\RequirePackage{fix-cm}
\documentclass{svjour3}
\usepackage{graphics,graphicx,epsf,subfigure,epstopdf}
\usepackage{amsfonts,amsmath,amsxtra,amscd,amssymb,bm, amsthm}
\usepackage{mathtools}
\usepackage[title]{appendix}
\usepackage[numbers]{natbib}
\usepackage{lmodern}
\usepackage{algorithm}
\usepackage{algpseudocode}
\usepackage[top=2.5cm,bottom=2.5cm,right=2.5cm,left=2.5cm]{geometry}
\usepackage[mathscr]{eucal}
\usepackage{color}
\numberwithin{equation}{section}

\usepackage{hyperref}

\DeclareMathOperator*{\argmin}{arg\,min}

\def\eqref#1{(\ref{#1})}

\newcommand{\tsum}{\textstyle\sum}
\newcommand{\tint}{\textstyle\int}
\newcommand{\bbe}{\mathbb{E}}

\newcommand{\beq}{\begin{equation}}
\newcommand{\eeq}{\end{equation}}
\newcommand{\beqa}{\begin{eqnarray}}
\newcommand{\eeqa}{\end{eqnarray}}
\newcommand{\beqas}{\begin{eqnarray*}}
\newcommand{\eeqas}{\end{eqnarray*}}

\usepackage{enumitem}

\newtheorem{assumption}{Assumption}
\newcommand{\gap}{\mathrm{gap}}
\newcommand{\res}{\mathrm{res}}

\newcommand{\bbr}{\mathbb{R}}

\def\cS{{\cal S}}
\def\cA{{\cal A}}

\newcommand{\vbar}{\ensuremath{\bar v}}

\def\brho{{\boldsymbol{\nu}}}
\def\bPhi{{\bar{\Phi}}}
\def\bphi{{\bar{\phi}}}

\def\myfinal{}

\def\vgap{\vspace*{.1in}}

\newcommand{\norm}[1]{\left\lVert#1\right\rVert}

\usepackage[dvipsnames]{xcolor}

\title{Computation of Strong Solutions to Stochastic Variational Inequalities}

\author{Yao Ji \and Guanghui Lan   \and Jason  Zhu  \thanks{GL, YJ and JZ were partially supported by Air Force Office of Scientific Research grant FA9550-22-1-
0447 and American Heart Association grant 23CSA1052735.} \thanks{The results of this work were presented at the Modeling and Optimization: Theory and Applications (MOPTA) Conference, Lehigh University, Bethlehem, PA, August 18--19, 2026.}}

\institute{
    Y. Ji \and G. Lan \and J. Zhu \at
    H. Milton Stewart School of Industrial and Systems
  Engineering,
    Georgia Institute of Technology,
    225 North Ave, Atlanta, GA 30332, USA \\
    \email{yaoji@gatech.edu, george.lan@isye.gatech.edu, jzhu657@gatech.edu}
  }

\begin{document}

\maketitle

\begin{abstract}
This paper studies the computation of strong solutions of monotone variational inequalities (VIs) with Lipschitz continuous operators. Building on the idea of accumulative regularization (AR), we develop a general framework for VIs, with particular emphasis on stochastic settings. Under unbiased stochastic oracles with uniformly bounded variance $ \sigma^2$, AR computes an approximate solution with expected operator residual bounded by $\varepsilon$ using at most
$$
\widetilde{\mathcal{O}}\left(\tfrac{LD_0}{\varepsilon}+\tfrac{ \sigma^2}{\varepsilon^2}(\log\tfrac{LD_0}{\varepsilon})^3\right)
$$
stochastic oracle calls, where $L$ is the Lipschitz constant and $D_0$ bounds the initial distance to the solution. 
This substantially improves the existing $\mathcal{O}( \sigma^2/\varepsilon^4)$ complexity for residual reduction and matches the lower bound up to logarithmic factors. For strongly monotone VIs, measured by the distance to the solution, AR achieves the optimal oracle complexity when the strong monotonicity modulus is known. By treating the problem as merely monotone, AR still achieves nearly optimal complexity without knowledge of this modulus. We further introduce a state-dependent noise model applicable to general monotone VIs with potentially nonunique solutions, extending state-dependent noise analysis beyond the strongly monotone setting. Under this model, AR, when equipped with an enhanced stochastic operator extrapolation (SOE) method, achieves nearly optimal complexity with the stochastic term depending on the variance at a solution.
We apply these results to policy evaluation in reinforcement learning, where we show that estimators of the projected Bellman operator satisfy our state-dependent noise condition. This yields improved rates for fast temporal-difference (FTD) learning, including optimal Bellman-residual rates in the long-horizon regime without dependence on the deteriorating strong-monotonicity parameter.
To the best of our knowledge, the AR framework, the improved complexity under uniformly bounded noise, the state-dependent noise model and its guarantees, the adaptivity to an unknown strong monotonicity modulus, and the improved guarantees for FTD learning are all new in the VI literature.

\end{abstract}

\section{Introduction} \label{sec_intro}

We consider the general variational inequality (VI) problem of finding a point
\begin{equation}
\label{VIP}
x^* \in X \quad \text{such that} \quad \langle F(x^*),\, x - x^* \rangle \ge 0, \qquad \forall\, x \in X,
\end{equation}
where $X \subseteq \mathbb{R}^n$ is a nonempty closed convex set and $F: X \to \mathbb{R}^n$ is
an $L$-Lipschitz continuous map, i.e., there exists $L>0$ such that
\begin{equation}
\label{eq:Lipschitz}
\|F(x)-F(z)\| \le L\|x-z\|, \qquad \forall\, x,z\in X.
\end{equation}
VIs satisfying \eqref{eq:Lipschitz} are often referred to as \emph{Lipschitz continuous} and points $x^*$ satisfying $\eqref{VIP}$ are referred to as \emph{strong solutions}.

Throughout this paper, we assume that the solution set $X^*$ satisfying \eqref{VIP} is nonempty. Moreover, we assume that $F$ satisfies the (strong) monotonicity condition
\begin{equation}
\label{strongly_monotone}
\langle F(x)-F(z),\, x-z\rangle \ge \mu\|x-z\|^2, \qquad \forall\, x,z\in X,
\end{equation}
for some $\mu\ge 0$. When $\mu>0$, $F$ is said to be \emph{strongly monotone}; when $\mu=0$, $F$ is \emph{monotone}.

In order to analyze the complexity of VI algorithms, we first need to discuss a few different termination criteria
for the VI problem in \eqref{VIP} to measure the accuracy of a given candidate solution $\bar x \in X$.
If $F$ satisfies the strong monotonicity condition in \eqref{strongly_monotone} for some $\mu> 0$,  then the solution $x^*$ is unique, and
the distance to the optimal solution $\|\bar x - x^*\|$ is a natural measure of solution accuracy. For monotone VIs, the solution need not be unique, and there exist at least three other widely used termination criteria. One is called the weak gap, defined as
\beq \label{def_weak_gap}
\gap_W(\bar{x}) \coloneqq \max_{x \in X} \langle F(x),  \bar{x} -x \rangle
\eeq
for a given $\bar x \in X$.
A stronger notion of gap function is given by
\beq \label{def_strong_gap}
\gap_S(\bar x) \coloneqq \max_{x \in X} \langle F(\bar x),  \bar{x} -x \rangle.
\eeq
Both of these gap functions work only in the case when $X$ is bounded, but their extensions to the unbounded case  have been investigated by Monteiro and Svaiter in \cite{MonSva10-1}. Note that under monotonicity condition, $\gap_S(\bar x) \le \varepsilon$ implies $\gap_W(\bar x) \le \varepsilon$, but not vice versa.
Also, under the strong monotonicity assumption, the condition $\gap_S(\bar x) \le \varepsilon$ implies $\mu\|\bar x -      x^*\|^2 \le \varepsilon$.

In this paper, we adopt a strong notion called the \textit{operator residual}, which applies to both the bounded and unbounded settings, for solving the general VI problem \eqref{VIP}.
Let us denote the normal cone of $X$ at $\bar x$ by
\beq \label{def_N_X}
N_X(\bar x) \coloneqq \{y \in \bbr^n \,|\, \langle y, x - \bar x\rangle \le 0, \forall\, x \in X \}.
\eeq
Noting that $\bar x \in X$ is an optimal solution for problem~\eqref{VIP} if and only if $F(\bar x) \in -N_X(\bar x)$.
We define the operator residual of $\bar x$ as
\beq \label{def_res}
\res_F(\bar x) \coloneqq \min_{y \in -N_X(\bar x)} \|y - F(\bar x)\|.
\eeq
In particular, if $X = \bbr^n$, then $N_X(\bar x) = \{0\}$ and $\res_F(\bar x) = \|F(\bar x)\|$, which is exactly
 the residual of solving the nonlinear equation $F(\bar x) = 0$.
 It can also be easily verified that in the bounded case,
 \beq \label{eq:Residual_stronger}
\res_F(\bar x) \le \varepsilon  \Longrightarrow \gap_W(\bar x) \le \gap_S(\bar x) \le D_X \varepsilon,
\eeq
where $D_X = \max_{x,z \in X} \|x -z\|$, and thus this is a stronger notion than
both the weak and strong gaps defined in \eqref{def_weak_gap} and \eqref{def_strong_gap},
respectively. Note that this criterion is also stronger than the modified two-parameter strong gap function studied by~\cite{MonSva10-1}. For strongly monotone problems, one may use both the operator residual $\res_F(\bar x)$ and the distance $\|\bar x - x^*\|$; in particular, a small residual $\res_F(\bar x)\leq \varepsilon$ implies a small distance $\|\bar x - x^*\|\leq \varepsilon/\mu$.
These observations motivate our choice of the operator residual as the solution criterion: it directly quantifies the violation of the optimality condition $F(\bar x)\in -N_X(\bar x)$ and remains well defined even when $X$ is unbounded. Consequently, a convergence guarantee in terms of the operator residual certifies approximate optimality and, in view of the implications above, automatically yields guarantees for the weaker solution criteria whenever they are applicable. 

The complexity of solving VIs has been well
studied, especially in the deterministic setting.
Nemirovski, in his seminal work \cite{Nem05-1}, introduced the mirror-prox method by modifying the extragradient scheme \cite{korpelevich1983extrapolation} and established an $\mathcal{O}(1/\varepsilon)$ iteration complexity in terms of the weak gap in \eqref{def_weak_gap} for smooth monotone VIs. This rate is optimal for first-order VI methods, as demonstrated by matching lower complexity bounds \cite{nemyud:83,Nem94,ouyang2021lower}. The mirror-prox framework subsequently inspired extensive developments  \cite{MonSva10-1,dang2015convergence,Malisky15,NJLS09-1,juditsky2011solving,CheLanOu14-1,yousefian2017smoothing,iusem2017extragradient,Shanbhag2019}.
In terms of the residual defined in \eqref{def_res}, Monteiro et al. \cite{MonSva10-1} showed that the hybrid proximal extragradient framework achieves an $\mathcal{O}(\varepsilon^{-2})$ complexity bound for monotone VIs with Lipschitz-continuous operators. Early uses of fixed regularization in convex programming can be found in Lan and  Monteiro 's works \cite{lan2013iteration,lan2016iteration}.
Kotsalis et al. \cite{kotsalis2022simple} showed that a simple operator extrapolation method attains the same complexity for the broader class of generalized monotone VIs with Lipschitz-continuous operators. For definitions of generalized monotonicity, see, e.g., \cite{facchinei_finite-dimensional_2004,dang2015convergence}. For monotone Lipschitz operators, Alves et al. \cite{alves2016regularized} showed that a regularized method achieves the near-optimal iteration complexity of $\mathcal{O}(\varepsilon^{-1}\log(1/\varepsilon))$; see also the related earlier works on convex optimization by Lan and Monteiro \cite{lan2013iteration,lan2016iteration}. This technique was later studied in the form of Halpern-type iterations \cite{diakonikolas2020halpern}. Under the additional assumption of cocoercivity, Halpern-type methods attain the optimal $\mathcal{O}(\varepsilon^{-1})$ complexity \cite{diakonikolas2020halpern,DiakonikolasWang2022Potential}. For unconstrained monotone Lipschitz equations, \cite{ioan2024extra} establishes an $\mathcal{O}(1/k)$ residual decay, where $k$ denotes the iteration count; this rate, however, relies on an asymptotic argument. Explicit nonasymptotic $\mathcal{O}(1/\varepsilon)$ residual bounds have been established for extragradient methods for unconstrained monotone VIs with Lipschitz-continuous operators \cite{yoon2021accelerated}.
There are a few works \cite{tran2024extragradient,cai2022accelerated} that establish an $\mathcal{O}(1/\varepsilon)$ complexity bound using fixed regularization methods.

The complexity of solving stochastic VIs depends critically on the noise model. 
A standard assumption is that we have access to a conditionally unbiased estimator (or stochastic oracle) of $F$~\cite{juditsky2011solving,kotsalis2022simple}. Specifically, for every $\mathcal{F}_k$-measurable $x\in X$,
\begin{align}\label{eq:fast-decreasing-bias-o}
\mathbb{E}[\tilde{F}(x,\zeta_k)\mid\mathcal{F}_k]
=F(x), \quad \textnormal{a.s.}
\end{align}
It should be noted that most of our developments can be extended straightforwardly to estimators with vanishing bias. For simplicity, we focus on the unbiased case in this paper.
In addition to \eqref{eq:fast-decreasing-bias-o}, we often need to impose different assumptions on the variance of the estimator to derive stochastic oracle complexity bounds.

Under the classical \emph{uniformly bounded variance} assumption, i.e., for every $\mathcal{F}_k$-measurable $x\in X$,
\begin{equation}\label{eq:intro-uniform-assumption}
\bbe\!\left[\|\tilde{F}(x,\zeta_k)-F(x)\|^2\mid\mathcal{F}_k\right]\leq  \sigma^2, \qquad \forall\,\, x\in X.
\end{equation}
there has been extensive work on stochastic VIs using the expected weak gap as the termination criterion. Juditsky et al.~\cite{juditsky2011solving} showed that stochastic mirror-prox attains the optimal $\mathcal{O}(1/\varepsilon^2)$ stochastic-oracle complexity under this criterion. More recently, Kotsalis et al.~\cite{kotsalis2022simple} showed that the simpler stochastic operator extrapolation (SOE) method not only achieves the same optimal weak-gap complexity for monotone VIs, but also attains the optimal
$\mathcal{O}((L/\mu)\log(1/\varepsilon)+ \sigma^2/(\mu^2\varepsilon))$
stochastic-oracle complexity under the mean-squared-distance criterion for strongly monotone VIs.
In contrast, the existing literature remains considerably less satisfactory in terms of operator-residual guarantees in the stochastic setting. For stochastic generalized monotone VIs, under a standard i.i.d.\ single-point stochastic oracle with uniformly bounded variance, SOE achieves a complexity of ${\cal O}(1/\varepsilon^4)$ for producing a point $x$ satisfying
$\mathbb{E}[\operatorname{res}(x)]\leq\varepsilon$
\cite{kotsalis2022simple}. For unconstrained Lipschitz-monotone problems, stochastic Halpern-type methods improve this complexity to ${\cal O}(1/\varepsilon^3)$ \cite{cai2022stochastic}. However, this latter result relies on a multi-point oracle that permits the same random sample to be reused at different query points, together with mean-square Lipschitz continuity of the stochastic operator, an assumption strictly stronger than Lipschitz continuity of the expected operator. Moreover, the constrained extension in \cite{cai2022stochastic} requires cocoercivity. Consequently, under the standard single-point oracle with uniformly bounded variance, the best-known general guarantee for stochastic Lipschitz-monotone VIs remains ${\cal O}(1/\varepsilon^4)$. This is substantially worse than the optimal ${\cal O}(1/\varepsilon^2)$ stochastic-oracle complexity under the weak-gap criterion. For the more restrictive class of unconstrained minimax problems, whose optimality conditions take the form of monotone equations rather than general VIs, the near optimal ${\mathcal O}(\sigma^2(\log 1/\varepsilon)^3/\varepsilon^2)$ dependence has been established in \cite{chen2024near}.

A complementary lower bound is provided by stochastic convex optimization, where an
$\Omega(\varepsilon^{-2}\log(1/\varepsilon))$ lower bound is established for stochastic gradient-norm
minimization under a local stochastic first-order oracle
\cite{foster2019complexity}. This problem is a special case of residual
minimization for monotone VIs: taking $X=\mathbb{R}^n$ and
$F=\nabla f$ for a smooth convex function $f$ gives
$\mathbb E[\operatorname{res}(x)]=\mathbb E[\|\nabla f(x)\|]$. Their result therefore yields a
$\Omega(\varepsilon^{-2}\log(1/\varepsilon))$ lower bound for the broader VI
class.
This naturally raises the question of whether a
$\widetilde{\mathcal{O}}(1/\varepsilon^2)$ stochastic-oracle complexity,
matching the lower bound in its polynomial dependence on $\varepsilon$ for residual reduction,
can be attained for stochastic monotone VIs with Lipschitz-continuous
operators.

Addressing this question under a more broadly applicable noise model presents an additional challenge. In many stochastic VIs, the uniform variance bound in \eqref{eq:intro-uniform-assumption} fails to hold \cite{iusem2017extragradient}, as the variance of the stochastic oracle may depend on the query point $x$ \cite{juditsky2023sparse}. This has motivated a growing body of work on weaker, \emph{state-dependent noise} assumptions that relax \eqref{eq:intro-uniform-assumption} \cite{iusem2017extragradient,kotsalis2022simple2,alacaoglu2025towards,alacaoglu2026solving}. Such assumptions typically take the form \cite{kotsalis2022simple2}
\begin{equation}\label{state-dependent-original}
\bbe\!\left[  \|\tilde F(x,\zeta_k)-F(x)\|^2\mid \mathcal{F}_k\right]\leq \sigma_*^2+\varsigma\|x-x^*\|^2,\qquad \forall x\in X,
\end{equation}
where $\sigma_*^2$ characterizes the variance at the solution $x^*$, and $\varsigma$ is the state-dependent variance parameter.
This distance-based relaxation is natural in the strongly monotone case, where the solution $x^*$ is unique. For merely monotone VIs, however, the solution need not be unique, making the distance to a particular solution less intrinsic to the problem.
In the special case of convex optimization, Ilandarideva et al. \cite{ilandarideva2025accelerated} considers state-dependent noise characterized by the function-value optimality gap and derives optimal convergence rates under that state-dependent noise assumption. For a general SVI, however, there is no corresponding notion of a function-value optimality gap, and hence this assumption is not directly applicable.

In this paper, we introduce a natural state-dependent noise model not only for strongly monotone VIs, but also for general monotone problems. Specifically,
we assume that for any $x^*\in X^*$, there exist constants
$\sigma_*^2 \equiv \sigma_*^2(x^*) \geq0$ and $\varsigma_* \equiv \varsigma_*(x^*) \geq0$ such that, for every $\mathcal{F}_k$-measurable $x\in X$,
\begin{align}\label{eqn:state-dependent-assumption}
    \mathbb{E}\!\left[
        \|\tilde F(x,\zeta_k)-F(x)\|^2\mid \mathcal{F}_k
    \right]
    \leq \sigma_*^2
    +\varsigma_* \langle F(x)-F(x^*),x-x^*\rangle.
\end{align}
Unlike the distance-based condition in
\eqref{state-dependent-original}, the state-dependent term in
\eqref{eqn:state-dependent-assumption} vanishes between any two solutions of a monotone VI
 when the solution is not unique.
Indeed,
if both $\bar x$ and $x^*$ solve
the VI ~\eqref{VIP}, their optimality conditions together with
monotonicity of $F$ imply
$\langle F(\bar x)-F(x^*),\bar x-x^*\rangle=0$. More generally, if
$\bar x$ is an $\varepsilon$-approximate strong gap solution,
then
$$0\leq\langle F(\bar x)-F(x^*),\bar x-x^*\rangle\leq\varepsilon.$$
Thus, \eqref{eqn:state-dependent-assumption} controls stochastic variation through an intrinsic VI optimality measure that itself is controlled by
the strong gap, rather than through the Euclidean distance to an arbitrarily
selected solution. 
Moreover, $\sigma_*^2$ still provides an upper bound on the variance on $x^*$.

In addition to important applications such as sparse regression \cite{ilandarideva2025accelerated}, policy evaluation in reinforcement learning provides a natural instance of this framework. For a fixed policy $\pi$, its value function $V^\pi$ is characterized as a fixed point of the Bellman operator and, under linear function approximation, can be formulated as a stochastic
VI~\cite{kotsalis2022simple2, li2023accelerated}. As the discount factor $\gamma$ approaches
one, the strong-monotonicity parameter $\mu$ associated with the projected Bellman
operator typically deteriorates, and the problem increasingly resembles a
merely monotone VI. Distance-based guarantees may consequently become poorly
conditioned, whereas the VI residual remains meaningful because it directly
measures the projected Bellman error without requiring a favorable
strong-monotonicity parameter. We show that a simple stochastic estimator of
the projected Bellman operator satisfies the state-dependent noise \eqref{eqn:state-dependent-assumption}, thereby
placing policy evaluation within our proposed noise model and allowing our residual-based convergence guarantees to apply directly. Interestingly, for strongly monotone problems, a direct application of strong monotonicity suggests that \eqref{eqn:state-dependent-assumption} holds with $\varsigma_* = \varsigma/\mu$ under \eqref{state-dependent-original}. However, for policy evaluation, we show that \eqref{eqn:state-dependent-assumption} in fact holds with $\varsigma_* \approx \varsigma$, independent of $\mu$. 
This highlights an important distinction between the two state-dependent noise assumptions.

The state-dependent noise model \eqref{eqn:state-dependent-assumption}, however, introduces a new challenge in the design and analysis of stochastic VI methods. To control the effect of the state-dependent variance term
 $\varsigma_*\langle F(x)-F(x^*),x-x^*\rangle$,
 the algorithm must simultaneously control this intrinsic optimality measure, which can in turn be related to the operator residual.
 Consequently, obtaining sharp residual complexity bounds is essential for handling the state-dependent noise: without sufficiently tight residual guarantees, the additional variance induced by the state-dependent term cannot be effectively controlled.
 Developing stochastic VI methods that exploit this relationship while attaining sharp residual complexity bounds requires new algorithmic and analytical techniques. To the best of our knowledge, this problem has not been studied previously, even under strong monotonicity.

\subsection{Contributions and organization}

This paper attempts to address the aforementioned challenges in stochastic VIs. Our main contributions are summarized as follows.

First, inspired by the accumulative regularization technique for convex optimization~\cite{lan_optimal_2024,ji2025high}, we develop a general AR framework for VIs that allows a broad class of algorithms to serve as subroutines for solving a sequence of regularized VI subproblems, and converts their convergence guarantees into operator-residual guarantees for the original problem. In the deterministic setting, we show that AR computes a point $x_S$ satisfying
$\res_F(x_S)\leq\varepsilon$ after
$\mathcal{O}\left({LD_0}/{\varepsilon}\right)$ evaluations of $F$
(cf.~\autoref{the:sublinear_AR}), where
$D_0\geq\|x_0-x^*\|$
is an upper bound on the distance between the initial point $x_0$ and the solution $x^*$. 
   This achieves the optimal dependence on the target accuracy. Notice that, unlike fixed regularization methods in the existing literature, AR is not tailored to a particular algorithm but instead provides a general framework. It requires only that its subroutine attain a relatively slow residual rate and a suboptimal convergence rate in distance for the regularized subproblems. Consequently, it can transfer many existing VI algorithms, such as OE and extragradient type methods, with optimal residual guarantees.
Unlike in convex optimization, where function values can serve as a proxy for monitoring algorithmic progress, no such proxy is generally available for VIs. Consequently, the analysis of our AR framework differs significantly from that developed for convex optimization in \cite{lan_optimal_2024,ji2025high}.
We further equip AR with stochastic subroutines under the classic
uniformly bounded variance assumptions. In this setting, AR computes a point $x_S$ satisfying $\sqrt{\bbe[\res_F(x_S)^2]}\leq\varepsilon$ after
\begin{equation}\label{eqn:major-1}
\widetilde{\mathcal{O}}\left(
    \tfrac{LD_0}{\varepsilon}
    +
    \tfrac{\sigma^2}{\varepsilon^2}
    \left(
        \log\tfrac{LD_0}{\varepsilon}
    \right)^3
\right)
\end{equation}
calls to the stochastic oracle $(\mathcal{SO})$ (cf. \autoref{thm:main-unbiased}), where $\widetilde{\mathcal{O}}$ hides factors polynomial in $\log\log(\tfrac{LD_0}{\varepsilon})$.
By Jensen's inequality, this guarantee implies $\bbe[\res_F(x_S)]\leq\varepsilon$.
The stochastic term exhibits the nearly optimal dependence on the target accuracy, up to logarithmic factors. 
By comparison, existing results require a sample complexity of order $\mathcal{O}(\sigma^2/\varepsilon^4)$ to guarantee $\bbe[\res_F(x_S)]\leq\varepsilon$.

Second, we extend the AR framework to stochastic monotone VIs under the
state-dependent noise assumption \eqref{eqn:state-dependent-assumption}. We show that AR computes a point $x_S$ satisfying
$\sqrt{\bbe[\res_F(x_S)^2]}\leq\varepsilon$ using
\begin{equation}\label{eqn:major-2}
\mathcal{\widetilde O}\left(
    \tfrac{LD_0}{\varepsilon}
    +
    \tfrac{\varsigma_* D_0}{\varepsilon}
    \left(
        \log\tfrac{LD_0}{\varepsilon}
    \right)^3
    +
    \tfrac{\sigma_*^2}{\varepsilon^2}
    \left(
        \log\tfrac{LD_0}{\varepsilon}
    \right)^3
\right)
\end{equation}
calls to the $\mathcal{SO}$
(cf.~\autoref{thm:ARVI-state-dependent}). Here, $\varsigma_*$
characterizes the state-dependent component of the noise, while
$\sigma_*^2$ denotes an upper bound on the noise variance at the solution.
This improves upon the
$\widetilde{\mathcal{O}}(\varepsilon^{-4})$ sample-complexity bound
established in \cite{kotsalis2022simple2} for finding a point with a slightly weaker guarantee $\bbe[\res_F(x_S)]\leq\varepsilon$  under the distance-based state-dependent noise
assumption \eqref{state-dependent-original}. Moreover, under the proposed
assumption, the state-dependent contribution has a more favorable dependence
on the target accuracy and the noise parameter. Specifically, the
state-dependent component of the existing bound scales as
$\widetilde{\mathcal{O}}({\varsigma}^4\varepsilon^{-4})$
\citep[Theorem~3.23]{kotsalis2022simple2}, whereas the corresponding component
in our bound scales as
$\widetilde{\mathcal{O}}(\varsigma_*\varepsilon^{-1})$.

Third, to achieve the complexity bound in \eqref{eqn:major-2}, we derive an improved convergence guarantee for the stochastic operator extrapolation (SOE) method \cite{kotsalis2022simple2} applied to strongly monotone VIs under the proposed state-dependent noise assumption. For a $\mu$-strongly monotone and $L$-Lipschitz continuous operator, our SOE method computes a point $\widehat{x}$ satisfying
$\bbe[\|\widehat{x}-x^*\|^2]\leq\varepsilon$ after
\begin{equation}\label{eqn:major-3}
\mathcal{O}\left(
    \tfrac{L+\varsigma_*}{\mu}
    \log\tfrac{\|x_0-x^*\|^2}{\varepsilon}
    +
    \tfrac{\sigma_*^2}{\mu^2\varepsilon}
\right)
\end{equation}
calls to the $\mathcal{SO}$; see
\autoref{state-dependent-thm-strongly-monotone}.
For comparison, \citep[Corollary~3.6]{kotsalis2022simple2} establishes the
sample complexity
\begin{equation*}
\mathcal{O}\left(
    \max\left\{
        \tfrac{\max\{L,\tilde L\}^2+\varsigma^2}{\mu^2}
        \log\tfrac{\|x_0-x^*\|}{\varepsilon},
        \tfrac{\sigma_*^2+\|F(x^*)\|^2}{\mu^2\varepsilon}
        \log\tfrac{1}{\varepsilon}
    \right\}
\right)
\end{equation*}
for achieving the same distance guarantee. This bound depends quadratically
on the condition number and involves the potentially larger sample-wise
Lipschitz constant $\tilde L$. Moreover, its stochastic term also contains an additional
logarithmic factor and depends on $\|F(x^*)\|^2$, which does not diminish
under variance reduction.
In contrast, \eqref{eqn:major-3} exhibits the optimal linear dependence on $L/\mu$ in the deterministic term and the optimal
$\mathcal{O}(\sigma_*^2/(\mu^2\varepsilon))$ stochastic term. Because
$\sigma_*^2$ bounds the oracle variance at a solution $x^*$, it can be
 substantially reduced through variance-reduction mechanisms, such as parallel or distributed implementations that aggregate multiple stochastic operator evaluations, mini-batching, and other averaging strategies. Such mechanisms reduce the effective noise level without increasing the number of algorithmic iterations.

Fourth, we extend the AR framework to obtain high-probability guarantees. We establish the sample complexity of AR under the uniform and state-dependent noise models for both monotone and strongly monotone VIs. Specifically, given any $p \in (0,1)$, we estimate the number of calls to $\mathcal{SO}$ for AR to compute a point $x_S$ satisfying $\res_F(x_S)\leq\varepsilon$ with probability at least $1-p$. 
To establish these high-probability sample-complexity guarantees, we also derive convergence rates for SOE applied to strongly monotone VIs under the new state dependent noise model. Under the uniform noise model, we establish slightly sharper convergence bounds for both AR and SOE.

Finally, we apply the proposed methods to online policy evaluation. 
We consider accumulatively regularized fast temporal-difference learning (AR-FTD), obtained by applying the proposed AR framework to the projected Bellman equation. We show that an unbiased minibatch stochastic estimator under a generative model naturally satisfies the new state-dependent noise assumption \eqref{eqn:state-dependent-assumption}. Consequently, with problem-dependent constants omitted for simplicity, AR-FTD computes a point whose value-function mean-squared error is at most $\varepsilon$ using
\begin{equation*}
\widetilde{\mathcal{O}}\left(\tfrac{1}{1-\gamma}\log\tfrac{1}{\varepsilon}+\tfrac{\sigma_*^2}{(1-\gamma)^2\varepsilon}\right)
\end{equation*}
samples (cf. \autoref{the:mult_epoch_FTD_burn}). Thus, AR-FTD achieves optimal deterministic and stochastic complexity. We further show that AR-FTD computes a point $\theta_S$ with a small Bellman residual, namely, $\bbe[\|F(\theta_S)\|]\leq\varepsilon$, using $\widetilde{\mathcal{O}}\left(\tfrac{\sigma_*^2}{\varepsilon^2}(\log\tfrac{1}{\varepsilon})^3\right)$ samples (cf. \autoref{prop:ARVI-state-dependent}). Moreover, whenever the projected Bellman operator is strongly monotone (i.e., discount factor $\gamma$ small), this residual guarantee can be translated into a distance-to-solution guarantee of nearly the same order as that in \autoref{the:mult_epoch_FTD_burn}. By contrast, to find a point $\theta_S$ with a small Bellman residual, AR-FTD does not require strong monotonicity of the projected Bellman operator and can therefore be implemented without knowledge of a strong monotonicity parameter.

\smallskip

The above result also reveals an interesting adaptivity property of the AR
framework for monotone VIs. For any prescribed residual accuracy, AR can be run without assuming strong monotonicity or knowing
the strong monotonicity parameter $\mu$. This is valuable because methods
whose parameters are tuned from an estimate of $\mu$ can converge extremely
slowly when $\mu$ is overestimated; see the well-known example in
\cite[page~1578]{NJLS09-1}. Existing parameter-adaptive procedures can avoid prior knowledge of $\mu$ when the total iteration budget is fixed and a lower bound on $\mu$ is available, but generally incur an additional logarithmic factor and do not directly accommodate a prescribed target accuracy when such a lower bound is unavailable \cite{juditsky2014deterministic}. Interestingly, we show that when the VI is in fact $\mu$-strongly monotone and the stochastic error dominates, one can simply ignore strong monotonicity, apply AR as a general monotone VI method, and then translate its residual guarantee into a distance-to-solution guarantee. Here, we consider the uniform noise model. A similar result can be derived under the state-dependent noise model. Specifically, for any $\varepsilon>0$, AR computes $x_S$ such that
$\mathbb{E}[\norm{x_S-x^*}^2]\leq\varepsilon$ using
\begin{equation*}
\widetilde{\mathcal{O}}\left(\tfrac{LD_0}{\mu\sqrt{\varepsilon}}+\tfrac{\sigma^2}{\mu^2\varepsilon}\left(\log\tfrac{LD_0}{\mu\sqrt{\varepsilon}}\right)^3\right)
\end{equation*}
samples (cf.\ \autoref{prop:mu-free}). The resulting complexity is nearly optimal: up
to logarithmic factors, it matches the dominant stochastic term achieved by
SOE \cite{kotsalis2022simple} for strongly monotone operators, without requiring prior knowledge of $\mu$. 
\vgap

This paper is organized as follows. Section 2 develops the AR framework for finding strong solutions of monotone VIs~\eqref{VIP} satisfying \eqref{eq:Lipschitz} and \eqref{strongly_monotone}. The framework applies to both deterministic problems and stochastic problems under the uniformly bounded variance assumption. Section 3 introduces the new state-dependent noise assumption, extends the AR framework to this setting, and analyzes the SOE method under the proposed assumption. Section 4 develops high-probability guarantees under different noise assumptions. Section 5 presents new results for policy evaluation based on the AR framework and the proposed state-dependent noise assumption. Finally, Section 6 provides some brief concluding remarks.

\subsection{Notation and terminology} \label{sec:notations}
Throughout this paper, we use the symbol $\mathbb{R}$, $\mathbb Z$, and $\mathbb N$ to denote the real numbers, integers, and natural numbers respectively. We correspondingly use $\mathbb{R}_+$ to denote the nonnegative real numbers, $\mathbb{Z}_+$ to denote the nonnegative integers, and $\mathbb{N}_+$ to denote the positive natural numbers respectively. We use $\mathbb{R}^n$ and $\mathbb{Z}^n$ to denote the sets of $n$-tuples of real numbers and integers, respectively. We denote the standard inner product on $\mathbb{R}^n$ by $\langle a,b\rangle = a^{\top}b$ for $a, b \in \mathbb{R}^n$. Additionally, we denote $\|\cdot\|$ as the Euclidean norm, i.e., $\|a\| = \sqrt{\langle a, a\rangle}$ for $a \in \mathbb{R}^n$. For any real number $s,$ $\lceil s\rceil$ and $\lfloor s\rfloor$ denote the nearest integers to $s$ from above and below, respectively.
Let $[m]\triangleq\{1,\dots,m\}$, with $m\in \mathbb{N}_{+}.$
{Let $\zeta_1,\dots,\zeta_{k-1}$ be i.i.d. random variables on $(\Omega,\mathcal{F})$. Set $\Omega_{k}\coloneqq \prod_{i=0}^{k-1}\Omega$ and define
\[
\mathcal{F}_{k}\coloneqq \sigma\!\left(\left\{A_0\times\cdots\times A_{k-1}:\; A_i\in\mathcal{F},\ i=0,\dots,k-1\right\}\right).
\]
We denote by $\mathbb{P}\coloneqq \prod_{i=0}^{k-1} \mu$ the corresponding product measure on $(\Omega_{k},\mathcal{F}_{k})$. For any event $A \in \mathcal F$, we denote $A^c$ as the complement of $A$. For any sub-$\sigma$-algebra $\mathcal{G}\subseteq \mathcal{F}_{k}$, we write $\mathbb{E}[\cdot\mid\mathcal{G}]$ for the conditional expectation given $\mathcal{G}$.}
We use $\mathcal{SO}$ to denote the stochastic oracle. The notation $\mathcal O(\cdot)$ refers to standard big-O notation, while the use of $\mathcal {\widetilde O}(\cdot)$ suppresses any factors polynomial in $\log \log (1/\varepsilon)$.

\section{An Accumulative Regularization Method for Monotone Variational Inequalities}\label{sec:deterministic-vi}

In this section, we present \textit{accumulative regularization} (AR) method for finding strong solutions of monotone VIs which satisfy \eqref{eq:Lipschitz} and \eqref{strongly_monotone} with $\mu=0$. As a preliminary result,
we first establish the convergence of AR when a general deterministic subroutine is used as the inner-loop method, with the goal of finding a strong solution in the sense that the residual defined in \eqref{def_res} is small.
We then establish the convergence of AR when a general stochastic subroutine is used as the inner-loop method, with the goal of finding a strong solution in the sense of a small expected residual $\bbe [\res_F(\bar x)]$.

\subsection{Algorithm}
The AR method for finding strong solutions of monotone VIs which satisfy \eqref{eq:Lipschitz} and \eqref{strongly_monotone} with $\mu=0$ targets a series of regularized VI subproblems and computes their approximate solutions $\{x_s\}$ through some subroutine $\mathcal{A}$. Specifically, it consists of an outer loop indexed by the epoch $s,$ where it solves
 the regularized VI subproblem:
\begin{equation}\label{proxVIP}
\textrm{find}\quad x_s^* \in X \quad \text{such that} \quad \langle F_s(x_s^*), x - x_s^* \rangle \ge 0, \quad \forall\,\, x \in X,
\end{equation}
where the regularized operator $F_s$ is defined as
\begin{equation}\label{eq:Fs}
F_s(x) \coloneqq F(x) +  r_s  (x - \bar x_s).
\end{equation}
Here, $ r_s  \geq 0$ is referred to as the \textit{regularization parameter},
and $\bar x_s$, defined in \eqref{eqn:center} as a convex combination of the previous epochs' approximate solutions, is referred to as the \textit{prox-center}. Within each epoch $s$, the AR method calls some subroutine $\mathcal{A}$ to approximately solve the regularized problem, and denotes the approximate solution by $x_{s}.$ Then, at the next epoch $s+1,$
the AR method restarts from the point $x_{s}$ and targets the next regularized VI problem.

It is immediate that when $ r_s  > 0$, the operator $F_s$ is $ r_s $-strongly monotone:
\begin{equation}
\langle F_s(x) - F_s(y), x - y\rangle \geq  r_s  \|x - y\|^2, \quad \forall \,\,x,y \in X.
\end{equation}
The subroutine $\mathcal{A}$ is used to solve this strongly monotone VI problem. Depending on the oracle model, the algorithm may have either deterministic access to the operator $F$ or only stochastic oracle access to it. Accordingly, the subroutine $\mathcal{A}$ used within AR can be either deterministic or stochastic.  In both cases, for AR to obtain a point with a residual at most $\varepsilon$ of the original VI problem \eqref{VIP},  which satisfy \eqref{eq:Lipschitz} and \eqref{strongly_monotone} with $\mu=0$, the subroutine $\mathcal{A}$ needs to satisfy analogous sublinear convergence guarantees, which we will specify in detail in the next two subsections.
\begin{algorithm}[th]
\caption{An accumulative regularization method for monotone VI}
\label{alg:ARVI}
\begin{algorithmic}[1]
\Require Total number of subproblems $S$, strictly increasing regularization parameters
$\{ r_s \}_{s=0}^S$ with $r_0 = 0$, iteration counts $\{N_s\}_{s=1}^S$, and
initial point $x_0 \in \mathbb{R}^n$.
\State Set initial prox-center to $\bar{x}_0 \coloneqq x_0$.
\For{$s = 1, \ldots, S$}
    \State Set
  \begin{equation}
       \bar{x}_s \coloneqq (1 - \gamma_s)\bar{x}_{s-1} + \gamma_s x_{s-1}
    \quad \text{with } \gamma_s \coloneqq 1 - r_{s-1}/ r_s .
    \label{eqn:center}
  \end{equation}
    \State Compute an approximate solution $x_s$ of the regularized VI subproblem
    \begin{equation}\label{eq:sub-problem}
        \langle F_s(x^*_s), x - x^*_s\rangle \geq 0, \qquad \forall\, x \in X,
    \end{equation}
    \State by running subroutine $\mathcal{A}$ for $N_s$ iterations with the initialization $x_{s-1}$.
\EndFor
\State \textbf{Output:} $x_S\coloneqq x_{S,N_S}.$
\end{algorithmic}
\end{algorithm}
The proposed AR method is formally described in Algorithm \ref{alg:ARVI}.
The name accumulative regularization refers both to the fact that the regularization strength $ r_s $ increases across epochs and that the prox-center $\bar x_s$ accumulates a convex combination of previous approximate solutions:
\begin{align}\label{eqn:center-convex}
 \bar{x}_s &=\tsum_{i=1}^s\tfrac{ r_{i}-{ r_{i-1}}}{ r_s }x_{i-1}.
\end{align}

AR starts with a small regularization parameter, with the first positive $r_s$ typically chosen on the order of the target accuracy $\varepsilon$, and an epoch length $N_s$ inversely proportional to $r_s$. As $r_s$ increases, the regularized subproblems become better conditioned and $N_s$ decreases geometrically. Consequently, the first few epochs dominate the computational cost, and $\tsum_{s=1}^S N_s$ remains of the same order as the first-epoch cost. Meanwhile, the prox-center update in \eqref{eqn:center-convex} assigns greater weight to recent approximate solutions, such as $x_{s-1},x_{s-2},\ldots$, while gradually reducing the influence of $x_0$. Compared with fixed-center regularization anchored at $x_0$, AR therefore adapts the centers while controlling the total complexity through the joint choice of $r_s$ and $N_s$. It can thus be viewed as a refined restarting scheme with adaptive centers.

The idea of accumulative regularization has long been widely adopted, for example, with a similar strategy appearing in \cite{AllenZhu2018}, although the resulting method did not attain the optimal deterministic oracle complexity. This optimal complexity in the deterministic setting was later achieved by the first order AR method in \cite{lan_optimal_2024}, and was subsequently extended to higher order methods in \cite{ji2025high}.
However, these methods focus on gradient norm minimization and rely on function value guarantees provided by the subroutine $\mathcal{A}$. This structure is unavailable for the VIs \eqref{VIP}, where the operator $F$ need not be the gradient of a scalar objective function. Consequently, it is not immediate how to instantiate the AR framework in this setting, what requirements should be imposed on the subroutine $\mathcal{A}$, and what type of residual guarantees the resulting method can provide. Our analysis addresses this gap by developing an AR scheme directly for monotone VIs, with convergence guarantees stated in terms of the residual defined in \eqref{def_res}.

We start with providing the residual decomposition.
By the definition of the residual in \eqref{def_res}, we relate the residual of the original VI problem \eqref{VIP} at $x_S$ to that of the final regularized subproblem. By the triangle inequality, we have
\begin{equation}\label{eqn:res}
    \begin{aligned}
    \res_F(x_S) &= \min_{y \in -N_X(x_S)} \|y - F(x_S)\|\\
&\leq \min_{y \in -N_X(x_S)} \|y - F_S(x_S)\| + \|F_S(x_S) - F(x_S)\| \\
&= \res_{F_S}(x_S) +  r_S \|x_S - \bar x_S\|\\
&\leq \res_{F_S}(x_S)  +  r_S \|x_S -x_S^*\|+ r_S \|x_S^*- \bar x_S\|.
    \end{aligned}
\end{equation}
This inequality provides the basic residual decomposition used in the analysis. The first term is the residual of $x_S$ with respect to the regularized operator $F_S$ and therefore measures how accurately the subroutine $\mathcal A$ solves the final regularized VI subproblem. The remaining terms bound the regularization error, which measures the discrepancy between $F_S$ and the original operator $F$. Thus, to obtain a point with a small residual for the original problem, it suffices to control both the residual of the final regularized subproblem and the regularization error.

Define the initial distance to the solution set as
\begin{equation}\label{def:initial-optimality-gap}
\operatorname{dist}(x_0, X^*) \coloneqq \min_{x^* \in X^*} \|x_0 - x^*\|.
\end{equation}

We next present a result showing that AR yields certain epoch-wise relations between the iterates $x_s$, the prox-centers $\bar x_s$, and the exact solutions $x_s^*$ of the regularized subproblems. In the deterministic setting, these relations reduce to two deterministic inequalities. In the stochastic setting, they hold  along every sample path. The detailed convergence results for the two cases are deferred to the next two subsections.
\begin{proposition}
\label{lemma_ARVI_proximity}
In \autoref{alg:ARVI}, for all $s\geq 1,$ for all $x^*\in X^*$
we have
    \begin{align}
     \|x_{0} - x_1^*\| &\leq \|x_0-x^*\|, \label{eq:epoch_proximity0}\\
        \|x_{s} - x_{s+1}^*\| &\leq \|x_{s } - x^*_{s }\|, \label{eq:epoch_proximity1} \\
         r_s  \|\bar x_s - x^*_s\| &\leq \tsum_{i = 1}^s ( r_{i} +  r_{i-1}) \|x_{i - 1} - x_{i - 1}^*\| \label{eq:epoch_proximity2},\\
        \|x_{s}^* - x^*\| &\leq 2 \tsum_{k= 0}^{s - 1} \|x_{k} - x_k^*\|.\label{eqn:induction-traingle}
    \end{align}
\end{proposition}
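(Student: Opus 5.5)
The plan is to work directly with the optimality conditions of consecutive regularized subproblems \eqref{eq:sub-problem}, combined with monotonicity of $F$, and to use the recursion $r_s\bar x_s = r_{s-1}\bar x_{s-1} + (r_s-r_{s-1})x_{s-1}$, which is just \eqref{eqn:center} multiplied by $r_s$. Throughout we adopt the convention that the epoch-$0$ ``subproblem'' (with $r_0=0$) is the original VI, and we set $x_0^*\coloneqq x^*$ for the given $x^*\in X^*$. With this convention, \eqref{eq:epoch_proximity0} is exactly the case $s=0$ of \eqref{eq:epoch_proximity1}, and the $i=1$ term in \eqref{eq:epoch_proximity2} and the $k=0$ term in \eqref{eqn:induction-traingle} make sense.

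\textbf{Key inequality.} For $s\ge 0$, write $d\coloneqq x_{s+1}^*-x_s^*$. Test the VI for $x_{s+1}^*$ at $x=x_s^*$ and the VI for $x_s^*$ at $x=x_{s+1}^*$, and add the two. By monotonicity of $F$ the $F$-terms combine to $-\langle F(x_{s+1}^*)-F(x_s^*),d\rangle\le 0$, leaving
\[
r_{s+1}\langle x_{s+1}^*-\bar x_{s+1},-d\rangle + r_s\langle x_s^*-\bar x_s, d\rangle \ge 0 .
\]
For $s=0$ the second term vanishes since $r_0=0$, and the $s=0$ VI is the original problem at $x^*$. Substituting $r_{s+1}\bar x_{s+1}=r_s\bar x_s+(r_{s+1}-r_s)x_s$ cancels the $\bar x_s$ terms. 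Using $-r_{s+1}x_{s+1}^*+r_sx_s^* = -(r_{s+1}-r_s)x_{s+1}^* - r_s d$, this should simplify to
\[
(r_{s+1}-r_s)\langle x_s-x_{s+1}^*,\,d\rangle \ge r_s\|d\|^2\ge 0,
\qquad\text{equivalently}\qquad
(r_{s+1}-r_s)\langle x_s-x_s^*,\,d\rangle \ge r_{s+1}\|d\|^2 .
\]

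\textbf{Consequences.} Since $r_{s+1}>r_s$, the first form gives $\langle x_s-x_{s+1}^*,x_{s+1}^*-x_s^*\rangle\ge 0$. Expanding $\|x_s-x_s^*\|^2=\|(x_s-x_{s+1}^*)+d\|^2$ then yields \eqref{eq:epoch_proximity1}; the case $s=0$ is \eqref{eq:epoch_proximity0}.

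For \eqref{eqn:induction-traingle}, I would telescope: $\|x_s^*-x^*\|\le\sum_{k=0}^{s-1}\|x_{k+1}^*-x_k^*\|$. Then bound each term by $\|x_{k+1}^*-x_k\|+\|x_k-x_k^*\|\le 2\|x_k-x_k^*\|$, using \eqref{eq:epoch_proximity1}.

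For \eqref{eq:epoch_proximity2}, I would induct on $s$ using the identity
\[
r_s(\bar x_s-x_s^*) = r_{s-1}(\bar x_{s-1}-x_{s-1}^*) + \big[(r_s-r_{s-1})a - r_s d\big],
\qquad a\coloneqq x_{s-1}-x_{s-1}^*,\ d\coloneqq x_s^*-x_{s-1}^* .
\]
The main obstacle is that a naive triangle-inequality bound on the bracket gives the coefficient $3r_s-r_{s-1}$, which is too large. I will instead expand $\|(r_s-r_{s-1})a-r_sd\|^2$ and apply the second form of the key inequality (with index $s-1$), namely $(r_s-r_{s-1})\langle a,d\rangle\ge r_s\|d\|^2$. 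This gives
\[
\|(r_s-r_{s-1})a-r_sd\|^2 \le (r_s-r_{s-1})^2\|a\|^2 - r_s^2\|d\|^2 \le (r_s-r_{s-1})^2\|a\|^2 ,
\]
so the bracket is at most $(r_s-r_{s-1})\|a\|\le(r_s+r_{s-1})\|x_{s-1}-x_{s-1}^*\|$. The base case is $r_0\|\bar x_0-x_0^*\|=0$, and summing the increments gives \eqref{eq:epoch_proximity2}.
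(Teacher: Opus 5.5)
Your proof is correct. For \eqref{eq:epoch_proximity0}, \eqref{eq:epoch_proximity1} and \eqref{eqn:induction-traingle} it follows the paper. You add the two optimality conditions, drop the $F$-terms by monotonicity, cancel the prox-centers via $r_s\bar x_s=r_{s-1}\bar x_{s-1}+(r_s-r_{s-1})x_{s-1}$, and set $x_0^*\coloneqq x^*$. The paper only rewrites your inner-product inequality in squared-norm form via the polarization identity, discarding a harmless nonnegative term.

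The genuine difference is in \eqref{eq:epoch_proximity2}. The obstacle you describe comes from how you group the bracket. Note that $(r_s-r_{s-1})a-r_sd=r_s(x_{s-1}-x_s^*)-r_{s-1}(x_{s-1}-x_{s-1}^*)$. The paper applies the plain triangle inequality to exactly this form: it writes $x_s^*-\bar x_s=(x_s^*-x_{s-1})+\tfrac{r_{s-1}}{r_s}(x_{s-1}-\bar x_{s-1})$ and splits the last term through $x_{s-1}^*$. Then \eqref{eq:epoch_proximity1} immediately gives the coefficient $r_s+r_{s-1}$.

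Your route instead expands the square and reuses the stronger inner-product form $(r_s-r_{s-1})\langle a,d\rangle\ge r_s\|d\|^2$, rather than only its consequence \eqref{eq:epoch_proximity1}. This is slightly longer, but it proves the sharper estimate $r_s\|\bar x_s-x_s^*\|\le\sum_{i=1}^s(r_i-r_{i-1})\|x_{i-1}-x_{i-1}^*\|$. That would tighten the numerical constants, though not the rates, wherever \eqref{eq:epoch_proximity2} is used later, e.g.\ in the residual decompositions and in \autoref{lem:state-dependent-iterate-boundedness}.
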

\begin{proof}
Observe that $r_0 = 0$, so that $F_0 = F$ and we denote $x_0^* \coloneqq x^*$, where $x^* \in X^*$.
Given that $x_{s-1}^*$ is the optimal solution of $ \langle F_{s-1}(x^*_{s-1}), x - x^*_{s-1}\rangle \geq 0$ for all $ x \in X,$ we obtain
\begin{equation}\label{eqn:optimality-1}
    \langle F(x^*_{s - 1}) +  r_{s - 1}(x^*_{s - 1} - \bar x_{s - 1}), x^*_s - x^*_{s - 1}\rangle \geq 0.
\end{equation}
By the definition of $F_s$ in \eqref{eq:sub-problem} and the center $\bar{x}_s$ in \eqref{eqn:center}, we obtain
\begin{align*}
F_s(x) = F(x) +  r_{s - 1}(x - \bar x_{s-1}) + ( r_s  -  r_{s - 1})(x - x_{s - 1}).
\end{align*}
Combining it with the fact that $x_{s}^*$ is the optimal solution of $ \langle F_{s}(x^*_{s}), x - x^*_{s}\rangle \geq 0$ for all $ x \in X,$ we obtain
\begin{equation}\label{eqn:optimality-2}
   \langle F(x^*_s)+  r_{s - 1}(x^*_s - \bar x_{s - 1}) + ( r_s  -  r_{s - 1})(x^*_s - x_{s - 1}), x^*_{s - 1} - x^*_s\rangle \geq 0.
\end{equation}
Combining \eqref{eqn:optimality-1} with \eqref{eqn:optimality-2}, we have
\begin{align}\label{eqn:optimality-3}
    r_{s - 1}\langle x^*_{s - 1} - x^*_s, x^*_s - x^*_{s - 1}\rangle + ( r_s  -  r_{s - 1})\langle x^*_s - x_{s - 1}, x^*_{s - 1} - x^*_s\rangle \geq  \langle  F(x^*_s)-F(x^*_{s - 1} ), x^*_s - x^*_{s - 1}\rangle.
\end{align}
By the monotonicity of $F$, it holds that $\langle F(x^*_s) - F(x^*_{s - 1}), x^*_s - x^*_{s - 1}\rangle \geq 0$. Moreover, for all $a, b \in\mathbb{R}^n,$ it holds that $2\langle a, b\rangle=\|a+b\|^2-\|a\|^2-\|b\|^2.$ Therefore, we have
\begin{align*}
\langle x^*_s - x_{s - 1}, x^*_{s - 1}-x^*_s\rangle &=\tfrac{1}{2} \|x^*_{s - 1} - x_{s - 1}\|^2 -\tfrac{1}{2} \|x_s^* - x_{s - 1}\|^2  - \tfrac{1}{2} \|x^*_s - x^*_{s- 1}\|^2.
\end{align*}
Thus, substituting it into \eqref{eqn:optimality-3}, we have
\begin{equation*}
    \tfrac{ r_s  -  r_{s - 1}}{2} \|x^*_{s - 1} - x_{s - 1}\|^2 \geq \tfrac{ r_s  -  r_{s - 1}}{2} \|x^*_s - x_{s - 1}\|^2 + \tfrac{ r_s }{2} \|x^*_s - x^*_{s- 1}\|^2.
\end{equation*}
Therefore, for every $s\geq 1$, $\|x_{s-1}-x_s^*\|\leq \|x_{s-1}-x_{s-1}^*\|.$
Taking $s=1$ and using the choice of $x_0^* \in X^*$ gives
\begin{equation*}
\|x_0-x_1^*\|\leq \operatorname{dist}(x_0,X^*),
\end{equation*}
which proves \eqref{eq:epoch_proximity0}. Replacing $s$ by $s+1$  proves \eqref{eq:epoch_proximity1}.
\smallskip
Furthermore, by the definition of the center $\bar{x}_s$ in \eqref{eqn:center}, it holds that
\begin{equation}\label{eqn:resursive}
    \begin{aligned}
         r_s  \|x_s^* - \bar x_s\|
&=  r_s  \left \|x^*_s - \tfrac{ r_{s - 1}}{ r_s } \bar x_{s - 1} - x_{s - 1} + \tfrac{ r_{s - 1}}{ r_s } x_{s - 1} \right \| \\
&\leq  r_s \|x_s^* - x_{s - 1}\| +  r_{s - 1}\|x_{s - 1} - \bar x_{s - 1}\| \\
&\leq  r_s \|x_s^* - x_{s - 1}\| +  r_{s - 1}\|x_{s - 1} - x^*_{s - 1}\| +  r_{s - 1} \|x^*_{s - 1} - \bar x_{s - 1}\| \\
&\!\!\!\overset{\eqref{eq:epoch_proximity1}}{\leq} ( r_s  +  r_{s - 1}) \|x^* _{s - 1} - x_{s - 1}\| +  r_{s - 1} \|x^* _{s - 1} - \bar x_{s - 1}\|.
    \end{aligned}
\end{equation}
Iteratively applying \eqref{eqn:resursive} yields the conclusion \eqref{eq:epoch_proximity2}.
\smallskip

We prove \eqref{eqn:induction-traingle} by induction.
The case $s=1$ follows from the following.
\begin{align*}
    \|x_1^* - x^*\|  \leq \|x_1^* - x_{0}\| + \|x_{0} - x^*\| \overset{\eqref{eq:epoch_proximity0}}{\leq} 2 \|x_{0} - x^*\|.
\end{align*}
Suppose \eqref{eqn:induction-traingle} holds for $s-1$. Then, for index $s$, it holds that
\begin{align*}
    \|x_s^* - x^*\|  \leq \|x_s^* - x_{s - 1}\| + \|x_{s - 1} - x_{s - 1}^*\| + \|x_{s - 1}^* - x^*\| \overset{\eqref{eq:epoch_proximity1}}{\leq} 2 \|x_{s - 1} - x^*_{s - 1}\| + \|x_{s - 1}^* - x^*\|.
\end{align*}
Combining this with \eqref{eqn:induction-traingle},
we have $\|x_{s}^* - x^*\| \leq 2 \tsum_{k= 0}^{s - 1} \|x_{k} - x_k^*\|$.

\end{proof}
\vgap

\subsection{Deterministic variational inequalities}

In this subsection, we consider the AR method for solving deterministic VIs with $\mu=0$, where convergence is measured in terms of the residual. Although we focus on the monotone case, the AR method also applies to strongly monotone VIs. The stochastic results developed in later sections cover both the monotone and strongly monotone cases.

We make  the following assumption regarding the convergence of the subroutine $\mathcal A=\mathcal A (F,  r_s , \bar{x}_{s}, x_{s-1})$.
\begin{assumption}\label{assumption_OE}
The subroutine $\mathcal A (F,  r_s , \bar{x}_{s}, x_{s-1})$ for solving the regularized problem \eqref{eq:sub-problem}
exhibits the following performance guarantee after $k$ iterations, for all $k\geq 1,$
\begin{align}
    \|x_{s,k}-x_s^*\|^2&\leq \tfrac{c_\mathcal{A}(L+ r_s )^2}{r_s^2k^2}\|x_{s-1}-x_s^*\|^2,\label{eqn:distance}\\
    \res_{F_s}(x_{s,k})&\leq{c_\mathcal{A}(L+ r_s )}\|x_{s-1}-x_s^*\|,\label{eqn:residual}
\end{align}
where $c_\mathcal{A}$ is a constant depending only on the subroutine $\mathcal A$.

\end{assumption}
The above assumption is satisfied by many algorithms. For example,
operator extrapolation (OE) \cite[Theorem~3.3]{kotsalis2022simple}
satisfies it with $c_{\mathcal A}=64$.  In particular, the residual
guarantee in \eqref{eqn:residual} is mild: it requires only a relatively
slow residual convergence rate, which depends on the initial
distance $\|x_{S-1}-x_S^*\|$ at epoch $S$.

We are now ready to state the convergence guarantee of AR for computing a
strong solution of the monotone VI~\eqref{VIP} under the Lipschitz condition
\eqref{eq:Lipschitz} and the monotonicity condition
\eqref{strongly_monotone} with $\mu=0$.

\begin{theorem}\label{the:sublinear_AR}
Assume that $0<\varepsilon \le LD_0$, where $D_0\geq \textnormal{dist}(x_0, X^*)$. In \autoref{alg:ARVI}, suppose 
\begin{equation}\label{eq:deterministic-S}
S = 1+\left \lceil\log_2  \tfrac{\tilde{c}_\mathcal{A}LD_0}{\varepsilon}\right \rceil,\quad
 r_s  = \tfrac{2^{s-1} \varepsilon}{\tilde{c}_\mathcal{A}D_0},\quad   N_s = \left\lceil\tfrac{4\sqrt{c_\mathcal{A}}(L+ r_s )}{r_s }\right\rceil,
\end{equation}
where $\tilde{c}_\mathcal{A}=\max\{8c_\mathcal{A}+1, 10/3\}.$
Then, \autoref{alg:ARVI} computes a point $x_S$ such that $\res_F(x_S) \leq \varepsilon$ after
\begin{equation}
 \tfrac{8\tilde{c}_\mathcal{A}\sqrt{c_\mathcal{A}}LD_0}{\varepsilon}+(4\sqrt{c_\mathcal{A}}+1)(S+1)=\mathcal{O}(\tfrac{LD_0}{\varepsilon})
\end{equation}
evaluations of $F$.
\end{theorem}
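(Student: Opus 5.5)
The plan is to combine the residual decomposition \eqref{eqn:res} with Proposition~\ref{lemma_ARVI_proximity} and Assumption~\ref{assumption_OE}. The first step is to fix $x^*\in X^*$ with $\|x_0-x^*\|=\operatorname{dist}(x_0,X^*)\le D_0$. We then show that the choice of $N_s$ produces geometric contraction of the inner distances. Set $\delta_s\coloneqq\|x_s-x_s^*\|$ with $\delta_0\coloneqq\|x_0-x^*\|$. By \eqref{eqn:distance} with $k=N_s\ge 4\sqrt{c_\mathcal A}(L+r_s)/r_s$, followed by \eqref{eq:epoch_proximity0}--\eqref{eq:epoch_proximity1}, we get
\[
\delta_s\le \tfrac{\sqrt{c_\mathcal A}(L+r_s)}{r_sN_s}\|x_{s-1}-x_s^*\|\le \tfrac14\delta_{s-1}.
\]
Hence $\delta_s\le 4^{-s}D_0$. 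This requires no knowledge beyond the first three inequalities of the proposition.

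Next we bound each term in \eqref{eqn:res}. For the subproblem residual, \eqref{eqn:residual} together with \eqref{eq:epoch_proximity1} gives $\res_{F_S}(x_S)\le c_\mathcal A(L+r_S)\delta_{S-1}\le c_\mathcal A(L+r_S)4^{1-S}D_0$. The second term satisfies $r_S\delta_S\le r_S4^{-S}D_0$. For the third term, \eqref{eq:epoch_proximity2} gives
\[
r_S\|x_S^*-\bar x_S\|\le\tsum_{i=1}^S(r_i+r_{i-1})4^{-(i-1)}D_0\le \tfrac{3}{2}\tsum_{i=1}^S r_i4^{1-i}D_0 .
\]
With $r_i=2^{i-1}\varepsilon/(\tilde c_\mathcal AD_0)$, this sum is at most $\tfrac32\cdot\tfrac{\varepsilon}{\tilde c_\mathcal A}\tsum_{i\ge1}2^{1-i}=3\varepsilon/\tilde c_\mathcal A\le 0.9\varepsilon$, using $\tilde c_\mathcal A\ge10/3$. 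The terms involving $r_S$ become $r_S4^{-S}D_0=2^{-S-1}\varepsilon/\tilde c_\mathcal A$, which is tiny. For the $L$-part, the choice of $S$ gives $2^{S-1}\ge \tilde c_\mathcal A LD_0/\varepsilon$, so $c_\mathcal AL\,4^{1-S}D_0\le c_\mathcal A\varepsilon\,2^{1-S}/\tilde c_\mathcal A$. Since $2^{1-S}\le \varepsilon/(\tilde c_\mathcal A LD_0)\le 1/\tilde c_\mathcal A$ when $\varepsilon\le LD_0$, and $\tilde c_\mathcal A\ge 8c_\mathcal A+1$, this is small. The remaining piece $c_\mathcal Ar_S4^{1-S}D_0=c_\mathcal A\varepsilon\,2^{1-S}\cdot\tfrac{2}{\tilde c_\mathcal A}\cdot\tfrac12$ is similarly $O(c_\mathcal A\varepsilon/\tilde c_\mathcal A\cdot 2^{-S})$. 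Summing all contributions should give at most $\varepsilon$.

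The main obstacle is this constant bookkeeping. The constants $8c_\mathcal A+1$ and $10/3$ are tuned so that the third term consumes at most $9\varepsilon/10$ (or a similar fraction) and the rest fits in the remainder. I would first try loose bounds and, only if they overshoot, refine the geometric sums by keeping the exact $(r_i+r_{i-1})=\tfrac32 r_i$ factor and the exact powers of $2$ and $4$.

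Finally, we count evaluations. Assuming each iteration of $\mathcal A$ costs $O(1)$ evaluations (the count is normalized to one per iteration), we have
\[
\tsum_{s=1}^S N_s\le\tsum_s\Big(\tfrac{4\sqrt{c_\mathcal A}L}{r_s}+4\sqrt{c_\mathcal A}+1\Big)\le 4\sqrt{c_\mathcal A}L\cdot\tfrac{\tilde c_\mathcal AD_0}{\varepsilon}\tsum_{s\ge1}2^{1-s}+(4\sqrt{c_\mathcal A}+1)S .
\]
This equals the stated $\tfrac{8\tilde c_\mathcal A\sqrt{c_\mathcal A}LD_0}{\varepsilon}+(4\sqrt{c_\mathcal A}+1)(S+1)$ bound, and the bound is $O(LD_0/\varepsilon)$ since $S=O(\log(LD_0/\varepsilon))\le O(LD_0/\varepsilon)$.
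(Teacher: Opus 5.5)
Your proposal is correct and follows essentially the same route as the paper. The common steps are the contraction $\|x_s-x_s^*\|\le 4^{-s}D_0$ obtained from \eqref{eqn:distance} and Proposition~\ref{lemma_ARVI_proximity}, the decomposition \eqref{eqn:res} combined with \eqref{eqn:residual} and \eqref{eq:epoch_proximity2}, and the same geometric sum for $\sum_s N_s$. Your one deviation is the looser bound $3\varepsilon/\tilde c_{\mathcal A}\le 0.9\varepsilon$ on the center term, where the paper keeps $r_1+r_0=r_1$ and gets $5\varepsilon/(2\tilde c_{\mathcal A})$. This still closes, but you should state the final sum explicitly rather than ``this is small'': the remaining terms total at most $(8c_{\mathcal A}+1)2^{-S-1}\varepsilon/\tilde c_{\mathcal A}\le 2^{-S-1}\varepsilon\le\varepsilon/16$, because $S\ge 3$ (from $\tilde c_{\mathcal A}\ge 10/3>2$ and $\varepsilon\le LD_0$).
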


\begin{proof}
We first show that for each $s \in [S]$,
\begin{equation}\label{eq:deterministic-inductive-hypothesis}
\|x_s - x_s^*\|^2 \leq \tfrac{D_0^2}{16^s},
\end{equation}
where $ x_s=x_{s, N_s}$ for all $s\in[S]$.
\smallskip

When $s=1$, by \autoref{assumption_OE} and \autoref{lemma_ARVI_proximity}, it holds that
 \begin{equation*}
     \begin{aligned}
\|x_1-x_1^*\|^2&\overset{\eqref{eqn:distance}}{\leq} \tfrac{c_{\mathcal{A}}(L+ r_1 )^2\|x_{0}-x_1^*\|^2}{ r_1 ^2 N_1^2}\overset{\eqref{eq:epoch_proximity0}}{\leq}\tfrac{c_{\mathcal{A}}(L+ r_1 )^2D_0^2}{ r_1 ^2 N_1^2}\overset{\text{(i)}}{\leq}\tfrac{D_0^2}{  {16}},
     \end{aligned}
 \end{equation*}
 where in (i), we used the condition on the inner iteration number $N_s$ in \eqref{eq:deterministic-S}.

Let $s\in\{2,\ldots,S\}$ and suppose that the claim holds
at epoch $s-1$. Then
 by \autoref{assumption_OE} and \autoref{lemma_ARVI_proximity}, it holds that
\begin{align}\label{eqn:induction-s-1}
    \|x_s - x_s^*\|^2 &\overset{\eqref{eqn:distance}}{\leq} \tfrac{c_\mathcal{A}(L+ r_s )^2\|x_{s-1}-x_s^*\|^2}{ r_s ^2N_s^2} \overset{\eqref{eq:epoch_proximity1}}{\leq} \tfrac{c_\mathcal{A}(L+ r_s )^2\|x_{s-1}-x_{s-1}^*\|^2}{ r_s ^2N_s^2} \overset{\eqref{eq:deterministic-S}}{\leq} \tfrac{\|x_{s - 1} - x_{s-1}^*\|^2}{16}{\leq} \tfrac{D_0^2}{16^s}.
\end{align}
Hence, \eqref{eq:deterministic-inductive-hypothesis} holds.
\smallskip

By the residual decomposition
in \eqref{eqn:res} and \autoref{assumption_OE}, it holds that
\begin{equation}\label{eqn:residual-decom}
    \begin{aligned}
         \res_F(x_S)
&\,\overset{\eqref{eqn:res} }{\leq} \res_{F_S}(x_S)  +  r_S \|x_S -x_S^*\|+ r_S \|x_S^*- \bar x_S\|\\
&\overset{\eqref{eqn:residual}}{\leq}{c_\mathcal{A}(L+ r_S )}\|x_{S - 1} - x_{S}^*\| +  r_S \|x_S -x_S^*\|+ r_S \|x_S^*- \bar x_S\|\\
&\,\,\,\overset{\text{(ii)}}{\leq} {c_\mathcal{A}(L+ r_S )}\|x_{S - 1} - x_{S-1}^*\|  +  r_S \|x_S -x_S^*\|+\tsum_{s = 1}^S ( r_s  +  r_{s - 1}) \|x_{s - 1} - x_{s - 1}^*\|,
    \end{aligned}
\end{equation}
where in (ii), we used \autoref{lemma_ARVI_proximity}.
Substituting  \eqref{eq:deterministic-inductive-hypothesis} into \eqref{eqn:residual-decom}, we obtain
\begin{equation}\label{eqn:residual-decom'}
    \begin{aligned}
        \res_F(x_S)
&\leq \tfrac{c_\mathcal{A}(L+ r_S ) D_0}{4^{S-1}} + \tsum_{s = 1}^S \tfrac{( r_s  +  r_{s - 1})D_0}{4^{s - 1}}+ \tfrac{ r_S  D_0}{4^S}.
    \end{aligned}
\end{equation}
Observe that for the first term in \eqref{eqn:residual-decom'}, we have
\begin{equation*}
    \begin{aligned}
        \tfrac{c_\mathcal{A}(L+ r_S ) D_0}{4^{S-1}}\overset{\textnormal{(iii)}}{\leq}  \tfrac{2c_\mathcal{A} r_S  D_0}{4^{S-1}},
    \end{aligned}
\end{equation*}
where in (iii), we used $ r_S \geq L$ due to the choice of $S$ and $ r_S $ in \eqref{eq:deterministic-S}. Combining it with  \eqref{eqn:residual-decom'}, we obtain
\begin{equation*}
    \begin{aligned}
        \res_F(x_S)
&\leq \tfrac{2c_\mathcal{A} r_S  D_0}{4^{S-1}}+ \tsum_{s = 1}^S \tfrac{( r_s  +  r_{s - 1})D_0}{4^{s - 1}}+ \tfrac{ r_S  D_0}{4^S}=\left(8c_\mathcal{A}+1\right)\tfrac{ r_S  D_0}{4^{S}} + \tsum_{s = 1}^S \tfrac{( r_s  +  r_{s - 1})D_0}{4^{s - 1}} \leq\tfrac{\tilde{c}_\mathcal{A} r_S  D_0}{4^{S}} + \tfrac{5\varepsilon}{2\tilde{c}_{\mathcal{A}}}\leq \varepsilon.
    \end{aligned}
\end{equation*}
The total iteration complexity is bounded as
\begin{equation*}
    \tsum_{s = 1}^{S} N_s = \tsum_{s = 1}^{S}\left\lceil\tfrac{4\sqrt{c_\mathcal{A}}(L+ r_s )}{r_s }\right\rceil \leq\tfrac{8\tilde{c}_\mathcal{A}\sqrt{c_\mathcal{A}}LD_0}{\varepsilon}+(4\sqrt{c_\mathcal{A}}+1)(S+1).
\end{equation*}
This concludes the proof.
\end{proof}
\smallskip

\autoref{the:sublinear_AR} highlights the role of accumulative
regularization in improving the total complexity of finding strong
solutions. As the regularization strength $ r_s $ increases
geometrically across epochs, the regularized subproblems become better
conditioned, allowing the epoch lengths $N_s$ to decrease
geometrically until they reach constant order. Consequently, the
subproblems in later epochs need only be solved coarsely. Nevertheless,
the overall method requires only
$\mathcal{O}(LD_0/\varepsilon)=\mathcal{O}(1/\varepsilon)$ operator
evaluations to compute a strong solution $x_S$ satisfying
$\res_F(x_S)\leq\varepsilon$ for a monotone VI \eqref{VIP} satisfying
\eqref{eq:Lipschitz} and \eqref{strongly_monotone} with $\mu= 0$.

\subsection{Stochastic variational inequalities under uniform noise}\label{sec:Stochastic Variational Inequalities under Uniform Noise}

In this section, we study the problem of finding strong solutions of the
monotone VI~\eqref{VIP}, where $F$ satisfies \eqref{eq:Lipschitz} and
\eqref{strongly_monotone} with $\mu\geq 0$. We assume that $F$ is accessible only
through noisy evaluations returned by successive calls to a stochastic oracle
$(\mathcal{SO})$. Our goal is to bound the total number of oracle calls
required to produce an iterate $x_S$ satisfying
$\mathbb{E}[\res_F(x_S)]\leq\varepsilon$ for a prescribed accuracy
$\varepsilon>0$.

We assume that all data $\{\zeta_{k}\}$ are i.i.d random variables. Throughout this section, we impose the uniformly bounded noise assumption \eqref{eq:intro-uniform-assumption} and unbiased estimator assumption \eqref{eq:fast-decreasing-bias-o}.

We now supply \autoref{alg:ARVI} with a stochastic subroutine $\mathcal{A}$ for approximately solving the regularized problem \eqref{eq:sub-problem}. In the stochastic case,
due to the presence of noise, using mini-batches of samples will play an important role in deriving accelerated convergence rates.
Hence,
at iteration $k$ of epoch $s$, we construct a mini-batch estimator of $F$ from a block of $m_{s,k}$ i.i.d.\ samples, denoted by
$\{\zeta_{s,k}^{\,i}\}_{i=1}^{m_{s,k}}$. We fix the batch size within each epoch, setting $m_{s,k}\equiv m_s$; together with the i.i.d.\ sampling, this ensures that the unbiasedness and variance bounds are uniform across inner iterations, even though the mini-batch realization changes with $(s,k)$. Hence, for brevity, we suppress the batch indices and write $\tilde{F}_{s,k}(x_{s,k})$.
Specifically,
given the point $x_{s,k}$, the mini-batch estimator is defined as
\begin{equation}\label{eqn:minibatch}
\tilde{F}_{s,k}(x_{s,k}) \coloneqq \tfrac{1}{m_s} \tsum_{i = 1}^{m_s}\tilde{F}(x_{s,k},\zeta_{s, k}^{\,i}).
\end{equation}

We let $\mathcal{F}_{s,k}$ denote the filtration generated by the global trajectory up to, but not including, the samples used to form $\tilde{F}_{s,k}$. Accordingly, we assume that \eqref{eq:fast-decreasing-bias-o} and \eqref{eq:intro-uniform-assumption} hold with $\mathcal{F}_k$ and $\zeta_{k}$ replaced by their double-indexed counterparts $\mathcal{F}_{s,k}$ and $\zeta_{s,k}^{\,i}$. By the definition of $\mathcal{F}_{s,k}$, both $x_{s,k}$ and $x_s^*$ are $\mathcal{F}_{s,k}$-measurable.
Then, from the construction of the mini-batch estimator and assumptions \eqref{eq:fast-decreasing-bias-o} and \eqref{eq:intro-uniform-assumption},
it holds that
\begin{align}
    \mathbb E[\tilde{F}_{s,k}(x_{s,k}) \mid \mathcal{F}_{s,k}] = F(x_{s,k}), \quad \mathbb E\left[\|\tilde{F}_{s,k}(x_{s,k})-F(x_{s,k})\|^2 \mid \mathcal{F}_{s,k}\right] \leq \tfrac{ \sigma^2}{m_s}, \quad \textnormal{a.s.}
\end{align}

We make the following assumption regarding the convergence of the subroutine $\mathcal A=\mathcal A (\mathcal{SO},  r_s , \bar{x}_{s}, x_{s-1}, m_s)$.
\begin{assumption}\label{assumption_inner_sto}
The subroutine $\mathcal A (\mathcal{SO},  r_s , \bar{x}_{s}, x_{s-1}, m_s)$ for solving the regularized problem \eqref{eq:sub-problem}
exhibits the following performance guarantee after $k$ iterations:
\begin{align}
    \mathbb{E}\left[\|x_{s,k}-x_s^*\|^2\,\middle|\,\mathcal{F}_{s,0}\right]&\leq \tfrac{c_{\mathcal{A}}(L+ r_s )^2}{ (\mu+r_s) ^2k^2}\|x_{s-1}-x_s^*\|^2 +\tfrac{c_{\mathcal{A}} \sigma^2}{km_s (\mu+r_s) ^2},\label{eqn:distance-s}\\
 \sqrt{\mathbb{E}\left[\res_{F_s}(x_{s,k})^2\,\middle|\,\mathcal{F}_{s,0}\right]}&\leq {c_{\mathcal{A}}(L+ r_s )}\|x_{s-1}-x_s^*\|+\tfrac{c_{\mathcal{A}} \sigma\sqrt{k}}{\sqrt{m_s}},\label{eqn:residual-s}
\end{align}
where $c_{\mathcal{A}}$ is a constant depending only on the subroutine $\mathcal A$.
\end{assumption}
The above assumption can be readily verified for existing algorithms by
slightly modifying their analyses. For example, stochastic operator
extrapolation (SOE) \cite{kotsalis2022simple} satisfies it with
$c_{\mathcal A}=64$. The residual guarantee in \eqref{eqn:residual-s} is relatively mild and
easy to satisfy, as its deterministic
term is independent of $k$, while its stochastic term is not required to
decrease with $k$. We later establish such a guarantee for
SOE under the more general state-dependent noise model and omit its
derivation here for simplicity.

 \vgap
We are now ready to state the convergence guarantee of AR for computing a
strong solution of the monotone VI~\eqref{VIP}, where $F$ satisfies
\eqref{eq:Lipschitz} and \eqref{strongly_monotone} with $\mu\geq 0$. 

\begin{theorem}\label{thm:main-unbiased}
Assume that $0<\varepsilon \le LD_0$, where $D_0\geq \textnormal{dist}(x_0, X^*)$. In \autoref{alg:ARVI}, suppose 
\begin{align}\label{eqn:regularization-exp-unbiased}
         S&=\left\lceil
2 + \log_4\!\left(\tfrac{16(16c_{\mathcal A}+4)LD_0}{\varepsilon}\, S_0\log(4S_0)\right)
\right\rceil,\,\,  r_s =\tfrac{4^{s-1}\varepsilon}{(16c_{\mathcal A}+4)D_0s\log S},\,\,
\end{align}
where $S_0$ is defined as
\begin{align}\label{eqn:def-S0-unbiased}
S_0 \coloneqq \log_4\!\left(\tfrac{16(16c_{\mathcal A}+4)LD_0}{\varepsilon}\right).
\end{align}
For each epoch, the epoch length and the batch size satisfy 
\begin{align}\label{eqn:batch-epoch}
   N_s&={\left\lceil\tfrac{4\sqrt{2c_{\mathcal{A}}}(L+ r_s )}{r_s }\right\rceil},\,\,
m_s=\left\lceil\tfrac{16^{s-1}N_s \sigma^2}{(L+ r_s )^2 D_0^2}+1\right\rceil.
\end{align}
Then, at epoch $S$, \autoref{alg:ARVI} computes an approximate solution $x_S$ such that $\sqrt{\mathbb{E}\left[\res_F(x_S)^2\right]}
\le\varepsilon$ after
\begin{align}\label{eqn:final-sample-complexity}
   \widetilde{\mathcal{O}}\left(\tfrac{LD_0}{\varepsilon}+\tfrac{ \sigma^2}{\varepsilon^2}(\log\tfrac{LD_0}{\varepsilon})^3\right)
\end{align}
calls to the $\mathcal{SO}$, where $\widetilde{\mathcal{O}}$ hides factors polynomial in $\log\log(\tfrac{LD_0}{\varepsilon})$.
\end{theorem}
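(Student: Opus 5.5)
The plan mirrors the proof of \autoref{the:sublinear_AR}, with squared distances replaced by conditional second moments. Set $\delta_s^2\coloneqq\mathbb{E}[\|x_s-x_s^*\|^2]$. The first step is an induction showing $\delta_s^2\le 2D_0^2/16^{s}$, or a bound of this form up to constants. For epoch $s$, take \eqref{eqn:distance-s} with $\mu\ge 0$ dropped, since $\mu+r_s\ge r_s$. Use \eqref{eq:epoch_proximity1} (and \eqref{eq:epoch_proximity0} when $s=1$), which holds pathwise, to replace $\|x_{s-1}-x_s^*\|$ by $\|x_{s-1}-x_{s-1}^*\|$. Then take total expectations via the tower property, which is legitimate because $x_{s-1}$ and $x_s^*$ are $\mathcal{F}_{s,0}$-measurable. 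The choice $N_s\ge 4\sqrt{2c_{\mathcal A}}(L+r_s)/r_s$ makes the deterministic part at most $\delta_{s-1}^2/32$. The choice of $m_s$ gives
\[
\tfrac{c_{\mathcal A}\sigma^2}{N_s m_s r_s^2}\le \tfrac{c_{\mathcal A}(L+r_s)^2D_0^2}{16^{s-1}N_s^2r_s^2}\le \tfrac{D_0^2}{32\cdot 16^{s-1}},
\]
so that $\delta_s^2\le \delta_{s-1}^2/32+D_0^2/(32\cdot16^{s-1})$, which closes the induction with geometric decay $\delta_s\lesssim D_0/4^{s}$.

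The second step is the residual bound. Apply the decomposition \eqref{eqn:res} with $\sqrt{\mathbb{E}[\cdot^2]}$, using Minkowski's inequality to split the sum. The term $\res_{F_S}(x_S)$ is handled by \eqref{eqn:residual-s} after the tower property. Its deterministic part is $c_{\mathcal A}(L+r_S)\delta_{S-1}$. Its stochastic part is $c_{\mathcal A}\sigma\sqrt{N_S/m_S}\le c_{\mathcal A}(L+r_S)D_0/4^{S-1}$, again by the choice of $m_S$. The term $r_S\|x_S^*-\bar x_S\|$ is bounded pathwise by \eqref{eq:epoch_proximity2} and then by Minkowski, giving $\sum_{s}(r_s+r_{s-1})\delta_{s-1}\lesssim \sum_s r_s D_0/4^{s-1}$.

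The regularization parameters are tuned to make these terms small. Since $r_s/4^{s-1}=\varepsilon/((16c_{\mathcal A}+4)D_0 s\log S)$, the sum becomes $\lesssim \varepsilon\sum_{s\le S}1/(s\log S)/(16c_{\mathcal A}+4)$. The harmonic sum is about $\log S$, so the $\log S$ in $r_s$ cancels it and this contribution is $O(\varepsilon)$ with a small constant. The final-epoch terms are of order $c_{\mathcal A}(L+r_S)D_0/4^{S}$. The definition of $S$ through $S_0\log(4S_0)$ guarantees $r_S\ge L$, using $S\log S\lesssim S_0\log(4S_0)$. These terms then reduce to $\lesssim c_{\mathcal A}r_SD_0/4^{S}=c_{\mathcal A}\varepsilon/(4(16c_{\mathcal A}+4)S\log S)$, which is at most a fraction of $\varepsilon$. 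The constant bookkeeping, checking $r_S\ge L$ from the implicit definition of $S$ while keeping the sum at most $\varepsilon$, is the fiddliest part. I would first verify $4^{S-2}\ge 16(16c_{\mathcal A}+4)LD_0S_0\log(4S_0)/\varepsilon$ and $S\le 2S_0+O(\log S_0)$.

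The third step, which I expect to be the main obstacle, is the sample count $\sum_s N_sm_s\le\sum_s N_s+\sum_s 16^{s-1}N_s^2\sigma^2/((L+r_s)^2D_0^2)+\sum_s N_s$. The terms $\sum_s N_s$ give $O(\sum_s L/r_s+S)=O(LD_0\log S/\varepsilon+S)$, which is $\widetilde{\mathcal O}(LD_0/\varepsilon)$. For the middle sum, $N_s^2/(L+r_s)^2\lesssim c_{\mathcal A}/r_s^2$, so each term is
\[
\lesssim \tfrac{16^{s-1}\sigma^2}{r_s^2D_0^2}=\tfrac{(16c_{\mathcal A}+4)^2\sigma^2s^2(\log S)^2}{\varepsilon^2}.
\]
Summing over $s\le S$ gives $\sigma^2S^3(\log S)^2/\varepsilon^2$. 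Since $S=O(\log(LD_0/\varepsilon))$, this is $\widetilde{\mathcal O}(\sigma^2(\log\tfrac{LD_0}{\varepsilon})^3/\varepsilon^2)$, where the $(\log S)^2$ factor is polynomial in $\log\log$. The point to watch is that the ceiling in $N_s$ does not spoil the bound $N_s\lesssim (L+r_s)/r_s$ in the late epochs, where $r_s\ge L$. There $N_s=O(1)$ and $(L+r_s)\approx r_s$, so the estimate persists. This gives \eqref{eqn:final-sample-complexity}.
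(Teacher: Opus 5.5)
Your proposal is correct and takes essentially the same approach as the paper. It uses the same induction $\mathbb{E}[\|x_s-x_s^*\|^2]\le D_0^2/16^s$; your recursion $\delta_s^2\le \delta_{s-1}^2/32+D_0^2/(32\cdot 16^{s-1})$ closes exactly at this bound, so no extra constant is needed. The remaining steps also match the paper: the residual decomposition with Minkowski's inequality, the check $r_S\ge L$ via $4^{S-2}$ and $S\le 4S_0$, the harmonic-sum cancellation of $\log S$, and the same summation of $N_sm_s$, which gives the paper's bound $\mathcal{O}\bigl(\tfrac{LD_0}{\varepsilon}\log S+\tfrac{\sigma^2}{\varepsilon^2}S^3(\log S)^2\bigr)$.
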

\begin{proof}
We first show that for each $s \in [S]$,
\begin{equation}\label{eq:stochastic-inductive-hypothesis}
    \mathbb{E}[\|x_s - x_s^*\|^2] \leq \tfrac{D_0^2}{16^s},
\end{equation}
where $x_s=x_{s, N_s}.$
\smallskip

When $s=1$, by \autoref{assumption_inner_sto} and \autoref{lemma_ARVI_proximity}, it holds that
 \begin{equation*}
     \begin{aligned}
          \mathbb{E}[\|x_1-x_1^*\|^2] &\overset{\text{(i)}}{\leq} \tfrac{c_{\mathcal{A}}(L+ r_1 )^2\|x_{0}-x_1^*\|^2}{ (\mu+r_1) ^2 N_1^2}+\tfrac{c_{\mathcal{A}}(L+ r_1 )^2D_0^2}{ (\mu+r_1) ^2 N_1^2}\overset{\eqref{eq:epoch_proximity0}}{\leq}\tfrac{2c_{\mathcal{A}}(L+ r_1 )^2D_0^2}{ (\mu+r_1) ^2 N_1^2}\overset{\text{(ii)}}{\leq}\tfrac{D_0^2}{  {16}},
     \end{aligned}
 \end{equation*}
 where in (i), we used \eqref{eqn:distance-s} from \autoref{assumption_inner_sto} and the condition on the batch size $m_s$
 in \eqref{eqn:batch-epoch}; in (ii), we used the condition on the inner iteration number $N_1$ in \eqref{eqn:batch-epoch} and $\mu\geq 0$.

 Suppose that \eqref{eq:stochastic-inductive-hypothesis} holds at epoch $s-1$. Then, for epoch $s$, similarly, by \autoref{assumption_inner_sto}, \autoref{lemma_ARVI_proximity}, and the condition on the batch size $m_s$
 in \eqref{eqn:batch-epoch}, it holds that
 \begin{equation*}
     \begin{aligned}
          \mathbb{E}[\|x_s-x_s^*\|^2] &\leq\tfrac{c_{\mathcal{A}}(L+ r_s )^2\mathbb{E}[\|x_{s-1}-x_{s-1}^*\|^2]}{N_s^2 (\mu+r_s) ^2}+\tfrac{1}{N_s^2 (\mu+r_s)^2}\cdot\tfrac{c_{\mathcal{A}}(L+ r_s )^2D_0^2}{  {16^{s-1}}}\\
          &\overset{\text{(iii)}}{\leq}\tfrac{c_{\mathcal{A}}(L+ r_s )^2D_0^2}{  {16^{s-1}}N_s^2 (\mu+r_s)^2}+\tfrac{1}{N_s^2 (\mu+r_s) ^2}\cdot\tfrac{c_{\mathcal{A}}(L+ r_s )^2D_0^2}{  {16^{s-1}}}\overset{\text{(iv)}}{\leq}\tfrac{D_0^2}{  16^{s}},
     \end{aligned}
 \end{equation*}
 where in (iii), we used the induction hypothesis; in (iv), we again used the condition on the inner iteration number $N_s$ in \eqref{eqn:batch-epoch} and $\mu\geq 0$.
 \smallskip

By the residual decomposition in \eqref{eqn:res}, it follows that
\begin{equation}\label{eqn:residual-decom-s}
\begin{aligned}
\sqrt{\mathbb{E}\left[\res_F(x_S)^2\right]}
&\overset{\eqref{eqn:res}}{\leq}
\sqrt{\mathbb{E}\left[\left(\res_{F_S}(x_S)+r_S\|x_S-x_S^*\|+r_S\|x_S^*-\bar x_S\|\right)^2\right]}\\
&\overset{\textnormal{(iv)}}{\leq}
\sqrt{\mathbb{E}\left[\res_{F_S}(x_S)^2\right]}
+r_S\sqrt{\mathbb{E}\left[\|x_S-x_S^*\|^2\right]}
+r_S\sqrt{\mathbb{E}\left[\|x_S^*-\bar x_S\|^2\right]},
\end{aligned}
\end{equation}
where in (iv), we used Minkowski's inequality. Furthermore, by the \eqref{eqn:residual-s} in \autoref{assumption_inner_sto}, we have
\begin{equation}\label{eqn:regularized-square}
\begin{aligned}
\sqrt{\mathbb{E}\left[\res_{F_S}(x_S)^2\right]}
&\overset{\eqref{eqn:residual-s}}{\leq}
\sqrt{\mathbb{E}\left[\left(c_{\mathcal{A}}(L+r_S)\|x_{S-1}-x_S^*\|+\tfrac{c_{\mathcal{A}}\sigma\sqrt{N_S}}{\sqrt{m_S}}\right)^2\right]}\\
&\,\,\overset{\textnormal{(v)}}{\leq}
c_{\mathcal{A}}(L+r_S)\sqrt{\mathbb{E}[\|x_{S-1}-x_{S-1}^*\|^2]}
+\tfrac{c_{\mathcal{A}}\sigma\sqrt{N_S}}{\sqrt{m_S}},
\end{aligned}
\end{equation}
where in (v), we used \autoref{lemma_ARVI_proximity} and Minkowski's inequality. Similarly, by \autoref{lemma_ARVI_proximity} and Minkowski's inequality, we have
\begin{equation}\label{eqn:regularized-sol-center}
r_S\sqrt{\mathbb{E}\left[\|x_S^*-\bar x_S\|^2\right]}
\leq
\tsum_{s=1}^S(r_s+r_{s-1})\sqrt{\mathbb{E}[\|x_{s-1}-x_{s-1}^*\|^2]}.
\end{equation}
Substituting \eqref{eq:stochastic-inductive-hypothesis}, \eqref{eqn:regularized-square}, \eqref{eqn:regularized-sol-center} into \eqref{eqn:residual-decom-s}, we obtain
\begin{equation}\label{eqn:residual-decom'-s}
    \begin{aligned}
          \sqrt{\mathbb{E}\left[\res_F(x_S)^2\right]}&\leq \tfrac{c_{\mathcal{A}}(L+ r_S ) D_0}{4^{S-1}}+\tfrac{c_{\mathcal{A}}\sigma\sqrt{N_S}}{\sqrt{m_S}}  + \tsum_{s = 1}^S \tfrac{( r_s  +  r_{s - 1})D_0}{4^{s - 1}}+ \tfrac{ r_S  D_0}{4^S}.
    \end{aligned}
\end{equation}
Observe that for the first term in \eqref{eqn:residual-decom'-s}, we have
\begin{equation}\label{eqn:first-term}
    \begin{aligned}
        \tfrac{c_{\mathcal{A}}(L+ r_S ) D_0}{4^{S-1}}\overset{\textnormal{(vi)}}{\leq}  \tfrac{2c_{\mathcal{A}} r_S  D_0}{4^{S-1}}.
    \end{aligned}
\end{equation}
Here in (vi),  we used $ r_S \geq L$, which follows from the choice of $S$ and $ r_S $ in \eqref{eqn:regularization-exp-unbiased} and will be established at the end of the proof.
\smallskip
For the second term in \eqref{eqn:residual-decom'-s}, by the choice of the mini-batch size $m_S$ in \eqref{eqn:batch-epoch}, we obtain
\begin{align}\label{eqn:second-term}
    \tfrac{c_{\mathcal{A}}\sigma\sqrt{N_S}}{\sqrt{m_S}} \leq   \tfrac{c_{\mathcal{A}}(L+ r_S ) D_0}{4^{S-1}} \overset{\eqref{eqn:first-term}}{\leq}\tfrac{2c_{\mathcal{A}} r_S  D_0}{4^{S-1}}.
\end{align}
Substituting \eqref{eqn:first-term} and \eqref{eqn:second-term} into \eqref{eqn:residual-decom'-s}, we obtain
\begin{equation*}
    \begin{aligned}
   \sqrt{\mathbb{E}\left[\res_F(x_S)^2\right]}
&\leq\tfrac{c_{\mathcal{A}} r_S  D_0}{4^{S-2}}+ \tsum_{s = 1}^S \tfrac{( r_s  +  r_{s - 1})D_0}{4^{s - 1}}+ \tfrac{ r_S  D_0}{4^S}\leq\tfrac{\left(16c_{\mathcal{A}}+1\right) r_S  D_0}{4^{S}}+ {\tfrac{5D_0}{4}}\tsum_{s=1}^{S}\tfrac{ r_s }{  {4}^{s-1}},
    \end{aligned}
\end{equation*}
 where in the last step, we used $r_0=0$. Observe that by the choice of the regularization $ r_s $ in \eqref{eqn:regularization-exp-unbiased}, we obtain
 \begin{align*}
\tfrac{\left(16c_{\mathcal{A}}+1\right) r_S  D_0}{4^{S}}\leq\tfrac{\varepsilon}{4},\quad \tsum_{s=1}^{S}\tfrac{ r_s }{4^{s-1}}
&\overset{\eqref{eqn:regularization-exp-unbiased}}{=}
\tsum_{s=1}^{S}\tfrac{4^{s-1}\varepsilon}{\left(16c_{\mathcal{A}}+4\right)D_0s\log S}\left(\tfrac{1}{4}\right)^{s-1}
\le
\tfrac{\varepsilon(1+\log S)}{\left(16c_{\mathcal{A}}+4\right)D_0\log S}
\le
\tfrac{\varepsilon}{2D_0},
\end{align*}
where in the last step, we used $\tfrac{1+\log S}{\log S}\leq 2$ and $
16c_{\mathcal A}+4\geq 4,$ due to $S\geq 3.$ Hence $\sqrt{\mathbb{E}\left[\res_F(x_S)^2\right]}\leq \tfrac{\varepsilon}{4}+ \tfrac{5}{4}\cdot\tfrac{\varepsilon}{2}\leq \varepsilon$.
\smallskip

It remains to prove $ r_S  \ge L$. Since $\varepsilon \le LD_0$ and $16(16c_{\mathcal A}+4) \ge 64$, we have $S_0 \ge \log_4 64 = 3$. Moreover,
\begin{equation}\label{eqn:bound-on-S-new}
    S \le 3 + \log_4\!\left(\tfrac{16(16c_{\mathcal A}+4)LD_0}{\varepsilon}\, S_0\log(4S_0)\right)
      = 3 + S_0 + \log_4 S_0 + \log_4\log(4S_0) \le 4S_0,
\end{equation}
where the last step uses $S_0 \ge 3$. Hence, by the definition of $S$ in \eqref{eqn:regularization-exp-unbiased} and its bound in \eqref{eqn:bound-on-S-new},
\begin{align*}
    4^S \ge \tfrac{16(16c_{\mathcal A}+4)LD_0}{\varepsilon}\, S_0\log(4S_0), \qquad S\log S \le 4S_0\log(4S_0),
\end{align*}
which together yield $\tfrac{4^S}{S\log S} \ge \tfrac{4(16c_{\mathcal A}+4)LD_0}{\varepsilon}$. Therefore, we have
\begin{align*}
     r_S  = \tfrac{4^{S-1}\varepsilon}{(16c_{\mathcal A}+4)D_0 S\log S}
           \ge \tfrac{\varepsilon}{(16c_{\mathcal A}+4)D_0} \cdot \tfrac{(16c_{\mathcal A}+4)LD_0}{\varepsilon}
             = L.
\end{align*}
The total sample complexity can be bounded as
\begin{equation*}
    \begin{aligned}
       \tsum_{s=1}^{S}\tsum_{i=1}^{N_s} m_s
&\le \tsum_{s=1}^{S}\left[2+\tfrac{16^{s-1}N_s \sigma^2}{(L+ r_s )^2D_0^2}\right]\left[2+\tfrac{4\sqrt{2c_{\mathcal{A}}}(L+ r_s )}{ r_s }\right]\\
&=\mathcal{O}\left(\tfrac{LD_0}{\varepsilon}\log\log\tfrac{LD_0}{\varepsilon}+\tfrac{ \sigma^2}{\varepsilon^2}(\log\tfrac{LD_0}{\varepsilon})^3(\log\log\tfrac{LD_0}{\varepsilon})^2\right).
    \end{aligned}
\end{equation*}

\end{proof}
It is worth noting that the stochastic setting requires a stronger regularization schedule than the deterministic one. In the deterministic case, the choice $ r_s  = \mathcal{O}(2^{s-1}\varepsilon/D_0)$ is sufficient, although this schedule is not unique. In the stochastic case, however, the operator information is noisy, and such a schedule is not sufficient for the present stochastic analysis. When $ r_s $ is too small, the oracle noise may be amplified by the ill-conditioning of the regularized subproblems and can dominate the epoch-wise progress. The stochastic analysis therefore uses larger regularization parameters to stabilize the iterates and compensate for the accumulated stochastic error. When the noise level vanishes, i.e., $\sigma=0$, the stochastic bound recovers the deterministic $\mathcal{O}(1/\varepsilon)$ polynomial dependence on the target accuracy, up to the additional polynomial in $\log\log(\tfrac{LD_0}{\varepsilon})$ induced by the stochastic regularization schedule.
\smallskip

The complexity bound in \autoref{thm:main-unbiased} is nearly optimal up to logarithmic factors. By Jensen's inequality, $\mathbb{E}\left[\res_F(x_S)\right]\leq\sqrt{\mathbb{E}\left[\res_F(x_S)^2\right]}$. Consequently, the sample complexity guarantee in \autoref{thm:main-unbiased} for finding a point $x_S$ satisfying $\mathbb{E}\left[\res_F(x_S)\right]\leq\varepsilon$ is $\widetilde{\mathcal O}\bigl(\sigma^2\varepsilon^{-2}(\log\tfrac{1}{\varepsilon})^3\bigr)$. Existing residual guarantees for stochastic monotone VIs typically yield a stochastic complexity term of order $\mathcal O(\sigma^2\varepsilon^{-4})$ for directly controlling the strong residual; see, for example, \cite{kotsalis2022simple}. In the stochastic regime $\sigma>0$, this dependence matches the $\Omega\bigl(\sigma^2\varepsilon^{-2}\log(LD/\varepsilon)\bigr)$ stochastic optimization lower bound of \cite{foster2019complexity} up to logarithmic factors.

\begin{proposition}\label{prop:mu-free}
Suppose the conditions of \autoref{thm:main-unbiased} hold and $F$ is $\mu$-strongly
monotone. Let $x^*$ be the unique solution of the VI. Then, for any
$\tilde \varepsilon>0$, AR computes $x_S$ such that
$\mathbb{E}[\norm{x_S-x^*}^2]\leq\tilde \varepsilon$ after
\begin{equation}\label{eqn:mu-free}
\widetilde{\mathcal{O}}\left(\tfrac{LD_0}{\mu\sqrt{\tilde\varepsilon}}+\tfrac{\sigma^2}{\mu^2\tilde\varepsilon}\left(\log\tfrac{LD_0}{\mu\sqrt{\tilde\varepsilon}}\right)^3\right)
\end{equation}
calls to the stochastic oracle.
\end{proposition}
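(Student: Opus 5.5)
The plan is to reduce the distance guarantee to a residual guarantee and then invoke \autoref{thm:main-unbiased} with a suitably chosen target accuracy. The key deterministic fact is that, under $\mu$-strong monotonicity, every $\bar x\in X$ satisfies $\mu\|\bar x-x^*\|\le \res_F(\bar x)$. To see this, let $y\in -N_X(\bar x)$ attain the minimum in \eqref{def_res}. Then $\langle y,x^*-\bar x\rangle\ge 0$, and optimality of $x^*$ gives $\langle F(x^*),\bar x-x^*\rangle\ge 0$. Combining these with \eqref{strongly_monotone},
\begin{align*}
\mu\|\bar x-x^*\|^2
&\le \langle F(\bar x)-F(x^*),\bar x-x^*\rangle
\le \langle F(\bar x),\bar x-x^*\rangle
\le \langle F(\bar x)-y,\bar x-x^*\rangle\\
&\le \res_F(\bar x)\,\|\bar x-x^*\|,
\end{align*}
where the third inequality uses $\langle y,\bar x-x^*\rangle\le 0$. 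Dividing by $\|\bar x-x^*\|$ (the claim is trivial when this is zero) gives the claim. Squaring and taking expectations yields $\mathbb{E}[\|x_S-x^*\|^2]\le \mu^{-2}\mathbb{E}[\res_F(x_S)^2]$, with no tuning of the algorithm to $\mu$.

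Next I would run AR exactly as in \autoref{thm:main-unbiased}, with parameters \eqref{eqn:regularization-exp-unbiased}--\eqref{eqn:batch-epoch}, at residual accuracy $\varepsilon\coloneqq\mu\sqrt{\tilde\varepsilon}$. Note that the algorithm's parameters do not involve $\mu$; only the chosen accuracy does. That theorem holds for $\mu\ge 0$ and gives $\sqrt{\mathbb{E}[\res_F(x_S)^2]}\le\varepsilon$, hence $\mathbb{E}[\|x_S-x^*\|^2]\le\varepsilon^2/\mu^2=\tilde\varepsilon$. Substituting $\varepsilon=\mu\sqrt{\tilde\varepsilon}$ into \eqref{eqn:final-sample-complexity} produces $LD_0/(\mu\sqrt{\tilde\varepsilon})$ and $\sigma^2/(\mu^2\tilde\varepsilon)$ with logarithm $\log(LD_0/(\mu\sqrt{\tilde\varepsilon}))$, which is exactly \eqref{eqn:mu-free}.

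The only obstacle is the hypothesis $\varepsilon\le LD_0$ of \autoref{thm:main-unbiased}, i.e.\ $\mu\sqrt{\tilde\varepsilon}\le LD_0$. If instead $\tilde\varepsilon>L^2D_0^2/\mu^2\ge D_0^2$ (using $\mu\le L$), I would handle this case trivially: $x_0$ already satisfies $\|x_0-x^*\|^2\le D_0^2<\tilde\varepsilon$ with zero oracle calls. Alternatively, one can run AR at $\varepsilon=LD_0$, which costs $\widetilde{\mathcal O}(1+\sigma^2/(L^2D_0^2))$, and this is dominated by the stated bound. This case split completes the argument.
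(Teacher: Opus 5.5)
Your proof is correct and follows the paper's argument. It uses the same inequality $\mu\|x-x^*\|\le\res_F(x)$, derived from strong monotonicity, optimality of $x^*$ and the normal-cone inequality, and then invokes \autoref{thm:main-unbiased} at residual tolerance $\mu\sqrt{\tilde\varepsilon}$.

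Your explicit handling of the hypothesis $\varepsilon\le LD_0$, which the paper leaves implicit, is fine via the zero-oracle-call case. However, drop the ``alternatively'' remark. In that regime $\mu^2\tilde\varepsilon>L^2D_0^2$, so $\sigma^2/(L^2D_0^2)$ exceeds $\sigma^2/(\mu^2\tilde\varepsilon)$ rather than being dominated by it. The stated logarithm is also negative there.
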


\begin{proof}
For any $x\in X$ and $y\in-N_X(x)$, we have $\langle y,x-x^*\rangle\leq0$,
while the optimality of $x^*$ implies $\langle F(x^*),x-x^*\rangle\geq0$.
Hence, by the strong monotonicity of $F$,
\begin{align*}
\mu\norm{x-x^*}^2
&\leq\langle F(x)-F(x^*),x-x^*\rangle
\leq\langle F(x)-y,x-x^*\rangle
\leq\norm{F(x)-y}\norm{x-x^*}.
\end{align*}
Taking the infimum over $y\in-N_X(x)$ gives $\mu\norm{x-x^*}\leq\res_F(x).$
Therefore, applying \autoref{thm:main-unbiased} with residual tolerance $\varepsilon$ yields
\begin{equation*}
\mathbb{E}[\norm{x_S-x^*}^2]
\leq\tfrac{1}{\mu^2}\mathbb{E}\left[\res_F(x_S)^2\right]
\leq \tfrac{\varepsilon^2}{\mu^2}\coloneqq\tilde\varepsilon.
\end{equation*}
Substituting the sample-complexity bound from \eqref{eqn:final-sample-complexity} in terms of $\tilde \varepsilon$ completes the proof.
\end{proof}
\smallskip

Notice that the stochastic complexity term in \eqref{eqn:mu-free} matches the optimal dependence on the target accuracy established in \citep[Corollary 3.4]{kotsalis2022simple} up to logarithmic factors.  

{In general, it is difficult to certify that the distance to the optimal solution is less than a prescribed tolerance without knowing $\mu$. However, for our method AR, even if we do not know $\mu$, we can use an estimate while preserving the nearly optimal stochastic sample complexity. Specifically, let $\tilde{\varepsilon}$ be the desired mean squared distance accuracy. If we overestimate $\mu$ by $\tilde{\mu}>\mu$, then the targeted residual accuracy is set as $\varepsilon^2\coloneqq\tilde{\mu}^2\tilde{\varepsilon}>\mu^2\tilde{\varepsilon}$, i.e.,  we can only guarantee the mean squared distance accuracy $\tfrac{\tilde{\mu}^2}{\mu^2}\tilde{\varepsilon}>\tilde{\varepsilon}$, which is less accurate than the desired $\tilde{\varepsilon}$. By \autoref{prop:mu-free}, the number of calls to the $\mathcal{SO}$ is bounded by
\begin{equation}\label{eqn:new-mu}
\widetilde{\mathcal{O}}\left(\tfrac{LD_0}{\tilde{\mu}\sqrt{\tilde{\varepsilon}}}+\tfrac{\sigma^2}{\tilde{\mu}^2\tilde{\varepsilon}}\left(\log\tfrac{LD_0}{\tilde{\mu}\sqrt{\tilde{\varepsilon}}}\right)^3\right),
\end{equation}
which is correspondingly lower than the one using the true value of $\mu$ c.f. \eqref{eqn:mu-free}. On the other hand, if we underestimate $\mu$ by $\tilde{\mu}<\mu$, then the targeted residual accuracy satisfies $\varepsilon^2=\tilde{\mu}^2\tilde{\varepsilon}<\mu^2\tilde{\varepsilon}$, i.e., the guaranteed accuracy is higher since $\tfrac{\tilde{\mu}^2}{\mu^2}\tilde{\varepsilon}<\tilde{\varepsilon}$, but the bound on the number of calls to the $\mathcal{SO}$ is also higher according to \eqref{eqn:new-mu}. In both cases, AR converges and remains nearly optimal in its stochastic part. } Thus, using AR, we can obtain an optimal convergence guarantee in terms of the distance to the optimal solution without knowing $\mu.$

{Notice that the above $\mu$-free property does not generally hold for other algorithms that directly control the distance to the optimal solution, for which an incorrect estimate of $\mu$ may cause the algorithm not to converge or to converge extremely slowly. The classical example in \cite{NJLS09-1} illustrates the latter behavior when $\mu$ is overestimated; see also SOE method in \cite{kotsalis2022simple}, when $\mu$ is overestimated, SOE may not converge. When $\mu$ is underestimated, SOE still converges but with a slower convergence rate dictated by the underestimated value $\tilde \mu$ of $\mu.$
}

Observe that the $\mu$-free property above does not recover the optimal
deterministic term for strongly monotone problems. When $\mu$ is known,
however, this limitation can be removed by directly restarting AR. We next
show that the resulting restarted scheme achieves the optimal convergence
rate for strongly monotone VIs and, in particular, recovers the deterministic
linear convergence rate in terms of the distance to the unique solution.

\begin{corollary}\label{thm:main-unbiased-mu}
Assume that $F$ is $L$-Lipschitz continuous and $\mu$-strongly
monotone, and
$0<\varepsilon \le \|x_0-x^*\|^2 \leq D_0^2$. In \autoref{alg:ARVI}, suppose the parameters satisfy
\begin{align}\label{eqn:regularization-exp-unbiased-mu}
         S&=\left\lceil
\log_{16}\!\left(\tfrac{D_0^2}{{\varepsilon}}\right)
\right\rceil,\quad r_s =0.
\end{align}
For each epoch, the epoch length $N_s$ and the batch size $m_s$ satisfy
\begin{align}\label{eqn:batch-epoch-mu}
   N_s&={\left\lceil\tfrac{4\sqrt{2c_{\mathcal{A}}}L}{{\mu}}\right\rceil},\quad
m_s=\max\left\{\left\lceil\tfrac{32\cdot16^{s-1}c_{\mathcal{A}} \sigma^2}{N_s\mu^2 D_0^2}\right\rceil,\, 1\right\}.
\end{align}
Then, at epoch $S$, \autoref{alg:ARVI} computes an approximate solution $x_S$ such that $\mathbb{E}[\|x_S - x^*\|^2]
\le\varepsilon$ after
\begin{align*}
\mathcal{O}\left(\tfrac{L}{\mu}\left\lceil
\log_{16}\!\left(\tfrac{D_0^2}{{\varepsilon}}\right)
\right\rceil+\tfrac{ \sigma^2}{\mu^2\varepsilon}\right)
\end{align*}
calls to the $\mathcal{SO}$.
\end{corollary}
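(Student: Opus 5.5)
With $r_s=0$ every subproblem \eqref{eq:sub-problem} is just the original VI, so $F_s=F$ and $x_s^*=x^*$ for all $s$. The prox-centers play no role. \autoref{alg:ARVI} therefore reduces to restarting the subroutine $\mathcal A$ from $x_{s-1}$ for $N_s$ iterations with batch size $m_s$. I will prove by induction on $s\in[S]$ that
\begin{equation*}
\mathbb{E}[\|x_s-x^*\|^2]\le \tfrac{D_0^2}{16^s}.
\end{equation*}
The conclusion then follows from the choice $S=\lceil\log_{16}(D_0^2/\varepsilon)\rceil$, which gives $D_0^2/16^S\le\varepsilon$.

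For the inductive step I use \eqref{eqn:distance-s} of \autoref{assumption_inner_sto} with $r_s=0$, so the strong monotonicity modulus $\mu$ replaces $\mu+r_s$. Taking total expectation of the conditional bound gives
\begin{equation*}
\mathbb{E}[\|x_s-x^*\|^2]\le \tfrac{c_{\mathcal A}L^2}{\mu^2N_s^2}\mathbb{E}[\|x_{s-1}-x^*\|^2]+\tfrac{c_{\mathcal A}\sigma^2}{N_sm_s\mu^2}.
\end{equation*}
The two terms are handled separately.
\begin{itemize}
\item By $N_s\ge 4\sqrt{2c_{\mathcal A}}L/\mu$, the contraction factor is at most $1/32$. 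With the induction hypothesis (or $\|x_0-x^*\|\le D_0$ when $s=1$), the first term is at most $D_0^2/(32\cdot16^{s-1})$.
\item By $m_s\ge 32\cdot16^{s-1}c_{\mathcal A}\sigma^2/(N_s\mu^2D_0^2)$, the noise term is at most $D_0^2/(32\cdot16^{s-1})$.
\end{itemize}
Summing, $\mathbb{E}[\|x_s-x^*\|^2]\le D_0^2/(16\cdot16^{s-1})=D_0^2/16^s$, which closes the induction.

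For the sample count, the total is $\sum_{s=1}^S N_sm_s$. I bound each term using $m_s\le 1+32\cdot16^{s-1}c_{\mathcal A}\sigma^2/(N_s\mu^2D_0^2)$, which gives
\begin{equation*}
N_sm_s\le N_s+\tfrac{32\cdot16^{s-1}c_{\mathcal A}\sigma^2}{\mu^2D_0^2}.
\end{equation*}
Summing the first part gives $S\lceil 4\sqrt{2c_{\mathcal A}}L/\mu\rceil=\mathcal O(\tfrac{L}{\mu}S)$. The second part is a geometric sum bounded by $\tfrac{32\cdot 16^{S}c_{\mathcal A}\sigma^2}{15\mu^2D_0^2}$.

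The only step needing care is bounding $16^S$ in terms of $D_0^2/\varepsilon$. Since $S<\log_{16}(D_0^2/\varepsilon)+1$, we have $16^S\le 16D_0^2/\varepsilon$. The second sum is therefore $\mathcal O(\sigma^2/(\mu^2\varepsilon))$. Combining the two parts gives the claimed bound $\mathcal O(\tfrac{L}{\mu}\lceil\log_{16}(D_0^2/\varepsilon)\rceil+\sigma^2/(\mu^2\varepsilon))$.
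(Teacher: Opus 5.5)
Your proof is correct and follows the paper's argument: the same induction $\mathbb{E}[\|x_s-x^*\|^2]\le D_0^2/16^s$, with the contraction and noise terms each bounded by $D_0^2/(32\cdot 16^{s-1})$, followed by a geometric sum over epochs for the sample count. Your bookkeeping $N_s m_s\le N_s+32\cdot16^{s-1}c_{\mathcal A}\sigma^2/(\mu^2D_0^2)$ is slightly cleaner than the paper's, which first bounds $m_s$ using $N_s\ge 4\sqrt{2c_{\mathcal A}}L/\mu$, but it yields the same bound.
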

\begin{proof}
We show that for each $s \in [S]$,
\begin{equation}\label{eq:stochastic-inductive-hypothesis-mu}
    \mathbb{E}[\|x_s - x^*\|^2] \leq \tfrac{\|x_0 - x^*\|^2}{16^s},
\end{equation}
where $x_s=x_{s, N_s}$ for all $s\in[S]$.
\smallskip

Since $ r_s =0$, the regularized operator $F_s=F$, and hence $x_s^*=x^*$. Therefore, by \autoref{assumption_inner_sto}, we have
\begin{align}
    \mathbb{E}\left[\|x_{s,k}-x^*\|^2\,\middle|\,\mathcal{F}_{s,0}\right]&\leq \tfrac{c_{\mathcal{A}}L^2}{  {\mu^2}k^2}\|x_{s-1}-x^*\|^2 +\tfrac{c_{\mathcal{A}} \sigma^2}{km_s  {\mu^2}},\label{eqn:distance-s-strong}
\end{align}
Hence, for $s=1$, after $N_1$ iterations, it holds that
 \begin{equation*}
     \begin{aligned}
          \mathbb{E}[\|x_1-x^*\|^2] &\,\,\,\overset{\text{(i)}}{\leq} \tfrac{c_{\mathcal{A}}L^2\|x_{0}-x^*\|^2}{N_1^2  {\mu^2} }+\tfrac{D_0^2}{32}\overset{\text{(ii)}}{\leq}\tfrac{D_0^2}{  {16}},
     \end{aligned}
 \end{equation*}
 where in (i), we used \eqref{eqn:distance-s-strong} and the condition on the batch size $m_s$
 in \eqref{eqn:batch-epoch-mu}; in (ii), we used the condition on the inner iteration number $N_1$ in \eqref{eqn:batch-epoch-mu}.

 Suppose that \eqref{eq:stochastic-inductive-hypothesis-mu} holds at epoch $s-1$. Then, for epoch $s$, similarly, by \eqref{eqn:distance-s-strong}, and the condition on the batch size $m_s$
 in \eqref{eqn:batch-epoch-mu}, it holds that
 \begin{equation*}
     \begin{aligned}
          \mathbb{E}[\|x_s-x^*\|^2] &\leq\tfrac{c_{\mathcal{A}}L^2\mathbb{E}[\|x_{s-1}-x^*\|^2]}{N_s^2  {\mu^2}}+\tfrac{c_{\mathcal{A}} \sigma^2}{N_sm_s  {\mu^2}}\overset{\text{(iii)}}{\leq}\tfrac{c_{\mathcal{A}}L^2D_0^2}{  {16^{s-1}}N_s^2  {\mu^2}}+\tfrac{c_{\mathcal{A}} \sigma^2}{N_sm_s  {\mu^2}}\overset{\text{(iv)}}{\leq}\tfrac{c_{\mathcal{A}}L^2D_0^2}{  {16^{s-1}}N_s^2  {\mu^2}}+\tfrac{1}{2}\cdot\tfrac{D_0^2}{16^s}\overset{\text{(v)}}{\leq}\tfrac{D_0^2}{  16^{s}},
     \end{aligned}
 \end{equation*}
 where in (iii), we used the induction hypothesis;
 in (iv),  we  substituted $m_s$ from \eqref{eqn:batch-epoch-mu}; and in (v)
 we again used the condition on the inner iteration number $N_s$ in \eqref{eqn:batch-epoch-mu}. Therefore, \eqref{eq:stochastic-inductive-hypothesis-mu} holds for all $s\in [S]$.
 \smallskip

By the choices of $N_s,$ and $s\leq S$ hence $16^{s-1}\leq 16^{S-1},$ we can bound the batch size $m_s$
as follows
\begin{equation*}
    \begin{aligned}
        m_s&\leq\tfrac{32\cdot16^{s-1}c_{\mathcal{A}} \sigma^2}{N_s\mu^2 D_0^2}+1\overset{\eqref{eqn:batch-epoch-mu}}{\leq}\tfrac{4\sqrt{2}\cdot16^{s-1}\sqrt{c_{\mathcal{A}}} \sigma^2}{\mu L D_0^2}+1 \overset{\eqref{eqn:regularization-exp-unbiased-mu}}{\leq} \tfrac{4\sqrt{2}\sqrt{c_{\mathcal A}} \sigma^2}{\mu L \varepsilon}+1.
    \end{aligned}
\end{equation*}
Hence, the total sample complexity can be bounded as
\begin{equation*}
    \begin{aligned}
    \tsum_{s=1}^{S}\tsum_{i=1}^{N_s} m_s&\le \tsum_{s=1}^{S}\left[\tfrac{8\cdot16^{s-1}\sqrt{c_{\mathcal{A}}} \sigma^2}{\mu L D_0^2}{\tfrac{1}{\sqrt{2}}}+1\right]\left[\tfrac{4\sqrt{2c_{\mathcal{A}}}L}{{\mu}}+1\right]=\mathcal{O}\left(\tfrac{L}{\mu}\left\lceil
\log_{16}\!\left(\tfrac{D_0^2}{{\varepsilon}}\right)
\right\rceil+\tfrac{ \sigma^2}{\mu^2\varepsilon}\right).
    \end{aligned}
\end{equation*}

\end{proof}
We next show that AR also achieves a near-optimal residual complexity when $F$ is $\mu$-strongly monotone with known $\mu>0$. The algorithm remains unchanged: we only modify the regularization parameters $r_s$ and the number of epochs, while using the same choices of epoch lengths and batch sizes choices as before. The resulting complexity is optimal up to logarithmic factors for strongly monotone problems.
\begin{corollary}
\label{thm:main-unbiased-mu-n}
Assume that $0<\varepsilon \le LD_0$, where $D_0\geq \textnormal{dist}(x_0, X^*)$. In \autoref{alg:ARVI}, let 
\begin{align}\label{eqn:regularization-exp-unbiased-mu-n}
S_1&\coloneqq
1+\left\lceil
\log_4\left(\tfrac{(16c_{\mathcal A}+1)\mu D_0}{\varepsilon}\right)
\right\rceil,
\qquad
S\coloneqq
S_1+\left\lceil\log_4\left(\tfrac{8L}{\mu}\log_4\left(\tfrac{16(16c_{\mathcal A}+4)L}{\mu}\right)\right)\right\rceil.
\end{align}
For each epoch $s$, the regularization parameter $r_s$, the epoch length $N_s$, and the batch size $m_s$ are given by 
\begin{align}\label{eqn:batch-epoch-mu-n}
 r_s=
\begin{cases}
0, & s\leq S_1,\\[1mm]
\tfrac{4^{s-S_1}\mu}{8\log_4\left(\tfrac{16(16c_{\mathcal A}+4)L}{\mu}\right)}, & s>S_1,
\end{cases}\quad N_s=\left\lceil\tfrac{4\sqrt{2c_{\mathcal{A}}}(L+ r_s )}{r_s+\mu}\right\rceil,\quad
m_s=\left\lceil\tfrac{16^{s-1}N_s \sigma^2}{(L+ r_s )^2 D_0^2}+1\right\rceil.
\end{align}
Then, at epoch $S$, \autoref{alg:ARVI} computes an approximate solution $x_S$ such that $\sqrt{\bbe\left[\res_F(x_S)^2\right]}
\le\varepsilon$ after
\begin{align}\label{eqn:final-sample-complexity-mu-n}
{\mathcal{O}}\left(\tfrac{L}{\mu}\left(\log\tfrac{\mu D_0}{\varepsilon}+\log \log \tfrac{L}{\mu}\right)+\log \tfrac{L}{\mu}+\tfrac{\sigma^2}{\varepsilon^2}\left(\log\tfrac{L}{\mu}\right)^3\right)
\end{align}
calls to the $\mathcal{SO}$.
\end{corollary}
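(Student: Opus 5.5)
The plan is to follow the structure of the proof of \autoref{thm:main-unbiased}: an epoch-wise contraction for $\mathbb{E}[\|x_s-x_s^*\|^2]$, and then the residual decomposition \eqref{eqn:res} at epoch $S$. The new feature is that the schedule splits into two phases. For $s\le S_1$ we have $r_s=0$, so $F_s=F$ and $x_s^*$ is the unique solution $x^*$; this phase is exactly the restart scheme of \autoref{thm:main-unbiased-mu}. For $s>S_1$ the $r_s$ grow geometrically from roughly $\mu/\log(L/\mu)$ up to at least $L$. Note that $\gamma_s=1-r_{s-1}/r_s$ is ill-defined when $r_{s-1}=r_s=0$, so I would read \eqref{eqn:center} as keeping $\bar x_s$ arbitrary while $r_s=0$. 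At $s=S_1+1$ we have $\gamma_{S_1+1}=1$, so $\bar x_{S_1+1}=x_{S_1}$.

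\textbf{Step 1: contraction for all epochs.} I would show $\mathbb{E}[\|x_s-x_s^*\|^2]\le D_0^2/16^s$ for every $s\in[S]$ by induction. The argument uses \eqref{eqn:distance-s} with denominator $(\mu+r_s)^2$, the choice $N_s\ge 4\sqrt{2c_{\mathcal A}}(L+r_s)/(\mu+r_s)$, and the batch size $m_s$, exactly as in the proof of \autoref{thm:main-unbiased}. The key check is that \eqref{eq:epoch_proximity1}, $\|x_{s-1}-x_s^*\|\le\|x_{s-1}-x_{s-1}^*\|$, still holds across the transition. The derivation in \autoref{lemma_ARVI_proximity} only uses monotonicity and the identity $F_s(x)=F_{s-1}(x)+(r_s-r_{s-1})(x-x_{s-1})$, and both survive when $r_{s-1}=0$. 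For \eqref{eq:epoch_proximity2} I would restart the recursion \eqref{eqn:resursive} at $S_1$: since $r_{S_1}=0$, the terms with $i\le S_1$ drop out, giving
\[
r_S\|\bar x_S-x_S^*\|\le\tsum_{i=S_1+1}^S(r_i+r_{i-1})\|x_{i-1}-x_{i-1}^*\|.
\]

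\textbf{Step 2: the residual bound.} By Minkowski, $\sqrt{\mathbb{E}[\res_F(x_S)^2]}$ is at most
\[
\tfrac{2c_{\mathcal A}(L+r_S)D_0}{4^{S-1}}+\tfrac{r_SD_0}{4^S}+\tfrac{5D_0}{4}\tsum_{s=S_1+1}^S\tfrac{r_s}{4^{s-1}},
\]
where the batch-size term is absorbed exactly as in \eqref{eqn:second-term}. Two facts finish this step.
\begin{itemize}
\item First, the sum. With $r_s=4^{s-S_1}\mu/(8\Lambda)$ and $\Lambda=\log_4(16(16c_{\mathcal A}+4)L/\mu)$, we get $r_s/4^{s-1}=4^{1-S_1}\mu/(8\Lambda)$. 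Summing over at most $S-S_1=O(\log_4(L/\mu\cdot\Lambda))$ terms gives $O(\mu D_0 4^{-S_1})$, which is at most a constant fraction of $\varepsilon$ because $4^{S_1}\ge 4(16c_{\mathcal A}+1)\mu D_0/\varepsilon$. This is the reason for the $\Lambda$ in the denominator: it cancels the number of terms.
\item Second, $r_S\ge L$, which follows from $4^{S-S_1}\ge 8L\Lambda/\mu$.
\end{itemize}
With $r_S\ge L$, the first two terms are $O(r_SD_0/4^S)=O(\mu D_0 4^{-S_1}/\Lambda)\le\varepsilon/4$.

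\textbf{Step 3: counting oracle calls.} The total is $\sum_s N_s m_s\le\sum_s 2N_s+\sum_s 16^{s-1}N_s^2\sigma^2/((L+r_s)^2D_0^2)$. The first sum splits into $S_1\cdot O(L/\mu)$ from phase one and $\sum_{s>S_1}O(1+L/r_s)=O(L\Lambda/\mu+S-S_1)$ from phase two. This yields the $\frac{L}{\mu}(\log\frac{\mu D_0}{\varepsilon}+\log\log\frac{L}{\mu})+\log\frac{L}{\mu}$ terms, although I expect the $L\Lambda/\mu$ piece to need care to fit the stated form.

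The stochastic sum is where I expect the main difficulty. Its summand is $16^{s-1}\sigma^2 O(1/(\mu+r_s)^2)/D_0^2$, and it is dominated by the last epochs. For $s\ge S_1$ it is $\sigma^2 16^{s-1}\Lambda^2/(\mu^2 16^{s-S_1}D_0^2)=O(\sigma^2 16^{S_1}\Lambda^2/(\mu^2D_0^2))=O(\sigma^2\Lambda^2/\varepsilon^2)$, using $16^{S_1}=O(\mu^2D_0^2/\varepsilon^2)$. Summing over $S-S_1=O(\log(L/\mu))$ epochs then gives $O(\sigma^2(\log\frac{L}{\mu})^3/\varepsilon^2)$. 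The phase-one epochs contribute a geometric sum bounded by the term at $S_1$. Verifying these cancellations carefully, with the ceilings and constants, is the step I would expect to be the main obstacle.
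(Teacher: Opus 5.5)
Your argument follows the paper's route. It uses the same epoch-wise induction $\mathbb{E}[\|x_s-x_s^*\|^2]\le D_0^2/16^s$ (which the paper simply defers to \autoref{thm:main-unbiased}) and the same residual decomposition. It also uses the bound $S-S_1\le 4\Lambda$ to cancel the $\Lambda$ in the denominator of $r_s$, with your $\Lambda=\log_4(16(16c_{\mathcal A}+4)L/\mu)$ being the paper's $S_0$. Finally, it gets $r_S\ge L$ from the choice of $S$ and obtains the same $O(\sigma^2\Lambda^3/\varepsilon^2)$ count for the stochastic term, which you expected to be the main obstacle but computed correctly. Your treatment of the $r_s=0$ phase is correct and fills in details the paper leaves implicit: the center is irrelevant, $x_s^*=x^*$, and the identity $r_s\bar x_s=r_{s-1}\bar x_{s-1}+(r_s-r_{s-1})x_{s-1}$ persists through the transition at $S_1+1$.

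\textbf{Gap: the deterministic count in phase two.} Bounding $N_s=O(1+L/r_s)$ discards the $\mu$ in the denominator of $N_s$. That bound is genuinely of order $\tfrac{L}{\mu}\Lambda$, i.e.\ $\tfrac{L}{\mu}\log\tfrac{L}{\mu}$; already its first term is $L/r_{S_1+1}=2\Lambda L/\mu$. So no amount of care with it yields the claimed $\tfrac{L}{\mu}\log\log\tfrac{L}{\mu}$.

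\textbf{Fix.} Keep $N_s=O\bigl(1+L/(\mu+r_s)\bigr)$ and use $L/(\mu+r_s)\le\min\{L/\mu,\,L/r_s\}$. Since $r_s=4^{s-S_1}\mu/(8\Lambda)$, at most $\lceil\log_4(8\Lambda)\rceil$ epochs after $S_1$ have $r_s<\mu$. Each of these costs $O(L/\mu)$, so together they cost $O(\tfrac{L}{\mu}\log\Lambda)$. For the remaining epochs $r_s\ge\mu$, and $\sum L/r_s$ is a geometric series with ratio $1/4$ and first term at most $L/\mu$, hence $O(L/\mu)$. Adding the $O(S-S_1)=O(\Lambda)$ from the ceilings gives $O\bigl(\tfrac{L}{\mu}\log\log\tfrac{L}{\mu}+\log\tfrac{L}{\mu}\bigr)$. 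This is exactly the paper's $\mathcal{O}\bigl(\tfrac{L}{\mu}S_1+S_0+\tfrac{L}{\mu}\log S_0\bigr)$.
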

\begin{proof}
We first show that for each $s \in [S]$, $ \mathbb{E}[\|x_s - x_s^*\|^2] \leq \tfrac{D_0^2}{16^s},$
where $x_s=x_{s, N_s}.$ This is the same as \autoref{thm:main-unbiased}, thus omitted for simplicity.
The residual decomposition is also the same, it remains to bound the residual as follows.
\begin{equation*}
    \begin{aligned}
   \sqrt{\mathbb{E}\left[\res_F(x_S)^2\right]}
&\leq\tfrac{c_{\mathcal{A}} r_S  D_0}{4^{S-2}}+ \tsum_{s = 1}^S \tfrac{( r_s  +  r_{s - 1})D_0}{4^{s - 1}}+ \tfrac{ r_S  D_0}{4^S}\leq\tfrac{\left(16c_{\mathcal{A}}+1\right) r_S  D_0}{4^{S}}+ {\tfrac{5D_0}{4}}\tsum_{s=1}^{S}\tfrac{ r_s }{  {4}^{s-1}},
    \end{aligned}
\end{equation*}
Denote $S_0\coloneqq\log_4\left(\tfrac{16(16c_{\mathcal A}+4)L}{\mu}\right),$
since $S_0\geq3$, the choices of $S$ and $S_1$ imply
\begin{align*}
S-S_1
&\leq 1+\log_4\left(\tfrac{8S_0L}{\mu}\right)
=\tfrac{5}{2}+\log_4S_0+\log_4\tfrac{L}{\mu}\leq \tfrac{5}{2}+2S_0
\leq 4S_0,
\end{align*}
where we used $\log_4S_0\leq S_0$ and
$\log_4(L/\mu)\leq S_0$. Hence,
\begin{align*}
\tfrac{(16c_{\mathcal A}+1)r_SD_0}{4^S}
&=
\tfrac{(16c_{\mathcal A}+1)\mu D_0}{C4^{S_1}}
\leq\tfrac{\varepsilon}{4C}
\leq\tfrac{\varepsilon}{4},\\
\tsum_{s=1}^S\tfrac{r_s}{4^{s-1}}
&\leq
\tfrac{4(S-S_1)\mu}{C4^{S_1}}
\leq
\tfrac{16S_0\mu}{C4^{S_1}}
=
\tfrac{2\mu}{4^{S_1}}
\leq
\tfrac{\varepsilon}{2(16c_{\mathcal A}+1)D_0}
\leq
\tfrac{\varepsilon}{2D_0},
\end{align*}
It remains to prove $ r_S  \ge L$. By the choice of $S$,
\begin{align*}
r_S
=
\tfrac{4^{S-S_1}\mu}{8\log_4\left(\tfrac{16(16c_{\mathcal A}+4)L}{\mu}\right)}
\geq L.
\end{align*}
The total sample complexity can be bounded as
\begin{equation*}
    \begin{aligned}
       \tsum_{s=1}^{S}{N_s} m_s&=  \tsum_{s=1}^{S_1}{N_s} m_s+\tsum_{s=S_1+1}^{S}\tsum_{i=1}^{N_s} m_s\\
&\le \tsum_{s=1}^{S_1}\left[2+\tfrac{16^{s-1}N_s \sigma^2}{L^2D_0^2}\right]\left[1+\tfrac{4\sqrt{2c_{\mathcal{A}}}L}{\mu}\right]+\tsum_{s=S_1+1}^{S}\left[2+\tfrac{16^{s-1}N_s \sigma^2}{(L+ r_s )^2D_0^2}\right]\left[1+\tfrac{4\sqrt{2c_{\mathcal{A}}}(L+ r_s )}{ r_s +\mu}\right]\\
&=\mathcal{O}\left(\tfrac{L}{\mu}S_1+S_0+\tfrac{L}{\mu}\log S_0+\tfrac{\sigma^2}{\varepsilon^2}S_0^3\right)\\
&={\mathcal{O}}\left(\tfrac{L}{\mu}\left(\log\tfrac{\mu D_0}{\varepsilon}+\log \log \tfrac{L}{\mu}\right)+\log \tfrac{L}{\mu}+\tfrac{\sigma^2}{\varepsilon^2}\left(\log\tfrac{L}{\mu}\right)^3\right).
    \end{aligned}
\end{equation*}

\end{proof}
\section{Stochastic Variational Inequalities under State Dependent Noise}\label{sec:Stochastic Variational Inequalities under State-Dependent Noise}

In this section, we study the problem of finding a strong solution of the
monotone VI~\eqref{VIP}, where $F$ satisfies \eqref{eq:Lipschitz} and
\eqref{strongly_monotone} with $\mu\geq 0$, and only stochastic evaluations
satisfying the state-dependent variance condition are available. In \autoref{sec:Assumptions}, we introduce the assumptions imposed on the stochastic operators. In \autoref{sec:An accumulative regularization method for stochastic monotone VI}, we establish the convergence rate of the AR method when equipped with a generic subroutine $\mathcal{A}$ for monotone VIs. In \autoref{sec:Stochastic strongly monotone VI}, we provide a concrete instantiation of this subroutine, namely stochastic operator extrapolation, and analyze its convergence rate for strongly monotone VIs. We also establish an improved convergence guarantee for stochastic operator extrapolation (SOE) \cite{kotsalis2022simple2} for strongly monotone VIs under state-dependent noise.

\subsection{Assumptions}\label{sec:Assumptions}

Our goal in this subsection is to present assumptions on the stochastic estimators that characterize their accuracy
and play an essential role in the design and analysis of the algorithms for solving the stochastic monotone VIs
developed in subsequent sections.

We assume that all data $\{\zeta_k\}$ are i.i.d. random variables. Throughout this section, we impose the state-dependent noise assumption \eqref{eqn:state-dependent-assumption} and unbiased estimator assumption \eqref{eq:fast-decreasing-bias-o}.

The variance condition  in \eqref{eqn:state-dependent-assumption} is state-dependent: the right-hand side depends on the queried point $x$, rather than on a uniform constant, which is  commonly assumed in the stochastic VI literature. In particular, the variance is controlled by the quantity $\varsigma_*\langle F(x)-F(x^*),x-x^*\rangle$. Since $F$ is monotone, this quantity is nonnegative and is motivated by the strong gap of the VI.
Furthermore, for any two solutions $\bar{x}$ and $x^*$ of a monotone VI, monotonicity and their corresponding VI imply $\langle F(\bar{x})-F(x^*),\,\bar{x}-x^*\rangle=0.$
Thus, the state-dependent term vanishes even when the solution is not unique.

Unlike distance-based state-dependent noise conditions expressed in terms of $\|x-x^*\|^2$ \cite{kotsalis2022simple2,alacaoglu2025towards}, this new formulation does not require uniqueness of the solution, and hence does not rely on strong monotonicity, while remaining meaningful for merely monotone VIs.  It is new to the literature and is also the key structural assumption underlying our sharper sample-complexity bounds and, in the strongly monotone setting, an improved convergence guarantee for stochastic operator extrapolation (SOE) \cite{kotsalis2022simple2}.

In the application section (see \autoref{sec:Applications: Online Policy Evaluation via Variational Inequalities}), we show that these assumptions are readily satisfied. In particular, we consider an i.i.d.\ sampling model in which a black-box simulator generates samples from the transition kernel of the reinforcement learning problem.

\subsection{Accumulative regularization for stochastic monotone VIs}\label{sec:An accumulative regularization method for stochastic monotone VI}
In this subsection, we analyze AR with subroutine $\mathcal{A}$ for computing a strong solution
of the monotone VI~\eqref{VIP}, where $F$ satisfies \eqref{eq:Lipschitz} and
\eqref{strongly_monotone} with $\mu\ge0$, under the assumptions stated in
\autoref{sec:Assumptions}.

Recall that the regularized operator $F_s$ is defined in \eqref{eq:Fs}. We define its stochastic counterpart as
\begin{equation}\label{eqn:regulairzed-stochastic-estimator}
   \widehat{F}_{s,k}(x)\coloneqq \tilde{F}_{s,k}(x) +  r_s  (x - \bar x_s),
\end{equation}
where $\tilde{F}_{s,k}(x)$ is defined in \eqref{eqn:minibatch}.
We assume that the stochastic estimator $\tilde{F}_{s,k}$ satisfies the assumptions in \autoref{sec:Assumptions}.
Furthermore, recall that $\mathcal{F}_{s,k}$ denote the filtration generated by the global trajectory up to, but not including, the samples used to form $\tilde{F}_{s,k}$.
Hence, by the definition of $\mathcal{F}_{s,k}$, both $\bar{x}_s$ and $x_s^*$ are $\mathcal{F}_{s,k}$-measurable.
\medskip

We start by exploring the properties of the stochastic operator $\widehat{F}_{s,k}$ under the assumptions in \autoref{sec:Assumptions}.
\begin{lemma}\label{lemma:state-dependent-noise-regularized}
Suppose assumptions \eqref{eq:fast-decreasing-bias-o} and \eqref{eqn:state-dependent-assumption}. Let $x_s^*$ be a solution of the regularized VI associated with $F_s$, and let $x^*\in X^*$. Then the following properties hold almost surely.
\begin{itemize}
    \item[\textendash] {\it Unbiasedness:} for any $\mathcal{F}_{s,k}$-measurable point $x \in X$,
    \begin{align}\label{eq:fast-decreasing-bias}
         \mathbb{E}[\widehat{F}_{s,k}(x)\mid \mathcal{F}_{s,k}]=F_s(x).
    \end{align}
    \item[\textendash] {\it Variance at the regularized solution:}
    \begin{align}\label{eq:bounded-variance-at-opt-regularized}
    \mathbb E[\|\widehat{F}_{s,k}(x_{s}^*) - F_s(x_{s}^*)\|^2\mid\mathcal{F}_{s,k}]
    \leq \tfrac{\varsigma_{*}  r_s }{m_s}\|x_s^* - \bar x_s\| \|x_s^* - x^*\| + \tfrac{\sigma_*^2}{m_s}.
    \end{align}
\end{itemize}
\end{lemma}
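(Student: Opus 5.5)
Both claims follow by direct computation from the definitions; the only real work is rewriting the state-dependent term in (\ref{eq:bounded-variance-at-opt-regularized}) using the two VI optimality conditions.

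\emph{Unbiasedness.} Since $\bar x_s$ is $\mathcal{F}_{s,k}$-measurable, so is $r_s(x-\bar x_s)$ for any $\mathcal{F}_{s,k}$-measurable $x$. Hence
\[
\mathbb{E}[\widehat F_{s,k}(x)\mid\mathcal{F}_{s,k}]=\mathbb{E}[\tilde F_{s,k}(x)\mid\mathcal{F}_{s,k}]+r_s(x-\bar x_s).
\]
The mini-batch average \eqref{eqn:minibatch} consists of i.i.d.\ samples, each conditionally unbiased by \eqref{eq:fast-decreasing-bias-o} (applied to the double-indexed filtration). Their average therefore has conditional mean $F(x)$, which gives \eqref{eq:fast-decreasing-bias}.

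\emph{Variance.} The regularization term is deterministic given $\mathcal{F}_{s,k}$, so
\[
\widehat F_{s,k}(x_s^*)-F_s(x_s^*)=\tilde F_{s,k}(x_s^*)-F(x_s^*).
\]
The samples in the batch are conditionally independent with mean zero given $\mathcal{F}_{s,k}$, so the cross terms vanish. The conditional second moment of the average is thus $1/m_s$ times the single-sample variance. By \eqref{eqn:state-dependent-assumption} at the $\mathcal{F}_{s,k}$-measurable point $x_s^*$, this is at most
\[
\tfrac{1}{m_s}\bigl(\sigma_*^2+\varsigma_*\langle F(x_s^*)-F(x^*),x_s^*-x^*\rangle\bigr).
\]

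It remains to bound the inner product $\langle F(x_s^*)-F(x^*),x_s^*-x^*\rangle$ by $r_s\|x_s^*-\bar x_s\|\|x_s^*-x^*\|$. Apply the optimality of $x_s^*$ for \eqref{proxVIP} with test point $x=x^*$:
\[
\langle F(x_s^*)+r_s(x_s^*-\bar x_s),\,x^*-x_s^*\rangle\ge 0,
\]
i.e. $\langle F(x_s^*),x_s^*-x^*\rangle\le r_s\langle x_s^*-\bar x_s,x^*-x_s^*\rangle$. Apply the optimality of $x^*$ for \eqref{VIP} with $x=x_s^*$: $\langle F(x^*),x_s^*-x^*\rangle\ge0$. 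Subtracting the second from the first gives
\[
\langle F(x_s^*)-F(x^*),x_s^*-x^*\rangle\le r_s\langle x_s^*-\bar x_s,x^*-x_s^*\rangle\le r_s\|x_s^*-\bar x_s\|\|x_s^*-x^*\|
\]
by Cauchy--Schwarz. Substituting this into the variance bound yields \eqref{eq:bounded-variance-at-opt-regularized}.

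The only subtle point is measurability. We must check that $x_s^*$ is $\mathcal{F}_{s,k}$-measurable, which holds because it is determined by $\bar x_s$, so the assumptions can be applied at $x_s^*$. We also need the batch samples to be conditionally independent given $\mathcal{F}_{s,k}$, which follows from the i.i.d.\ sampling. Everything else is routine.
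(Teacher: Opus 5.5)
Your proposal is correct and matches the paper's proof. The regularization term cancels in both the mean and the error, the minibatch average gives the $1/m_s$ factor, and the state-dependent inner product is bounded using the two VI optimality conditions plus Cauchy--Schwarz (the paper writes $F(x_s^*)=F_s(x_s^*)-r_s(x_s^*-\bar x_s)$ and drops two nonpositive terms, which is the same as your subtraction of the two inequalities), while your justification that $x_s^*$ is $\mathcal{F}_{s,k}$-measurable as a function of $\bar x_s$ (unique when $r_s>0$ by strong monotonicity of $F_s$) is slightly more explicit than the paper, which only asserts it.
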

\begin{proof}
By the definitions of $F_s$ and $\widehat{F}_{s,k}$,
it holds that
\begin{align*}
        \mathbb E[\widehat{F}_{s,k}(x)\mid\mathcal{F}_{s,k}] &= \mathbb E[\tilde{F}_{s,k}(x)\mid\mathcal{F}_{s,k}] +  r_s  (x - \bar x_s),\quad
        F_s(x)= F(x)+  r_s  (x - \bar x_s).
\end{align*}
Therefore, it holds that
\begin{equation*}
    \begin{aligned}
         &\mathbb{E}[\widehat{F}_{s,k}(x)\mid \mathcal{F}_{s,k}]-F_s(x)=\mathbb{E}[\tilde{F}_{s,k}(x)\mid \mathcal{F}_{s,k}]-F(x)\overset{\text{(i)}}{=}0,
    \end{aligned}
\end{equation*}
where in (i), we used \eqref{eq:fast-decreasing-bias-o}.
Furthermore, since $x_s^*$ and $x^*$ are $\mathcal{F}_{s,k}$-measurable, and $\bar x_s$ is also $\mathcal{F}_{s,k}$-measurable by its definition in \eqref{eqn:center}, it holds that
\begin{align*}
    \mathbb E[\|x_s^* - \bar x_s\| \|x^* - x^*_s\|\mid\mathcal{F}_{s,k}]
    =\|x_s^* - \bar x_s\| \|x^* - x^*_s\|.
\end{align*}
Therefore, by the definitions of $F_s$ in \eqref{eq:Fs}, $\tilde{F}_{s,k}$ in \eqref{eqn:minibatch}, $\widehat{F}_{s,k}$ in \eqref{eqn:regulairzed-stochastic-estimator},
it holds that
\begin{align*}
&\,\,\mathbb{E}\big[\|\widehat{F}_{s,k}(x_{s}^*) - F_s(x_{s}^*)\|^2\mid \mathcal{F}_{s,k}\big]\\
       &\,\,=\mathbb E[\|\tilde{F}_{s,k}(x_{s}^*) - F(x_s^*)\|^2\mid\mathcal{F}_{s,k}] \\
        &\,\,\overset{\text{(ii)}}{\leq} \tfrac{\varsigma_{*}}{m_s}\langle F(x_s^*) - F(x^*), x^*_s - x^*\rangle + \tfrac{ \sigma^2_*}{m_s} \\
        &\,\overset{\eqref{eq:Fs}}{=} \tfrac{\varsigma_{*}}{m_s} \langle - r_s (x_s^* - \bar x_s),  x^*_s - x^*\rangle  + \tfrac{\varsigma_{*}}{m_s}\langle F(x^*), x^*-x^*_s \rangle+\tfrac{\varsigma_{*}}{m_s}\langle F_s(x_s^*), x^*_s - x^*\rangle + \tfrac{ \sigma^2_*}{m_s} \\
        &\,\,\overset{\text{(iii)}}{\leq} \tfrac{ \varsigma_{*}}{m_s} \langle - r_s (x_s^* - \bar x_s), x^*_s - x^*\rangle + \tfrac{ \sigma^2_*}{m_s} \\
        &\,\,\,\,\leq \tfrac{\varsigma_{*} r_s }{m_s}  \|x^*_s - \bar x_s\| \|x_s^* - x^*\| + \tfrac{ \sigma^2_*}{m_s}.
\end{align*}
Here, in (ii), we used the state-dependent variance assumption \eqref{eqn:state-dependent-assumption}, the definition of $\tilde{F}_{s,k}$, and the minibatch property of the i.i.d.\ samples;
 in (iii), we used the definition of $x^*$ as a solution of the VI problem \eqref{VIP}, namely
$\langle F(x^*), x^* - x_s^* \rangle \leq 0$, and the fact that $x_s^*$
solves the regularized VI problem associated with $F_s$ in \eqref{proxVIP}, namely
$\langle F_s(x_s^*), x_s^* - x^*\rangle \leq 0$.
Therefore, \eqref{eq:bounded-variance-at-opt-regularized} holds.
This concludes the proof.
\smallskip

\end{proof}

In view of \eqref{eq:bounded-variance-at-opt-regularized}, we define the variance term of the regularized operator at the regularized solution $x_s^*$ as
\begin{align}\label{eqn:delta_s}
     \sigma^2_{s,*}\coloneqq {\varsigma_{*} r_s } \|x^*_s - \bar x_s\| \|x_s^* - x^*\| + { \sigma^2_*},
\end{align}
which incorporates the regularization through $ r_s $ and the distances $\|x_s^* - \bar x_s\|$ and $\|x_s^* - x^*\|$, the state-dependent noise level $\varsigma_*$, and the variance bound $\sigma_*^2$ at the solution $x^*$. Observe that $ \sigma^2_{s,*}$ is random, as it depends on the prox-center $\bar x_s$ and the regularized solution $x_{s}^*$; nevertheless, it is immediate that $ \sigma^2_{s,*}$ is $\mathcal{F}_{s,0}$-measurable.
\vgap

We make the following assumption regarding the convergence of the subroutine $\mathcal A (\mathcal{SO},  r_s , \bar{x}_{s}, x_{s-1}, m_s)$.
\begin{assumption}\label{assumption_inner_sto-state-dependent}
The subroutine
$\mathcal A(\mathcal{SO},  r_s , \bar{x}_s, x_{s-1}, m_s)$
for solving the regularized problem \eqref{eq:sub-problem}
generates iterates satisfying the following performance guarantees for any
mini-batch size
$m_s\geq \tfrac{\tilde{c}_{\mathcal A}\varsigma_*}{L+ r_s }$
and all $k\geq 2$:
\begin{align}
    \mathbb{E}\left[\|x_{s,k}-x_s^*\|^2\,\middle|\,\mathcal{F}_{s,0}\right]&\leq \tfrac{c_{\mathcal{A}}(L+ r_s )^2}{(  \mu+r_s )^2k^2}\|x_{s-1}-x_s^*\|^2 +\tfrac{c_{\mathcal{A}} \sigma^2_{s,*}}{km_s(  \mu+r_s )^2},\label{eqn:distance-s-state-dependent}\\
 \sqrt{\mathbb{E}\left[\res_{F_s}(x_{s,k})^2\,\middle|\,\mathcal{F}_{s,0}\right]}&\leq {c_{\mathcal{A}}(L+ r_s )}\|x_{s-1}-x_s^*\|+\tfrac{c_{\mathcal{A}} \sigma_{s,*}\sqrt{k}}{\sqrt{m_s}},\label{eqn:residual-s-state-dependent}
\end{align}
where $c_{\mathcal{A}}$ is a constant depending only on the subroutine $\mathcal A$.
\end{assumption}
Similar to \autoref{assumption_inner_sto}, the deterministic terms in the
two guarantees remain unchanged, and the stochastic error terms have the
same dependence on the number of iterations and the mini-batch size. The
main differences lie in the treatment of the stochastic oracle noise.
First, in \autoref{assumption_inner_sto}, the stochastic terms are
controlled by a uniform noise bound $\sigma$, whereas
\eqref{eqn:distance-s-state-dependent} and
\eqref{eqn:residual-s-state-dependent} depend on the effective noise level
$ \sigma_{s,*}$ at the solution $x_s^*$ of the regularized subproblem.
Second, while the guarantees in the uniform-noise setting hold for any
mini-batch size, the guarantees under the state-dependent noise model are
imposed only for mini-batch sizes satisfying
$m_s\geq \tilde{c}_{\mathcal A}\varsigma_*/(L+ r_s )$. This condition ensures that the state-dependent variance does not dominate the deterministic Lipschitz-dependent terms.

Although stochastic operator extrapolation (SOE)
\cite{kotsalis2022simple} provides an example of a subroutine satisfying
guarantees of this type under the distance-dependent noise condition
\eqref{state-dependent-original}, its performance under the new
state-dependent noise assumption
\eqref{eqn:state-dependent-assumption} is unknown. We therefore refine the
analysis of SOE under this new assumption and prove that it satisfies
\autoref{assumption_inner_sto-state-dependent}
(cf. \autoref{prop:subroutine}).

It remains to control the variance proxy $ \sigma^2_{s,*}$ in \eqref{eqn:delta_s}. Unlike the uniform variance case, this quantity may grow with the regularization level $ r_s $ through the term
$ r_s \|x_s^*-\bar x_s\|\|x_s^*-x^*\|$.
The next lemma shows that this growth remains controlled. Specifically, if the approximate solutions from previous epochs remain geometrically close to their corresponding regularized solutions, then $  \sigma^2_{s,*}$ remains bounded by the accumulative regularization parameters, $\sigma_*$ and $\varsigma_*.$

\begin{lemma}\label{lem:state-dependent-iterate-boundedness}
    Suppose that $\mathbb E[\|x_j - x_j^*\|^2] \leq \tfrac{D_0^2}{16^j}$ for all $j\in\{0,\ldots,s-1\}$. Then, for $s\geq 2$, it holds that
\begin{equation}\label{eqn:boundedness}
            \mathbb{E}[ \sigma^2_{s,*}]
   \leq  {\tfrac{8\varsigma_*D_0^2}{3}
   \,\tsum_{i = 1}^s
   {\tfrac{ r_{i} +  r_{i-1}}{4^{i-1}}}}
   +\sigma_*^2.
    \end{equation}
\end{lemma}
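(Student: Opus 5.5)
The plan is to bound the random term $\varsigma_* r_s\|x_s^*-\bar x_s\|\|x_s^*-x^*\|$ in \eqref{eqn:delta_s} pathwise, using the deterministic epoch relations of \autoref{lemma_ARVI_proximity}. Those relations hold along every sample path. After that, we take expectations and use Cauchy--Schwarz together with the hypothesis $\mathbb E[\|x_j-x_j^*\|^2]\le D_0^2/16^j$.

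\textbf{Step 1 (pathwise bounds).} Write $a_j\coloneqq\|x_j-x_j^*\|$, with the convention $x_0^*\coloneqq x^*$ as in the proof of \autoref{lemma_ARVI_proximity}. Take $x^*$ to be the solution used in \eqref{eqn:delta_s}, chosen so that the hypothesis at $j=0$ holds, e.g.\ a closest solution to $x_0$. By \eqref{eq:epoch_proximity2} we have $r_s\|\bar x_s-x_s^*\|\le\sum_{i=1}^s(r_i+r_{i-1})a_{i-1}$. By \eqref{eqn:induction-traingle} we have $\|x_s^*-x^*\|\le 2\sum_{k=0}^{s-1}a_k$. Multiplying these gives, almost surely,
\begin{equation*}
r_s\|x_s^*-\bar x_s\|\|x_s^*-x^*\|\le 2\tsum_{i=1}^s\tsum_{k=0}^{s-1}(r_i+r_{i-1})\,a_{i-1}a_k .
\end{equation*}
The factors $r_i$ are deterministic, so no measurability issue arises.

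\textbf{Step 2 (expectation).} By Cauchy--Schwarz and the hypothesis,
\begin{equation*}
\mathbb E[a_{i-1}a_k]\le\sqrt{\mathbb E[a_{i-1}^2]\,\mathbb E[a_k^2]}\le \tfrac{D_0^2}{4^{i-1}4^k}.
\end{equation*}
Summing over $k$ gives $2\sum_{k=0}^{s-1}4^{-k}\le 2\cdot\tfrac{4}{3}=\tfrac83$. Hence
\begin{equation*}
\mathbb E\big[r_s\|x_s^*-\bar x_s\|\|x_s^*-x^*\|\big]\le\tfrac{8D_0^2}{3}\tsum_{i=1}^s\tfrac{r_i+r_{i-1}}{4^{i-1}}.
\end{equation*}
Multiplying by $\varsigma_*$ and adding the constant $\sigma_*^2$ yields \eqref{eqn:boundedness}.

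\textbf{Main obstacle.} The only delicate point is that the two distance factors are dependent random variables, so their expectations cannot be simply multiplied. This is exactly why Cauchy--Schwarz is applied to each cross term $a_{i-1}a_k$, rather than bounding the two sums separately. A second point to check is the index range. \eqref{eq:epoch_proximity2} involves $a_0,\dots,a_{s-1}$ and \eqref{eqn:induction-traingle} involves $a_0,\dots,a_{s-1}$, so only the hypotheses for $j\le s-1$ are needed. The restriction $s\ge2$ is harmless, since it merely matches the range in which \eqref{eqn:induction-traingle} is applied.
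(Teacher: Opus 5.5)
Your proposal is correct and is essentially the paper's argument: you use the pathwise bounds \eqref{eq:epoch_proximity2} and \eqref{eqn:induction-traingle}, then Cauchy--Schwarz together with the hypothesis, and you obtain the same constant $\tfrac{8}{3}$. The only difference is cosmetic: the paper first uses Minkowski to bound $\sqrt{\mathbb E[\|x_s^*-x^*\|^2]}$ and $r_s\sqrt{\mathbb E[\|x_s^*-\bar x_s\|^2]}$ separately and then applies one Cauchy--Schwarz to the product, whereas you expand into the cross terms $a_{i-1}a_k$ and apply Cauchy--Schwarz to each, which gives an identical bound (you also keep the factor $2$ from \eqref{eqn:induction-traingle} explicit, which the paper's intermediate display drops before restoring it in the final $\tfrac{8}{3}$).
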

\begin{proof}
By \autoref{lemma_ARVI_proximity}, \eqref{eqn:induction-traingle}, we obtain
\begin{align}\label{induction-0}
    \mathbb E[ \|x_{s}^* - x^*\| ]&\leq 2 \tsum_{k = 0}^{s - 1} \mathbb E[\|x_{k} - x_k^*\|]\leq 2\tsum_{k = 0}^{s - 1}\tfrac{D_0}{4^k}\leq \tfrac{8D_0}{3}.
\end{align}
 {Hence,  we have}
{
\begin{align}\label{eqn:boundednedd-regulairzed-true}
   \sqrt{\mathbb E[\|x_s^* - x^*\|^2]}
    &\overset{\eqref{eqn:induction-traingle}}{\leq}
    \sqrt{\mathbb E
    \left[
    \left(
    \tsum_{k=0}^{s-1}\|x_k-x_k^*\|
    \right)^2
    \right]}\overset{\text{(i)}}{\leq}
    \tsum_{k=0}^{s-1}
    \sqrt{\mathbb E
    [\|x_k-x_k^*\|^2]}\overset{\text{(ii)}}{\leq}
    \tsum_{k=0}^{s-1}\tfrac{D_0}{4^k}
    \leq \tfrac{4D_0}{3},
\end{align}
where in (i), we used Minkowski's inequality; and in (ii), we used Jensen's inequality and \eqref{induction-0}.
}
\smallskip

Similarly, by \autoref{lemma_ARVI_proximity}
 {and Minkowski's inequality}, we obtain
{
\begin{equation*}
\begin{aligned}
      r_s
    \sqrt{\mathbb E[\|x_s^*-\bar{x}_s\|^2]}
    &\overset{\eqref{eq:epoch_proximity2}}{\leq}
    \sqrt{\mathbb E\left[
    \left(
    \tsum_{i=1}^s( r_{i}+ r_{i-1})
    \|x_{i-1}^*-x_{i-1}\|
    \right)^2
    \right]}\\
    &\,\,\overset{\text{(iii)}}{\leq}
    \tsum_{i = 1}^s( r_{i}+ r_{i-1})
    \sqrt{\mathbb E
    [\|x^*_{i-1}-x_{i-1}\|^2]},
\end{aligned}
\end{equation*}
where in (iii), we used again the Minkowski's inequality.
}
Hence, by substituting the condition
$\mathbb E[\|x_j - x_j^*\|^2] \leq \tfrac{D_0^2}{16^j}$
for all $j\in\{0,\ldots,s-1\}$, we have
{
\begin{equation}\label{eqn:bias-2'}
\begin{aligned}
     r_s
    \sqrt{ \mathbb E[\|x_s^*-\bar{x}_s\|^2]}
    &\leq D_0\tsum_{i = 1}^s
    \tfrac{ r_{i}+ r_{i-1}}{4^{i-1}}.
\end{aligned}
\end{equation}
}
To bound $ \sigma_{s,*}$ in
\eqref{eqn:delta_s},
by Cauchy--Schwarz inequality, we have
\begin{align}\label{eqn:bias-0'}
    \mathbb E\left[\|x_s^* - \bar x_s\| \|x_s^* - x^*\|\right]
    &\leq  \sqrt{\mathbb E\left[\|x_s^* - \bar x_s\|^2\right]}
   \sqrt{\mathbb E\left[\|x_s^* - x^*\|^2\right]}
    \overset{\eqref{eqn:boundednedd-regulairzed-true}}{\leq}
     {\tfrac{4D_0}{3}
    \sqrt{\mathbb E\left[\|x_s^* - \bar x_s\|^2\right]}}.
\end{align}
Hence, substituting \eqref{eqn:bias-2'} into \eqref{eqn:bias-0'}, we have
\begin{equation*}
\begin{aligned}
    \varsigma_* r_s
    \mathbb E\left[\|x_s^* - \bar x_s\| \|x_s^* - x^*\|\right]
    &\leq
     {\tfrac{8}{3}\varsigma_* r_s D_0\sqrt{
    \mathbb E
    \left[\|x_s^*-\bar x_s\|^2\right]} }{\leq
    \tfrac{8}{3}\varsigma_*D_0^2
    \tsum_{i = 1}^s
    \tfrac{ r_{i}+ r_{i-1}}{4^{i-1}}}.
\end{aligned}
\end{equation*}
This concludes the proof.
\end{proof}
\smallskip

We are now ready to state the convergence guarantee for the {AR method} in \autoref{alg:ARVI} with $\mu\geq 0$ measured in terms of the residual.  
\begin{theorem}\label{thm:ARVI-state-dependent}
Assume that $\varepsilon \le LD_0$, where
$D_0\geq \textnormal{dist}(x_0, X^*)$. In \autoref{alg:ARVI},
suppose $S$ and $\{ r_s \}_{s=1}^S$ satisfy
\eqref{eqn:regularization-exp-unbiased}, and
$\{N_s\}_{s=1}^S$ satisfies \eqref{eqn:batch-epoch}. For each
$s\in[S]$, choose $m_s$ according to
\begin{equation}\label{eqn:regularization-exp-unbiased-state-dependent}
m_s
=
\left\lceil
\max\left\{
\tfrac{16^{s-1}N_s}{(L+ r_s )^2D_0^2}
\left(
\tfrac{8\varsigma_*}{3}D_0^2
\tsum_{i=1}^s
\tfrac{ r_{i}+ r_{i-1}}{4^{i-1}}
+\sigma_*^2
\right)
+1,\,
\tfrac{\tilde{c}_{\mathcal A}\varsigma_*}{L+ r_s }
\right\}
\right\rceil.
\end{equation}
Then, at epoch $S$, \autoref{alg:ARVI} computes an approximate 
solution $x_S$ satisfying $\sqrt{\mathbb{E}\left[\res_F(x_S)^2\right]}
\le\varepsilon$ after
\begin{align}\label{eqn:state-dependent-final}
\widetilde{\mathcal{O}}\left(
\tfrac{LD_0}{\varepsilon}
+
\tfrac{\varsigma_* D_0}{\varepsilon}
\left(\log\tfrac{LD_0}{\varepsilon}\right)^{ {3}}
+
\tfrac{\sigma_*^2}{\varepsilon^2}
\left(\log\tfrac{LD_0}{\varepsilon}\right)^3
\right)
\end{align}
calls to the $\mathcal{SO}$.
\end{theorem}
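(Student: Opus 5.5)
The plan is to follow the proof of \autoref{thm:main-unbiased} almost verbatim. The fixed variance $\sigma^2$ is replaced by the random, $\mathcal{F}_{s,0}$-measurable proxy $\sigma^2_{s,*}$ from \eqref{eqn:delta_s}. Its expectation is controlled through \autoref{lem:state-dependent-iterate-boundedness}. The second argument of the max in \eqref{eqn:regularization-exp-unbiased-state-dependent} guarantees $m_s\geq \tilde c_{\mathcal A}\varsigma_*/(L+r_s)$, so \autoref{assumption_inner_sto-state-dependent} is applicable in every epoch. (For $N_s=1$ one would need $k\ge2$; since $N_s\ge 4\sqrt{2c_{\mathcal A}}\ge2$ when $c_{\mathcal A}\ge1$, this is harmless.)

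\emph{Step 1 (induction).} I show $\mathbb{E}[\|x_s-x_s^*\|^2]\le D_0^2/16^s$ for all $s\in[S]$ by strong induction. Take total expectation in \eqref{eqn:distance-s-state-dependent}; since $\sigma^2_{s,*}$ is $\mathcal{F}_{s,0}$-measurable and $m_s$ is deterministic, the stochastic term becomes $c_{\mathcal A}\mathbb{E}[\sigma^2_{s,*}]/(N_sm_s(\mu+r_s)^2)$. For $s=1$, \eqref{eqn:center} gives $\bar x_1=x_0$, so \autoref{lemma_ARVI_proximity} yields $\mathbb{E}[\sigma_{1,*}^2]\le \varsigma_*r_1\cdot D_0\cdot 2D_0+\sigma_*^2$. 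This is within the bracket in \eqref{eqn:regularization-exp-unbiased-state-dependent}, since $\tfrac{8}{3}(r_1+r_0)\ge 2r_1$. For $s\ge2$, the induction hypothesis for $j\le s-1$ (with $j=0$ trivial via $x_0^*=x^*$) is exactly the hypothesis of \autoref{lem:state-dependent-iterate-boundedness}. Hence $\mathbb{E}[\sigma^2_{s,*}]$ is bounded by the bracket in the definition of $m_s$, and so
\[
\tfrac{c_{\mathcal A}\mathbb{E}[\sigma^2_{s,*}]}{N_sm_s(\mu+r_s)^2}\le \tfrac{c_{\mathcal A}(L+r_s)^2D_0^2}{16^{s-1}N_s^2(\mu+r_s)^2}.
\]
The deterministic term is handled using \eqref{eq:epoch_proximity1}, and the choice of $N_s$ in \eqref{eqn:batch-epoch} closes the induction with the same constants as before.

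\emph{Step 2 (residual).} I use the decomposition \eqref{eqn:residual-decom-s} with Minkowski. The regularized residual is bounded by \eqref{eqn:residual-s-state-dependent}, giving
\[
c_{\mathcal A}(L+r_S)\sqrt{\mathbb{E}\|x_{S-1}-x_{S-1}^*\|^2}+c_{\mathcal A}\sqrt{N_S}\sqrt{\mathbb{E}[\sigma^2_{S,*}]}/\sqrt{m_S},
\]
where Jensen moves the expectation inside the square root of the conditional bound. By the choice of $m_S$, the second term is at most $c_{\mathcal A}(L+r_S)D_0/4^{S-1}$, which reproduces \eqref{eqn:second-term}. The remaining terms and the verification $r_S\ge L$ are identical to the proof of \autoref{thm:main-unbiased}, giving $\sqrt{\mathbb{E}[\res_F(x_S)^2]}\le\varepsilon$.

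\emph{Step 3 (complexity).} This is where the new work lies. The total is $\sum_s N_sm_s$, and I split $m_s$ into three contributions.
\begin{itemize}[leftmargin=*]
\item[] \emph{The $\sigma_*^2$ part and the ``$+1$''.} These reproduce \eqref{eqn:final-sample-complexity} with $\sigma\to\sigma_*$.
\item[] \emph{The $\tilde c_{\mathcal A}\varsigma_*/(L+r_s)$ part.} Since $N_s\lesssim (L+r_s)/r_s$, this contributes $\lesssim\sum_s\varsigma_*/r_s\lesssim \varsigma_*D_0\log S/\varepsilon$.
\item[] \emph{The $\varsigma_*$ part of the first argument.} With $r_s=4^{s-1}\varepsilon/(cD_0s\log S)$, I get $\sum_{i\le s}(r_i+r_{i-1})/4^{i-1}\lesssim \varepsilon/D_0$, by the same harmonic-sum computation as in the previous proof. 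This term of $N_sm_s$ is then $\lesssim 16^{s-1}N_s^2\varsigma_*\varepsilon/((L+r_s)^2D_0)\lesssim 16^{s-1}\varsigma_*\varepsilon/(r_s^2D_0)\lesssim \varsigma_*D_0s^2\log^2S/\varepsilon$. Summing over $s\le S$ gives $\varsigma_*D_0S^3\log^2S/\varepsilon=\widetilde{\mathcal O}(\varsigma_*D_0(\log\tfrac{LD_0}{\varepsilon})^3/\varepsilon)$.
\end{itemize}

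The main obstacle is the bookkeeping in Step 1: justifying that $\mathbb{E}[\sigma^2_{s,*}]$ may replace the conditional quantity. The conditional bound is linear in $\sigma^2_{s,*}$, so the tower property suffices. One must also check that \autoref{lem:state-dependent-iterate-boundedness} is invoked only on already-established epochs, which is why the induction is run as a strong induction.
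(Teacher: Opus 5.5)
Your proposal is correct and follows essentially the same route as the paper: you rerun the induction of \autoref{thm:main-unbiased} with $\sigma^2$ replaced by the bound on $\mathbb{E}[\sigma_{s,*}^2]$ from \autoref{lem:state-dependent-iterate-boundedness}, reuse the same residual decomposition, and split $\sum_s N_s m_s$ into the same three contributions, as the paper does. Your explicit check of $s=1$, which the lemma (stated for $s\geq 2$) does not formally cover, is a welcome extra. One small correction: the passage from the conditional residual bound to $\sqrt{\mathbb{E}[\sigma_{S,*}^2]}$ uses the tower property and Minkowski's inequality, not Jensen's inequality.
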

\begin{proof}
The proof follows the same induction as that of
\autoref{thm:main-unbiased}, with the uniform noise bound $ \sigma^2$
replaced by the upper bound on
$\mathbb E[ \sigma_{s,*}^2]$ established in
\autoref{lem:state-dependent-iterate-boundedness}. In particular, by \autoref{assumption_inner_sto-state-dependent}, we have
 \begin{equation}\label{eqn:induction-thm3}
     \begin{aligned}
          \mathbb E\left[\|x_s-x_s^*\|^2\right]
&\leq
\tfrac{c_{\mathcal A}(L+ r_s )^2}
{(  \mu+r_s )^2N_s^2}
\mathbb E\left[\|x_{s-1}-x_s^*\|^2\right]
+
\tfrac{c_{\mathcal A}\mathbb E[ \sigma_{s,*}^2]}
{N_sm_s(  \mu+r_s )^2}.
     \end{aligned}
 \end{equation}
By the batch size  condition \eqref{eqn:regularization-exp-unbiased-state-dependent} and \autoref{lem:state-dependent-iterate-boundedness}, we have
\begin{align*}
\tfrac{c_{\mathcal A}\mathbb E[ \sigma_{s,*}^2]}
{N_sm_s(  \mu+r_s )^2}
&\leq
\tfrac{c_{\mathcal A}(L+ r_s )^2D_0^2}
{16^{s-1}N_s^2(  \mu+r_s )^2}.
\end{align*}
Substituting this bound into \eqref{eqn:induction-thm3}, the
remaining steps follow similarly and are omitted for brevity.
It remains to specify the resulting sample complexity. Observe that
\begin{equation*}
    \begin{aligned}
      \tfrac{\tfrac{8\varsigma_*}{3}D_0^2\,\tsum_{i = 1}^s \tfrac{ r_{i} +  r_{i-1}}{4^{i-1}} + \sigma_*^2}{\varepsilon^2}\overset{\textnormal{(i)}}{=} \mathcal{O}\left( \tfrac{\varsigma_* D_0}{\varepsilon}+ \tfrac{\sigma_*^2}{\varepsilon^2} \right).
    \end{aligned}
\end{equation*}
Here, in $\textnormal{(i)}$, we used the definition of $ r_{i}$ in \eqref{eqn:regularization-exp-unbiased}, which gives $ \tsum_{i = 1}^s \tfrac{ r_{i} +  r_{i-1}}{4^{i-1}} = \mathcal{O}\left( \tfrac{\varepsilon}{D_0} \right).$
Hence, the total sample complexity reads
\begin{equation*}
    \begin{aligned}
\tsum_{s=1}^{S}N_sm_s
&\le \tsum_{s=1}^{S}\left[2+\tfrac{16^{s-1}N_s}{(L+ r_s )^2D_0^2}\cdot\left(  {\tfrac{8\varsigma_*}{3}D_0^2\,\tsum_{i = 1}^s \tfrac{ r_{i} +  r_{i-1}}{4^{i-1}} + \sigma_*^2}\right)+\tfrac{\tilde{c}_{\mathcal A}\varsigma_*}{L+ r_s }\right]\left[1+\tfrac{4\sqrt{2c_{\mathcal{A}}}(L+ r_s )}{ r_s }\right]\\
&=\mathcal{O}\left(
\tsum_{s=1}^{S}
\left[
\tfrac{L+ r_s }{ r_s }
+\tfrac{\varsigma_*}{ r_s }
+\tfrac{16^{s}}{ r_s ^2D_0^2}
\left(
\varsigma_*D_0^2 \tsum_{i = 1}^s \tfrac{ r_{i} +  r_{i-1}}{4^{i-1}}+\sigma_*^2
\right)
\right]
\right)\\
&=
\widetilde{\mathcal{O}}\left(
\tfrac{LD_0}{\varepsilon}
+
\tfrac{\varsigma_* D_0}{\varepsilon}
\left(\log\tfrac{LD_0}{\varepsilon}\right)^{ {3}}
+
\tfrac{\sigma_*^2}{\varepsilon^2}
\left(\log\tfrac{LD_0}{\varepsilon}\right)^3
\right).
\end{aligned}
\end{equation*}
\end{proof}

\smallskip

By Jensen's inequality,
$\mathbb{E}\left[\res_F(x_S)\right]\leq\sqrt{\mathbb{E}\left[\res_F(x_S)^2\right]}$.
Consequently, the sample complexity guarantee in \autoref{thm:ARVI-state-dependent} for finding a point $x_S$ satisfying $\mathbb{E}\left[\res_F(x_S)\right]\leq\varepsilon$ is
$\widetilde{\mathcal O}\bigl(\sigma_*^2\varepsilon^{-2}(\log\tfrac{1}{\varepsilon})^3\bigr)$.
This improves upon the $\mathcal O(\varepsilon^{-4})$ sample complexity established in \citep[Theorem~3.23]{kotsalis2022simple2} for achieving a small expected residual under the distance-based state-dependent noise assumption \eqref{state-dependent-original}.
\begin{remark}\label{remark:choice-reference-solution}
The convergence guarantee holds for any $x^*\in X^*$. Hence, the reference solution can be selected according to the dominant term in the sample complexity bound. If the initial-distance term dominates, one may choose
$x_{\mathrm{dist}}^*\in\argmin_{x\in X^*}\|x_0-x\|$, so that
$D_0\geq\operatorname{dist}(x_0,X^*)$. If the variance term dominates, one may instead choose
$x_{\mathrm{var}}^*\in\argmin_{x\in X^*}\sigma_*^2(x)$
and take $D_0\geq\|x_0-x_{\mathrm{var}}^*\|$. This choice trades a potentially larger initial distance for a smaller variance.
\end{remark}
\smallskip

 As in the uniform-noise setting, AR can be applied as a general monotone VI method without exploiting strong monotonicity. For a prescribed residual tolerance, its implementation does not require knowledge of $\mu$; $\mu$ enters only when translating residual accuracy into distance accuracy.
\begin{remark}
 Suppose the conditions of \autoref{thm:ARVI-state-dependent} hold and $F$ is $\mu$-strongly monotone. Setting the residual tolerance to $\mu\sqrt{\varepsilon}$, AR computes $x_S$ such that $\mathbb{E}[\|x_S-x^*\|^2]\leq\varepsilon$ after
\begin{equation*}
\widetilde{\mathcal{O}}\left(
\tfrac{LD_0}{\mu\sqrt{\varepsilon}}
+\tfrac{\varsigma_*D_0}{\mu\sqrt{\varepsilon}}
\left(\log\tfrac{LD_0}{\mu\sqrt{\varepsilon}}\right)^3
+\tfrac{\sigma_*^2}{\mu^2\varepsilon}
\left(\log\tfrac{LD_0}{\mu\sqrt{\varepsilon}}\right)^3
\right)
\end{equation*}
calls to the stochastic oracle.
\end{remark}

In the following, we show that AR can also be used to derive the optimal convergence rate for strongly monotone problems ($\mu>0$) under the state-dependent noise model.
\begin{corollary}\label{state-dependent-thm-strongly-monotone}
Assume that $F$ is $L$-Lipschitz continuous and $\mu$-strongly
monotone, and
$0<\varepsilon \le \|x_0-x^*\|^2 \leq D_0^2$. In \autoref{alg:ARVI}, suppose the parameters satisfy \eqref{eqn:regularization-exp-unbiased-mu}.
For each epoch, the epoch length $N_s$ satisfies \eqref{eqn:batch-epoch-mu} and the batch size $m_s$ satisfy
\begin{align}\label{eqn:batch-epoch-mu-state}
m_s=\max\left\{\left\lceil\tfrac{32\cdot16^{s-1}c_{\mathcal{A}} \sigma^2_*}{N_s\mu^2 D_0^2}\right\rceil,\, \tfrac{\tilde{c}_{\mathcal A}\varsigma_*}{L},\,1\right\}.
\end{align}
Then, at epoch $S$, \autoref{alg:ARVI} computes an approximate solution $x_S$ such that $\mathbb{E}[\|x_S - x^*\|^2]
\le\varepsilon$ after
\begin{align}\label{eq:FTD_Complexity_error}
\mathcal{O}\left(\tfrac{L+{\varsigma_*}}{\mu}\left\lceil 1+
\log_{16}\!\left(\tfrac{D_0^2}{{\varepsilon}}\right)
\right\rceil+\tfrac{ \sigma^2_*}{\mu^2\varepsilon}\right)
\end{align}
calls to the $\mathcal{SO}$.
\end{corollary}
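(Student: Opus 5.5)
The plan is to mirror the proof of \autoref{thm:main-unbiased-mu}. Throughout, \autoref{assumption_inner_sto-state-dependent} replaces \autoref{assumption_inner_sto}. With $r_s=0$ for all $s$, each regularized operator coincides with $F$, so $x_s^*=x^*$ for every $s$. The variance proxy \eqref{eqn:delta_s} then collapses to $\sigma_{s,*}^2=\sigma_*^2$, because its state-dependent term carries the factor $r_s=0$. In particular $\sigma_{s,*}^2$ is deterministic, and \autoref{lem:state-dependent-iterate-boundedness} is not needed. The only admissibility requirement of \autoref{assumption_inner_sto-state-dependent} is $m_s\ge \tilde c_{\mathcal A}\varsigma_*/(L+r_s)=\tilde c_{\mathcal A}\varsigma_*/L$, which the middle term in \eqref{eqn:batch-epoch-mu-state} guarantees. (If one is careful about $k\ge 2$, note that $N_s\ge 4\sqrt{2c_{\mathcal A}}L/\mu\ge 2$ for reasonable $c_{\mathcal A}$.) Hence \eqref{eqn:distance-s-state-dependent} gives
\[
\mathbb{E}\left[\|x_{s,k}-x^*\|^2\,\middle|\,\mathcal{F}_{s,0}\right]\le \tfrac{c_{\mathcal A}L^2}{\mu^2k^2}\|x_{s-1}-x^*\|^2+\tfrac{c_{\mathcal A}\sigma_*^2}{k m_s\mu^2}.
\]

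Next, induct on $s$ to prove $\mathbb{E}\|x_s-x^*\|^2\le D_0^2/16^s$. Take total expectation in the display at $k=N_s$ and apply the induction hypothesis to the first term. The choice of $N_s$ in \eqref{eqn:batch-epoch-mu} makes the first term at most $D_0^2/(2\cdot 16^{s})$. The first entry of the maximum in \eqref{eqn:batch-epoch-mu-state} makes the second term at most $D_0^2/(2\cdot16^s)$, exactly as in steps (i)--(v) of \autoref{thm:main-unbiased-mu}. Taking $s=S$ with $S=\lceil\log_{16}(D_0^2/\varepsilon)\rceil$ yields $\mathbb{E}\|x_S-x^*\|^2\le\varepsilon$.

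Finally, count samples. Bound $m_s$ by the sum of the three entries of the maximum in \eqref{eqn:batch-epoch-mu-state} (plus one for the ceiling), and use $N_s\le 4\sqrt{2c_{\mathcal A}}L/\mu+1$. The sum $\sum_{s=1}^S N_s m_s$ then splits into three parts.
\begin{itemize}
\item The constant entry contributes $\mathcal{O}(SL/\mu)$.
\item The $\varsigma_*$ entry contributes $\sum_s N_s\tilde c_{\mathcal A}\varsigma_*/L=\mathcal{O}(S\varsigma_*/\mu)$, since $N_s/L=\mathcal{O}(1/\mu)$.
\item The $\sigma_*^2$ entry contributes $\sum_s 32\cdot 16^{s-1}c_{\mathcal A}\sigma_*^2/(\mu^2D_0^2)=\mathcal{O}(16^{S}\sigma_*^2/(\mu^2D_0^2))=\mathcal{O}(\sigma_*^2/(\mu^2\varepsilon))$. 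This is a geometric sum, and $16^{S-1}\le D_0^2/\varepsilon$.
\end{itemize}
Together these give \eqref{eq:FTD_Complexity_error}. The extra ``$1+$'' inside the ceiling in \eqref{eq:FTD_Complexity_error} absorbs rounding.

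I expect no real obstacle. The only delicate point is confirming that $\sigma_{s,*}^2$ reduces exactly to $\sigma_*^2$ when $r_s=0$. One might worry that the lemma behind \eqref{eqn:delta_s} needs $r_s>0$ or a strongly monotone $F_s$. However, \autoref{lemma:state-dependent-noise-regularized} applies verbatim with $r_s=0$ and $x_s^*=x^*$, since the assumption \eqref{eqn:state-dependent-assumption} evaluated at $x^*$ gives variance at most $\sigma_*^2/m_s$ directly. The strong convexity needed by the subroutine now comes from $\mu$ through the $(\mu+r_s)$ factor in \autoref{assumption_inner_sto-state-dependent}.
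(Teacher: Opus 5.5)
Your proposal is correct and follows the paper's proof. The paper likewise reruns the induction of \autoref{thm:main-unbiased-mu} with $\sigma^2$ replaced by $\sigma_*^2$, and then bounds $\sum_s N_s m_s$ through the same three-part split of $m_s$. The one difference is that the paper obtains $\sigma_*^2$ by invoking \autoref{lem:state-dependent-iterate-boundedness} with $r_i=0$, whereas you read $\sigma_{s,*}^2=\sigma_*^2$ directly off \eqref{eqn:delta_s}; this is equivalent and slightly cleaner, since it holds for every $s$ without that lemma's hypotheses.
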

\begin{proof}
The proof follows the same induction as that of \autoref{thm:main-unbiased-mu}, after replacing $ \sigma^2$ by a deterministic upper bound on $\sigma_*^2$ in \autoref{lem:state-dependent-iterate-boundedness} with $ r_{i}=0,$ for all $i\in[s]$, and is therefore omitted for brevity.
The total sample complexity can be bounded as
\begin{equation*}
    \begin{aligned}
       \tsum_{s=1}^{S}\tsum_{i=1}^{N_s} m_s
&\le \tsum_{s=1}^{S}\left[\tfrac{8\cdot16^{s-1}\sqrt{c_{\mathcal{A}}} \sigma^2_*}{\mu L D_0^2}{\tfrac{1}{\sqrt{2}}}+\tfrac{\tilde{c}_{\mathcal A}\varsigma_*}{L}+1\right]\left[\tfrac{4\sqrt{2c_{\mathcal{A}}}L}{{\mu}}+1\right]=\mathcal{O}\left(\tfrac{L+{\varsigma_*}}{\mu}\left\lceil 1+
\log_{16}\!\left(\tfrac{D_0^2}{{\varepsilon}}\right)
\right\rceil+\tfrac{ \sigma^2_*}{\mu^2\varepsilon}\right).
    \end{aligned}
\end{equation*}
\end{proof}
This result improves the distance-to-solution guarantee for strongly monotone operators established in \cite{kotsalis2022simple2}.
In the latter result (cf. \citep[Corollary 3.6]{kotsalis2022simple2}), under the distance-based state-dependent noise assumption \eqref{state-dependent-original}, the sample complexity to find a solution $\widehat{x} \in X$ such that $\bbe\left[
\|\widehat{x}-x^*\|^2
\right]
\leq\varepsilon$ is
$$
\mathcal{O}\{ \max( \tfrac{\max\{L, \tilde{L}\}^2+\varsigma_*^2}{\mu^2}
\log{\tfrac{\|x_0-x^*\|}{\varepsilon}} , \tfrac{\sigma_*^2+\|F(x^*)\|^2}{\mu^2 \varepsilon} \log{\tfrac{1}{\varepsilon}}) \},
$$
where it depends quadratically on the condition number (also involving the potentially larger sample $L$-Lipschitz continuity parameter $\tilde{L}$), and also depends on the state-dependent noise parameter $\varsigma$ and on $\| F(x^*)\|$, and therefore does not capture the benefit of incorporating variance-reduction mechanisms. In contrast, even in the presence of the state-dependent noise, the deterministic error decays linearly, as shown in \eqref{eq:FTD_Complexity_error}; furthermore, the second term in \eqref{eq:FTD_Complexity_error} can be substantially reduced through variance-reduction mechanisms, such as parallel or distributed implementations that aggregate multiple stochastic operator evaluations, mini-batching, or related averaging strategies. These approaches reduce the effective noise level without increasing the number of algorithmic iterations.

We next state a parallel residual guarantee for strongly monotone problems under state dependent noise. The result shows that AR attains a near optimal residual complexity. Since the proof is similar to that of \autoref{thm:main-unbiased-mu-n}, we only include the result here and omit its proof for simplicity.
\begin{corollary}\label{thm:ARVI-state-dependent-mu}
Assume that $\mu>0$ and $0<\varepsilon\leq LD_0$, where
$D_0\geq\textnormal{dist}(x_0,X^*)$. In \autoref{alg:ARVI},
suppose $S$, $\{r_s\}_{s=1}^S$, and $\{N_s\}_{s=1}^S$ are chosen
as in \autoref{thm:main-unbiased-mu}. For each $s\in[S]$, choose
$m_s$ according to
\eqref{eqn:regularization-exp-unbiased-state-dependent}.
Then, at epoch $S$, \autoref{alg:ARVI} computes an approximate
solution $x_S$ satisfying
$\sqrt{\mathbb E[\res_F(x_S)^2]}\leq\varepsilon$ after
\begin{align}\label{eqn:state-dependent-final-mu}
\mathcal O\left(
\tfrac{L}{\mu}
\left(
\log\tfrac{\mu D_0}{\varepsilon}
+\log\log\tfrac{L}{\mu}
\right)
+\log\tfrac{L}{\mu}
+\tfrac{\varsigma_*D_0}{\varepsilon}
\left(\log\tfrac{L}{\mu}\right)^3
+\tfrac{\sigma_*^2}{\varepsilon^2}
\left(\log\tfrac{L}{\mu}\right)^3
\right)
\end{align}
calls to the $\mathcal{SO}$.
\end{corollary}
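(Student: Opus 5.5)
The plan is to follow the proof of \autoref{thm:main-unbiased-mu-n} almost verbatim, with the uniform variance $\sigma^2$ replaced by the random variance proxy $\sigma_{s,*}^2$ of \eqref{eqn:delta_s}. (The statement refers to the choices of $S$, $r_s$, $N_s$ ``as in \autoref{thm:main-unbiased-mu}''; I read this as the two-phase schedule \eqref{eqn:regularization-exp-unbiased-mu-n}: $r_s=0$ for $s\le S_1$, geometric growth afterwards, and $N_s=\lceil 4\sqrt{2c_{\mathcal A}}(L+r_s)/(r_s+\mu)\rceil$.)

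\textbf{Step 1: epoch-wise contraction.} By induction on $s$, I would show $\mathbb E[\|x_s-x_s^*\|^2]\le D_0^2/16^s$. Assume the bound holds for all $j\le s-1$. Then \autoref{lem:state-dependent-iterate-boundedness} applies; for $s=1$ use directly $\sigma_{1,*}^2=\sigma_*^2$ when $r_1=0$. This gives
\[
\mathbb E[\sigma_{s,*}^2]\le \tfrac{8\varsigma_*D_0^2}{3}\tsum_{i=1}^s\tfrac{r_i+r_{i-1}}{4^{i-1}}+\sigma_*^2 .
\]
The batch size \eqref{eqn:regularization-exp-unbiased-state-dependent} contains exactly this quantity and also meets the lower bound $m_s\ge \tilde c_{\mathcal A}\varsigma_*/(L+r_s)$. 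Hence \autoref{assumption_inner_sto-state-dependent} is in force. Take expectations of \eqref{eqn:distance-s-state-dependent} (the tower property is valid because $\sigma_{s,*}^2$ is $\mathcal F_{s,0}$-measurable), then apply \eqref{eq:epoch_proximity1}. The noise term is at most $c_{\mathcal A}(L+r_s)^2D_0^2/(16^{s-1}N_s^2(\mu+r_s)^2)$, and the choice of $N_s$ closes the induction. Note that in the phase $s\le S_1$ we have $r_s=0$ and $x_s^*\in X^*$, so $\mu>0$ is what gives the contraction there.

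\textbf{Step 2: residual bound.} I would apply the decomposition \eqref{eqn:res} with Minkowski's inequality, together with \eqref{eqn:residual-s-state-dependent}, \eqref{eq:epoch_proximity2} and Step 1. The only new term is $c_{\mathcal A}\sqrt{\mathbb E[\sigma_{S,*}^2]N_S/m_S}$, which by the choice of $m_S$ is at most $c_{\mathcal A}(L+r_S)D_0/4^{S-1}$, exactly as in \eqref{eqn:second-term}. The remaining estimates are the ones in \autoref{thm:main-unbiased-mu-n}: $r_S\ge L$ from the choice of $S$, $(16c_{\mathcal A}+1)r_SD_0/4^S\le\varepsilon/4$, and $\tsum_s r_s/4^{s-1}\le \varepsilon/(2D_0)$. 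Together these give $\sqrt{\mathbb E[\res_F(x_S)^2]}\le\varepsilon$.

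\textbf{Step 3: sample complexity.} Summing $N_sm_s$ splits into three kinds of terms.
\begin{itemize}
\item The deterministic part reproduces the first terms of \eqref{eqn:state-dependent-final-mu}.
\item The $\sigma_*^2$ part reproduces the $\sigma_*^2\varepsilon^{-2}(\log\tfrac L\mu)^3$ term as before.
\item The two $\varsigma_*$ contributions are new.
\end{itemize}
For the last item, the lower-bound piece $\tilde c_{\mathcal A}\varsigma_* N_s/(L+r_s)=\mathcal O(\varsigma_*/(\mu+r_s))$ sums to $\mathcal O(\varsigma_*S/\mu)$. For the variance piece, the key input is $\tsum_{i\le s}(r_i+r_{i-1})/4^{i-1}=\mathcal O(\mu/4^{S_1})=\mathcal O(\varepsilon/D_0)$. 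With it, the variance piece is $\tfrac{16^{s}N_s}{(L+r_s)^2}\varsigma_*\cdot\mathcal O(\varepsilon/D_0)$, which has the same form as the $\sigma_*^2$ term with $\sigma_*^2$ replaced by $\varsigma_*\varepsilon D_0$. It therefore yields $\varsigma_*D_0\varepsilon^{-1}(\log\tfrac L\mu)^3$.

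\textbf{Main obstacle.} The delicate point is absorbing the $\mathcal O(\varsigma_*S/\mu)$ term coming from the minibatch lower bound. I would attack it by using $\varepsilon\le LD_0$ and $S=\mathcal O(\log\tfrac{\mu D_0}{\varepsilon}+\log\tfrac L\mu)$, and then bounding the term either by the $\varsigma_*D_0/\varepsilon$ term (when $\mu D_0\lesssim\varepsilon\log$-factors) or by comparing the $\varsigma_*N_s/(L+r_s)$ contributions against the deterministic term. If this comparison fails, the bound must state an extra $\tfrac{\varsigma_*}{\mu}\log$ term, as in \eqref{eq:FTD_Complexity_error}.
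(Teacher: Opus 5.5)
Your Steps 1 and 2 are the intended argument. The paper omits this proof, saying it mirrors Corollary~\ref{thm:main-unbiased-mu-n}, and your reading of the schedule as the two-phase one from that corollary is correct.

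The gap is in Step 3. You never show that the minibatch floor $\tilde c_{\mathcal A}\varsigma_*/(L+r_s)$ fits inside \eqref{eqn:state-dependent-final-mu}. You even concede that an extra $\tfrac{\varsigma_*}{\mu}\log$ term might be needed, so the statement as written is left unproved. Neither of your suggested routes works as stated:
\begin{itemize}
\item Dominating $\varsigma_*/\mu$ by $\varsigma_*D_0/\varepsilon$ needs $\varepsilon\lesssim\mu D_0$, which is the reverse of the condition you wrote.
\item Comparing against the $L/\mu$ term needs $\varsigma_*\lesssim L$, which is not assumed.
\end{itemize}
Moreover, the crude bound $\mathcal O(\varsigma_*S/\mu)$, obtained by discarding $r_s$, is genuinely too weak. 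For $\varepsilon=LD_0$ the number of epochs $S$ is only of order $\log\log\tfrac L\mu$. Yet $\varsigma_*S/\mu$ then exceeds the target $\tfrac{\varsigma_*}{L}(\log\tfrac L\mu)^3$ by a factor of order $L/\mu$, up to logarithms.

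The missing step is to keep $r_s$ in the denominator, i.e.\ to bound $\tsum_s\varsigma_*/(\mu+r_s)$. You should also use what the schedule itself says about $u\coloneqq(16c_{\mathcal A}+1)\mu D_0/\varepsilon$. Write $S_0=\log_4(16(16c_{\mathcal A}+4)L/\mu)$, so that $r_s=4^{s-S_1}\mu/(8S_0)$ for $s>S_1$, and note $S_1=1+\lceil\log_4u\rceil\le2+\log_4u$.

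\emph{Case $S_1\ge1$.} Then $u>1/4$, so $1/\mu<4(16c_{\mathcal A}+1)D_0/\varepsilon$. Since $(2+\log_4u)/u\le4$ for $u>1/4$, the unregularized epochs contribute $\varsigma_*S_1/\mu\le4(16c_{\mathcal A}+1)\varsigma_*D_0/\varepsilon$. The regularized epochs contribute $\tfrac{\varsigma_*}{\mu}\tsum_{j\ge1}(1+4^j/(8S_0))^{-1}=\mathcal O(\tfrac{\varsigma_*}{\mu}\log S_0)$. By the bound on $1/\mu$, this is $\mathcal O(\tfrac{\varsigma_*D_0}{\varepsilon}\log\log\tfrac L\mu)$.

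\emph{Case $S_1\le0$.} This happens exactly when $\varepsilon\ge4(16c_{\mathcal A}+1)\mu D_0$. Every epoch is then regularized, and $\tsum_s\varsigma_*/r_s\le\tfrac43\varsigma_*/r_1$. From $4^{S_1}\le16u$ you get $r_1\ge\varepsilon/(32(16c_{\mathcal A}+1)S_0D_0)$. This contribution is therefore $\mathcal O(\tfrac{\varsigma_*D_0}{\varepsilon}\log\tfrac L\mu)$.

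In both cases the floor is absorbed into $\tfrac{\varsigma_*D_0}{\varepsilon}(\log\tfrac L\mu)^3$. No extra $\tfrac{\varsigma_*}{\mu}\log$ term arises, and the stated bound holds. The rest of your Step 3 is fine, namely the variance piece via $\tsum_{i\le s}(r_i+r_{i-1})/4^{i-1}=\mathcal O(\varepsilon/D_0)$.
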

\subsection{Stochastic operator extrapolation for strongly monotone VIs}\label{sec:Stochastic strongly monotone VI}
In this subsection,
we focus on providing a subroutine $\mathcal{A}$ that satisfies the convergence guarantee in \autoref{assumption_inner_sto-state-dependent} used in the AR method.
We consider
the stochastic operator extrapolation method as $\mathcal{A}$ for solving
VI problems  \eqref{VIP} which satisfy $L$-Lipschitz continuity \eqref{eq:Lipschitz} and $\mu$-strong monotonicity \eqref{strongly_monotone} with $\mu>0$, under the assumptions in \autoref{sec:Assumptions}. We provide both suboptimal result required in \autoref{assumption_inner_sto-state-dependent} and an optimal result as a byproduct.
\begin{algorithm}[t]
\caption{Stochastic operator extrapolation}
\label{alg:SOE}
\begin{algorithmic}
\State Let $x_{0}=x_{-1}\in\bbr^n$, and let the nonnegative parameters ${\eta_k}$ and ${\lambda_k}$ be given with $\lambda_0=0$.
\For{$k=0,1,\dots,N-1$}
\State Draw a mini-batch of size $m_k$ and define
\begin{equation*}
\tilde F_k(x)
\coloneqq
\tfrac{1}{m_k}\tsum_{i=1}^{m_k}\tilde F(x;\zeta_i).
\end{equation*}
\If{$k=0$}
\State Set $\tilde F_{-1}(\cdot)\equiv \tilde F_{0}(\cdot)$.
\EndIf
\begin{equation}\label{eq:stochastic_algorithm_step}
x_{k+1}
=
\argmin_{x \in X}
\left\{
\eta_{k}
\left\langle
\tilde F_k(x_k)
+
\lambda_k\big[
\tilde F_k(x_k)
-
\tilde F_{k-1}(x_{k-1})
\big],
x
\right\rangle
+
\tfrac{1}{2}\|x-x_k\|^2\right\}.
\end{equation}
\EndFor
\end{algorithmic}
\end{algorithm}

We define the mini-batch estimator as follows
\begin{equation}\label{eqn:mini-batch-estimator}
\tilde F_k(x)
\coloneqq
\tfrac{1}{m_k}\tsum_{i=1}^{m_k}\tilde F(x;\zeta_{k,i}),
\end{equation}
where $m_k$ is the batch size.
We let $\mathcal{F}_{k}$ denote the filtration generated by the global trajectory up to, but not including, the samples used to form $\tilde F_k(\cdot)$; thus, $\mathcal{F}_{k}$ contains the entire sample history observed before $\tilde F_k(\cdot)$ is defined.
Since the sampling strategy is fixed within each epoch, the batch size is uniform across inner iterations, and we write $m_k \equiv m$.

Building upon the Assumptions \eqref{eq:fast-decreasing-bias-o} and \eqref{eqn:state-dependent-assumption},
we have the following batch-level properties on $\tilde F_k$.
\begin{lemma}
Suppose assumptions \eqref{eq:fast-decreasing-bias-o} and \eqref{eqn:state-dependent-assumption}.
The following properties hold almost surely.
\begin{itemize}
    \item[\textendash] {\it Unbiasedness:} for any $\mathcal{F}_{k}$-measurable point $x \in X$,
    \begin{align}\label{eq:fast-decreasing-bias-unregularized}
        \mathbb{E}[\tilde F_k(x)\mid\mathcal{F}_{k}]=F(x),\quad \quad \textnormal{a.s.}
    \end{align}
    \item[\textendash] {\it State-dependent variance:}  for every $\mathcal F_k$-measurable point $x\in X$, $\tilde F_k$ satisfies
\begin{align}\label{eq:state-dep-diff-op}
\mathbb{E}\big[\|\tilde F_k(x)-F(x)\|^2\mid \mathcal{F}_{k}\big]
\leq \tfrac{\sigma_*^2}{m}
+\tfrac{\varsigma_*}{m}\langle F(x)-F(x^*),\,x-x^* \rangle, \quad \textnormal{a.s.}
\end{align}
\end{itemize}
\end{lemma}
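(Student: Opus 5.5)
The plan is to reduce both properties to the single-sample assumptions \eqref{eq:fast-decreasing-bias-o} and \eqref{eqn:state-dependent-assumption} by conditioning on $\mathcal{F}_k$ and using that, given $\mathcal{F}_k$, the samples $\zeta_{k,1},\dots,\zeta_{k,m}$ are fresh i.i.d.\ draws independent of $\mathcal{F}_k$. Since $x$ is $\mathcal{F}_k$-measurable, $x$ can be treated as fixed inside each conditional expectation. The same reasoning was used informally in the proof of \autoref{lemma:state-dependent-noise-regularized}, where step (ii) invoked the minibatch property; here we make it explicit.

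For unbiasedness, I will apply linearity of conditional expectation to \eqref{eqn:mini-batch-estimator}. For each $i$, I will apply \eqref{eq:fast-decreasing-bias-o} with the filtration enlarged to $\mathcal{F}_k\vee\sigma(\zeta_{k,1},\dots,\zeta_{k,i-1})$. The $i$th sample is independent of this $\sigma$-algebra and $x$ is measurable with respect to it, so $\mathbb{E}[\tilde F(x;\zeta_{k,i})\mid\mathcal{F}_k]=F(x)$ after applying the tower property. Averaging over $i$ then gives \eqref{eq:fast-decreasing-bias-unregularized}.

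For the variance bound, I will write $\tilde F_k(x)-F(x)=\tfrac1m\sum_{i=1}^m \xi_i$ with $\xi_i\coloneqq\tilde F(x;\zeta_{k,i})-F(x)$. Expanding the square gives diagonal terms and cross terms. For $i<j$, conditioning first on $\mathcal{F}_k\vee\sigma(\zeta_{k,1},\dots,\zeta_{k,j-1})$ makes $\xi_i$ measurable, and the previous step gives $\mathbb{E}[\xi_j\mid\cdot]=0$. Hence every cross term $\mathbb{E}[\langle\xi_i,\xi_j\rangle\mid\mathcal{F}_k]$ vanishes, and
\begin{equation*}
\mathbb{E}\big[\|\tilde F_k(x)-F(x)\|^2\mid\mathcal{F}_k\big]=\tfrac{1}{m^2}\tsum_{i=1}^m\mathbb{E}\big[\|\xi_i\|^2\mid\mathcal{F}_k\big].
\end{equation*}
Each diagonal term is bounded by $\sigma_*^2+\varsigma_*\langle F(x)-F(x^*),x-x^*\rangle$ using \eqref{eqn:state-dependent-assumption}, again via the enlarged filtration and the tower property. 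The right-hand side is $\mathcal{F}_k$-measurable because $x^*$ is deterministic. Summing the $m$ diagonal terms and dividing by $m^2$ gives \eqref{eq:state-dep-diff-op}.

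The only delicate point is justifying the oracle assumptions at the sub-filtration level, i.e., that each sample is a fresh oracle call independent of everything generated before it. This follows from the i.i.d.\ sampling model and the definition of $\mathcal{F}_k$ as the history preceding the batch, so I expect no real obstacle. All statements hold almost surely because they are conditional expectation identities and inequalities.
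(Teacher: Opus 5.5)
Your proposal is correct and follows the paper's route. The paper justifies this lemma in a single line (``the claim follows from independence and the construction of $\tilde F_k$''). Your steps supply the details that line leaves implicit: linearity for unbiasedness, cross terms vanishing after conditioning on $\mathcal{F}_k\vee\sigma(\zeta_{k,1},\dots,\zeta_{k,j-1})$, and summing the $m$ per-sample bounds before dividing by $m^2$.
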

These are the $1/m$-averaged counterparts of the per-sample conditions and reduce to the latter when $m=1$. The claim follows from independence and the construction of $\tilde{F}_k$.

Consider the stochastic operator extrapolation (SOE) algorithm \cite{kotsalis2022simple}.
Starting from the initial point $x_{0}=x_{-1}\in \bbr^n$, the SOE iteration is given in \autoref{alg:SOE}, where $\tilde F_k$ is defined in \eqref{eqn:mini-batch-estimator}. Define
\begin{equation}\label{eqn:delta-error}
    \Delta_{k} \coloneqq\tilde{F}_{k}(x_{k})- F(x_{k}).
\end{equation}

We start with a general error bound for the SOE method.
Before stating it, we describe the role of the conditions imposed.
Conditions \eqref{eq:inner-loop-B}--\eqref{eq:inner-loop-A} are the standard matching conditions between the stepsizes $\{\eta_k\}$, the extrapolation weights $\{\lambda_k\}$, and the weighting sequence $\{\theta_k\}$, ensuring that the extrapolation step is correctly balanced for convergence; see, for example, \cite{kotsalis2022simple,kotsalis2022simple2}.

\smallskip
\begin{lemma}\label{lem:per-itrate}
Let $\{x_{ k}\}_{k\geq -1}$ denote the iterates generated by the SOE method in \autoref{alg:SOE}, and let $\{\theta_{ k}\}_{k\geq 0}$ be a sequence of positive numbers. Let $\{\eta_k\}_{k\geq 0}$ and $\{\lambda_k\}_{k\geq 0}$ denote the stepsizes and extrapolation weights, with $\lambda_0=0$. Suppose that, for all $k\geq 0$, they satisfy
\begin{align}
 16\theta_{ k+1}(\lambda_{ k+1}\eta_{ k+1})^2L^2
  &\leq  \theta_{ k},\label{eq:inner-loop-B}\\
  \theta_{ k+1}\eta_{ k+1}\lambda_{ k+1}
  &=\theta_{ k}\eta_{ k}.\label{eq:inner-loop-A}
\end{align}
Then, for any $x\in X,$
for all $N\geq 1$, it holds that
\begin{equation}\label{eqn:iterate-wise}
 \begin{aligned}
&\tsum_{k = 0}^{N-1}\theta_{ k}\left[\tfrac{1}{2}(\|x_{ k} - x\|^2-\|x_{  k + 1} - x\|^2)-\eta_{  k}\langle F(x_{  k + 1}), x_{  k + 1}-x \rangle\right] +\tfrac{3\theta_{N}}{8}\| x_{N} - x \|^2+\theta_{N}(\eta_{ N} \lambda_{ N})^2\|\Delta_{N - 1}\|^2\\
&\geq\tsum_{k = 0}^{N-1}\left[\tfrac{\theta_k}{8}\|x_{  k + 1} - x_{ k}\|^2-4\theta_k(\eta_{  k} \lambda_{ k})^2(\|\Delta_{  k}\|^2 + \|\Delta_{  k - 1}\|^2)\right]+  {\tsum_{k = 0}^{N-2}}\theta_{ k}\eta_{  k} \langle \Delta_{ {k + 1}}, x_{  k + 1} - x\rangle.
\end{aligned}
\end{equation}
\end{lemma}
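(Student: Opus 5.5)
The plan is to carry out the standard operator-extrapolation analysis: a three-point inequality for each prox step, a telescoping of the extrapolation terms using \eqref{eq:inner-loop-A}, and Young's inequality together with \eqref{eq:inner-loop-B} for the leftover cross terms. Monotonicity is not needed here, since the left-hand side of \eqref{eqn:iterate-wise} is stated with $F(x_{k+1})$.

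\textbf{Step 1 (three-point inequality and splitting $g_k$).} Write $g_k\coloneqq \tilde F_k(x_k)+\lambda_k[\tilde F_k(x_k)-\tilde F_{k-1}(x_{k-1})]$. Optimality of $x_{k+1}$ in \eqref{eq:stochastic_algorithm_step} gives, for any $x\in X$,
\begin{equation*}
\eta_k\langle g_k,x_{k+1}-x\rangle+\tfrac12\|x_{k+1}-x_k\|^2\le \tfrac12\|x_k-x\|^2-\tfrac12\|x_{k+1}-x\|^2 .
\end{equation*}
Using \eqref{eqn:delta-error}, I decompose
\begin{equation*}
g_k=F(x_{k+1})-[F(x_{k+1})-F(x_k)]+\lambda_k[F(x_k)-F(x_{k-1})]+\Delta_k+\lambda_k(\Delta_k-\Delta_{k-1}).
\end{equation*}
The convention $\tilde F_{-1}\equiv\tilde F_0$ and $\lambda_0=0$ make the $k=0$ terms vanish. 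I then multiply by $\theta_k$ and sum over $k=0,\dots,N-1$. This moves $\theta_k\eta_k\langle F(x_{k+1}),x_{k+1}-x\rangle$ to the left-hand side, which produces the bracket appearing in \eqref{eqn:iterate-wise}.

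\textbf{Step 2 (deterministic extrapolation terms).} By \eqref{eq:inner-loop-A}, the term $-\theta_k\eta_k\langle F(x_{k+1})-F(x_k),x_{k+1}-x\rangle$ cancels against $\theta_{k+1}\eta_{k+1}\lambda_{k+1}\langle F(x_{k+1})-F(x_k),x_{k+1}-x\rangle$ from the next index, once the latter is written as $\langle\cdot,x_{k+1}-x\rangle$ plus $\langle\cdot,x_{k+2}-x_{k+1}\rangle$. Two kinds of remainder are left.
\begin{itemize}
\item Interior cross terms $\theta_k\eta_k\lambda_k\langle F(x_k)-F(x_{k-1}),x_{k+1}-x_k\rangle$. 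I bound these by $L\|x_k-x_{k-1}\|$ and Young's inequality, splitting them into $\tfrac{\theta_k}{8}\|x_{k+1}-x_k\|^2$ and $c\,\theta_k(\eta_k\lambda_k L)^2\|x_k-x_{k-1}\|^2$. Condition \eqref{eq:inner-loop-B} converts the second piece into a fraction of $\theta_{k-1}\|x_k-x_{k-1}\|^2$, which is absorbed by the $\tfrac12\theta_{k-1}\|x_k-x_{k-1}\|^2$ coming from the three-point inequality at step $k-1$.
\item The boundary term $\theta_{N-1}\eta_{N-1}\langle F(x_N)-F(x_{N-1}),x_N-x\rangle=\theta_N\eta_N\lambda_N\langle\cdot\rangle$. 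I bound it by $\tfrac{3\theta_N}{8}\|x_N-x\|^2$ plus a multiple of $\theta_N(\eta_N\lambda_N L)^2\|x_N-x_{N-1}\|^2$. The latter is again absorbed through \eqref{eq:inner-loop-B}; this is why the term $\tfrac{3\theta_N}{8}\|x_N-x\|^2$ sits on the left-hand side.
\end{itemize}
After all absorptions, I expect to retain $\tfrac{\theta_k}{8}\|x_{k+1}-x_k\|^2$ on the right-hand side.

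\textbf{Step 3 (noise terms).} I handle the terms $\theta_k\eta_k\langle\Delta_k+\lambda_k(\Delta_k-\Delta_{k-1}),x_{k+1}-x\rangle$ in the same telescoping fashion via \eqref{eq:inner-loop-A}. The $\lambda_k\Delta_k$ pieces and the $-\lambda_{k+1}\Delta_k$ pieces pair up, leaving inner products against $x_{k+1}-x$, which give the sum $\sum_{k=0}^{N-2}\theta_k\eta_k\langle\Delta_{k+1},x_{k+1}-x\rangle$. What remains are cross terms $\theta_k\eta_k\lambda_k\langle\Delta_k-\Delta_{k-1},x_{k+1}-x_k\rangle$, which I bound by Young's inequality. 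Each such term contributes at most $4\theta_k(\eta_k\lambda_k)^2(\|\Delta_k\|^2+\|\Delta_{k-1}\|^2)$, paid for with part of the $\|x_{k+1}-x_k\|^2$ budget. The final unmatched noise term at index $N$ is bounded by $\theta_N(\eta_N\lambda_N)^2\|\Delta_{N-1}\|^2$ plus a piece of $\|x_N-x_{N-1}\|^2$ or $\|x_N-x\|^2$.

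\textbf{Main obstacle.} The hard part is the bookkeeping: the single quadratic budget $\tfrac12\theta_k\|x_{k+1}-x_k\|^2$ must be shared among the Lipschitz cross term, the noise cross term, and the two boundary terms, while still leaving exactly $\tfrac{\theta_k}{8}\|x_{k+1}-x_k\|^2$ and $\tfrac{3\theta_N}{8}\|x_N-x\|^2$. I would first fix Young's constants so that the Lipschitz part consumes $\tfrac18$ per index through the factor $16$ in \eqref{eq:inner-loop-B}, give $\tfrac14$ to the noise cross terms (this produces the coefficient $4$), and keep $\tfrac18$. I would then check that the index shift in the noise sum lands exactly on $\Delta_{k+1}$ paired with $x_{k+1}-x$.
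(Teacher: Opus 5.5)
Your plan follows the paper's proof: the three-point inequality for the prox step, telescoping through \eqref{eq:inner-loop-A}, and Young's inequality with \eqref{eq:inner-loop-B} for the leftover cross terms, with no use of monotonicity. There is one organizational difference. The paper expands $\tilde F_k(x_k)$ around the next evaluation $\tilde F_{k+1}(x_{k+1})$, so the deterministic and noise parts telescope together and the resulting $\langle\Delta_N,x_N-x\rangle$ terms cancel. You split $g_k$ first, so the clean way to treat the noise is to mirror Step 2 with $\Delta_k=\Delta_{k+1}-(\Delta_{k+1}-\Delta_k)$ for $k\le N-2$. Your literal pairing of $\lambda_k\Delta_k$ with $-\lambda_{k+1}\Delta_k$ has mismatched weights, $\theta_k\eta_k\lambda_k$ versus $\theta_{k+1}\eta_{k+1}\lambda_{k+1}=\theta_k\eta_k$. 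The corrected version gives exactly three pieces:
\begin{itemize}
\item the sum $\sum_{k=0}^{N-2}\theta_k\eta_k\langle\Delta_{k+1},x_{k+1}-x\rangle$;
\item the cross terms $\theta_k\eta_k\lambda_k\langle\Delta_k-\Delta_{k-1},x_{k+1}-x_k\rangle$;
\item the boundary term $\theta_N\eta_N\lambda_N\langle\Delta_{N-1},x_N-x\rangle$.
\end{itemize}

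What needs fixing is the budget in your last paragraph, which does not close as stated. The constants in \eqref{eqn:iterate-wise} leave no slack, so this matters.

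\textbf{Interior terms.} The Lipschitz cross term at index $k$ costs $\tfrac{\theta_k}{8}\|x_{k+1}-x_k\|^2$ directly. It also costs $2\theta_k(\eta_k\lambda_kL)^2\|x_k-x_{k-1}\|^2\le\tfrac{\theta_{k-1}}{8}\|x_k-x_{k-1}\|^2$ carried back to the previous index. So each budget $\tfrac{\theta_k}{2}\|x_{k+1}-x_k\|^2$ loses $\tfrac14$ to Lipschitz terms, and no Young parameter does better: with parameter $\alpha$ the cost is $\tfrac{\alpha}{2}+\tfrac{1}{32\alpha}\ge\tfrac14$. Hence the noise cross term can only get $\tfrac18$. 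The coefficient $4$ comes from $ab\le\tfrac18b^2+2a^2$ together with $\|\Delta_k-\Delta_{k-1}\|^2\le2\|\Delta_k\|^2+2\|\Delta_{k-1}\|^2$. Giving the noise term $\tfrac14$ would yield coefficient $2$ but overdraw the budget, since $\tfrac14+\tfrac14+\tfrac18>\tfrac12$.

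\textbf{Boundary terms.} The $\tfrac{3\theta_N}{8}\|x_N-x\|^2$ is not used up by the Lipschitz boundary term. That term takes only $\tfrac{\theta_N}{8}\|x_N-x\|^2$, paired via \eqref{eq:inner-loop-B} with the otherwise unused $\tfrac{\theta_{N-1}}{8}\|x_N-x_{N-1}\|^2$ at the last index. The noise boundary term needs the remaining $\tfrac{\theta_N}{4}$:
\[
\theta_N\eta_N\lambda_N\langle\Delta_{N-1},x_N-x\rangle\ge-\theta_N(\eta_N\lambda_N)^2\|\Delta_{N-1}\|^2-\tfrac{\theta_N}{4}\|x_N-x\|^2 .
\]
The $\|x_N-x_{N-1}\|^2$ option you mention is not available here, because the inner product is against $x_N-x$.

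With this allocation your argument yields \eqref{eqn:iterate-wise} exactly. Per interior index it is $\tfrac18+\tfrac18$ for Lipschitz, $\tfrac18$ for noise, and $\tfrac18$ kept; at the boundary it is $\tfrac18+\tfrac14$.
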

\begin{proof}
By the initialization $x_{-1}=x_0$ and the choice of using the same batch to construct $\tilde{F}_{-1}$ and $\tilde{F}_0$, we have
\begin{align}\label{eqn:initial-p1}
\tilde{F}_{ -1}(x_{ -1})-\tilde{F}_{ 0}(x_{ 0})
    \overset{\eqref{eqn:delta-error}}{=}
  \tilde{F}_{ -1}(x_{ -1})-\tilde{F}_{ 0}(x_{ -1})
   = 0,
\end{align}
where the first equality follows from $x_{-1}=x_0$, while the second follows because $\tilde{F}_{-1}(\cdot)=\tilde{F}_0(\cdot)$, as the two stochastic operators are constructed using the same sample batch. Moreover, by the definition of $\Delta_k$ in \eqref{eqn:delta-error}, it follows that $\Delta_{-1}=\Delta_0$. Therefore, for SOE, the three-point lemma \citep[Lemma 3.1]{lan2020first} implies that, for each $k\geq 0$ and all $x\in X,$ it holds almost surely that
\begin{equation}\label{eq:state-dependent-optimality-condition}
\begin{aligned}
\tfrac{1}{2}\|x_{k} - x\|^2
&\geq \tfrac{1}{2}\|x_{k + 1} - x\|^2 + \eta_{k}\langle F(x_{k + 1}), x_{ k + 1} - x\rangle
\\
&\quad + \eta_{ k} \lambda_{k} \langle \Delta_{ k}-\Delta_{ k-1},  x_{ k + 1} -x_{k}\rangle+\eta_{ k}\langle \Delta_{ k+1}, x_{ k + 1} - x\rangle\\
&\quad + \eta_{ k} \langle \tilde{F}_{k}(x_{k})-\tilde{F}_{k+1}(x_{k+1}), x_{ k + 1} - x\rangle-\eta_{ k}\lambda_{k} \langle\tilde{F}_{k-1}(x_{k-1})-\tilde{F}_{k}(x_{k}), x_{k }- x\rangle \\
&\quad+ \tfrac{1}{2}\| x_{k}-x_{k + 1} \|^2 + \eta_{ k} \lambda_{k} \langle F(x_{k}) - F(x_{ k - 1}),  x_{ k + 1}-x_{k} \rangle.
\end{aligned}
\end{equation}
Multiplying \eqref{eq:state-dependent-optimality-condition} by $\theta_k$, and summing over $k=0,\ldots,N-1$,  we obtain
\begin{equation}\label{eq:state-dependent-optimality-condition-mult-theta-new}
\begin{aligned}
&\tsum_{k = 0}^{N-1}\theta_{ k}\left[\tfrac{1}{2}\|x_{ k} - x\|^2-\tfrac{1}{2}\|x_{  k + 1} - x\|^2-\eta_{  k}\langle F(x_{  k + 1}), x_{  k + 1} - x\rangle\right] \\
&\geq{\tsum_{k = 0}^{N-1}\theta_{ k}\eta_{  k} \lambda_{ k} \langle \Delta_{  k}-\Delta_{  k-1}, x_{  k + 1} - x_{ k}\rangle}+ {  {\tsum_{k = 0}^{N-1}}\theta_{ k}\eta_{  k} \langle \Delta_{ {k + 1}}, x_{  k + 1} - x\rangle}\\
&\quad + {\theta_{ N-1}\eta_{  N-1} \langle{F}(x_{ N-1})-{F}(x_{ N})+\Delta_{N-1}-\Delta_N, x_{N} - x\rangle}\\
&\quad  +{\tsum_{k = 0}^{N-1}\tfrac{\theta_{ k}}{2}\|x_{  k + 1} - x_{ k}\|^2+ \tsum_{k = 0}^{N-1}\theta_{ k}\eta_{  k} \lambda_{ k} \langle F(x_{ k}) - F(x_{  k - 1}), x_{  k + 1} - x_{ k}\rangle},
\end{aligned}
\end{equation}
where the inequality follows from the telescoping of the inner-product terms, using \eqref{eq:inner-loop-A} and \eqref{eqn:initial-p1}. We first bound the deterministic cross terms in the above relation. By Cauchy-Schwarz inequality, Young's inequality, the $L$-Lipschitz continuity  of $F,$ and the condition on the stepsize $\eta_k$ in \eqref{eq:inner-loop-B}, we have
\begin{equation}\label{eq:step4bound}
\begin{aligned}
&\,\,\,\tsum_{k = 0}^{N - 1} \tfrac{\theta_{  k}}{2}\|x_{  k + 1} - x_{ k}\|^2 + \tsum_{k = 0}^{N - 1} \theta_{  k} \eta_{  k} \lambda_{ k} \langle F(x_{ k}) - F(x_{  k - 1}), x_{  k + 1} - x_{ k}\rangle \\
&\,\,\,\geq\tsum_{k = 0}^{N - 1} \tfrac{3\theta_{  k}}{8}\|x_{  k + 1} - x_{ k}\|^2  - 2\tsum_{k = 0}^{N - 1} \theta_{ k}(\eta_{  k} \lambda_{ k})^2 L^2 \|x_{ k} - x_{  k - 1}\|^2 \\
&\overset{\eqref{eq:inner-loop-B}}{\geq}\tsum_{k = 0}^{N - 1} \tfrac{\theta_{  k}}{4}\|x_{  k + 1} - x_{ k}\|^2+\tfrac{\theta_{N-1}}{8}\| x_{ N-1}-x_{ N} \|^2 - 2\theta_{ 0}(\eta_{ 0} \lambda_{ 0})^2L^2 \|x_{  -1} - x_{ 0}\|^2 \\
&\,\,\,= \tsum_{k = 0}^{N - 1} \tfrac{\theta_{  k}}{4} \|x_{  k + 1} - x_{ k}\|^2+\tfrac{\theta_{  N-1}}{8}\| x_{ N-1}-x_{ N} \|^2,
\end{aligned}
\end{equation}
Similarly, by the Cauchy--Schwarz and Young's inequalities, the $L$-Lipschitz continuity  of $F,$ and the condition on the extrapolation weight $\lambda_k$ in \eqref{eq:inner-loop-A}, we obtain
\begin{equation}\label{eqn:thm-final-K}
    \begin{aligned}
        &{\theta_{ N-1}\eta_{  N-1} \langle{F}(x_{ N-1})-{F}(x_{ N}), x_{  N} - x\rangle} +2\theta_{  N} (\eta_{  N} \lambda_{ N})^2L^2 \| x_{  N - 1}- x_{ N}\|^2+\tfrac{\theta_{  N}}{8}\| x_{ N} - x \|^2\geq 0.
    \end{aligned}
\end{equation}
Furthermore,
by the condition on the stepsize $\eta_{N}$ in \eqref{eq:inner-loop-B}, it holds that
\begin{equation}\label{thm-inner-bias-2}
    \begin{aligned}
     &\tfrac{\theta_{  N-1}}{8}\| x_{ N-1}-x_{ N} \|^2 -2\theta_{N} (\eta_{N} \lambda_{ N})^2L^2 \|x_{ N - 1} - x_{ N}\|^2\geq0.
    \end{aligned}
\end{equation}
Next, we bound the stochastic cross terms in \eqref{eq:state-dependent-optimality-condition-mult-theta-new}. By the Cauchy--Schwarz and Young inequalities, we obtain
\begin{equation}\label{eq:step1bound}
\begin{aligned}
\theta_k\eta_{  k} \lambda_{ k} \langle \Delta_{  k} - \Delta_{  k - 1}, x_{  k + 1} - x_{ k}\rangle + \tfrac{\theta_k}{8}\|x_{  k + 1} - x_{ k}\|^2&\,\,\,\,\geq-4\theta_k(\eta_{  k} \lambda_{ k})^2(\|\Delta_{  k}\|^2 + \|\Delta_{  k - 1}\|^2)\\
\theta_{ N-1}\eta_{N-1} \langle\Delta_{N-1}, x_{  N} - x\rangle+\tfrac{\theta_{N}}{4} \|x_{N} - x  \|^2&\overset{\eqref{eq:inner-loop-A}}{\geq} -\theta_{N} (\eta_{ N} \lambda_{ N})^2\|\Delta_{N - 1}\|^2.
\end{aligned}
\end{equation}
Substituting \eqref{eq:step4bound}--\eqref{eq:step1bound} into \eqref{eq:state-dependent-optimality-condition-mult-theta-new} concludes the proof.
\end{proof}
\smallskip

We now establish the main convergence properties of SOE under the state-dependent noise assumption. In particular, \autoref{prop:main-theorem-bias-exp} provides bounds on the expected squared distance to the solution, the aggregated successive differences between the iterates, and the state-dependent variance term. Condition \eqref{eq:inner-loop-D} requires the effective state-dependent noise level $\varsigma_*/m$ is small enough.
Here the problem-dependent constants $L$, $\mu$, $\sigma_*^2$, and $\varsigma_*$ are fixed, while the schedules $\{\theta_k\}$, $\{\eta_k\}$, $\{\lambda_k\}$, and the batch size $m$ are free to be chosen.
\begin{theorem}\label{prop:main-theorem-bias-exp}
Let $\{x_{ k}\}_{k\geq -1}$ denote the iterates generated by the SOE method in \autoref{alg:SOE}, and let $\{\theta_{ k}\}_{k\geq 0}$ be a sequence of nonnegative numbers. Let $\{\eta_k\}_{k\geq 0}$ and $\{\lambda_k\}_{k\geq 0}$ denote the stepsizes and extrapolation weights, with $\lambda_0=0$. Suppose that, for all $k\geq 0$, they satisfy  \eqref{eq:inner-loop-B}, \eqref{eq:inner-loop-A} and
\begin{align}
\tfrac{\theta_{k+1}}{2}&\leq \tfrac{    \theta_{k}}{2}
    + \tfrac{\mu\theta_{k}\eta_{ k}}{2}.
    \label{eq:inner-loop-C}
\end{align}
Furthermore,
suppose that for all $k\geq 0,$ the batch size $m$ is large enough that
\begin{align}
   \tfrac{12\varsigma_*}{m}
\left[
\theta_{k+1}(\eta_{k+1}\lambda_{k+1})^2
+
\theta_{k+2}(\eta_{k+2}\lambda_{k+2})^2
\right]\leq  {\theta_{k}\eta_{k}}.
    \label{eq:inner-loop-D}
\end{align}
Then, for all $N\geq 1$, it holds that
\begin{equation}\label{eqn:main-theorem-bias-exp}
\begin{aligned}
&
\tfrac{\theta_{  N}}{8} \mathbb{E}[\|x_{ N} - x^* \|^2]
+\tsum_{k = 0}^{N - 1} \tfrac{\theta_{  k}}{8} \mathbb E[\|x_{  k + 1} - x_{ k}\|^2]+\tsum_{k = 1}^{N}
\tfrac{\theta_{k-1}\eta_{k-1}}{6}\mathbb E\left[\langle F(x_{k}) - F(x^*), x_{k} - x^*\rangle\right]\\
&\leq\left\{
\tfrac{\theta_{ 0}}{2}+\tfrac{4\varsigma_* L\theta_{  1}(\eta_{   1} \lambda_{ 1})^2}{m}\right\}
\mathbb E\left[\|x_{ 0}-x^*\|^2\right]+\tfrac{8\sigma_*^2}{m}\tsum_{k = 0}^{N} { \theta_{  k} (\eta_{  k} \lambda_{ k})^2 }.
\end{aligned}
\end{equation}
\end{theorem}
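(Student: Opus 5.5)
Starting from \autoref{lem:per-itrate} with $x=x^*$, I would take total expectation of \eqref{eqn:iterate-wise} and then deal with three groups of terms: the monotone/strongly-monotone terms on the left, the martingale cross terms, and the variance terms $\|\Delta_k\|^2$.

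\textbf{Step 1 (left-hand side).} Write
\[
\langle F(x_{k+1}),x_{k+1}-x^*\rangle=\langle F(x_{k+1})-F(x^*),x_{k+1}-x^*\rangle+\langle F(x^*),x_{k+1}-x^*\rangle .
\]
The second piece is nonnegative by optimality of $x^*$, so it can be dropped. The first piece is split into two parts:
\begin{itemize}
\item a $\mu$-part, $\mu\|x_{k+1}-x^*\|^2$, which I combine with \eqref{eq:inner-loop-C} so that the distance terms telescope, leaving $\tfrac{\theta_0}{2}\|x_0-x^*\|^2$ on one side and a multiple of $\theta_N\|x_N-x^*\|^2$ on the other;
\item a reserved fraction (say one third, later shrunk to $1/6$) of $\theta_k\eta_k\langle F(x_{k+1})-F(x^*),x_{k+1}-x^*\rangle$, kept on the left.
\end{itemize}
Since $\mu\|x-x^*\|^2\le\langle F(x)-F(x^*),x-x^*\rangle$, I can spend the full gap on the $\mu$-part and still keep a fraction $\tfrac16$ by splitting the coefficient appropriately; that is, use $\eta_k\langle\cdot\rangle\ge \tfrac{\mu\eta_k}{2}\|\cdot\|^2+\tfrac{\eta_k}{2}\langle\cdot\rangle$.

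\textbf{Step 2 (cross terms).} Because $x_{k+1}$ is $\mathcal F_{k+1}$-measurable, unbiasedness \eqref{eq:fast-decreasing-bias-unregularized} gives $\mathbb E[\langle\Delta_{k+1},x_{k+1}-x^*\rangle]=0$, so these terms vanish.

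\textbf{Step 3 (variance terms).} Apply \eqref{eq:state-dep-diff-op} to get
\[
\mathbb E\|\Delta_k\|^2\le \tfrac{\sigma_*^2}{m}+\tfrac{\varsigma_*}{m}\,\mathbb E\langle F(x_k)-F(x^*),x_k-x^*\rangle .
\]
Each $\Delta_k$ appears with weights $4\theta_k(\eta_k\lambda_k)^2$ and $4\theta_{k+1}(\eta_{k+1}\lambda_{k+1})^2$, plus the terminal weight $\theta_N(\eta_N\lambda_N)^2$ from \eqref{eqn:iterate-wise}. The index bookkeeping must match each gap term $\langle F(x_k)-F(x^*),x_k-x^*\rangle$ with the reserved left-hand coefficient $\theta_{k-1}\eta_{k-1}$. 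Condition \eqref{eq:inner-loop-D}, which has total weight at most $12\varsigma_*/m\,[\cdots]\le\theta_{k}\eta_k$, is designed exactly so that this absorption leaves at least $\tfrac16\theta_{k-1}\eta_{k-1}$ of the gap term.

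The exceptional index is $k=0$, since $\Delta_{-1}=\Delta_0$ and $x_0$ has no preceding left-hand gap term. Here I bound
\[
\langle F(x_0)-F(x^*),x_0-x^*\rangle\le L\|x_0-x^*\|^2
\]
by Lipschitz continuity and Cauchy--Schwarz. This produces the term $\tfrac{4\varsigma_*L\theta_1(\eta_1\lambda_1)^2}{m}\|x_0-x^*\|^2$; the $k=0$ weight itself is zero since $\lambda_0=0$. The $\sigma_*^2/m$ parts sum to at most $\tfrac{8\sigma_*^2}{m}\sum_{k=0}^N\theta_k(\eta_k\lambda_k)^2$.

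\textbf{Step 4 (terminal distance).} The terminal distance coefficient $\tfrac{3}{8}\theta_N$ combines with the $-\tfrac12\theta_N$ produced by telescoping and the $\mu$-term to leave $\tfrac{\theta_N}{8}\mathbb E\|x_N-x^*\|^2$ on the left.

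The main obstacle is the index alignment in Step 3. The terminal variance term $\theta_N(\eta_N\lambda_N)^2\|\Delta_{N-1}\|^2$ and the sum of cross terms running only to $N-2$ must be matched carefully, so that every $\varsigma_*$ contribution is absorbed by some $\theta_{k-1}\eta_{k-1}$ from the left side with the constants $12$ and $1/6$ working out. I would first write out the coefficient of each gap term $\langle F(x_j)-F(x^*),x_j-x^*\rangle$ explicitly, and then check it against \eqref{eq:inner-loop-D}.
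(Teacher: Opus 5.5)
Your proposal is correct and follows essentially the same route as the paper: set $x=x^*$ in \autoref{lem:per-itrate}, kill the cross terms by unbiasedness, bound $\mathbb E\|\Delta_k\|^2$ via the state-dependent variance, absorb the $\varsigma_*$-gap terms using \eqref{eq:inner-loop-D} to leave $\tfrac23\theta_{k-1}\eta_{k-1}$ (split as $\tfrac12$ for strong monotonicity plus telescoping via \eqref{eq:inner-loop-C}, and $\tfrac16$ kept), and treat the $x_0$ term by Lipschitz continuity. The one bookkeeping point to state correctly is that $\Delta_{N-1}$ carries total weight $4\theta_{N-1}(\eta_{N-1}\lambda_{N-1})^2+\theta_N(\eta_N\lambda_N)^2$: the terminal term replaces the missing $4\theta_N(\eta_N\lambda_N)^2$ weight rather than adding to it, so \eqref{eq:inner-loop-D} at $k=N-2$ controls this index exactly as it does the others and the $\tfrac16$ survives.
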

\begin{proof}
Taking $x=x^*$ in \eqref{eqn:iterate-wise} and applying the following conditional expectation, we obtain
\begin{equation}\label{eq:step1bound'}
    \begin{aligned}
\mathbb E[\|\Delta_{  k}\|^2\,|\,\mathcal{F}_{ k}]
&\overset{\eqref{eqn:delta-error}}{=} \mathbb E[\|\tilde{F}_{ k}(x_{ k}) - F(x_{ k})\|^2\,|\,\mathcal{F}_{ k}]
\overset{\text{(i)}}{\leq} \tfrac{\varsigma_*}{m}\langle F(x_{ k}) - F(x^*), x_{ k} - x^*\rangle+
\tfrac{\sigma_*^2}{m},
    \end{aligned}
\end{equation}
where, in (i), we used the state dependent variance property
 \eqref{eq:state-dep-diff-op}.
 Similarly, for all $k\geq 1,$ we have
\begin{equation}\label{eq:step1bound''}
    \begin{aligned}
        &\,\,\mathbb E[\|\Delta_{  k-1}\|^2\,|\,\mathcal{F}_{ k-1}] \leq \tfrac{\varsigma_*}{m}\langle F(x_{ k-1}) - F(x^*), x_{ k-1} - x^*\rangle+ \tfrac{\sigma_*^2}{m}.
    \end{aligned}
\end{equation}
For $k=0$, there holds $\Delta_{-1}=\Delta_0$.
Furthermore, by the unbiasedness assumption and $x_{  k + 1}\in \mathcal{F}_{ k+1}$, we obtain
\begin{equation}\label{eqn: term II}
\begin{aligned}
&\,\,\tsum_{k = 0}^{N - 1} \theta_{  k} \eta_{  k} \mathbb E[\langle \Delta_{  k+1}, x_{  k + 1} - x^*\rangle\,|\,\mathcal{F}_{ k+1}]=0.
\end{aligned}
\end{equation}
Substituting  \eqref{eq:step1bound'}, \eqref{eq:step1bound''} and \eqref{eqn: term II} into \eqref{eqn:iterate-wise}, we obtain
\begin{equation}\label{eqn:term I}
     \begin{aligned}
&\tsum_{k = 0}^{N-1}\theta_{ k}\mathbb E\left[\tfrac{1}{2}\|x_{ k} -x^*\|^2-\tfrac{1}{2}\|x_{  k + 1} -x^*\|^2-\eta_{  k}\langle F(x_{  k + 1}), x_{  k + 1}-x^* \rangle\right]+\tfrac{8\sigma_*^2}{m}\tsum_{k = 0}^{N} \theta_{  k} (\eta_{  k} \lambda_{ k})^2 \\
&\overset{\text{(ii)}}{\geq} \tsum_{k = 0}^{N-1}\tfrac{\theta_k}{8}\mathbb E[\|x_{  k + 1} - x_{ k}\|^2]-\tfrac{3\theta_{N}}{8}\mathbb E[\| x_{ N} - x^* \|^2]\\
&\quad- \tfrac{4\varsigma_*}{m}\tsum_{k = 0}^{N - 1}  [\theta_{  k} (\eta_{  k} \lambda_{ k})^2  + \theta_{  k + 1}(\eta_{  k + 1} \lambda_{  k + 1})^2]\mathbb  E\left[\langle F(x_{ k}) - F(x^*), x_{ k} - x^*\rangle\right] \\
&\quad-\tfrac{4\theta_{ 0}(\eta_{ 0} \lambda_{ 0})^2 \varsigma_*}{m} \mathbb E\left[\langle F(x_{ 0}) - F(x^*), x_{ 0} - x^*\rangle\right] + \tfrac{4\theta_{  N}(\eta_{  N}\lambda_{ N})^2 \varsigma_*}{m} \mathbb  E\left[\langle F(x_{N- 1}) - F(x^*), x_{  N - 1} - x^*\rangle\right]\\
 &\quad- \tfrac{\theta_{ N}(\eta_{N} \lambda_{ N})^2\varsigma_*}{m} \mathbb E[ \langle F(x_{ N-1}) - F(x^*), x_{ N-1} - x^*\rangle],
\end{aligned}
\end{equation}
where, in (ii), we used $x_{ -1}=x_{ 0}$ and $\tilde{F}_{-1}(\cdot)=\tilde{F}_{0}(\cdot).$
By the fact that $x^*$ is the solution to \eqref{VIP}, it holds that
\begin{equation}\label{eq:x-star-solves-the-VI}
\begin{aligned}
\left\langle F(x_{k+1}),x_{k+1}-x^*\right\rangle
&\geq
\left\langle F(x_{k+1})-F(x^*),x_{k+1}-x^*\right\rangle.
\end{aligned}
\end{equation}
Thus, combining \eqref{eq:x-star-solves-the-VI} with \eqref{eqn:term I}, we
define $\mathcal{I}_N$ as follows.
\begin{equation*}
\begin{aligned}
     \mathcal{I}_{ N}
      &\,\,\coloneqq \tsum_{k = 1}^{N} \left\{\theta_{k-1}\eta_{ k-1}-\tfrac{4\varsigma_*}{m}
\left[
\theta_{k}(\eta_{k}\lambda_{k})^2
+
\theta_{k+1}(\eta_{k+1}\lambda_{k+1})^2
\right]\right\}\mathbb E\left[\langle F(x_{k}) - F(x^*), x_{k} - x^*\rangle\right].
    \end{aligned}
\end{equation*}
Substituting $\mathcal{I}_N$ into
\eqref{eqn:term I},  we obtain
\begin{equation}\label{eqn:final-1}
      \begin{aligned}
&\tsum_{k = 0}^{N-1}\theta_{ k}\mathbb E\left[\tfrac{1}{2}\|x_{ k} -x^*\|^2-\tfrac{1}{2}\|x_{  k + 1} -x^*\|^2\right] +\tfrac{8\sigma_*^2}{m}\tsum_{k = 0}^{N} \theta_{  k} (\eta_{  k} \lambda_{ k})^2+\tfrac{3\theta_{N}}{8}\mathbb E[\| x_{ N} - x^* \|^2]\\
&\geq \tsum_{k = 0}^{N-1}\tfrac{\theta_k}{8}\mathbb E[\|x_{  k + 1} - x_{ k}\|^2]+\mathcal{I}_N- \tfrac{\varsigma_*}{m}[8\theta_{  0} (\eta_{  0} \lambda_{ 0})^2  + 4\theta_{   1}(\eta_{  1} \lambda_{1})^2]\mathbb  E\left[\langle F(x_{0}) - F(x^*), x_{ 0} - x^*\rangle\right].
\end{aligned}
\end{equation}
By the condition of batch size $m$ in \eqref{eq:inner-loop-D} and strong monotonicity, we have
\begin{align*}
             \mathcal{I}_{ N}
&\overset{\eqref{eq:inner-loop-D}}{\geq}\tsum_{k = 1}^{N} \left\{\tfrac{\theta_{k-1}\eta_{ k-1}}{2}+\tfrac{\theta_{k-1}\eta_{ k-1}}{6}\right\}\mathbb E\left[\langle F(x_{k}) - F(x^*), x_{k} - x^*\rangle\right]\\
&\,\,\,\geq\tsum_{k = 0}^{N-1} \tfrac{\mu\theta_{k}\eta_{ k}}{2}\mathbb E[\|x_{k+1}-x^*\|^2]+\tsum_{k = 1}^{N} \tfrac{\theta_{k-1}\eta_{ k-1}}{6}\mathbb E\left[\langle F(x_{k}) - F(x^*), x_{k} - x^*\rangle\right].
\end{align*}
Applying \eqref{eq:inner-loop-C} to \eqref{eqn:final-1} for
$k=0,\ldots,N-1$ telescopes the corresponding terms. Furthermore,
for $k=N-1$,
\begin{align*}
\tfrac{\theta_N}{8}
=
\tfrac{\theta_N}{2}-\tfrac{3\theta_N}{8}
\leq
\tfrac{\theta_{N-1}}{2}
+\tfrac{\mu\theta_{N-1}\eta_{N-1}}{2}
-\tfrac{3\theta_N}{8}.
\end{align*}
Moreover, the $L$-Lipschitz continuity of $F$ implies $\left\langle F(x_0)-F(x^*),x_0-x^*\right\rangle
\leq L\|x_0-x^*\|^2.$
Applying these bounds to \eqref{eqn:final-1} completes the proof.
\end{proof}

The right-hand side consists of the initial optimality gap $\|x_0-x^*\|^2$, and a noise term $\tfrac{16 \sigma_*^2}{m}\tsum_{k=0}^{K-1}[\theta_{k} (\eta_{k}\lambda_{k})^2+\theta_{k+1} (\eta_{k+1}\lambda_{k+1})^2]$ which decreases with the batch size $m$.
Concrete convergence rates can be obtained by specializing the weights, stepsizes, and extrapolation parameters.

\smallskip
We next specify such a choice in \autoref{prop:main-theorem-bias-exp} to derive a sublinear convergence rate for SOE under the assumptions in \autoref{sec:Assumptions}.
\begin{corollary}\label{cor:subroutine-guarantee}
    Suppose $\lambda_0=0,$ and for all $k\geq 1,$ $ \lambda_{ k}=\tfrac{\eta_{  k-1}\theta_{  k-1}}{\eta_{  k}\theta_{ k}},   m\geq
\left\lceil\tfrac{6\varsigma_*}{L}\right\rceil.$ Let
    \begin{align}
        k_{ 0}&=\tfrac{16L}{\mu}+1,\quad
        \eta_{ k}=\tfrac{4}{\mu(k+k_{ 0}-1)},
       \quad
        \theta_{ k}=(k+k_{ 0})(k+k_{ 0}-1)\label{eqn:stepsize-inner},\quad
    \forall\, k\geq 0.
    \end{align}
    Then, for all $N\geq 1,$ SOE satisfies
\begin{equation}\label{eqn:subroutine-guarantee}
    \begin{aligned}
    &{(N+k_{ 0})(N+k_{ 0}-1)}\mathbb{E}[\|x_{N} - x^*\|^2]+ \tsum_{k = 0}^{N - 1} (k+k_{ 0})(k+k_{ 0}-1)\mathbb E[\|x_{  k + 1} - x_{ k}\|^2]\\
    &\quad+\tfrac{16}{3\mu}
\tsum_{k=1}^{N}(k+k_0-1)
\mathbb{E}\left[
\left\langle F(x_k)-F(x^*),x_k-x^*\right\rangle
\right]\\
    &\leq \tfrac{9k_0^2}{2}\mathbb E[\|x_{ 0} - x^*\|^2]+\tfrac{1024(N-1)\sigma_*^2}{\mu^2 m}.
    \end{aligned}
    \end{equation}
\end{corollary}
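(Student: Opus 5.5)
The corollary follows by specializing \autoref{prop:main-theorem-bias-exp}. The work is to check its four hypotheses \eqref{eq:inner-loop-B}--\eqref{eq:inner-loop-D} under the schedule \eqref{eqn:stepsize-inner}, and then to simplify the two sides of \eqref{eqn:main-theorem-bias-exp}, multiplying through by $8$.

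First I verify the matching conditions. With $\lambda_k=\eta_{k-1}\theta_{k-1}/(\eta_k\theta_k)$, condition \eqref{eq:inner-loop-A} holds by construction. Write $t=k+k_0$. Then
\begin{align*}
\eta_k\theta_k=\tfrac{4(k+k_0)}{\mu}, \qquad \eta_{k+1}\lambda_{k+1}=\tfrac{\eta_k\theta_k}{\theta_{k+1}}=\tfrac{4}{\mu(k+k_0+1)},
\end{align*}
so that $\theta_{k+1}(\eta_{k+1}\lambda_{k+1})^2=\tfrac{16(k+k_0)}{\mu^2(k+k_0+1)}\le \tfrac{16}{\mu^2}$.

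For \eqref{eq:inner-loop-B} I need $256L^2(k+k_0)/(\mu^2(k+k_0+1))\le (k+k_0)(k+k_0-1)$. This follows from $(k+k_0-1)(k+k_0+1)\ge k_0^2-1\ge (16L/\mu)^2$, using $k_0=16L/\mu+1$.

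For \eqref{eq:inner-loop-C} I need $\theta_{k+1}-\theta_k\le \mu\eta_k\theta_k$. The left side is $2(k+k_0)$ and the right side is $4(k+k_0)$, so the inequality holds.

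For \eqref{eq:inner-loop-D} the left side is at most $\tfrac{12\varsigma_*}{m}\cdot\tfrac{32}{\mu^2}$, and the right side is $\tfrac{4(k+k_0)}{\mu}\ge \tfrac{4k_0}{\mu}\ge\tfrac{64L}{\mu^2}$. It therefore suffices that $384\varsigma_*/m\le 64L$, i.e.\ $m\ge 6\varsigma_*/L$, which is exactly the stated batch condition. This constant matching is where I expect the only real care is needed, and it appears to close exactly.

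Next I evaluate the right side of \eqref{eqn:main-theorem-bias-exp}. We have $\theta_0/2=k_0(k_0-1)/2$. The middle term is $\tfrac{4\varsigma_* L}{m}\theta_1(\eta_1\lambda_1)^2\le \tfrac{4L\varsigma_*}{m}\cdot\tfrac{16}{\mu^2}\le \tfrac{32L^2}{3\mu^2}$, again using $m\ge 6\varsigma_*/L$; since $16L/\mu\le k_0$, this is at most $k_0^2/24$. Multiplying the bracket by $8$ gives at most $4k_0(k_0-1)+k_0^2/3\le \tfrac{9}{2}k_0^2$.

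For the noise sum, the $k=0$ term vanishes because $\lambda_0=0$, and each remaining term is at most $16/\mu^2$. The sum therefore has at most $N$ nonzero terms, and $8\cdot 8\cdot 16\,N/\mu^2=1024N/\mu^2$. That is one more than the stated $N-1$. I would try to recover the $N-1$ by tracking the exact fractions $(k+k_0-1)/(k+k_0)<1$, or by observing that the $k=N$ term enters only through the final-index bookkeeping in \autoref{lem:per-itrate}. Failing that, I would accept the $N$ factor as harmless.

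On the left side, multiplying by $8$ gives $\theta_N\mathbb E\|x_N-x^*\|^2$ plus $\sum\theta_k\mathbb E\|x_{k+1}-x_k\|^2$ directly. The gap term becomes $\tfrac{8}{6}\theta_{k-1}\eta_{k-1}=\tfrac{4}{3}\cdot\tfrac{4(k-1+k_0)}{\mu}=\tfrac{16}{3\mu}(k+k_0-1)$, which matches \eqref{eqn:subroutine-guarantee}.
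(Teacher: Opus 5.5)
Your route is the paper's: you specialize \autoref{prop:main-theorem-bias-exp}, check \eqref{eq:inner-loop-B}--\eqref{eq:inner-loop-D} for the schedule \eqref{eqn:stepsize-inner}, and multiply through by $8$. All of your verifications are correct. Your direct check of \eqref{eq:inner-loop-D}, via $\theta_k\eta_k=4(k+k_0)/\mu\ge 64L/\mu^2$, is cleaner than the paper's. The paper's check passes through $\eta_k\varsigma_*/m\le\lambda_k/24$, which says nothing at $k=0$ because $\lambda_0=0$.

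On the variance term, your count is the right one, and you should not try to recover $N-1$.

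The right-hand side of \autoref{prop:main-theorem-bias-exp} is $\tfrac{8\sigma_*^2}{m}\sum_{k=0}^{N}\theta_k(\eta_k\lambda_k)^2$, which has $N$ nonzero terms. The paper's proof reaches $N-1$ only because \eqref{eqn:cor-2-final-5} sums to $N-1$, silently dropping the $k=N$ summand. That summand is not bookkeeping. It accounts for the term $\theta_N(\eta_N\lambda_N)^2\|\Delta_{N-1}\|^2$ in \autoref{lem:per-itrate}, which comes from Young's inequality applied to $\theta_{N-1}\eta_{N-1}\langle\Delta_{N-1},x_N-x\rangle$. This term is genuine, since $x_N$ depends on $\Delta_{N-1}$.

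In fact the bound as stated fails at $N=1$. Take $X=\mathbb{R}^n$, $F(x)=\mu(x-x^*)$, $x_0=x^*$, and $\tilde F(x,\zeta)=F(x)+\zeta$ with $\mathbb{E}[\zeta]=0$ and $\mathbb{E}\|\zeta\|^2=\sigma_*^2>0$. Then the right-hand side is $0$. But $x_1=x^*-\eta_0\tilde F_0(x^*)$, so the left-hand side is at least $\theta_1\eta_0^2\sigma_*^2/m>0$.

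Tracking the exact fractions cannot rescue $N-1$ either. We have $\sum_{k=1}^{N}\tfrac{k+k_0-1}{k+k_0}=N-\sum_{k=1}^{N}\tfrac{1}{k+k_0}$. This is at most $N-1$ only if $\sum_{k=1}^{N}\tfrac{1}{k+k_0}\ge1$, which forces $N\ge(e-1)k_0$.

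So what the argument actually yields is the bound with $\tfrac{1024N\sigma_*^2}{\mu^2 m}$ in place of $\tfrac{1024(N-1)\sigma_*^2}{\mu^2 m}$. As you guessed, this is harmless downstream. \autoref{prop:subroutine} already replaces $k-1$ by $k$. \autoref{lem:slow-residual-last} already bounds $\sqrt{B}$ using $32\sqrt{N}\,\sigma_*/(\mu\sqrt{m})$. Nothing later changes.
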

\begin{proof}
It suffices to verify that, under the choices of $\eta_k, \lambda_k, \theta_k,$ and $m,$ the conditions of \autoref{prop:main-theorem-bias-exp} hold.
For all $k\geq 0,$ setting $\lambda_{ k+1}=\tfrac{\theta_{ k}\eta_{ k}}{\theta_{ k+1}\eta_{ k+1}}$ ensures that \eqref{eq:inner-loop-A} holds. To verify \eqref{eq:inner-loop-B},
note that for all $k\geq 0,$ we have
\begin{equation*}
    \begin{aligned}
       16 \theta_{ k+1}(\lambda_{ k+1} \eta_{ k+1})^2L^2
       = 16 \theta_{ k+1}\left(\tfrac{\theta_{ k}\eta_{ k}}{\theta_{ k+1}}\right)^2L^2  \overset{\text{(i)}}{\leq} 16 \theta_{ k}\eta_{ k}^2L^2
       \overset{\text{(ii)}}{\leq} \theta_{ k},
    \end{aligned}
\end{equation*}
where (i) follows from the monotonicity of $\{\theta_{ k}\}$ in \eqref{eqn:stepsize-inner}; and (ii) follows from the stepsize choice $\eta_k$ in \eqref{eqn:stepsize-inner}, which implies $ \eta_{ k}\leq \tfrac{1}{4L}$, for all $k\geq0.$
Therefore, \eqref{eq:inner-loop-B} holds. Furthermore, notice that
\begin{equation*}
    \begin{aligned}
     \tfrac{12\theta_{ k+1}(\lambda_{ k+1}\eta_{ k+1})^2 \varsigma_*}{m}&=\tfrac{12\theta_{ k+1}}{m}\left(\tfrac{\theta_{ k}\eta_{ k}}{\theta_{ k+1}}\right)^2 \varsigma_*\overset{\text{(iii)}}{\leq}\tfrac{12\theta_{ k}\eta_{ k}^2\varsigma_*}{m}\overset{\text{(iv)}}{\leq}\tfrac{\theta_{ k}\eta_k\lambda_k}{2},\quad
        \tfrac{12\theta_{ k+2}(\lambda_{ k+2}\eta_{ k+2})^2 \varsigma_*}{m}\leq\tfrac{\theta_{ k+1}\eta_{k+1}\lambda_{k+1}}{2}=\tfrac{\theta_{ k}\eta_{ k}}{2},
    \end{aligned}
\end{equation*}
where (iii) follows again from the monotonicity of $\{\theta_{k}\},$ and (iv)
follows by the condition on the state-dependent variance parameter $\varsigma_*$ as $\tfrac{\eta_k \varsigma_*}{m} \leq \tfrac{\lambda_k}{24}$. Hence,
\eqref{eq:inner-loop-D} holds due to $\lambda_k\leq 1$.
\smallskip

We proceed with \eqref{eq:inner-loop-C}.
Since $\eta_k, \theta_{k}$ are chosen as in \eqref{eqn:stepsize-inner}, it holds that
\begin{align*}
    \tfrac{    \theta_{k}}{2}
    + \tfrac{\mu\theta_{k}\eta_{ k}}{2}&\,{=}\,\tfrac{\theta_{ k}}{2}\left(1+\tfrac{2}{(k+k_{ 0}-1)}\right)\overset{\eqref{eqn:stepsize-inner}}{\geq} \tfrac{\theta_{ k+1}}{2}.
\end{align*}
Therefore, \autoref{prop:main-theorem-bias-exp} holds.
Substituting the choices for
$\theta_{  k}, \eta_{  k}, \lambda_{ k}$ into \eqref{eqn:main-theorem-bias-exp},
we obtain
\begin{equation}\label{eqn:cor-2-final-2}
\begin{aligned}
\tfrac{\theta_{ 0}}{2}+\tfrac{4\varsigma_* L}{m}[2\theta_{ 0} (\eta_{ 0} \lambda_{ 0})^2 + \theta_{ 1}(\eta_{ 1} \lambda_{ 1})^2]
&=
\tfrac{\theta_{ 0}}{2}+\tfrac{4\varsigma_* L}{m} \theta_{ 1}(\eta_{ 1} \lambda_{ 1})^2\le \tfrac{\theta_{0}}{2}+\tfrac{L}{4}\theta_{1}\eta_{1}\lambda_{1}^3
 \leq \tfrac{9k_0^2}{16}.
\end{aligned}
\end{equation}
Furthermore, notice that
\begin{equation}\label{eq:concrete-noise-dep-lhs-bound}
    \begin{aligned}
        &\tsum_{k = 1}^{N}
\tfrac{8\theta_{k-1}\eta_{k-1}}{6}\mathbb E\left[\langle F(x_{k}) - F(x^*), x_{k} - x^*\rangle\right]=
\tfrac{16}{3\mu}
\tsum_{k=1}^{N}(k+k_0-1)
\mathbb{E}\left[
\left\langle F(x_k)-F(x^*),x_k-x^*\right\rangle
\right].
    \end{aligned}
\end{equation}
Furthermore, by substituting the choices of $\lambda_k$, $\theta_k$, and $\eta_k$, we have
\begin{equation*}
m\geq
\tfrac{96\varsigma_*(k+k_0)}
{\mu(k+k_0-1)^2}.
\end{equation*}
Since the right-hand side is decreasing in $k$, its maximum is attained at $k=1$. Hence, using $k_0=\tfrac{16L}{\mu}+1$, it suffices to choose $m\geq
\left\lceil\tfrac{6\varsigma_*}{L}\right\rceil$ to guarantee $\tfrac{\eta_k \varsigma_*}{m} \leq \tfrac{\lambda_k}{24}.$
It remains to bound the variance term in \eqref{eqn:main-theorem-bias-exp}.
\begin{equation}\label{eqn:cor-2-final-5}
    \begin{aligned}
     \tfrac{8\sigma_*^2}{m}\tsum_{k = 0}^{N-1} { \theta_{  k} (\eta_{  k} \lambda_{ k})^2 }&\overset{\eqref{eqn:stepsize-inner}}{=}\tsum_{k = 1}^{N -1} \tfrac{128\sigma_*^2}{\mu^2 m}\cdot\tfrac{{k+k_{ 0}-1}{}}{k+k_{0}}\leq \tfrac{128(N-1)\sigma_*^2}{\mu^2 m}.
    \end{aligned}
\end{equation}
Substituting \eqref{eqn:cor-2-final-2}  and \eqref{eqn:cor-2-final-5} into \eqref{eqn:main-theorem-bias-exp} concludes the proof.
\end{proof}
\medskip

It is interesting to note that \autoref{cor:subroutine-guarantee} also provides a guarantee on the weighted cumulative inner products.
Let $R$ be sampled according to
\begin{equation}\label{eqn:iterative-averaging}
\mathbb{P}(R=k)
=
\tfrac{k+k_0-1}
{\sum_{j=1}^{N}(j+k_0-1)}
=
\tfrac{2(k+k_0-1)}
{N(N+2k_0-1)}.
\end{equation}

\begin{remark}\label{remark:another-output}
Under the conditions of \autoref{cor:subroutine-guarantee}, the random output $x_R$ satisfies
\begin{equation*}
\begin{aligned}
\tfrac{16N}{\mu(N+k_{0})}
\mathbb{E}\left[
\left\langle F(x_R)-F(x^*),x_R-x^*\right\rangle
\right]
&\leq
\tfrac{9k_0^2\mathbb{E}[\|x_{0}-x^*\|^2] }{2(N+k_{0})(N+k_{0}-1)}
+\tfrac{1024(N-1)\sigma_*^2}
{\mu^2 m(N+k_{0})(N+k_{0}-1)}.
\end{aligned}
\end{equation*}
Indeed, by the definition of the random output in \eqref{eqn:iterative-averaging}, we have
\begin{equation*}
\tfrac{16}{3\mu}
\tsum_{k=1}^{N}(k+k_0-1)
\mathbb{E}\left[
\left\langle F(x_k)-F(x^*),x_k-x^*\right\rangle
\right]
=
\tfrac{8N(N+2k_0-1)}{3\mu}
\mathbb{E}\left[
\left\langle F(x_R)-F(x^*),x_R-x^*\right\rangle
\right].
\end{equation*}
Combining it with \autoref{cor:subroutine-guarantee},
\begin{equation*}
\begin{aligned}
\tfrac{16N}{\mu(N+k_{0})}
\mathbb{E}\left[
\left\langle F(x_R)-F(x^*),x_R-x^*\right\rangle
\right]
&\leq
\tfrac{64N(N+2k_0-1)}
{3\mu(N+k_{0})(N+k_{0}-1)}
\mathbb{E}\left[
\left\langle F(x_R)-F(x^*),x_R-x^*\right\rangle
\right] \\
&\leq
\tfrac{9k_0^2\mathbb{E}[\|x_{0}-x^*\|^2] }{2(N+k_{0})(N+k_{0}-1)}
+\tfrac{1024(N-1)\sigma_*^2}
{\mu^2 m(N+k_{0})(N+k_{0}-1)}.
\end{aligned}
\end{equation*}
This concludes the proof.
\end{remark}

\smallskip
We next establish a residual guarantee for the last iterate. Together with \autoref{cor:subroutine-guarantee}, this result will later be used to verify that SOE satisfies \autoref{assumption_inner_sto-state-dependent}.
\begin{corollary}\label{lem:slow-residual-last}
Suppose the conditions in \autoref{cor:subroutine-guarantee} hold. Let $N\geq 2.$ Then, it holds that
\begin{align}
\sqrt{\mathbb{E}\left[\res_F(x_N)^2\right]}
&\leq
28L\|x_0-x^*\|
+
\tfrac{29\sqrt{N}\,\sigma_*}{\sqrt{m}}.
\label{eqn:res-last}
\end{align}
\end{corollary}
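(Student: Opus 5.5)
The plan is to read off a residual certificate directly from the optimality condition of the last prox step \eqref{eq:stochastic_algorithm_step}. Write $g_{N-1}\coloneqq \tilde F_{N-1}(x_{N-1})+\lambda_{N-1}[\tilde F_{N-1}(x_{N-1})-\tilde F_{N-2}(x_{N-2})]$. The first-order condition for $x_N$ gives $0\in \eta_{N-1}g_{N-1}+(x_N-x_{N-1})+N_X(x_N)$. Hence $y\coloneqq g_{N-1}+(x_N-x_{N-1})/\eta_{N-1}\in -N_X(x_N)$, and by \eqref{def_res} we have $\res_F(x_N)\le\|F(x_N)-y\|$.

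Expanding $g_{N-1}$ with $\Delta_k$ from \eqref{eqn:delta-error}, the triangle inequality, $L$-Lipschitz continuity and $\lambda_{N-1}\le 1$ (the $\theta_k\eta_k$ are nondecreasing under \eqref{eqn:stepsize-inner}) give
\begin{equation*}
\res_F(x_N)\le \big(L+\tfrac{1}{\eta_{N-1}}\big)\|x_N-x_{N-1}\|+L\|x_{N-1}-x_{N-2}\|+2\|\Delta_{N-1}\|+\|\Delta_{N-2}\|.
\end{equation*}
I will then apply Minkowski's inequality and bound the root mean square of each term separately, using \autoref{cor:subroutine-guarantee}. Denote its right-hand side by $\mathcal R\coloneqq \tfrac{9k_0^2}{2}\mathbb E\|x_0-x^*\|^2+\tfrac{1024(N-1)\sigma_*^2}{\mu^2m}$.

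\emph{Successive differences.} The corollary gives $\theta_{k}\mathbb E\|x_{k+1}-x_k\|^2\le \mathcal R$ with $\theta_k=(k+k_0)(k+k_0-1)$. Since $1/\eta_{N-1}=\mu(N+k_0-2)/4$, we get $\eta_{N-1}^{-2}\mathbb E\|x_N-x_{N-1}\|^2\le \tfrac{\mu^2}{16}\mathcal R$. Now $\mu k_0=16L+\mu\le 17L$ (using $\mu\le L$), so $\tfrac{\mu}{4}\sqrt{\mathcal R}\lesssim L\|x_0-x^*\|+\sqrt{N}\sigma_*/\sqrt m$. The terms with factor $L$ are smaller still, because $L^2/\theta_k\le L^2/k_0^2\le \mu^2/256$.

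\emph{Noise terms.} By \eqref{eq:state-dep-diff-op} and the tower property,
\begin{equation*}
\mathbb E\|\Delta_k\|^2\le \tfrac{\sigma_*^2}{m}+\tfrac{\varsigma_*}{m}\mathbb E\langle F(x_k)-F(x^*),x_k-x^*\rangle .
\end{equation*}
The assumption $m\ge 6\varsigma_*/L$ gives $\varsigma_*/m\le L/6$. For $k\ge1$, the third term on the left of \eqref{eqn:subroutine-guarantee} isolates a single index: $\mathbb E\langle F(x_k)-F(x^*),x_k-x^*\rangle\le \tfrac{3\mu}{16(k+k_0-1)}\mathcal R\le \tfrac{3\mu}{16k_0}\mathcal R$. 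Multiplying by $L/6$ gives at most $\tfrac{L\mu}{32k_0}\mathcal R$. Since $k_0\ge 16L/\mu$, this is $\lesssim L\mu k_0\|x_0-x^*\|^2+\tfrac{L N\sigma_*^2}{\mu k_0 m}\lesssim L^2\|x_0-x^*\|^2+\tfrac{N\sigma_*^2}{m}$. The one exceptional index is $k=0$, which occurs when $N=2$ through $\Delta_{N-2}=\Delta_0$. There I instead use $\langle F(x_0)-F(x^*),x_0-x^*\rangle\le L\|x_0-x^*\|^2$, so $\tfrac{\varsigma_*}{m}$ times this is at most $L^2\|x_0-x^*\|^2/6$.

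Collecting the terms with Minkowski's inequality yields a bound of the form $C_1L\|x_0-x^*\|+C_2\sqrt N\sigma_*/\sqrt m$. The main obstacle is the constant bookkeeping needed to land on $28$ and $29$. This requires tracking the factors $\sqrt{9/2}$ and $\sqrt{1024}$, together with $\mu k_0\le 17L$, through each of the four terms. I would also handle separately the mismatch between $\theta_{N-1}$ and $\eta_{N-1}$ indices, i.e.\ the ratio $(N+k_0-2)/(N+k_0-1)\le1$.
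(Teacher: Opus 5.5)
Your proposal is correct and follows essentially the same route as the paper: the residual certificate from the optimality condition of the last prox step, a triangle-inequality/Minkowski split into successive-difference and noise terms, bounds on each via \autoref{cor:subroutine-guarantee} (isolating a single index of the weighted inner-product sum), and the Lipschitz bound $\langle F(x_0)-F(x^*),x_0-x^*\rangle\le L\|x_0-x^*\|^2$ for the exceptional index $k=0$. The differences are cosmetic. You control $\varsigma_*/m$ through $m\ge 6\varsigma_*/L$ where the paper invokes \eqref{eq:inner-loop-D}. Also, for $N=2$ one has $\theta_0=k_0(k_0-1)<k_0^2$, but $\theta_0\ge(16L/\mu)^2$ still gives $L^2/\theta_0\le\mu^2/256$. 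Carrying your constants through gives roughly $18.2\,L\|x_0-x^*\|+16.3\sqrt{N}\sigma_*/\sqrt{m}$, comfortably within $28$ and $29$.
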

\begin{proof}
For SOE \autoref{alg:SOE}, by the optimality condition of $x_k$ in \eqref{eq:stochastic_algorithm_step}, for any $k\geq 2$, for all $x\in X,$ we have $\left\langle F(x_k)+\varepsilon_{k-1},x-x_k\right\rangle\geq 0,$ where
\begin{equation*}
\varepsilon_{k-1}
\coloneqq
\tilde F_{k-1}(x_{k-1})-F(x_k)
+\lambda_{k-1}[\tilde F_{k-1}(x_{k-1})-\tilde F_{k-2}(x_{k-2})]
+\tfrac{x_k-x_{k-1}}{\eta_{k-1}}.
\end{equation*}
Hence, by the definition of the residual in \eqref{def_res}, it holds that $\res_F(x_k)\leq \|\varepsilon_{k-1}\|.$
Using the Lipschitz continuity of $F$, the triangle inequality, for all $k\geq 2$, we obtain
\begin{equation*}
\begin{aligned}
\res_F(x_k)
&\leq
(1+\lambda_{k-1})\|\tilde F_{k-1}(x_{k-1})-F(x_{k-1})\|
+\lambda_{k-1}\|\tilde F_{k-2}(x_{k-2})-F(x_{k-2})\| 
+(L+\tfrac{1}{\eta_{k-1}})\|x_k-x_{k-1}\|\\
&\quad
+\lambda_{k-1}L\|x_{k-1}-x_{k-2}\|.
\end{aligned}
\end{equation*}
By Minkowski's inequality, for all $k\geq 2$, we obtain
\begin{equation}\label{eqn:epsilon-residual}
    \begin{aligned}
        \sqrt{\mathbb{E}\left[\res_F(x_k)^2\right]}
&\leq
(1+\lambda_{k-1})
\sqrt{\mathbb{E}\left[
\|\tilde F_{k-1}(x_{k-1})-F(x_{k-1})\|^2
\right]}
+\lambda_{k-1}
\sqrt{\mathbb{E}\left[
\|\tilde F_{k-2}(x_{k-2})-F(x_{k-2})\|^2
\right]}\\
&\quad
+\left(L+\tfrac{1}{\eta_{k-1}}\right)
\sqrt{\mathbb{E}\left[\|x_k-x_{k-1}\|^2\right]}
+\lambda_{k-1}L
\sqrt{\mathbb{E}\left[\|x_{k-1}-x_{k-2}\|^2\right]}.
    \end{aligned}
\end{equation}
For any $k\geq 0$, by the state-dependent variance bound \eqref{eq:step1bound''}, we have
\begin{equation}\label{eqn:var-state-dependent}
\begin{aligned}
\mathbb{E}[\|\tilde F_k(x_k)-F(x_k)\|^2]
\leq
s_k+\tfrac{\sigma_*^2}{m},
\quad\text{where}\quad
s_k\coloneqq
\tfrac{\varsigma_*}{m}
\mathbb{E}\left[
\langle F(x_k)-F(x^*),x_k-x^*\rangle
\right].
\end{aligned}
\end{equation}
By the choices of the parameters in \autoref{cor:subroutine-guarantee},
we have
\begin{equation}\label{eqn:residual-inter-1}
  k_{0}=\tfrac{16L}{\mu}\geq 16,\,\,\lambda_{k-1}\leq 1, \,\,L+\tfrac{1}{\eta_{k-1}}\leq\tfrac{(4k+5k_0)\mu}{16},\,\,\text{and}\,\,(2k+3k_0)^2\leq 16(k-1+k_{0})(k+k_{0}-2)=16\theta_{k-1}.
\end{equation}
Substituting \eqref{eqn:var-state-dependent} and \eqref{eqn:residual-inter-1} into \eqref{eqn:epsilon-residual}, and using Minkowski's inequality again, for all $k\geq 2$, we obtain
\begin{equation}\label{eqn:per-iterate bound}
\begin{aligned}
\sqrt{\mathbb{E}\left[\res_F(x_k)^2\right]}
&\leq
\tfrac{(4k+5k_0)\mu}{16}
\sqrt{\mathbb{E}\left[\|x_k-x_{k-1}\|^2\right]}
+\tfrac{\mu k_0}{16}
\sqrt{\mathbb{E}\left[\|x_{k-1}-x_{k-2}\|^2\right]}
+2\sqrt{s_{k-1}}+\sqrt{s_{k-2}}
+\tfrac{3\sigma_*}{\sqrt{m}}.
\end{aligned}
\end{equation}
We proceed with bounding each term in \eqref{eqn:per-iterate bound}.
By \autoref{cor:subroutine-guarantee}, we obtain
\begin{equation}\label{eqn:cor}
    \mathbb{E}[\|x_N-x_{N-1}\|^2]\leq \tfrac{B}{\theta_{N-1}},\quad \mathbb{E}[\|x_{N-1}-x_{N-2}\|^2]\leq \tfrac{B}{\theta_{N-2}},\quad \tsum_{k = 1}^{N}
\tfrac{4\theta_{k-1}\eta_{k-1}\mathbb E\left[\langle F(x_{k}) - F(x^*), x_{k} - x^*\rangle\right]}{3}\leq B,
\end{equation}
where $B$ is defined as follows
\begin{equation}\label{eqn:B}
B
\coloneqq
\tfrac{9k_0^2}{2}\|x_0-x^*\|^2
+\tfrac{1024(N-1)\sigma_*^2}{\mu^2 m},
\qquad
\sqrt{B}
\leq
\tfrac{3k_0\|x_0-x^*\|}{\sqrt{2}}
+\tfrac{32\sqrt{N}\,\sigma_*}{\mu\sqrt{m}}.
\end{equation}
It remains to bound $s_k$ in \eqref{eqn:per-iterate bound}, where $0\leq k\leq N.$ For all $1\le k\le N,$ by the batch size condition $m$ \eqref{eq:inner-loop-D} and parameter choices \eqref{eqn:stepsize-inner}, we have
\begin{align*}
\tfrac{\theta_{k-1}\eta_{k-1}}{12}
&\overset{\eqref{eq:inner-loop-D}}{\geq}
\tfrac{\varsigma_*}{m}
\left[
\theta_{k}(\eta_{k}\lambda_{k})^2
+
\theta_{k+1}(\eta_{k+1}\lambda_{k+1})^2
\right]
\overset{\eqref{eqn:stepsize-inner}}{=}
\tfrac{16\varsigma_*}{m\mu^2}
\left[
\tfrac{k+k_{0}-1}{k+k_{0}}
+
\tfrac{k+k_{0}}{k+k_{0}+1}
\right]
\geq \tfrac{16\varsigma_*}{m\mu^2}.
\end{align*}
Hence, we have
\begin{equation*}
\begin{aligned}
s_k
&\leq
\tfrac{\mu^2}{16}\cdot\tfrac{\theta_{k-1}\eta_{k-1}}{12}
\mathbb{E}\left[
\langle F(x_k)-F(x^*),x_k-x^*\rangle
\right]
\overset{\eqref{eqn:cor}}{\leq}
\tfrac{\mu^2B}{256},
\quad\forall\,\,k\in\{1,\dots,N\}.
\end{aligned}
\end{equation*}
For $s_0$, by the Lipschitz continuity of $F$,
$\tfrac{\eta_1\varsigma_*}{m}\leq\tfrac{\lambda_1}{24}$,
$\lambda_1\leq1$,
$\eta_1=\tfrac{4}{\mu k_0}$, and
$k_0=\tfrac{16L}{\mu}$, we have
\begin{equation*}
\begin{aligned}
s_0
\leq \tfrac{\varsigma_* L}{m}\|x_0-x^*\|^2
\leq \tfrac{\lambda_1L}{24\eta_1}\|x_0-x^*\|^2
=\tfrac{\lambda_1\mu Lk_0}{96}\|x_0-x^*\|^2
\leq \tfrac{L^2}{6}\|x_0-x^*\|^2
\leq \tfrac{\mu^2B}{256}.
\end{aligned}
\end{equation*}
Hence $\sqrt{s_k}\leq \mu\sqrt{B}/16,$ for all $k\in\{0,\dots,N\}.$
Combining it with \eqref{eqn:cor} and \eqref{eqn:per-iterate bound},
we obtain
\begin{equation*}
\sqrt{\mathbb{E}\left[\res_F(x_N)^2\right]}
\leq \tfrac{\mu\sqrt{B}}{16}
\left(
\tfrac{4N+5k_0}{\sqrt{\theta_{N-1}}}
+\tfrac{k_0}{\sqrt{\theta_{N-2}}}
+3
\right)
+\tfrac{3\sigma_*}{\sqrt{m}}
\overset{\text{(i)}}{\leq}
\tfrac{13\mu\sqrt{B}}{16}
+\tfrac{3\sigma_*}{\sqrt{m}},
\quad\forall\,\,N\geq 2,
\end{equation*}
where in (i), we used \eqref{eqn:residual-inter-1}, which implies
\begin{equation*}
\tfrac{4N+5k_0}{\sqrt{\theta_{N-1}}}\leq 8
\quad\text{and}\quad
\tfrac{k_0}{\sqrt{\theta_{N-2}}}\leq 2.
\end{equation*}
Combining it with the bound on $B$ in \eqref{eqn:B} concludes the proof.
\end{proof}
\vgap

For any $s\geq 1$ and $k\geq 2$, let $R_s$ be independent of the samples generated during epoch $s$ and satisfy
\begin{equation}\label{eqn:iterative-averaging-per-epoch}
\mathbb{P}(R_s=j)
=
\tfrac{j+k_{s,0}-1}
{\tsum_{\ell=1}^{k}(\ell+k_{s,0}-1)}
=
\tfrac{2(j+k_{s,0}-1)}
{k(k+2k_{s,0}-1)},
\qquad
j\in\{1,\ldots,k\}.
\end{equation}
Applying \autoref{cor:subroutine-guarantee} and \autoref{lem:slow-residual-last} to $F_s$ shows that SOE can serve as the subroutine $\mathcal{A}$ in \autoref{assumption_inner_sto-state-dependent}. In fact, the following result establishes stronger guarantees for SOE than those required by \autoref{assumption_inner_sto-state-dependent}.
\begin{proposition}\label{prop:subroutine}
Suppose that the conditions of \autoref{cor:subroutine-guarantee} are satisfied when the Lipschitz and strongly monotone parameters therein are specialized to $L+r_s$ and $\mu+r_s$, respectively. Then SOE exhibits the following performance guarantee after $k$ iterations: for all $k\geq 2$, it holds that
\begin{equation}
    \begin{aligned}
        &\tfrac{16 k}{(\mu+r_s)(k+k_{s,0})}\mathbb{E}\left[\left\langle F_s(x_{s,R_s})-F_s(x_s^*), x_{s,R_s}-x_s^*\right\rangle\,\middle|\,\mathcal{F}_{s,0}\right]+ \mathbb{E}\left[\|x_{s,k}-x_s^*\|^2\,\middle|\,\mathcal{F}_{s,0}\right]\\
 &\leq \tfrac{1301(L+ r_s )^2}{(\mu+r_s)^2k^2}\|x_{s-1}-x_s^*\|^2 +\tfrac{1024 \sigma^2_{s,*}}{km_s (\mu+r_s) ^2}.   \label{eqn:distance-s'} 
    \end{aligned}
\end{equation}
Furthermore, we have 
\begin{align}
      &\sqrt{\mathbb{E}\left[\res_{F_s}(x_{s,k})^2\,\middle|\,\mathcal{F}_{s,0}\right]}\leq {28(L+ r_s )}\|x_{s-1}-x_s^*\|+\tfrac{29\sqrt{k} \sigma_{s,*}}{\sqrt{m_s}},\label{eqn:residual-s''}
  \end{align}
where $ \sigma^2_{s,*}\coloneqq \varsigma_{*} r_s  \|x^*_s - \bar x_s\| \|x_s^* - x^*\| +  \sigma^2_*$.
\end{proposition}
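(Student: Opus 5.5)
The plan is to apply \autoref{cor:subroutine-guarantee}, \autoref{remark:another-output} and \autoref{lem:slow-residual-last} verbatim to the regularized problem \eqref{eq:sub-problem}. Epoch $s$ of SOE is a run of \autoref{alg:SOE} on the operator $F_s$, with stochastic estimator $\widehat F_{s,k}$ from \eqref{eqn:regulairzed-stochastic-estimator}, initial point $x_{s-1}$, and reference solution $x_s^*$. All statements are conditioned on $\mathcal{F}_{s,0}$, so that $\bar x_s$, $x_{s-1}$ and $x_s^*$ are fixed.

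\textbf{Step 1: check the hypotheses for $F_s$.} Since $\bar x_s$ is frozen, $F_s$ is $(L+r_s)$-Lipschitz and $(\mu+r_s)$-strongly monotone. By \eqref{eq:fast-decreasing-bias}, $\widehat F_{s,k}$ is conditionally unbiased. The noise $\widehat F_{s,k}(x)-F_s(x)=\tilde F_{s,k}(x)-F(x)$ does not depend on $r_s$. For the state-dependent bound relative to $x_s^*$, I would argue as in \autoref{lemma:state-dependent-noise-regularized}, but at a general point $x$. Use \eqref{eqn:state-dependent-assumption}, then write
\[
\langle F(x)-F(x^*),x-x^*\rangle=\langle F(x)-F(x_s^*),x-x_s^*\rangle+(\text{cross terms}).
\]
The cross terms should be controlled by the optimality of $x^*$ and of $x_s^*$ together with $F_s-F=r_s(\cdot-\bar x_s)$. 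This should give
\[
\mathbb{E}\big[\|\widehat F_{s,k}(x)-F_s(x)\|^2\mid\mathcal{F}_{s,k}\big]\le \tfrac{1}{m_s}\Big(\sigma_{s,*}^2+\varsigma_*\langle F_s(x)-F_s(x_s^*),x-x_s^*\rangle\Big),
\]
which has the form \eqref{eq:state-dep-diff-op} with $\sigma_*^2$ replaced by $\sigma_{s,*}^2$ and $x^*$ replaced by $x_s^*$. The batch condition $m_s\ge\lceil 6\varsigma_*/(L+r_s)\rceil$ is assumed in the statement.

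\textbf{Step 1 is the main obstacle.} The leftover cross term is $\langle F(x_s^*)-F(x^*),x-x_s^*\rangle$ plus pieces of the form $r_s\langle x_s^*-\bar x_s,\cdot\rangle$, and it is not obviously dominated by $\sigma_{s,*}^2$ at a general $x$. What I would try first is to go back to the proof of \autoref{prop:main-theorem-bias-exp}. There the variance bound is used only inside expectations weighted by $\langle F(x_k)-F(x^*),x_k-x^*\rangle$. I would redo \eqref{eq:x-star-solves-the-VI} with $F_s,x_s^*$, and absorb the mismatch using monotonicity of $F$ and $r_s(x_s^*-\bar x_s)=-F(x_s^*)+F_s(x_s^*)$. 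If the cross term cannot be absorbed, I would keep $\langle F(x_k)-F(x^*),x_k-x^*\rangle$ on the left-hand side and bound it below by the $F_s$-version minus $r_s\|x_s^*-\bar x_s\|\|x_k-x^*\|$-type terms, then reabsorb those.

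\textbf{Step 2: distance and averaged-gap bound \eqref{eqn:distance-s'}.} Apply \eqref{eqn:subroutine-guarantee} with $k_{s,0}=16(L+r_s)/(\mu+r_s)+1$ and divide by $\theta_k=(k+k_{s,0})(k+k_{s,0}-1)\ge k^2$. The initial term $\tfrac{9k_{s,0}^2}{2\theta_k}\|x_{s-1}-x_s^*\|^2$ is handled with $k_{s,0}\le 17(L+r_s)/(\mu+r_s)$, which gives the constant $9\cdot17^2/2\le1301$. The noise term $\tfrac{1024(k-1)\sigma_{s,*}^2}{(\mu+r_s)^2m_s\theta_k}$ is at most $\tfrac{1024\sigma_{s,*}^2}{k m_s(\mu+r_s)^2}$. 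For the gap term, use \autoref{remark:another-output}: the random index $R_s$ from \eqref{eqn:iterative-averaging-per-epoch} converts the weighted sum into $\tfrac{16k}{(\mu+r_s)(k+k_{s,0})}\mathbb{E}[\langle F_s(x_{s,R_s})-F_s(x_s^*),x_{s,R_s}-x_s^*\rangle]$. The distance term and the gap term both come from the same inequality, so they add on the left-hand side as stated, and I would verify the constants in that sum.

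\textbf{Step 3: residual bound \eqref{eqn:residual-s''}.} This is \autoref{lem:slow-residual-last} applied to $F_s$ with $L\to L+r_s$, $x_0\to x_{s-1}$, $x^*\to x_s^*$ and $\sigma_*\to\sigma_{s,*}$, valid for $k\ge2$. Its proof uses only the Lipschitz constant, the parameter relations \eqref{eqn:residual-inter-1}, and the variance bound from Step 1, so it transfers directly.
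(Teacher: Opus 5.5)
Your plan is the paper's proof. That proof applies \autoref{cor:subroutine-guarantee}, \autoref{remark:another-output} and \autoref{lem:slow-residual-last} to $F_s$, and justifies the noise hypothesis only by citing \autoref{lemma:state-dependent-noise-regularized}.

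The step you flagged is a genuine gap, and that citation does not close it. \autoref{lemma:state-dependent-noise-regularized} bounds the variance only at the single point $x_s^*$. Both \eqref{eq:step1bound'} in the proof of \autoref{prop:main-theorem-bias-exp} and \eqref{eqn:var-state-dependent} in the proof of \autoref{lem:slow-residual-last} need more: a bound of the form \eqref{eq:state-dep-diff-op} at every iterate, referenced to the solution of the VI actually being run, so it can be absorbed by the term produced by \eqref{eq:x-star-solves-the-VI}.

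As you formulated it, Step~1 is false for general monotone $F$. After your split, the remainder is $\langle F(x)-F(x_s^*),x_s^*-x^*\rangle+\langle F(x_s^*)-F(x^*),x-x_s^*\rangle$; your list omits the first piece. This remainder is first order in $x-x_s^*$, while the only slack is $r_s\|x-x_s^*\|^2$. Here is a counterexample:
\begin{itemize}
\item Take $X=\mathbb{R}$, $x^*=0$, $\bar x_s=1$, $0<\delta<1$, $H=r_s(1-\delta)$ and $F(t)=H\min\{\max\{t,0\}/\delta,1\}$.
\item Then $F$ is monotone and $(H/\delta)$-Lipschitz, $x_s^*=\delta$, and $\sigma_{s,*}^2=\sigma_*^2+\varsigma_*H\delta$.
\item Let the oracle variance equal the right-hand side of \eqref{eqn:state-dependent-assumption}. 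At $x=\delta+a$ it is $\sigma_*^2+\varsigma_*H(\delta+a)$, whereas your bound allows only $\sigma_*^2+\varsigma_*(H\delta+r_sa^2)$.
\item This fails for every $0<a<1-\delta$. With $\sigma_*=0$ and $\delta\ll a\ll 1$, no constant factor rescues it.
\end{itemize}

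Your fallback does not repair this. Young's inequality against $r_s\|x_k-x_s^*\|^2$ turns $r_s\|x_s^*-\bar x_s\|\,\|x_k-x_s^*\|$ into $r_s\|x_s^*-\bar x_s\|^2$. It turns $L\|x_k-x_s^*\|\,\|x_s^*-x^*\|$ into $L^2\|x_s^*-x^*\|^2/r_s$. Neither is dominated by $\sigma_{s,*}^2$; in the example the first is of order $r_s$, against $r_s\delta$.

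What does go through is the affine case, which covers the policy-evaluation application. If $F(x)=Ax+c$ and $S=(A+A^\top)/2\succeq0$, then $\langle F(x)-F(x^*),x-x^*\rangle\le 2(x-x_s^*)^\top S(x-x_s^*)+2(x_s^*-x^*)^\top S(x_s^*-x^*)$.
\begin{itemize}
\item The first term is at most $\langle F_s(x)-F_s(x_s^*),x-x_s^*\rangle$.
\item The second is at most $r_s\|x_s^*-\bar x_s\|\,\|x_s^*-x^*\|$, by the argument of \autoref{lemma:state-dependent-noise-regularized}.
\item So \autoref{cor:subroutine-guarantee} applies with $(\sigma_*^2,\varsigma_*)$ replaced by $(2\sigma_{s,*}^2,2\varsigma_*)$. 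This requires $m_s\ge\lceil 12\varsigma_*/(L+r_s)\rceil$ and gives $2048$ and $29\sqrt2$ in place of $1024$ and $29$.
\end{itemize}
Constant one cannot hold in general even here, since the cross term $2(x-x_s^*)^\top S(x_s^*-x^*)$ is linear in $x-x_s^*$. For nonlinear $F$ a genuinely new argument is needed.

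Separately, the constant check you deferred in Step~2 also fails. Divide \eqref{eqn:subroutine-guarantee} by $\theta_k$ and use $\sum_{j=1}^k(j+k_{s,0}-1)=k(k+2k_{s,0}-1)/2$. The gap term then gets the coefficient $\tfrac{8k(k+2k_{s,0}-1)}{3(\mu+r_s)(k+k_{s,0})(k+k_{s,0}-1)}$. This is at least $\tfrac{8k}{3(\mu+r_s)(k+k_{s,0})}$, but at most one third of the stated $\tfrac{16k}{(\mu+r_s)(k+k_{s,0})}$. The cause is that the second inequality in \autoref{remark:another-output} overstates the available coefficient by a factor of $8$. Your constants $1301$ and $1024$ for the distance part are correct, given Step~1.
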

\begin{proof}
Observe that $F_s$ is Lipschitz continuous with constant $L+r_s$ and $(\mu+r_s)$-strongly monotone. Moreover, \autoref{lemma:state-dependent-noise-regularized} shows that the regularized operator $F_s$ satisfies the monotone-operator assumptions in \autoref{sec:Assumptions} with the corresponding modified parameters. Applying \autoref{cor:subroutine-guarantee} together with \autoref{remark:another-output} to $F_s$ using these parameters and the definition of $R_s$ in \eqref{eqn:iterative-averaging-per-epoch}, dividing both sides of \eqref{eqn:subroutine-guarantee} by
$(k+k_{s,0})(k+k_{s,0}-1)$, and using
\begin{equation*}
(k+k_{s,0})(k+k_{s,0}-1)\geq k^2,\qquad
k_{s,0}=\tfrac{16(L+r_s)}{\mu+r_s}+1,\qquad
x_{s,0}=x_{s-1},\qquad
k-1\leq k,
\end{equation*}
we obtain \eqref{eqn:distance-s'}.
It remains to verify the residual guarantees. When
\autoref{lem:slow-residual-last} is applied to the regularized operator $F_s$ with Lipschitz constant $L+r_s$, strong monotonicity parameter $\mu+r_s$, initial point $x_{s-1}$, solution $x_s^*$, batch size $m_s$, and variance proxy $\sigma_{s,*}^2$, its last-iterate bound \eqref{eqn:res-last} yields \eqref{eqn:residual-s''}.

\end{proof}

\section{High-Probability Convergence Guarantees for Accumulative Regularization}

In this section, we will derive the complexities of AR framework for  solving the VI \eqref{VIP} with $\mu\ge0$ with high probability under light-tailed noise assumptions. We show that for any $p \in (0,1)$, AR produces a search point $x_S$ such that $\mathbb P(\res_F(x_S) \leq \varepsilon) \geq 1 -p$. 

In \autoref{sec:hp-ar-state-dep}, we introduce the light-tailed state-dependent noise assumption and establish the high-probability convergence rate of the AR framework when equipped with a generic subroutine $\mathcal A$ satisfying subroutine guarantees. Accordingly, in \autoref{sec:hp-soe-state-dependent-new} we verify that SOE satisfies these subroutine guarantees. Moreover, in \autoref{sec:hp-uniform}, we extend the result to the simpler light-tailed uniform noise case.

\subsection{Accumulative regularization for state-dependent light-tailed noise}\label{sec:hp-ar-state-dep}

In this section, we establish a high-probability convergence guarantee under the state-dependent noise model. To this end, we assume that the stochastic oracle $\mathcal{SO}$ returns the unbiased estimator in \eqref{eq:fast-decreasing-bias-o} and satisfies the following light-tailed assumption on the stochastic gradient error.
\smallskip

For any fixed solution $x^*\in X^*$, there exist constants $\sigma_*^2\geq 0$ and $\varsigma_*\geq 0$ such that, for any $\mathcal F_k$-measurable point $x\in X$, the following sub-Gaussian state-dependent variance condition holds:
\begin{align}\label{eq:hp-bounded-var-at-opt-o}
\mathbb{E}\left[\exp\left\{\tfrac{\|\tilde{F}(x,\zeta_k)-F(x)\|^2}{\sigma_*^2+\varsigma_*\langle F(x)-F(x^*),x-x^*\rangle}\right\}\,\bigg|\,\mathcal{F}_k\right]\leq \exp\{1\}, \quad \textnormal{a.s.}
\end{align}

\medskip

We start with the following result for the stochastic regularized operator $\widehat F_{s,k}$ define in \eqref{eqn:regulairzed-stochastic-estimator}.
\begin{lemma}\label{lem:hp-regularized-operator-properties}
For every $\mathcal F_{s,k}$-measurable point $x\in X,$ the estimator $\widehat F_{s,k}(x)$ satisfies the unbiasedness condition \eqref{eq:fast-decreasing-bias}; namely, $\mathbb E[\widehat F_{s,k}(x)\mid\mathcal F_{s,k}]=F_s(x).$ Furthermore, let
\begin{equation*}
\sigma_{s,*}^2\coloneqq 9\left[\varsigma_*r_s\|x_s^*-\bar x_s\|\cdot\|x_s^*-x^*\|+\sigma_*^2\right].
\end{equation*}
Then the oracle error at $x_s^*$ satisfies the conditional exponential-moment bound
\begin{equation}\label{eq:hp-state-dep-var-at-subproblem-opt}
\mathbb E\left[\exp\left\{\tfrac{m_s\|\widehat F_{s,k}(x_s^*)-F_s(x_s^*)\|^2}{\sigma_{s,*}^2}\right\}\,\middle|\,\mathcal F_{s,k}\right]\leq\exp(1),\quad\textnormal{a.s.}
\end{equation}
\end{lemma}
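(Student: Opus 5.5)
Unbiasedness follows exactly as in \autoref{lemma:state-dependent-noise-regularized}. The deterministic shift $r_s(x-\bar x_s)$ is $\mathcal F_{s,k}$-measurable, so $\widehat F_{s,k}(x)-F_s(x)=\tilde F_{s,k}(x)-F(x)$. Taking conditional expectation and using \eqref{eq:fast-decreasing-bias-o} gives zero. For the exponential-moment bound, the same identity shows that $\widehat F_{s,k}(x_s^*)-F_s(x_s^*)=\frac{1}{m_s}\sum_{i=1}^{m_s}\xi_i$, where $\xi_i\coloneqq\tilde F(x_s^*,\zeta^i_{s,k})-F(x_s^*)$. Since $x_s^*$ is $\mathcal F_{s,k}$-measurable, assumption \eqref{eq:hp-bounded-var-at-opt-o} applies to each $\xi_i$ with scale $\bar\sigma^2\coloneqq\sigma_*^2+\varsigma_*\langle F(x_s^*)-F(x^*),x_s^*-x^*\rangle$. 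Conditionally on $\mathcal F_{s,k}$, the $\xi_i$ are independent, mean zero, and sub-Gaussian in norm with parameter $\bar\sigma$.

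The first key step bounds $\bar\sigma^2$ deterministically. I would reuse the chain from the proof of \autoref{lemma:state-dependent-noise-regularized}: write $F(x_s^*)=F_s(x_s^*)-r_s(x_s^*-\bar x_s)$, then drop the terms $\langle F(x^*),x^*-x_s^*\rangle\le0$ and $\langle F_s(x_s^*),x_s^*-x^*\rangle\le0$ using the two VI optimality conditions. Cauchy--Schwarz then gives $\bar\sigma^2\le\varsigma_*r_s\|x_s^*-\bar x_s\|\|x_s^*-x^*\|+\sigma_*^2=\sigma_{s,*}^2/9$. Hence it suffices to show the following: if the $\xi_i$ are independent, zero mean, and satisfy $\mathbb E[\exp(\|\xi_i\|^2/\bar\sigma^2)]\le e$, then $\mathbb E[\exp(m\|\frac1m\sum\xi_i\|^2/(9\bar\sigma^2))]\le e$.

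This vector sub-Gaussian averaging step is the main obstacle, and the constant $9$ must come out of it. I would first try a standard martingale-difference concentration result for norm-sub-Gaussian vectors in Euclidean space, e.g.\ the bound used in \cite{lan2020first} or Juditsky--Nemirovski type results. Such results give $\mathbb P(\|\sum_i\xi_i\|\ge(\sqrt2+\sqrt2\lambda)\sqrt m\,\bar\sigma)\le\exp(-\lambda^2/3)$. Integrating this tail, or directly bounding the moment generating function, yields an exponential moment of $\|\sum\xi_i\|^2/(c\,m\bar\sigma^2)$ bounded by $e$ for an absolute constant $c$. I would track the constants to check that $c\le 9$. If the tail route loses too much, the fallback is a symmetrization argument: bound $\mathbb E\exp(t\|S_m\|^2)$ through the Gaussian trick $\exp(t\|v\|^2)=\mathbb E_g\exp(\sqrt{2t}\langle g,v\rangle)$, then use the conditional sub-Gaussian MGF bound $\mathbb E\exp\langle u,\xi_i\rangle\le\exp(\tfrac34\|u\|^2\bar\sigma^2)$, which follows from the exponential-moment condition. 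Finally, take $t$ small enough that the resulting Gaussian integral is at most $e$, and verify that $t=1/(9m\bar\sigma^2)$ is admissible.
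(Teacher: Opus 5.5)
Your outline has the same structure as the paper's proof. Unbiasedness comes from the identity $\widehat F_{s,k}(x)-F_s(x)=\tilde F_{s,k}(x)-F(x)$. The scale at $x_s^*$ is bounded through the two VI optimality conditions and Cauchy--Schwarz. A tail bound for sums of norm-sub-Gaussian vectors is then integrated into an exponential-moment bound, which is the paper's \autoref{prop:moment-tail-bound}.

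The gap is in the step you flagged, and it concerns the constant: neither of your tools yields the $9$ in $\sigma_{s,*}^2$. Take the inequality you quote, $\mathbb P(\|\sum_i\xi_i\|\ge\sqrt2(1+\lambda)\sqrt m\,\bar\sigma)\le e^{-\lambda^2/3}$, and set $W\coloneqq\|\sum_i\xi_i\|^2/(9m\bar\sigma^2)$. You only get $\mathbb P(W\ge w)\le\exp\{-\tfrac32 w+\sqrt{2w}-\tfrac13\}$ for $w\ge 2/9$. Integrating gives $\mathbb E[e^W]\le e^{2/9}+\int_{2/9}^\infty\exp\{-\tfrac w2+\sqrt{2w}-\tfrac13\}\,dw$, whose right-hand side is about $15$, far above $e$. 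Replacing $9$ by $18$ turns your integral into exactly the one the paper evaluates. So this route proves the lemma with $18[\cdots]$ rather than $9[\cdots]$.

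To get $9$ you need the sharper Euclidean inequality the paper uses, \autoref{lem:juditsky-bound}: $\mathbb P(\|\sum_i\xi_i\|\ge(1+t)\sqrt m\,\bar\sigma)\le e^{-t^2/3}$. This gives $\mathbb P(W\ge w)\le\exp\{-(3\sqrt w-1)^2/3\}$ for $w\ge1/9$. Hence $\mathbb E[e^W]\le e^{1/9}+\int_{1/9}^\infty\exp\{-2w+2\sqrt w-\tfrac13\}\,dw\le e$.

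Your fallback fails more seriously. Linearize with $g\sim N(0,I_d)$ and apply the directional bound $\mathbb E\exp\langle u,\xi_i\rangle\le\exp(\tfrac34\|u\|^2\bar\sigma^2)$ factor by factor. This gives $\mathbb E\exp(t\|\sum_i\xi_i\|^2)\le\mathbb E_g\exp(\tfrac32 tm\bar\sigma^2\|g\|^2)=(1-3tm\bar\sigma^2)^{-d/2}$. At $t=1/(9m\bar\sigma^2)$ this equals $(3/2)^{d/2}$, which exceeds $e$ once $d\ge5$. The directional bound charges the full scale $\bar\sigma^2$ to each of the $d$ directions, so no $d$-independent choice of $t$ works. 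A dimension-free bound has to exploit the Euclidean norm directly, as the martingale arguments behind \autoref{lem:juditsky-bound} do.

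Settling for $18$ breaks nothing qualitatively; only absolute constants downstream change, e.g.\ in \autoref{lem:hp-state-dependent-iterate-boundedness} and in the batch sizes of \autoref{thm:hp-state-dep-main-ar-new}. As stated, however, the lemma requires the $(1+t)$ concentration bound.
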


\begin{proof}
The unbiasedness condition \eqref{eq:fast-decreasing-bias} follows from \autoref{lemma:state-dependent-noise-regularized}. Moreover, by the definition of $\widehat F_{s,k}$,
\begin{equation*}
\widehat F_{s,k}(x_s^*)-F_s(x_s^*)=\tilde F_{s,k}(x_s^*)-F(x_s^*).
\end{equation*}
Under the light-tail assumption \eqref{eq:hp-bounded-var-at-opt-o}, \autoref{prop:moment-tail-bound} implies that, for every $\mathcal F_{s,k}$-measurable point $x\in X,$
\begin{equation*}
\mathbb E\left[\exp\left\{\tfrac{m_s\|\tilde F_{s,k}(x)-F(x)\|^2}{9[\sigma_*^2+\varsigma_*\langle F(x)-F(x^*),x-x^*\rangle]}\right\}\,\middle|\,\mathcal F_{s,k}\right]\leq\exp(1),\quad\textnormal{a.s.}
\end{equation*}
By the definitions of $\mathcal F_{s,0}$ and $x_s^*$, the point $x_s^*$ is $\mathcal F_{s,0}$-measurable. Since $\mathcal F_{s,0}\subseteq\mathcal F_{s,k}$, the preceding bound applies with $x=x_s^*$.

We next bound the resulting sub-Gaussian parameter. By the definitions of the variational inequalities for $x^*$ and $x_s^*$, the definition of $F_s$, and the Cauchy--Schwarz inequality,  we have
\begin{equation*}
\begin{aligned}
\sigma_*^2+\varsigma_*\langle F(x_s^*)-F(x^*),x_s^*-x^*\rangle
&\leq \sigma_*^2+\varsigma_*\langle F(x_s^*),x_s^*-x^*\rangle\\
&\leq \sigma_*^2+\varsigma_*\langle F(x_s^*)-F_s(x_s^*),x_s^*-x^*\rangle\\
&=\sigma_*^2+r_s\varsigma_*\langle\bar x_s-x_s^*,x_s^*-x^*\rangle\\
&\leq \sigma_*^2+r_s\varsigma_*\|x_s^*-\bar x_s\|\cdot\|x_s^*-x^*\|.
\end{aligned}
\end{equation*}
Since $\bar x_s$ is $\mathcal F_{s,0}$-measurable, so is $\sigma_{s,*}^2$. In particular, $\sigma_{s,*}^2$ is $\mathcal F_{s,k}$-measurable. Therefore, the denominator in the preceding exponential-moment bound at $x=x_s^*$ is at most $\sigma_{s,*}^2$. This concludes the proof.
\end{proof}
\medskip

Unlike under the uniform noise model, the conditional sub-Gaussian variance proxy $\sigma_*^2+\varsigma_*\langle F(x)-F(x^*),x-x^*\rangle$ depends on the iterate $x$ and is therefore random. Hence, the uniform-noise high-probability argument based on the deterministic variance proxy $\sigma^2$ cannot be applied directly; we instead develop a state-dependent analysis for stochastic AR and its inner subroutine, with SOE as a concrete example.  Controlling the cumulative squared oracle error and the associated martingale sum requires self-normalized concentration, which produces logarithmic variance-ratio terms encoded by $q_s$. These terms capture the additional error induced by the randomness of the variance proxy.

To formulate the required subroutine guarantee for AR, fix $\Lambda\geq0$ and $N_s\geq2$. Let $c_{\mathcal A}>1$ be a constant depending only on $\mathcal A$, and let $\{q_s\}_{s\geq0}$ satisfy $q_s\leq1+\Lambda+\log(N_s+\tfrac{L+r_s}{\mu+r_s})$. For every $2\leq k\leq N_s$, define the events
\begin{align}
E_{s,k}^1&\coloneqq\left\{\|x_{s,k}-x_s^*\|^2\leq c_{\mathcal A}q_s\left[\tfrac{(L+r_s)^2}{(\mu+r_s)^2k^2}\|x_{s-1}-x_s^*\|^2+\tfrac{(1+\Lambda)\sigma_{s,*}^2}{km_s(\mu+r_s)^2}\right]\right\},\label{eqn:hp-distance-s-state-dependent}\\
E_{s,k}^2&\coloneqq\left\{\res_{F_s}(x_{s,k})\leq c_{\mathcal A}\sqrt{q_s}\left[(L+r_s)\|x_{s-1}-x_s^*\|+\tfrac{\sqrt{k}\sqrt{1+\Lambda}\sigma_{s,*}}{\sqrt{m_s}}\right]\right\}.\label{eqn:hp-residual-s-state-dependent}
\end{align}
Compared with their counterparts under the uniform noise model, the distance and residual bounds in \eqref{eqn:hp-distance-s-state-dependent} and \eqref{eqn:hp-residual-s-state-dependent} contain the additional factors $q_s$ and $\sqrt{q_s}$, respectively. These factors arise from controlling the random sub-Gaussian parameter along the iterates.

With this notation, we state the following performance assumption on the subroutine $\mathcal A$ used by AR.

\begin{assumption}\label{as:hp-assumption_inner_sto-state-dependent}
For any mini-batch size satisfying $m_s\geq\lceil\tfrac{(1+\Lambda)\varsigma_*c_{\mathcal A}q_{s}}{L+r_s}\rceil$, consider the iterates generated by the subroutine $\mathcal A(\mathcal{SO},r_s,\bar x_s,x_{s-1},m_s)$ for solving the regularized problem \eqref{eq:sub-problem}. Suppose the following holds:
\begin{equation}
\mathbb P\left(E_{s,k}^1\cap E_{s,k}^2\,\middle|\,\mathcal F_{s,0}\right)\geq1-2k\exp\{-\Lambda\},\quad\textnormal{a.s.} \quad\forall\,\,2\leq k\leq N_{s}
\end{equation}
\end{assumption}
If $\sigma_*=0$ and $\varsigma_*=0$, we set $q_s\equiv1$, and the analysis reduces to the deterministic analysis in \autoref{the:sublinear_AR}. In \autoref{sec:hp-soe-state-dependent-new}, we show that SOE satisfies \autoref{as:hp-assumption_inner_sto-state-dependent}.

As in the in-expectation setting, \autoref{as:hp-assumption_inner_sto-state-dependent} requires a sufficiently large mini-batch size $m_s$ to control the state-dependent noise. Moreover, since the random sub-Gaussian parameter $\sigma_{s,*}^2$ contains a term proportional to the regularization parameter $r_s$, the following lemma provides a deterministic upper bound on $\sigma_{s,*}^2$ over the good event $\mathcal E_{s-1}$.
\begin{lemma}\label{lem:hp-state-dependent-iterate-boundedness}
For any $s\geq2,$ define the event
\begin{equation}\label{eq:hp-induction-event}
\mathcal E_s := \left\{\|x_i - x_i^*\|^2 \leq \tfrac{D_0^2}{16^i}, \quad \forall i \in [s]\right\}
\end{equation}
for each $s \geq 1.$ Moreover, define
\begin{equation*}
\mathcal E_{\sigma_{s,*}}\coloneqq\left\{\sigma_{s,*}^2\leq24\varsigma_*D_0^2\tsum_{i=1}^s\tfrac{r_i+r_{i-1}}{4^{i-1}}+9\sigma_*^2\right\}.
\end{equation*}
Then $\mathcal E_{s-1}\subseteq\mathcal E_{\sigma_{s,*}}$.
\end{lemma}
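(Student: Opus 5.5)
The plan is to mirror the in-expectation argument of \autoref{lem:state-dependent-iterate-boundedness}, but pathwise on the event $\mathcal E_{s-1}$, where no Minkowski or Jensen steps are needed. By definition, $\sigma_{s,*}^2 = 9\bigl[\varsigma_* r_s\|x_s^*-\bar x_s\|\,\|x_s^*-x^*\| + \sigma_*^2\bigr]$. It therefore suffices to show that on $\mathcal E_{s-1}$ we have
\begin{equation*}
r_s\|x_s^*-\bar x_s\|\,\|x_s^*-x^*\|\leq \tfrac{8}{3}D_0^2\tsum_{i=1}^s\tfrac{r_i+r_{i-1}}{4^{i-1}},
\end{equation*}
since multiplying this by $9$ gives the constant $24$.

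Fix a sample path in $\mathcal E_{s-1}$. On this path $\|x_i-x_i^*\|\leq D_0/4^i$ for $i\in[s-1]$. For $i=0$ we use the convention $x_0^*=x^*$ from the proof of \autoref{lemma_ARVI_proximity} together with $\|x_0-x^*\|\leq D_0$. The reference solution $x^*$ should be taken so that $D_0\geq\|x_0-x^*\|$, consistent with \autoref{remark:choice-reference-solution}. Hence $\|x_k-x_k^*\|\leq D_0/4^k$ for all $k\in\{0,\dots,s-1\}$.

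The two factors are then bounded deterministically using \autoref{lemma_ARVI_proximity}, which holds along every sample path.
\begin{itemize}
\item From \eqref{eqn:induction-traingle}, $\|x_s^*-x^*\|\leq 2\tsum_{k=0}^{s-1}D_0/4^k\leq \tfrac{8D_0}{3}$.
\item From \eqref{eq:epoch_proximity2}, $r_s\|x_s^*-\bar x_s\|\leq \tsum_{i=1}^s (r_i+r_{i-1})\|x_{i-1}-x_{i-1}^*\|\leq D_0\tsum_{i=1}^s\tfrac{r_i+r_{i-1}}{4^{i-1}}$.
\end{itemize}
Multiplying the two bounds gives the displayed inequality, so $\sigma_{s,*}^2\leq 24\varsigma_*D_0^2\tsum_{i=1}^s\tfrac{r_i+r_{i-1}}{4^{i-1}}+9\sigma_*^2$. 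This is exactly the defining condition of $\mathcal E_{\sigma_{s,*}}$, and the inclusion follows.

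There is no real obstacle here. The only point needing care is that the $k=0$ term in both sums uses the initial distance $\|x_0-x^*\|\leq D_0$ rather than the event. This is why the lemma is stated for $s\geq2$ with $\mathcal E_{s-1}$ covering the indices $1,\dots,s-1$, while index $0$ is handled by the choice of $x^*$ and $D_0$.
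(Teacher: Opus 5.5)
Your proposal is correct and follows the paper's proof essentially verbatim: on $\mathcal E_{s-1}$ you bound $\|x_s^*-x^*\|\le 8D_0/3$ via \eqref{eqn:induction-traingle} and $r_s\|x_s^*-\bar x_s\|\le D_0\sum_{i=1}^s (r_i+r_{i-1})/4^{i-1}$ via \eqref{eq:epoch_proximity2}, then multiply by $9\varsigma_*$ to get the constant $24$. Your explicit remark that the $k=0$ term relies on choosing $x^*$ with $\|x_0-x^*\|\le D_0$, not on the event, makes precise a step the paper uses only implicitly.
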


\begin{proof}
On the event $\mathcal E_{s-1}$, \autoref{lemma_ARVI_proximity} and the definition of $\mathcal E_{s-1}$ yield
\begin{align*}
\|x_s^*-x^*\|&\overset{\eqref{eqn:induction-traingle}}{\leq}2\tsum_{k=0}^{s-1}\|x_k-x_k^*\|\leq2\tsum_{k=0}^{s-1}\tfrac{D_0}{4^k}\leq\tfrac{8D_0}{3},\\
r_s\|x_s^*-\bar x_s\|&\overset{\eqref{eq:epoch_proximity2}}{\leq}\tsum_{i=1}^s(r_i+r_{i-1})\|x_{i-1}-x_{i-1}^*\|\leq\tsum_{i=1}^s\tfrac{(r_i+r_{i-1})D_0}{4^{i-1}}.
\end{align*}
Consequently, by the definition of the sub-Gaussian parameter $\sigma_{s,*}^2$,
\begin{align*}
\sigma_{s,*}^2&=9\left[\varsigma_*r_s\|x_s^*-\bar x_s\|\cdot\|x_s^*-x^*\|+\sigma_*^2\right]\leq24\varsigma_*D_0^2\tsum_{i=1}^s\tfrac{r_i+r_{i-1}}{4^{i-1}}+9\sigma_*^2.
\end{align*}
Thus, $\mathcal E_{s-1}\subseteq\mathcal E_{\sigma_{s,*}}$.
\end{proof}
\smallskip
We next present the convergence rate for AR.
Since the subroutine guarantees in \autoref{as:hp-assumption_inner_sto-state-dependent} contain an additional factor $q_s$ in the distance bound and $\sqrt{q_s}$ in the residual bound compared with the uniform-noise case, we need to enlarge the AR epoch length $N_s$, batch size $m_s$, and the number of total epochs accordingly to account for that. For simplicity, we first introduce a few quantities to reflect the effect of $q_s.$
Let $p\in(0,1)$, $0<\varepsilon\leq LD_0$ and $D_0 \ge \operatorname{dist}(x_0, X^*)$, define
\begin{equation}\label{eq:parameter-bounds}
\begin{aligned}
\bar S&:=\left\lceil3+\log_4\left(\tfrac{16(16c_{\cA}+4)LD_0}{p\varepsilon}S_0\log(4S_0)\right)+\tfrac{1}{2}\log\left(\tilde c_{\cA}\left(1+\log\tfrac{LD_0}{p\varepsilon}\right)\right)\right\rceil,
\end{aligned}
\end{equation}
where
\begin{equation}\label{eq:hp-state-dependent-def-tildecA}
S_0 \coloneqq \left\lceil\log_4 \tfrac{16(16c_{\cA} +4)LD_0}{p\varepsilon}\right\rceil\quad\text{and}\quad \tilde c_{\cA}\coloneqq16\log_4(16(16c_{\cA}+4)).
%+\tfrac{8}{3}\left[4+2\log(8\sqrt{2c_{\cA}})+4\log(32(c_{\cA}+4))\right].
\end{equation}
Additionally, define the base iteration count by
\begin{equation*}
\begin{aligned}
N_{\textnormal{base}}&:=\left\lceil\tfrac{4\sqrt{2c_{\cA}}(L+\underline r)}{\mu + \underline r}\right\rceil,\quad\text{where}\quad \underline r:=\tfrac{\varepsilon}{16(16c_{\cA}+4)D_0\log\bar S}.
\end{aligned}
\end{equation*}
Furthermore, define
\begin{equation}\label{eq:Qbar-Nbar-Lambda}
\begin{aligned}
\bar q&:=2\left(1+\log_4\left(\tfrac{16\bar S N_{\textnormal{base}}^2}{p}\right)\right)\quad\text{and}\quad \bar N:=\left\lceil N_{\textnormal{base}}\sqrt{\bar q}\right\rceil.
\end{aligned}
\end{equation}
We are now ready to provide the high-probability convergence guarantee result for AR when $\mu \ge 0$.

\begin{theorem}\label{thm:hp-state-dep-main-ar-new} 
Assume that $0<\varepsilon \le LD_0$, where $D_0\geq \operatorname{dist}(x_0, X^*)$. In \autoref{alg:ARVI}, suppose $S$ and $r_s$ satisfy
\begin{equation}\label{eq:hp-state-dependent-ar-regularization}
S = \left\lceil 2 + \log_4 \left( \tfrac{16(16c_{\cA} + 4) LD_0}{p\varepsilon}S_0 \log(4S_0) \sqrt{\bar q} \right)\right\rceil,\quad r_s:=\tfrac{4^{s-1}\varepsilon}{16(16c_{\cA}+4)D_0s\log S},
\end{equation}
where $S_0$ is defined in \eqref{eq:parameter-bounds} and $\bar q$ is defined in \eqref{eq:Qbar-Nbar-Lambda}.
 For each epoch, the epoch length satisfies
\begin{align}\label{hp-state-dep-epoch-length}
N_s = \left\lceil N_{s,0} \sqrt{\bar q} \right\rceil\quad\text{where}\quad N_{s,0} = \left\lceil\tfrac{4\sqrt{2c_{\cA}}(L+ r_s)}{r_s}\right\rceil,
\end{align}
and the batch size satisfies 
\begin{align}\label{eqn:hp-state-dep-batch-epoch}
m_s=\max\left\{\left\lceil\tfrac{16^{s-1}N_s(1+\Lambda_{p, \varepsilon})}{(L+ r_s)^2 D_0^2} \left[24\varsigma_* D_0^2\tsum_{i = 1}^s \tfrac{r_{i} + r_{i-1}}{4^{i - 1}}+9\sigma_*^2\right]+1\right\rceil,\ \tfrac{(1+\Lambda_{p, \varepsilon})\varsigma_* c_{\cA} \bar q}{L + r_s}\right\}, \quad \Lambda_{p, \varepsilon}:= \log \left(\tfrac{2\bar S \bar N}{p}\right).
\end{align}
Then, at epoch $S$, \autoref{alg:ARVI} computes an approximate solution $x_S$ such that $\mathbb P(\res_F(x_S) \leq \varepsilon) \geq 1 - p$ after 
\begin{align}\label{eq:hp-state-dependent-ar-complexity}
\widetilde{\mathcal{O}}\left(\tfrac{LD_0}{\varepsilon}\log(\tfrac{LD_0}{p\varepsilon})^{1/2} + \tfrac{\varsigma_* D_0}{\varepsilon} (\log \tfrac{LD_0}{p\varepsilon})^5 + \tfrac{\sigma_*^2}{\varepsilon^2}(\log \tfrac{LD_0}{p\varepsilon})^5\right)
\end{align}
calls to $\mathcal{SO}$.
\end{theorem}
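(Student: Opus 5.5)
We follow the in-expectation template of \autoref{thm:main-unbiased} and \autoref{thm:ARVI-state-dependent}, but argue along a sequence of good events rather than through moments. The core is to show that, with $\mathcal E_s$ as in \eqref{eq:hp-induction-event}, $\mathbb P(\mathcal E_S\cap E^2_{S,N_S})\geq 1-p$, and that on this event $\res_F(x_S)\leq\varepsilon$. Fix $\Lambda=\Lambda_{p,\varepsilon}$ in \autoref{as:hp-assumption_inner_sto-state-dependent}. We will check that $q_s\leq 1+\Lambda+\log(N_s+\tfrac{L+r_s}{\mu+r_s})\leq \bar q$ for all $s\leq S\leq\bar S$; this is what the definitions of $\bar q$, $N_{\rm base}$ and $\bar S$ are designed for, since $r_s\ge \underline r$ and $N_s\le \bar N$. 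We also check that the batch-size requirement $m_s\geq(1+\Lambda)\varsigma_*c_{\mathcal A}q_s/(L+r_s)$ holds by the second entry of the max in \eqref{eqn:hp-state-dep-batch-epoch}.

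\emph{Induction on good events.} Assume $\mathcal E_{s-1}$ holds. By \autoref{lem:hp-state-dependent-iterate-boundedness}, $\sigma_{s,*}^2$ is then bounded by the deterministic quantity in brackets in \eqref{eqn:hp-state-dep-batch-epoch}, so the choice of $m_s$ gives
\[
\tfrac{(1+\Lambda)\sigma_{s,*}^2}{N_sm_s}\leq \tfrac{(L+r_s)^2D_0^2}{16^{s-1}N_s^2}.
\]
By \eqref{eq:epoch_proximity1} we have $\|x_{s-1}-x_s^*\|^2\le D_0^2/16^{s-1}$ (for $s=1$ use \eqref{eq:epoch_proximity0}). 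On $E^1_{s,N_s}$ this gives
\[
\|x_s-x_s^*\|^2\le \tfrac{2c_{\mathcal A}\bar q(L+r_s)^2D_0^2}{(\mu+r_s)^2N_s^216^{s-1}}\le \tfrac{D_0^2}{16^s},
\]
because $N_s\ge N_{s,0}\sqrt{\bar q}$. Hence $\mathcal E_{s-1}\cap E^1_{s,N_s}\subseteq\mathcal E_s$. Since $\mathcal E_{s-1}\in\mathcal F_{s,0}$, the conditional guarantee of \autoref{as:hp-assumption_inner_sto-state-dependent} gives
\[
\mathbb P\bigl(\mathcal E_{s-1}\setminus(\mathcal E_s\cap E^2_{s,N_s})\bigr)\le 2N_se^{-\Lambda}\le 2\bar Ne^{-\Lambda}.
\]
A union bound over $s\le S\le\bar S$ then yields failure probability at most $2\bar S\bar Ne^{-\Lambda}=p$. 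The main technical obstacle is this bookkeeping: showing $S\le\bar S$, $N_s\le\bar N$, and $q_s\le\bar q$ uniformly. The quantity $\bar q$ depends on $\bar S$ and $N_{\rm base}$, which in turn depend on $p$ and $\varepsilon$, so the circularity must be broken using crude bounds such as $\log S\le\log\bar S$ and $\sqrt{\bar q}=O(\sqrt{\log(LD_0/(p\varepsilon))})$. This is where the extra $\tfrac12\log(\cdot)$ term in $\bar S$ comes from.

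\emph{Residual on the good event.} We use the decomposition \eqref{eqn:res}. On $E^2_{S,N_S}\cap\mathcal E_{S-1}$, the residual term is at most
\[
c_{\mathcal A}\sqrt{\bar q}\Bigl[(L+r_S)\tfrac{D_0}{4^{S-1}}+\sqrt{\tfrac{(1+\Lambda)N_S\sigma_{S,*}^2}{m_S}}\Bigr]\le \tfrac{2c_{\mathcal A}\sqrt{\bar q}(L+r_S)D_0}{4^{S-1}},
\]
by the same batch bound. The other two terms are controlled by $\mathcal E_S$ together with \eqref{eq:epoch_proximity2}, exactly as in \autoref{thm:main-unbiased}. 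We then need $r_S\ge L$ and $(16c_{\mathcal A}+1)\sqrt{\bar q}\,r_SD_0/4^S\le\varepsilon/4$. Both follow from the factor $\sqrt{\bar q}$ inside the logarithm defining $S$ and the factor $16$ in $r_s$, repeating the computation after \eqref{eqn:bound-on-S-new}. Finally, $\sum_s r_s4^{-(s-1)}\le\varepsilon/(8D_0)$ as before.

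\emph{Complexity.} We sum $N_sm_s$ over $s$. The deterministic part is $\sum_s N_s=O(\sqrt{\bar q}\,LD_0/\varepsilon)$ up to $\log\log$ factors. For the state-dependent and variance parts, we use $\sum_i (r_i+r_{i-1})4^{-(i-1)}=O(\varepsilon/D_0)$, the bound $16^{s}/r_s^2=O(s^2\log^2S\,D_0^2/\varepsilon^2)$, and the factors $N_s^2\propto\bar q$ and $(1+\Lambda)$. Summing over $S=O(\log(LD_0/(p\varepsilon)))$ epochs gives the polylogarithmic powers in \eqref{eq:hp-state-dependent-ar-complexity}.
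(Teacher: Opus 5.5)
Your proposal is correct and follows essentially the same route as the paper's proof. It uses the bounds $S\le\bar S$, $N_s\le\bar N$ and $q_s\le\bar q$; the epoch-wise induction on $\mathcal E_s$ via \autoref{lem:hp-state-dependent-iterate-boundedness} and the choice of $m_s$; the union bound giving $2\bar S\bar N e^{-\Lambda_{p,\varepsilon}}\le p$; the residual decomposition \eqref{eqn:res} together with $r_S\ge L$; and the same summation of $N_sm_s$. You are slightly more careful than the paper in two places: you put $E^2_{S,N_S}$ explicitly into the final good event, and you check the batch-size requirement of \autoref{as:hp-assumption_inner_sto-state-dependent} through the second entry of the max. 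The paper leaves both implicit and simply bounds the residual ``on $\mathcal E_S$''.
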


\begin{proof}
Observe that by \autoref{lem:bar-q-bound}, $\bar q \le \tilde c_{\cA}(1 + \log \tfrac{LD_0}{p \varepsilon})$, hence we have $S \le \bar S$, and thus $\bar r\leq r_1\leq r_s$ since $r_s$ is increasing in $s$. Hence, $N_{s,0} \le N_{\text{base}}$ and $N_s \le \bar N$. Thus, by the definition of $q_s$,
\begin{equation}\label{eqn:upper-q}
\begin{aligned}
q_s &\le 1 + \Lambda_{p, \varepsilon} + \log \left(N_s + \tfrac{L + r_s}{r_s}\right) \overset{\text{(i)}}{\le} 1 + \Lambda_{p, \varepsilon} + \log(2\bar N)\overset{\text{(ii)}}{\leq} 1 + \log\left(\tfrac{4 \bar S \bar N^2}{p}\right) \overset{\text{(iii)}}{\leq}  1 + \log\left(\tfrac{16 \bar S N_{\text{base}}^2}{p}\right) + \log \bar q\overset{\text{(iv)}}{\leq} \bar q,
\end{aligned}
\end{equation}
where in (i) we have used $\tfrac{L + r_s}{r_s}\le N_s \le \bar N$; in (ii) we have substituted the definition of $\Lambda_{p, \varepsilon}$ from \eqref{eqn:hp-state-dep-batch-epoch};  in (iii) we used that $\bar N  \le 2N_{\text{base}} \sqrt{\bar q}$; and in (iv) we used the definition of $\bar q$ in \eqref{eq:Qbar-Nbar-Lambda} with the fact that $\log \bar q \le \bar q /2$. Consequently, by the definition of $N_s$ in \eqref{hp-state-dep-epoch-length},
\begin{equation}\label{eq:ratio-qNs-to-Ns}
\tfrac{q_s}{N_s^2} \leq \tfrac{1}{N_{s,0}^2}, \quad \forall s\in [S]. 
\end{equation}
Define $\mathcal E_s$ as in \eqref{eq:hp-induction-event}. 
We show by induction that 
\begin{equation}\label{eq:hp-AR-induction}
\mathbb P(\mathcal E_s) \geq 1 - 2M_s \exp(-\Lambda_{p, \varepsilon}), \quad \forall s \in [S]
\end{equation}
where $ M_s :=\tsum_{i = 1}^s N_i,$ and $\mathcal E_s$ is the event defined in \eqref{eq:hp-induction-event}. Indeed for $s = 1$, we have
\begin{equation*}
\begin{aligned}
\|x_1 - x_1^*\|^2 \overset{\text{(v)}}{\leq} \tfrac{c_{\cA}(L +  r_1 )^2}{( \mu + r_1)^2N_1^2}\|x_{1,0} - x_1^*\|^2 + \tfrac{c_{\cA}(1+\Lambda_{p, \varepsilon})  \sigma_{1,*}^2}{(\mu +  r_1)^2m_1N_1} \overset{\text{(vi)}}{\leq} \tfrac{2c_{\mathcal A}(L+ r_1 )^2D_0^2}{ r_1 ^2N_{1,0}^2}  \overset{\text{(vii)}}{\leq} \tfrac{D_0^2}{16}.
\end{aligned}
\end{equation*}
where in (v) we have used \autoref{as:hp-assumption_inner_sto-state-dependent}, \eqref{eqn:hp-distance-s-state-dependent} and \eqref{eq:ratio-qNs-to-Ns}; in (vi) we have used the definition of $m_s$ in \eqref{eqn:hp-state-dep-batch-epoch} along with \autoref{lem:hp-state-dependent-iterate-boundedness}, and in (vii) we have used the definition of $N_{1,0}$ in \eqref{hp-state-dep-epoch-length}. 
\smallskip

Suppose now that $\mathbb P(\mathcal E_{s-1}) \ge 1 - 2M_{s-1}\exp(-\Lambda_{p, \varepsilon})$. Then on $\mathcal E_{s - 1}$, by \eqref{eq:ratio-qNs-to-Ns} and \autoref{as:hp-assumption_inner_sto-state-dependent}, we have
\begin{equation*}
\|x_s - x_s^*\|^2 \overset{\eqref{eqn:hp-distance-s-state-dependent}}{\leq} \tfrac{c_{\mathcal A}(L+  r_s )^2}{(  \mu+r_s )^2N_{s,0}^2}\|x_{s - 1} - x_{s-1}^*\|^2 + \tfrac{c_{\mathcal A}(1+\Lambda_{p, \varepsilon}) \sigma_{s, *}^2}{(\mu + r_s)^2 m_sN_{s,0}} \overset{\text{(viii)}}{\leq}\tfrac{2c_{\mathcal A}(L+ r_1 )^2D_0^2}{ r_s ^2N_{s,0}^2} \overset{\text{(ix)}}{\leq} \tfrac{D_0^2}{16^s}.
\end{equation*}
where in (viii) we use the inductive hypothesis and the fact that on $\mathcal E_{s-1}$, by \autoref{lem:hp-state-dependent-iterate-boundedness}, we can bound  $\sigma_{s,*}^2\leq24\varsigma_*D_0^2\tsum_{i=1}^s\tfrac{r_i+r_{i-1}}{4^{i-1}}+9\sigma_*^2$; in (ix) we apply the choice of $N_{s,0}$. Then, using a union bound, we have shown through induction that for all $s \in [S]$, $\mathbb P(\mathcal E_s) \geq 1 - 2M_s\exp(-\Lambda_{p, \varepsilon})$. 
\smallskip

To bound the residual on $\mathcal{E}_s$, we combine the residual decomposition \eqref{eqn:res}, the residual bound \eqref{eqn:hp-residual-s-state-dependent} in \autoref{as:hp-assumption_inner_sto-state-dependent}, and the bound $q_s \le \bar q$ in \eqref{eqn:upper-q} to obtain
\begin{equation}\label{eq:hp-sd-residual-decomp-full}
\begin{aligned}
&\res_{F}(x_S)\\
&\,\le c_{\cA}\sqrt{\bar q}\left[(L+r_S)\|x_{S-1}-x_S^*\|+\tfrac{\sqrt{N_S}\sqrt{1+\Lambda_{p, \varepsilon}}\sigma_{S,*}}{\sqrt{m_S}}\right] +r_S\|x_S -x_S^*\|+\tsum_{s = 1}^S ( r_s  +  r_{s - 1}) \|x_{s - 1} - x_{s - 1}^*\| \\
&\overset{\text{(x)}}{\le} \tfrac{c_{\mathcal{A}}\sqrt{\bar q}(L+ r_S ) D_0}{4^{S-1}}+\tfrac{c_{\mathcal{A}} \sqrt{\bar q}\sigma_{S,*}\sqrt{1 + \Lambda_{p, \varepsilon}}\sqrt{N_S}}{\sqrt{m_S}}  + \tsum_{s = 1}^S \tfrac{( r_s  +  r_{s - 1})D_0}{4^{s - 1}}+ \tfrac{ r_S  D_0}{4^S} \\
&\overset{\text{(xi)}}{\le} \tfrac{4c_{\cA} \sqrt{\bar q}r_SD_0}{4^{S-1}} + \tfrac{5D_0}{4}\tsum_{s=1}^S \tfrac{r_s}{4^{s - 1}},
\end{aligned}
\end{equation}
where in (x) we used \autoref{lemma_ARVI_proximity}, \eqref{eq:epoch_proximity1}, \eqref{eq:epoch_proximity2} and the induction; in (xi) we used the choice of $m_S$, and the fact that $r_0 = 0$ and $r_S \ge L$, 
the latter of which which will be established after we bound the residual.

We proceed with with bounding the first term in the residual.

\begin{equation}\label{eq:hp-sd-residual-part-i-bound}
\tfrac{4c_{\cA}\sqrt{\bar q}r_SD_0}{4^{S-1}} \overset{\eqref{eq:hp-state-dependent-ar-regularization}}{=}\tfrac{c_{\cA}\varepsilon}{4(16c_{\cA} + 4)} \tfrac{\sqrt{\bar q}}{S \log S} \overset{\text{(xii)}}{\le} \tfrac{c_{\cA}\varepsilon}{16c_{\cA} + 4} \le \tfrac{\varepsilon}{16}, 
\end{equation}
where in (xii), we bound $\sqrt{\bar q}/(S\log S)$ as follows. By Young's inequality and \autoref{lem:bar-q-bound}, we have
$$
\sqrt{\bar q} \le \sqrt{\tilde c_{\cA}} \sqrt{1 + \log_4 \tfrac{L D_0}{p \varepsilon}} \leq\tfrac{\tilde c_{\cA}}{8} + 2(1 + \log_4 \tfrac{LD_0}{p \varepsilon}).
$$
Next, to bound $S \log S$ from below, we use the definition of $S$ in \eqref{eq:hp-state-dependent-ar-regularization} to get
$$
S \log S \ge S \ge 2 + \log_4(\tfrac{16(16c_{\cA} + 4) LD_0}{p \varepsilon}) {\ge} \tfrac{\tilde c_{\cA}}{16} + (1 + \log_4 \tfrac{LD_0}{p \varepsilon}),
$$
where the last inequality follows from the definition of the $\tilde{c}_{\mathcal{A}}$ in \eqref{eq:hp-state-dependent-def-tildecA}.
Hence, $\sqrt{\bar q}/(S\log S) \le 4$ due to $\tfrac{a+b}{c+d} \le \tfrac{a}{c} + \tfrac{b}{d}$ for $a,b,c,d>0$.
\smallskip

We next bound the second term in the residual.
\begin{align}\label{eq:hp-sd-residual-part-ii-bound}
\tsum_{s=1}^{S}\tfrac{ r_s }{4^{s-1}}
&\overset{\eqref{eq:hp-state-dependent-ar-regularization}}{=}
\tsum_{s=1}^{S}\tfrac{4^{s-1}\varepsilon}{16\left(16c_{\mathcal{A}}+4\right)D_0s\log S}\left(\tfrac{1}{4}\right)^{s-1}
\le
\tfrac{\varepsilon(1+\log S)}{16\left(16c_{\mathcal{A}}+4\right)D_0\log S}
\le
\tfrac{\varepsilon}{64D_0},
\end{align}
where in the final inequality, we used that $\tfrac{1+\log S}{\log S} \le 2$ and $16(16c_{\cA} + 4)\ge 64$. 

Substituting \eqref{eq:hp-sd-residual-part-i-bound} and \eqref{eq:hp-sd-residual-part-ii-bound} into \eqref{eq:hp-sd-residual-decomp-full} gives that on $\mathcal{E}_S$, $\res_F(x_S) \le \tfrac{\varepsilon}{16} + \tfrac{5\varepsilon}{256} < \varepsilon. $
\smallskip

It remains to show $r_S \ge L$ in \eqref{eq:hp-sd-residual-decomp-full}. By the definition of $S$ in \eqref{eq:hp-state-dependent-ar-regularization}, we have
$$S \ge 2+S_0 + \log_4({S_0\log_4(S_0)\sqrt{\bar q}}),\,\,\text{and}\,\, 
4^S \ge \tfrac{256(16 c_{\cA} + 4)LD_0}{p \varepsilon}S_0 \log_4(4S_0)\sqrt{\bar q}.
$$
Using again the definition of $S$ in \eqref{eq:hp-state-dependent-ar-regularization}, we have the following upper bound
$$
S \le 3 +  S_0+ \log_4 \left(S_0\log_4(4S_0) \sqrt{\bar q} \right) = 3 + S_0 + \log_4(S_0) + \log_4(\log_4(4S_0)) +\log_4(\sqrt{\bar q})\le 4S_0 + \log_4 \sqrt{\bar q},
$$
where in the final inequality, we have used that $3 \le S_0$, $\log_4(S_0) \le S_0$, and $\log_4 \log_4 (4S_0) \le S_0$. Substituting the bounds for $S$ and $4^S$ into the definition of $r_S$ in \eqref{eq:hp-state-dependent-ar-regularization}, we have
$$
r_S = \tfrac{\varepsilon}{64(16 c_{\cA} + 4)D_0} \tfrac{4^{S}}{S \log S} \ge\tfrac{4LS_0 \log(4S_0)\sqrt{\bar q}}{(4S_0 + \log_4\sqrt{\bar q}) \log(4S_0 +\log_4\sqrt{\bar q})} \geq L,
$$
where the last inequality holds by the fact that $\tfrac{4S_0 \log(4S_0)\sqrt{\bar q}}{(4S_0 + \log_4\sqrt{\bar q}) \log(4S_0 +\log_4\sqrt{\bar q})} \ge 1$ due to $S_0 \ge 1$ and $\bar q \ge 4$. 

To characterize the probability, by the fact that $S \le \bar S$ and $N_s \leq \bar N$ along with the definition of $\Lambda_{p, \varepsilon}$ in \eqref{eqn:hp-state-dep-batch-epoch}, it follows that
$M_S = \tsum_{s = 1}^SN_s \leq \bar S\bar N $ and $2M_S\exp(-\Lambda_{p, \varepsilon}) \leq p$.
\smallskip

It remains to specify the complexity. Notice that 
\begin{equation*}
    \begin{aligned}
      \tfrac{24\varsigma_*D_0^2\,\tsum_{i = 1}^s \tfrac{ r_{i} +  r_{i-1}}{4^{i-1}} + 9\sigma_*^2}{\varepsilon^2}=\mathcal{O}\left( \tfrac{\varsigma_* D_0}{\varepsilon}+ \tfrac{\sigma_*^2}{\varepsilon^2} \right).
    \end{aligned}
\end{equation*}
Since $\bar q = \mathcal{\widetilde O} \left( \log \tfrac{LD_0}{p \varepsilon}\right)$ by \autoref{lem:bar-q-bound} and $\Lambda_{p, \varepsilon} = \widetilde {\mathcal O}\left(\log \tfrac{LD_0}{\varepsilon p}\right)$,
the sample complexity is
\begin{align*}
\tsum_{s = 1}^S N_sm_s & \le \tsum_{s=1}^S \left[ \tfrac{4\sqrt{2c_{\cA}}(L + r_s)\sqrt{\bar q}}{r_s}+1\right]\left[2 + \tfrac{16^{s - 1}N_s(1 + \Lambda_{p, \varepsilon})}{(L + r_s)^2 D_0^2}[24 \varsigma_* D_0^2 \tsum_{i = 1}^s \tfrac{r_i + r_{i - 1}}{4^{i - 1}} + 9\sigma_*^2] + \tfrac{(1 +\Lambda_{p, \varepsilon})\varsigma_* c_{\cA} \bar q)}{L + r_s}\right]\\
&= \widetilde{\mathcal{O}}\left(\tfrac{LD_0}{\varepsilon}\log(\tfrac{LD_0}{p\varepsilon})^{1/2} + \tfrac{\varsigma_* D_0}{\varepsilon} (\log \tfrac{LD_0}{p\varepsilon})^5 + \tfrac{\sigma_*^2}{\varepsilon^2}(\log \tfrac{LD_0}{p\varepsilon})^5\right). 
\end{align*}
\end{proof}

% We remark on the differences in the analysis from the uniform noise case. First, since the subroutine assumption in \eqref{eqn:hp-distance-s-state-dependent} includes the prefactor $q_s$ that scales both the deterministic and stochastic error, the iteration counts $N_s$ must be enlarged accordingly to solve the subproblems to accuracy $D_0/16^s$ to bound the state-dependent variance proxy $\sigma_{s, *}$ with its deterministic upper bound with high probability. Secondly, since the prefactor $\sqrt{q_s}$ appears in \eqref{eqn:hp-residual-s-state-dependent}, we require now running more total epochs $S$ to control the residual. This latter modification does not affect the overall complexity significantly, since the total number of calls to $\mathcal SO$ remains on the order of $N_1$. 

% It is worth noting that, if the parameter choices in \autoref{thm:hp-state-dep-main-ar-new} are applied directly in the noiseless regime $\varsigma_*=0$ and $\sigma_*^2=0$, the nominally deterministic term in \eqref{eq:hp-state-dependent-ar-complexity} retains an additional factor $\left(\log(LD_0/(p\varepsilon))\right)^{1/2}$. This factor is an artifact of the state-dependent high-probability parameterization rather than an intrinsic loss in the noiseless setting. Indeed, in this case, all stochastic-error terms vanish, $q_s$ can be chosen as a fixed constant and absorbed into $c_{\mathcal A}$, and $N_s$ can be set to $N_{s,0}$. The analysis then reduces to the deterministic case and recovers the deterministic complexity bound in \autoref{the:sublinear_AR}.
% \smallskip

In the strongly monotone case, we have the following result on the squared distance to optimal solution. 
\begin{corollary}\label{cor:state-dependent-strongly-monootne}
Assume that $F$ is $L$-Lipschitz continuous and $\mu$-strongly
monotone, and
$0<\varepsilon \le \|x_0-x^*\|^2 \leq D_0^2$. In \autoref{alg:ARVI}, suppose the parameters $S$ and $r_s$ satisfy \eqref{eqn:regularization-exp-unbiased-mu}. Define
$$
N_{s,0} ={\left\lceil\tfrac{4\sqrt{2c_{\mathcal{A}}}L}{{\mu}}\right\rceil}, \quad \Lambda_{p, \varepsilon} := 2\log\left(\tfrac{2S}{p}\right) + 3\log(N_{s,0}) + 2\log\left(\sqrt{\tfrac32} + \tfrac14\right). 
$$
For each epoch, suppose $N_s\coloneqq  \left\lceil \sqrt{\tfrac{3}{2}}N_{s,0}\sqrt{1+\Lambda_{p, \varepsilon} + \log(N_{s,0})}\right\rceil,$
and the batch size $m_s$ satisfies 
\begin{align}
m_s=\max\left\{\left\lceil\tfrac{32\cdot16^{s-1}c_{\mathcal{A}}(1+\Lambda_{p, \varepsilon}) \sigma_*^2q_s}{N_s\mu^2 D_0^2}\right\rceil,\, \tfrac{(1 + \Lambda_{p, \varepsilon})\varsigma_*  c_{\cA}q_s}{L},1\right\}.
\end{align}
Then, at epoch $S$, \autoref{alg:ARVI} computes a solution $x_S$ such that  $\mathbb P\{\|x_S - x^*\|^2
\le\varepsilon\} \geq 1 - p$ after
\begin{align}
\mathcal {\widetilde O}\left\{\tfrac{L}{\mu}\log\left(\tfrac{D_0^2}{\varepsilon}\right)\left(\log \tfrac{L}{p\mu}\right)^{1/2} + \tfrac{\varsigma_*}{\mu} \log\left( \tfrac{D_0^2}{\varepsilon}\right)\left(\log \tfrac{L}{p\mu}\right)^{5/2} + \tfrac{\sigma_*^2}{\mu^2 \varepsilon}\left(\log\tfrac {L}{p\mu}\right)^2\right\}
\end{align}
calls to the $\mathcal{SO}$.
\end{corollary}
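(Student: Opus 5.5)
The plan is to mimic the proof of \autoref{thm:main-unbiased-mu}, replacing expectations by a high-probability induction as in \autoref{thm:hp-state-dep-main-ar-new}. Since $r_s=0$ for every $s$, we have $F_s=F$ and $x_s^*=x^*$. The random proxy $\sigma_{s,*}^2$ of \autoref{lem:hp-regularized-operator-properties} then collapses to the deterministic value $9\sigma_*^2$, because the term $\varsigma_* r_s\|x_s^*-\bar x_s\|\|x_s^*-x^*\|$ vanishes. So no analogue of \autoref{lem:hp-state-dependent-iterate-boundedness} is needed. The inner guarantee is \autoref{as:hp-assumption_inner_sto-state-dependent} with $\Lambda=\Lambda_{p,\varepsilon}$ and $\mu+r_s=\mu$, $L+r_s=L$. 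The batch-size floor $m_s\ge(1+\Lambda)\varsigma_*c_{\mathcal A}q_s/L$ is built into the definition of $m_s$. Here $q_s\le 1+\Lambda_{p,\varepsilon}+\log(N_s+L/\mu)$.

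First I will bound $q_s/N_s^2\le 1/N_{s,0}^2$, as in \eqref{eq:ratio-qNs-to-Ns}. Use $L/\mu\le N_{s,0}\le N_s$, so that $q_s\le 1+\Lambda_{p,\varepsilon}+\log(2N_s)$. Then use $N_s\le(\sqrt{3/2}+1/4)N_{s,0}\sqrt{1+\Lambda_{p,\varepsilon}+\log N_{s,0}}$ and $\log\log(\cdot)\le\tfrac12\log(\cdot)$-type estimates. This gives $q_s\le\tfrac32(1+\Lambda_{p,\varepsilon}+\log N_{s,0})$, and hence $q_s\le N_s^2/N_{s,0}^2$. The odd constants in $\Lambda_{p,\varepsilon}$, namely $3\log N_{s,0}$ and $2\log(\sqrt{3/2}+1/4)$, are evidently chosen to absorb exactly these logs.

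I expect this self-referential bookkeeping to be the main, if routine, obstacle: $N_s$ depends on $\Lambda$, $q_s$ depends on $N_s$, and $\Lambda$ must dominate $\log(2SN_s/p)$ for the union bound. I would first verify $\log(2SN_s/p)\le\Lambda_{p,\varepsilon}$ using $N_s\le N_{s,0}^{3/2}(\sqrt{3/2}+1/4)\cdot\sqrt{\cdot}$, possibly adjusting by constants.

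Next comes the induction on the event $\mathcal E_s=\{\|x_i-x^*\|^2\le D_0^2/16^i,\ i\le s\}$. On $\mathcal E_{s-1}\cap E^1_{s,N_s}$, the bound \eqref{eqn:hp-distance-s-state-dependent} gives
\[
\|x_s-x^*\|^2\le \tfrac{c_{\mathcal A}L^2}{\mu^2N_{s,0}^2}\tfrac{D_0^2}{16^{s-1}}+\tfrac{9c_{\mathcal A}q_s(1+\Lambda_{p,\varepsilon})\sigma_*^2}{N_sm_s\mu^2}.
\]
By the choice of $N_{s,0}$, the first term is at most $D_0^2/(2\cdot16^s)$. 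By the choice of $m_s$, the second is at most $D_0^2/(2\cdot 16^s)$, up to absorbing the constant $9$ into $c_{\mathcal A}$ or the batch constant. Conditioning on $\mathcal F_{s,0}$ and taking a union bound gives $\mathbb P(\mathcal E_S)\ge 1-2\sum_s N_s e^{-\Lambda_{p,\varepsilon}}\ge 1-p$. Only $E^1$ is needed; no residual decomposition is required. Since $S=\lceil\log_{16}(D_0^2/\varepsilon)\rceil$, on $\mathcal E_S$ we get $\|x_S-x^*\|^2\le D_0^2/16^S\le\varepsilon$.

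Finally I will count samples via $\sum_s N_sm_s$. The three branches of $m_s$ give three terms. The constant branch gives $\sum_s N_s=\widetilde{\mathcal O}((L/\mu)S\sqrt{\log(L/(p\mu))})$. The $\varsigma_*$ branch gives $S\,N_s(1+\Lambda)\varsigma_*q_s/L=\widetilde{\mathcal O}((\varsigma_*/\mu)S(\log)^{5/2})$. For the $\sigma_*^2$ branch, $N_s$ cancels and the geometric sum $\sum_s16^{s-1}\le 16^S/15=\mathcal O(D_0^2/\varepsilon)$ yields $\widetilde{\mathcal O}(\sigma_*^2(\log)^2/(\mu^2\varepsilon))$, using $\Lambda,q_s=\widetilde{\mathcal O}(\log\tfrac{L}{p\mu})$. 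The polylog factors depend on $S$ and $\log\log$ terms, which are hidden by $\widetilde{\mathcal O}$.
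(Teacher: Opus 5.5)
Your proposal is correct and follows essentially the same route as the paper's proof. Both arguments bound $q_s\le\tfrac32(1+\Lambda_{p,\varepsilon}+\log N_{s,0})$ so that $q_s/N_s^2\le 1/N_{s,0}^2$, run the high-probability epoch induction of \autoref{thm:hp-state-dep-main-ar-new} with $x_s^*=x^*$, verify $\log(2SN_s/p)\le\Lambda_{p,\varepsilon}$ for the union bound, and sum $N_sm_s$ over the three branches of $m_s$; your explicit observation that $\sigma_{s,*}^2=9\sigma_*^2$ when $r_s=0$ forces a constant adjustment (absorbed into $c_{\mathcal A}$ or the batch constant) is a point the paper's sketch glosses over by simply writing $\sigma_*^2$.
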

\begin{proof}
We highlight the main ideas of the proof. It can be first verified that
\begin{equation}\label{eq:q_s-bound-strongly-monotone}
\begin{aligned}
q_s &= 1 + \Lambda_{p, \varepsilon} + \log(N_s + \tfrac{L}{\mu}) \\
&\le 1 + \Lambda_{p, \varepsilon} + \log\left(\sqrt{\tfrac32}N_{s,0}\sqrt{1 + \Lambda_{p, \varepsilon} + \log(N_{s,0})} + 1 + \tfrac{L}{\mu}\right) \\
&\overset{\text{(i)}}{\le} 1 + \Lambda_{p, \varepsilon} + \log(\sqrt{\tfrac32}+ \tfrac{1}{2}) + \log(N_{s,0}) + \tfrac{1}{2}\log(1 + \Lambda_{p, \varepsilon} + \log(N_{s,0})) \\
&\overset{\text{(ii)}}{\le} (1 + \log(\sqrt{\tfrac32}+1)) + \tfrac{3}{2}\Lambda_{p, \varepsilon} + \tfrac32\log(N_{s,0}),
\end{aligned}
\end{equation}
where in (i) we have used that $1 \le \tfrac{L}{\mu}$ and that $\tfrac{2L}{\mu} \le \tfrac{N_{s,0}}{2}$ due to $c_{\cA} \ge 1$; in (ii) we have used the fact that $\log(1+x)\le x$. Hence, 
$$
q_s \leq \tfrac{3}{2}(1+\Lambda_{p, \varepsilon} + \log(N_{s,0})), \qquad \tfrac{q_s}{N_s^2} \leq \tfrac{1}{N_{s,0}^2}. 
$$
Hence, by the same inductive strategy as in \autoref{thm:hp-state-dep-main-ar-new}, it is straightforward to show that with probability at least $1 - 2M_s \exp(-\Lambda_{p, \varepsilon})$, for each $s \in [S]$,
$$
\|x_s - x^*\|^2 \leq c_{\cA}q_s\left[\tfrac{L^2}{\mu^2N_s^2}\|x_{s-1}-x_s^*\|^2+\tfrac{(1+\Lambda_{p, \varepsilon})\sigma_{*}^2}{N_sm_s\mu^2}\right] \leq \tfrac{D_0^2}{16^s}. 
$$
At epoch $S$, it holds that with probability at least $1 - 2M_S\exp(-\Lambda_{p, \varepsilon})$, $\|x_S - x^*\|^2 \leq \varepsilon$. To characterize the probability, notice first that $M_S = S N_s$ and that $2M_S\exp(-\Lambda_{p, \varepsilon}) \le p$ if and only if $\Lambda_{p, \varepsilon} \ge \log\left(\tfrac{2S}{p}\right) + \log(N_S)$. Indeed, we have 
\begin{equation*}
\begin{aligned}
\log\left(\tfrac{2S}{p}\right) + \log\left(N_s\right)
&\overset{\text{(iii)}}{\le} \log(\tfrac{2S}{p}) + \log(\sqrt{\tfrac{3}{2}}+ \tfrac{1}{4}) + \log(N_{s,0}) + \tfrac{1}{2} \log(1 + \Lambda_{p, \varepsilon} + \log(N_{s,0}))\overset{\text{(iv)}}{\le} \Lambda_{p, \varepsilon},
\end{aligned}
\end{equation*}
where in (iii) we use the fact that $N_s \le \sqrt{\tfrac{3}{2}}N_{s,0}\sqrt{1+\Lambda_{p, \varepsilon} + \log(N_{s,0})} + 1$ along with $1 \le \tfrac{L}{\mu} \le \tfrac{N_{s,0}}{4}$ and in (iv) we use the definition of $\Lambda_{p, \varepsilon}$. 

It remains to specify the complexity. We have $\Lambda_{p, \varepsilon} = \mathcal{\widetilde O}\left(\log\tfrac{L}{\mu p}\right)$ by definition, $q_s = \mathcal O\left(\log\tfrac{L}{\mu p}\right)$ by \eqref{eq:q_s-bound-strongly-monotone}, and $N_s = \mathcal O\left(\tfrac{L}{\mu}\left( \log \tfrac{L}{\mu p}\right)^{1/2}\right)$. Hence,
$$
\tsum_{s = 1}^S N_s m_s = \mathcal {\widetilde O}\left\{\tfrac{L}{\mu}\log\left(\tfrac{D_0^2}{\varepsilon}\right)\left(\log \tfrac{L}{\mu p}\right)^{1/2} + \tfrac{\varsigma_*}{\mu} \log\left( \tfrac{D_0^2}{\varepsilon}\right)\left(\log \tfrac{L}{\mu p}\right)^{5/2} + \tfrac{\sigma_*^2}{\mu^2 \varepsilon}\left(\log\tfrac {L}{\mu p}\right)^2\right\}.
$$
\end{proof}

\subsection{Accumulative regularization for uniform light-tailed noise}\label{sec:hp-uniform}
In this subsection, we establish the convergence of AR under light-tailed uniform noise. Since this setting is simpler than the state-dependent noise setting, we omit the proofs and state only the assumptions and results.

We assume access to i.i.d data $\{\zeta_k\}$ and unbiased stochastic operator evaluations \eqref{eq:fast-decreasing-bias-o}. 
In addition, we assume that there exists $\sigma > 0$ such that the following holds. For any $\mathcal F_k$-measurable point $x\in X$, \begin{align}\label{assumption:subgaussian}
        \mathbb{E}\!\left[\exp\left\{\tfrac{\left\|[\tilde{F}(x,\zeta_{k})- F(x)]\right\|^{2}}{\sigma^{2}}\right\}\ \bigg| \ \mathcal F_k\right]\leq \exp\{1\} \quad \textnormal{a.s.}, 
\end{align}

In order for AR to attain $\res_F(x_S) \leq \varepsilon$ with high probability, we require that the inner subroutine $\mathcal A$ obey a sublinear convergence rate with high probability within each epoch. Fix $\Lambda \geq 0$, and let $E_{s,k}^1$ and $E_{s,k}^2$ denote the events defined in \eqref{eqn:hp-distance-s-state-dependent} and \eqref{eqn:hp-residual-s-state-dependent}, respectively, with $q_s \equiv 1$.

\begin{assumption}\label{as:hp-subroutine-guarantee}
For any mini-batch size $m_s \geq 1$, the iterates generated by the subroutine $\mathcal A(\mathcal{SO},r_s,\bar x_s,x_{s-1},m_s)$ for solving the regularized problem \eqref{eq:sub-problem} satisfy
\begin{equation}
\mathbb P\left(E_{s,k}^1\cap E_{s,k}^2\,\middle|\,\mathcal F_{s,0}\right)\geq 1-2k\exp\{-\Lambda\},\quad\textnormal{a.s.}
\end{equation}
\end{assumption}
High-probability performance guarantees of the form in \autoref{as:hp-subroutine-guarantee} have not been established for SOE in the literature. However, a simplified version of the state-dependent analysis in \autoref{sec:hp-soe-state-dependent-new}, specialized to $q_s=1$, shows that SOE satisfies this assumption.

 We provide the convergence result for AR below. 

\begin{theorem}\label{thm:hp-uniform}
Let $p \in (0,1)$ and assume that $0 <\varepsilon \le LD_0$, where $D_0\geq \textnormal{dist}(x_0, X^*)$. In \autoref{alg:ARVI}, suppose $S$ and $ r_s $ satisfy \eqref{eqn:regularization-exp-unbiased} and $N_s$ satisfies \eqref{eqn:batch-epoch}. Moreover, the batch size $m_s$ is defined as
\begin{equation}\label{eqn:hp-batch-and-confidence}
m_s=\left\lceil\tfrac{16^{s-1}N_s(1+\Lambda_{p, \varepsilon}) \sigma^2}{(L+ r_s )^2 D_0^2}+1\right\rceil, \qquad \Lambda_{p, \varepsilon} :=  \log\left(\tfrac{16\sqrt{c_{\mathcal A}}}{p}\left[S + \tfrac{16(16 c_{\mathcal A}+4) L D_0\log S}{9\varepsilon}\right]\right).
\end{equation}
Then, at epoch $S$, \autoref{alg:ARVI} computes an approximate solution $x_S$ such that $\mathbb P(\res_F(x_S) \leq \varepsilon) \geq 1 - p$ after
\begin{align}
   \widetilde{\mathcal{O}}\left(\tfrac{LD_0}{\varepsilon}+\tfrac{ \sigma^2}{\varepsilon^2}\left(\log\tfrac{LD_0}{\varepsilon}\right)^3\left(\log \tfrac{LD_0}{\varepsilon p}\right)\right)
\end{align}
calls to the $\mathcal{SO}$.
\end{theorem}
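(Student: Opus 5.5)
The plan follows the proof of \autoref{thm:hp-state-dep-main-ar-new}, specialized to $q_s\equiv1$ and the deterministic proxy $\sigma^2$. We use the parameters of \autoref{thm:main-unbiased} unchanged and only enlarge $m_s$ by the factor $1+\Lambda_{p,\varepsilon}$. Let $\mathcal E_s\coloneqq\{\|x_i-x_i^*\|^2\le D_0^2/16^i,\ \forall i\in[s]\}$ as in \eqref{eq:hp-induction-event}. By induction on $s$ we show
\[
\mathbb P(\mathcal E_s)\ge 1-2M_s e^{-\Lambda_{p,\varepsilon}},\qquad M_s\coloneqq\tsum_{i=1}^s N_i .
\]

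For the step from $s-1$ to $s$, apply \autoref{as:hp-subroutine-guarantee} with $k=N_s$ conditionally on $\mathcal F_{s,0}$. On $E^1_{s,N_s}$ the distance satisfies
\[
\|x_s-x_s^*\|^2\le \tfrac{c_{\mathcal A}(L+r_s)^2}{r_s^2N_s^2}\|x_{s-1}-x_s^*\|^2+\tfrac{c_{\mathcal A}(1+\Lambda)\sigma^2}{N_sm_sr_s^2}.
\]
The first term is controlled by \autoref{lemma_ARVI_proximity}: $\|x_{s-1}-x_s^*\|\le\|x_{s-1}-x_{s-1}^*\|\le D_0/4^{s-1}$ on $\mathcal E_{s-1}$ (for $s=1$ we use \eqref{eq:epoch_proximity0}). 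The choice of $m_s$ in \eqref{eqn:hp-batch-and-confidence} makes the second term at most $c_{\mathcal A}(L+r_s)^2D_0^2/(16^{s-1}N_s^2r_s^2)$. Summing, \eqref{eqn:batch-epoch} gives $\|x_s-x_s^*\|^2\le D_0^2/16^s$. A union bound over the failure of $\mathcal E_{s-1}$ and of the conditional event, at most $2N_se^{-\Lambda}$ after taking expectation of the conditional probability, closes the induction. This is simpler than the state-dependent case because $\sigma$ is deterministic, so no analogue of \autoref{lem:hp-state-dependent-iterate-boundedness} is needed.

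On $\mathcal E_S\cap E^2_{S,N_S}$ we use the residual decomposition \eqref{eqn:res}. The residual bound \eqref{eqn:hp-residual-s-state-dependent} with $q_S=1$ contributes $c_{\mathcal A}(L+r_S)D_0/4^{S-1}$ plus a noise term. By the choice of $m_S$, the noise term $c_{\mathcal A}\sqrt{N_S(1+\Lambda)}\sigma/\sqrt{m_S}$ is again at most $c_{\mathcal A}(L+r_S)D_0/4^{S-1}$. The remaining terms are handled by \eqref{eq:epoch_proximity2}. The resulting deterministic inequality is exactly the one at the end of the proof of \autoref{thm:main-unbiased}, which gives $\res_F(x_S)\le\varepsilon$ using $r_S\ge L$ and the schedule \eqref{eqn:regularization-exp-unbiased}. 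Since $E^2_{S,N_S}$ is part of the same event already counted in the union bound, the total failure probability is at most $2M_Se^{-\Lambda_{p,\varepsilon}}$.

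The main bookkeeping obstacle is verifying $2M_Se^{-\Lambda_{p,\varepsilon}}\le p$. From \eqref{eqn:batch-epoch},
\[
N_s\le 1+4\sqrt{2c_{\mathcal A}}+4\sqrt{2c_{\mathcal A}}\,L/r_s .
\]
By \eqref{eqn:regularization-exp-unbiased},
\[
\tsum_s \tfrac1{r_s}=\tfrac{(16c_{\mathcal A}+4)D_0\log S}{\varepsilon}\tsum_s \tfrac{s}{4^{s-1}}\le \tfrac{16(16c_{\mathcal A}+4)D_0\log S}{9\varepsilon}.
\]
Hence $2M_S\le 16\sqrt{c_{\mathcal A}}\,[S+16(16c_{\mathcal A}+4)LD_0\log S/(9\varepsilon)]$, so the definition of $\Lambda_{p,\varepsilon}$ yields exactly $2M_Se^{-\Lambda_{p,\varepsilon}}\le p$; the constants may need minor adjustment. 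The complexity bound $\tsum_s N_sm_s$ then repeats the computation of \autoref{thm:main-unbiased}, with the stochastic part multiplied by $1+\Lambda_{p,\varepsilon}=\mathcal O(\log\frac{LD_0}{\varepsilon p})$.
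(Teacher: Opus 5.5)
Your proposal is correct and matches the argument the paper intends when it omits this proof: the induction of \autoref{thm:hp-state-dep-main-ar-new} specialized to $q_s\equiv 1$ with the deterministic proxy $\sigma^2$, followed by the residual estimate from the end of \autoref{thm:main-unbiased}. Your count $2M_S\le 16\sqrt{c_{\mathcal A}}\,[S+16(16c_{\mathcal A}+4)LD_0\log S/(9\varepsilon)]$ is exactly how $\Lambda_{p,\varepsilon}$ is calibrated, and no adjustment of constants is needed: $2(1+4\sqrt{2c_{\mathcal A}})\le 16\sqrt{c_{\mathcal A}}$ and $8\sqrt{2c_{\mathcal A}}\le 16\sqrt{c_{\mathcal A}}$ both hold once $c_{\mathcal A}\ge 1$.
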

Notice that the choice of parameters for \autoref{thm:hp-uniform} does not require knowledge for $\mu$ in its implementation. However, when $\mu > 0$ is known, a simpler choice of parameters reduces AR to a direct restarting scheme with optimal convergence rates in terms of the distance to the optimal solution.
\begin{corollary}\label{thm:hp-main-unbiased-mu}
Let $p \in (0,1)$. Assume that $F$ is $L$-Lipschitz continuous and $\mu$-strongly
monotone, and
$0<\varepsilon \le \|x_0-x^*\|^2 \leq D_0^2$. In \autoref{alg:ARVI}, suppose $S$ and $ r_s $ satisfy \eqref{eqn:regularization-exp-unbiased-mu}, $N_s$ satisfies \eqref{eqn:batch-epoch-mu}, and $m_s$ is defined as
\begin{align}\label{eq:hp-strongly-monotone-ms-lambda}
m_s=\max\left\{\left\lceil\tfrac{32\cdot16^{s-1}c_{\mathcal{A}}(1+\Lambda_{p, \varepsilon}) \sigma^2}{N_s\mu^2 D_0^2}\right\rceil,\, 1\right\}, \quad \Lambda_{p, \varepsilon} := \log\left( \tfrac{2}{p}\left\lceil\tfrac{4\sqrt{2c_{\mathcal{A}}}L}{{\mu}} + 1\right\rceil\left\lceil
\log_{16}\!\left(\tfrac{D_0^2}{{\varepsilon}}\right)\right\rceil\right)
\end{align}
Then, at epoch $S$, \autoref{alg:ARVI} computes an approximate solution $x_S$ s.t. $\mathbb P\{\|x_S - x^*\|^2
\le\varepsilon\} \geq 1 - p$ after
\begin{align}
\widetilde{\mathcal O}\left(\tfrac{L}{\mu}\log\left(\tfrac{D_0^2}{\varepsilon}\right) + \tfrac{ \sigma^2}{\mu^2\varepsilon}\log\left(\tfrac{L}{\mu p}\right)\right)
\end{align}
calls to the $\mathcal{SO}$.
\end{corollary}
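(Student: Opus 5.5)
The plan is to follow the in-expectation argument of \autoref{thm:main-unbiased-mu} almost verbatim, replacing every expectation bound with a high-probability event supplied by \autoref{as:hp-subroutine-guarantee} (with $q_s\equiv 1$), and taking a union bound over epochs and inner iterations. Because $r_s=0$, we have $F_s=F$ and $x_s^*=x^*$ for every $s$. The event $E^1_{s,N_s}$ therefore reads
\[
\|x_s-x^*\|^2\le \tfrac{c_{\mathcal A}L^2}{\mu^2N_s^2}\|x_{s-1}-x^*\|^2+\tfrac{c_{\mathcal A}(1+\Lambda_{p,\varepsilon})\sigma^2}{N_sm_s\mu^2},
\]
which depends on nothing random beyond $x_{s-1}$. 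The distance part of the assumption is all we need; the residual event $E^2$ is not used.

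Define the good event $\mathcal E_s:=\{\|x_i-x^*\|^2\le D_0^2/16^i,\ \forall i\in[s]\}$. I would prove by induction that $\mathbb P(\mathcal E_s)\ge 1-2M_s e^{-\Lambda_{p,\varepsilon}}$, where $M_s=\sum_{i\le s}N_i$.
\begin{itemize}
\item On $\mathcal E_{s-1}$, the deterministic term is at most $\tfrac{c_{\mathcal A}L^2}{\mu^2N_s^2}\cdot\tfrac{D_0^2}{16^{s-1}}\le\tfrac{D_0^2}{2\cdot16^s}$ by the choice of $N_s$ in \eqref{eqn:batch-epoch-mu}, since $N_s^2\ge 32c_{\mathcal A}L^2/\mu^2$.
\item The stochastic term is at most $\tfrac{D_0^2}{32\cdot 16^{s-1}}\le\tfrac{D_0^2}{2\cdot 16^s}$ by the choice of $m_s$ in \eqref{eq:hp-strongly-monotone-ms-lambda}; the $16^{s-1}$ and $(1+\Lambda_{p,\varepsilon})$ factors built into $m_s$ are exactly what cancel here.
\end{itemize}
Since the assumption holds conditionally on $\mathcal F_{s,0}$ almost surely, and $\mathcal E_{s-1}\in\mathcal F_{s,0}$, we get $\mathbb P(\mathcal E_s)\ge\mathbb P(\mathcal E_{s-1})-2N_se^{-\Lambda_{p,\varepsilon}}$, which closes the induction. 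At $s=S=\lceil\log_{16}(D_0^2/\varepsilon)\rceil$ the event gives $\|x_S-x^*\|^2\le D_0^2/16^S\le\varepsilon$.

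For the probability, note that $N_s\equiv\lceil 4\sqrt{2c_{\mathcal A}}L/\mu\rceil$ is constant across epochs, so $M_S=SN_s$. The definition of $\Lambda_{p,\varepsilon}$ is exactly $\log\bigl(\tfrac2p\cdot N_s\cdot S\bigr)$, up to the harmless ceiling inside, so $2M_Se^{-\Lambda_{p,\varepsilon}}\le p$.

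For the complexity, bound $m_s\le 1+\tfrac{32\cdot16^{s-1}c_{\mathcal A}(1+\Lambda_{p,\varepsilon})\sigma^2}{N_s\mu^2D_0^2}$. Then
\[
\sum_s N_sm_s\le SN_s+\tfrac{32c_{\mathcal A}(1+\Lambda_{p,\varepsilon})\sigma^2}{\mu^2D_0^2}\sum_{s\le S}16^{s-1}.
\]
The geometric sum is $O(16^{S})=O(D_0^2/\varepsilon)$. This gives $O\bigl(\tfrac L\mu\log\tfrac{D_0^2}{\varepsilon}+\tfrac{\sigma^2}{\mu^2\varepsilon}(1+\Lambda_{p,\varepsilon})\bigr)$. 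Since $\Lambda_{p,\varepsilon}=\log\tfrac{L}{\mu p}+\log\log\tfrac{D_0^2}{\varepsilon}+O(1)$, the $\log\log$ factor is absorbed into $\widetilde{\mathcal O}$.

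The main obstacle is the conditioning in the induction. The guarantee of \autoref{as:hp-subroutine-guarantee} is conditional on $\mathcal F_{s,0}$, while the bound on $\|x_{s-1}-x^*\|$ holds only on $\mathcal E_{s-1}$. I would handle this by writing
\[
\mathbb P(\mathcal E_s)\ge \mathbb E\bigl[\mathbf 1_{\mathcal E_{s-1}}\mathbb P(E^1_{s,N_s}\mid\mathcal F_{s,0})\bigr]\ge \mathbb P(\mathcal E_{s-1})(1-2N_se^{-\Lambda_{p,\varepsilon}}),
\]
and then checking that the constants in $N_s,m_s$ leave exactly the factor-of-two slack used above.
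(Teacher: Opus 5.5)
Your proposal is correct and follows the paper's route. The paper omits this particular proof, but its state-dependent counterpart (\autoref{cor:state-dependent-strongly-monootne}, which uses the induction of \autoref{thm:hp-state-dep-main-ar-new}) argues exactly as you do: it propagates the good event $\mathcal E_s$ with $\mathbb P(\mathcal E_s)\ge 1-2M_s e^{-\Lambda_{p,\varepsilon}}$ using only the distance event, splits $D_0^2/16^s$ into two halves through the choices of $N_s$ and $m_s$, and combines $M_S=SN_s$ with $\Lambda_{p,\varepsilon}\ge\log(2SN_s/p)$ to bound the total failure probability by $p$. Your explicit treatment of the conditioning via $\mathbb E[\mathbf 1_{\mathcal E_{s-1}}\mathbb P(E^1_{s,N_s}\mid\mathcal F_{s,0})]$ and your geometric-sum complexity count are both sound.
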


\section{Application: Online Policy Evaluation via Variational Inequalities}\label{sec:Applications: Online Policy Evaluation via Variational Inequalities}
In this section, we consider policy evaluation, a key step in reinforcement learning, which computes the value function $V^\pi$ associated with a given policy $\pi$ in a Markov decision process (MDP). We show how the methods developed in \autoref{sec:Stochastic Variational Inequalities under State-Dependent Noise} can be applied to this problem. In \autoref{sec:Projected Bellman Equation}, we recall the reformulation of the projected Bellman equation from \cite{kotsalis2022simple2,li2023accelerated}; since some properties follow there, we omit the related proofs. In \autoref{sec:PolicyEvaluationEstimators}, we introduce stochastic estimators satisfying the assumptions \eqref{eq:fast-decreasing-bias-o} and \eqref{eqn:state-dependent-assumption}. Finally, in \autoref{sec:FTD}, we present accumulatively regularized fast temporal difference learning (AR-FTD), which applies AR with SOE as a subroutine to both strongly monotone ($\gamma$ small) and  monotone ($\gamma$ large) cases.

\subsection{Projected Bellman equation}\label{sec:Projected Bellman Equation}
We consider a discounted Markov decision process (MDP) denoted by
$\mathcal{M}=(\mathcal{S},\mathcal{A},\mathbb{P},c,\gamma)$, where $\mathcal{S}$ is the state space, $\mathcal{A}$ is the action space, $\mathbb{P}(\cdot\mid s,a)$ is the transition kernel, $c(s,a)$ is the one-stage cost, and $\gamma\in(0,1)$ is the discount factor. A policy $\pi$ specifies how actions are selected; in particular, $\pi(a\mid s)$ denotes the probability of choosing action $a\in\mathcal{A}$ at state $s\in\mathcal{S}$. Under a fixed policy $\pi$, the trajectory evolves according to $a_t\sim \pi(\cdot\mid s_t)$ and $s_{t+1}\sim \mathbb{P}(\cdot\mid s_t,a_t)$. The value function $V^\pi$ associated with policy $\pi$ is defined as
\begin{align}
V^\pi(s) &\coloneqq
\bbe_\pi
\left[
\tsum_{t=0}^\infty \gamma^t c(s_t, a_t)  \mid
s_0 = s
\right],
\label{eq:def_V_function0}
\end{align}
where $\bbe_\pi$ denotes expectation with respect to the trajectory generated by following the policy $\pi$.

Recall that once a policy is fixed, the sequence of states $ \{s_0, s_1, \hdots, \} $ becomes a time-homogeneous Markov
chain with transition probability matrix $\mathbb{P}_\pi$,
where  the $(s,s')$-th entry of $\mathbb{P}_\pi$ is
\beq \label{eq:def_cP_under}
\mathbb{P}_\pi(s, s') \coloneqq \tsum_{a \in \cA} \pi(a|s) \mathbb{P}(s'|s,a), \quad \forall\, (s, s') \in \cS \times \cS.
\eeq
We denote by
$c_\pi(s) \coloneqq \tsum_{a \in \cA} \pi(a|s)c(s,a)$, for all $s \in \cS$,
the expected instantaneous cost associated with state $s$ under policy $\pi$.
By the Bellman equation e.g.,\cite{puterman1994markov,bertsekas2012dynamic}, $V^\pi$ must satisfy the linear system
\begin{align}
V^\pi &= c_\pi + \gamma \mathbb{P}_\pi V^\pi, \qquad V^\pi \in \bbr^{|\cS|}. \label{eq:Bellman_V_eval}
\end{align}

For large state and action spaces, \eqref{eq:Bellman_V_eval} is often approximated using linear function approximation. In particular, we choose $d \ll |\cS|$ linearly independent basis vectors and define
$\Phi \coloneqq [\phi_1,\ldots,\phi_d]$ and $\phi(s)\coloneqq[\phi_1(s),\ldots,\phi_d(s)]^\top$. Throughout this section, we assume
\begin{align}
|c(s, a)| &\le \bar c, \quad\forall (s, a) \in \cS \times \cA, \quad
\|\phi(s)\|_2 \le \bar \phi, \quad\forall s \in \cS. \label{eq:bound_psi}
\end{align}

Let $\brho$ be a positive definite diagonal matrix, with inner product $\langle x,y\rangle_{\brho}\coloneqq x^\top\brho y$ and norm $\|x\|_{\brho}\coloneqq\sqrt{x^\top\brho x}$. Let $S\coloneqq\operatorname{span}\{\phi_1,\ldots,\phi_d\}$, and let $\Pi_S(x)$ denote the projection of $x$ onto $S$ under the $\|\cdot\|_{\brho}$-norm, given by
\beq \label{eq:def_projection_over_subspace}
\Pi_S(x) \coloneqq \argmin_{y \in S} \|y - x\|_\brho^2 =
\Phi [\Phi^\top \brho \Phi]^{-1} \Phi^\top \brho x.
\eeq

The choice of \( \brho \) is tied both to desirable properties of the operator \( \mathbb{P}_\pi \)
under the \( \|\cdot\|_\brho \) norm and to the underlying sampling method from the state space $\cS$.
In this paper, we mainly focus on the classic choice  $\label{eq:def_diag_steady}
\brho \coloneqq \text{diag}(\{\nu_\pi(1), \ldots, \nu_\pi(|\cS|)\}),$
where $\nu_\pi$ denotes the stationary distribution induced by policy $\pi$.
We further assume that the Markov chain associated with $\mathbb{P}_\pi$ is ergodic, and hence $\brho_s \equiv \nu_\pi(s) > 0, $ for all $ s \in \cS.$

To compute the value function \( V^\pi \) in \eqref{eq:Bellman_V_eval},  we define \( \vbar \) as the solution to the \emph{projected Bellman equation}:
\begin{align}\label{proj_fixed_point}
\vbar = \Pi_S(\gamma \mathbb{P}_\pi \vbar + c_\pi) = \Phi [\Phi^\top \brho \Phi]^{-1} \Phi^\top \brho (\gamma \mathbb{P}_\pi \vbar + c_\pi),
\end{align}
where the second equality follows from \eqref{eq:def_projection_over_subspace}.
To show that the solution $\vbar$ to \eqref{proj_fixed_point} exists,
we first demonstrate the nonexpansiveness of the operator \( \mathbb{P}_\pi \)
under the \( \|\cdot \|_\brho \) norm.

\begin{lemma}\cite{kotsalis2022simple2}
\label{lem:transition-pi}
		For each vector $x \in \bbr^{|\cS|}$, we have $	\|\mathbb{P}_\pi x\|_\brho \leq \|x\|_\brho.$
        Furthermore, the projected Bellman equation \eqref{proj_fixed_point} has a unique solution $\vbar$ and $\|\vbar\|_\brho \leq \bar c (1-\gamma)^{-1}$.
	\end{lemma}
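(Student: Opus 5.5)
The statement has two parts: nonexpansiveness of $\mathbb{P}_\pi$ in $\|\cdot\|_\brho$, and existence, uniqueness and a norm bound for the solution $\vbar$ of the projected Bellman equation \eqref{proj_fixed_point}. I will prove them in that order, since the second rests on the first.

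For the first part I will use stationarity of $\nu_\pi$, namely $\tsum_s \nu_\pi(s)\mathbb{P}_\pi(s,s') = \nu_\pi(s')$. I then apply Jensen's inequality row by row: each row $\mathbb{P}_\pi(s,\cdot)$ is a probability vector, so $\big(\tsum_{s'}\mathbb{P}_\pi(s,s')x(s')\big)^2 \le \tsum_{s'}\mathbb{P}_\pi(s,s')x(s')^2$. Multiplying by $\nu_\pi(s)$ and summing over $s$ gives
\[
\|\mathbb{P}_\pi x\|_\brho^2 \le \tsum_{s'}\Big(\tsum_s \nu_\pi(s)\mathbb{P}_\pi(s,s')\Big)x(s')^2 = \|x\|_\brho^2 .
\]

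For the second part, define $T(v) \coloneqq \Pi_S(\gamma\mathbb{P}_\pi v + c_\pi)$. The key fact is that $\Pi_S$ is the orthogonal projection onto $S$ in the inner product $\langle\cdot,\cdot\rangle_\brho$, so it is nonexpansive in $\|\cdot\|_\brho$. This follows from the Pythagorean identity: for any $x$, the residual $x-\Pi_S x$ is $\brho$-orthogonal to $S$. This can be checked from the closed form \eqref{eq:def_projection_over_subspace}, since $\Phi^\top\brho(x-\Pi_S x)=0$.

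Combining this with the first part gives
\[
\|T(v)-T(w)\|_\brho \le \gamma\|\mathbb{P}_\pi(v-w)\|_\brho \le \gamma\|v-w\|_\brho .
\]
Since $\gamma<1$, $T$ is a contraction on the complete space $(\bbr^{|\cS|},\|\cdot\|_\brho)$. Here $\|\cdot\|_\brho$ is a genuine norm because $\brho \succ 0$ by ergodicity. Banach's fixed point theorem then yields a unique fixed point $\vbar$, and $\vbar \in S$ automatically because it lies in the range of $\Pi_S$.

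For the norm bound, I use $\vbar = T(\vbar)$ together with nonexpansiveness of $\Pi_S$ and $\mathbb{P}_\pi$:
\[
\|\vbar\|_\brho \le \gamma\|\vbar\|_\brho + \|c_\pi\|_\brho .
\]
Next, $|c_\pi(s)| \le \bar c$ by \eqref{eq:bound_psi}, so $\|c_\pi\|_\brho^2 \le \bar c^2\tsum_s\nu_\pi(s) = \bar c^2$. Rearranging gives $\|\vbar\|_\brho \le \bar c/(1-\gamma)$.

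The main point needing care is the stationarity identity in the first part. It uses that $\nu_\pi$ is exactly the stationary distribution, which is why the weighting $\brho$ was chosen this way; with a different weighting the nonexpansiveness, and hence the contraction argument, can fail. Everything after that is routine.
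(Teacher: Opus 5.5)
Your proof is correct and follows the standard argument that the paper cites from \cite{kotsalis2022simple2} without reproducing it. That argument is row-wise Jensen plus stationarity of $\nu_\pi$ for nonexpansiveness of $\mathbb{P}_\pi$, then contraction of $v\mapsto\Pi_S(\gamma\mathbb{P}_\pi v+c_\pi)$ via $\brho$-orthogonality of $\Pi_S$ and Banach's theorem, and finally the bound $\|\vbar\|_\brho\le\gamma\|\vbar\|_\brho+\|c_\pi\|_\brho\le\gamma\|\vbar\|_\brho+\bar c$, which is the route the paper signals when it introduces nonexpansiveness of $\mathbb{P}_\pi$ in order to show that $\vbar$ exists.
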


Observe that the projected Bellman equation in \eqref{proj_fixed_point} involves computing $[\Phi^\top \brho \Phi]^{-1}$. Moreover, the fixed-point equation \eqref{proj_fixed_point} is defined over the state space and hence involves $|\cS|$ equations, which can be large. Therefore, solving this system directly can be computationally expensive. To obtain a lower-dimensional equivalent formulation, we multiply both sides of \eqref{proj_fixed_point} by $\Phi^\top \brho$, yielding 
\beq \label{proj_fixed_point1}
\Phi^\top \brho \vbar = \Phi^\top \brho (\gamma \mathbb{P}_\pi \vbar + c_\pi).
\eeq
In the sequel, for any \( v^\Diamond \in S \), let us use \( \theta^\Diamond \) to denote its corresponding parameterization in \( \bbr^d \),
such that, for example, \( \Phi \theta' = v' \) or \( \Phi \bar{\theta} = \bar{v} \).
With this shorthand, equation~\eqref{proj_fixed_point1} can be equivalently rewritten as
\begin{align}\label{proj_fix_point_2}
\Phi^\top \brho \Phi \bar{\theta} = \gamma \Phi^\top \brho \mathbb{P}_\pi \Phi \bar{\theta} + \Phi^\top \brho c_\pi.
\end{align}
While any solution \( \vbar \) of \eqref{proj_fixed_point} must also satisfy \eqref{proj_fixed_point1}, and
hence admits a corresponding parameter vector \( \bar{\theta} \) solving \eqref{proj_fix_point_2}, the converse
is not immediately obvious.
It can be shown  that \eqref{proj_fixed_point1}-\eqref{proj_fix_point_2} are, in fact, equivalent to \eqref{proj_fixed_point}.
Clearly, by \eqref{proj_fix_point_2}, \( \bar{\theta} \) is a solution to the following linear system:
\beq \label{eq:def_Bellman_fixed_point}
F(\theta) \coloneqq \Phi^\top \brho (\Phi \theta - c_\pi - \gamma \mathbb{P}_\pi \Phi \theta) = 0.
\eeq
The approach is to demonstrate that the linear operator \( F \), defined in \eqref{eq:def_Bellman_fixed_point},
is strongly monotone. As a result, \eqref{eq:def_Bellman_fixed_point} will have a unique solution \( \bar{\theta} \),
thereby establishing the equivalence between \eqref{proj_fixed_point1}–\eqref{proj_fix_point_2} and \eqref{proj_fixed_point}.

To characterize the strong monotonicity and other properties of \( F \),
it is often desirable to work with an orthonormal basis that spans the subspace \( S = \text{span}\{\phi_1, \ldots, \phi_d\} \).
To construct such a basis, define the matrix \( B \in \mathbb{R}^{d \times d} \) as  $B = \Phi^\top \brho \Phi,$
which can be viewed as the covariance matrix of features.
Since \( B \) is positive definite, we can define  $\bPhi \coloneqq [\bphi_1, \bphi_2, \ldots, \bphi_d] = \Phi B^{-\tfrac{1}{2}}.$
By construction, the matrix \( \bPhi \) satisfies $\bPhi^\top \brho \bPhi = B^{-\tfrac{1}{2}} \Phi^\top \brho \Phi B^{-\tfrac{1}{2}} = I, $
 which implies that
the vectors \( \bphi_1, \ldots, \bphi_d \) form an orthonormal set with respect to the \( \langle \cdot, \cdot \rangle_\brho \)
inner product, i.e., \( \langle \bphi_i, \bphi_j \rangle_\brho = \mathbb{I}(i = j) \).
With this orthonormal basis, it can be readily verified
that the projection operator in \eqref{eq:def_projection_over_subspace} simplifies to $\Pi_S(x) = \Phi B^{-1} \Phi^\top \brho x
= [ \bPhi B^{\tfrac{1}{2}} ] B^{-1} [B^{\tfrac{1}{2}}  \bPhi^\top] \brho x
= \bPhi \bPhi^\top \brho x.  $
Let us define the scalars  $\bar \beta \coloneqq \lambda_{\max}(B) $ $ \text{and}$ $\underline{\beta} \coloneqq \lambda_{\min}(B),  $
where \( \bar \beta / \underline{\beta} \) represents the condition number of \( B \).
We can readily verify that
\beq \label{eq:Gen_Bellman_eig}
\underline{\beta} \|\theta\|_2^2 \leq \|\theta\|_B^2 = \|\theta\|_{\Phi^\top \brho \Phi}^2 = \|\Phi \theta\|_\brho^2
= \|v\|_\brho^2 \leq \bar \beta \|\theta\|_2^2
\eeq
 for $v = \Phi \theta$.
 For any $\theta\in \bbr^d$, we have $\|\theta\|_2 = \|\bar \Phi \theta\|_\brho, \|\Phi \theta\|_\brho = \|B^{\tfrac{1}{2}}\theta\|_2.$
These identities are useful for characterizing the smoothness of $F(\theta)$.
Notice that $\bar \beta \leq \bar \phi^2$ as follows,
for all $\theta\in\mathbb{R}^d,$ we have
\begin{equation}\label{eqn:function-approx-eigenvalue-relation}
\begin{aligned}
\theta^\top \Phi^\top \brho \Phi \theta
&=\tsum_{s\in\mathcal S}\nu_\pi(s)
    \langle \phi(s),\theta\rangle^2\leq \tsum_{s\in\mathcal S}\nu_\pi(s)
    \|\phi(s)\|_2^2\|\theta\|_2^2\leq \bar\phi^2\|\theta\|_2^2,
\end{aligned}
\end{equation}
hence $\bar \beta = \lambda_{\max}(B)=\lambda_{\max}( \Phi^\top \brho \Phi) \leq \bar\phi^2.$

With those setup, we
recall $F$ is strongly monotone and smooth as follows. Since strong monotonicity implies uniqueness, $\bar\theta$ is the unique solution of \eqref{eq:def_Bellman_fixed_point}.
	\begin{lemma}\label{lem:linear-operator}
	For any $\theta, \theta' \in \bbr^d$, we have
    \begin{equation}\label{eq:strong_monotone_Bellman_operator}
        \begin{aligned}
        \langle F(\theta) - F(\theta'), \theta - \theta' \rangle&
	\ge (1-\gamma) \|v - v'\|_\brho^2 \ge (1-\gamma) \underline \beta \|\theta- \theta'\|_2^2,\\
    \|F(\theta) - F(\theta')\|_2& \le (1 +\gamma) {\bar \beta^{\tfrac{1}{2}}} \|v - v'\|_\brho
	\le (1+ \gamma) \bar \beta \|\theta - \theta'\|_2\\
\|v-v'\|_\brho^2
&\leq
\tfrac{1}{(1-\gamma)^2\underline\beta}
\|F(\theta)-F(\theta')\|_2^2.
        \end{aligned}
    \end{equation}
	\end{lemma}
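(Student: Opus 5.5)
We prove the three inequalities of \autoref{lem:linear-operator} by direct linear algebra, writing $F(\theta)-F(\theta')=\Phi^\top\brho(I-\gamma\mathbb{P}_\pi)\Phi(\theta-\theta')$. Set $\delta\coloneqq\theta-\theta'$ and $w\coloneqq v-v'=\Phi\delta$. The affine term $-\Phi^\top\brho c_\pi$ then cancels, and each claim becomes a statement about $w$ under the $\brho$-geometry, combined with \eqref{eq:Gen_Bellman_eig}.

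\emph{Strong monotonicity.} We compute
\[
\langle F(\theta)-F(\theta'),\delta\rangle=\delta^\top\Phi^\top\brho(I-\gamma\mathbb{P}_\pi)\Phi\delta=\|w\|_\brho^2-\gamma\langle w,\mathbb{P}_\pi w\rangle_\brho .
\]
By Cauchy--Schwarz in $\langle\cdot,\cdot\rangle_\brho$ and the nonexpansiveness $\|\mathbb{P}_\pi w\|_\brho\le\|w\|_\brho$ from \autoref{lem:transition-pi}, we get $\langle w,\mathbb{P}_\pi w\rangle_\brho\le\|w\|_\brho^2$. Hence the inner product is at least $(1-\gamma)\|w\|_\brho^2$. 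The second inequality in the first line then follows from $\|w\|_\brho^2=\|\delta\|_B^2\ge\underline\beta\|\delta\|_2^2$ in \eqref{eq:Gen_Bellman_eig}.

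\emph{Lipschitz continuity.} Write $u\coloneqq(I-\gamma\mathbb{P}_\pi)w$, so $\|u\|_\brho\le(1+\gamma)\|w\|_\brho$ by the triangle inequality and \autoref{lem:transition-pi}. It remains to bound $\|\Phi^\top\brho u\|_2$ by $\bar\beta^{1/2}\|u\|_\brho$. For this we use the factorization $\Phi^\top\brho u=B^{1/2}\bPhi^\top\brho u$. Since $\bPhi^\top\brho\bPhi=I$, the map $u\mapsto\bPhi^\top\brho u$ is the coordinate map of the $\brho$-orthogonal projection onto $S$, so $\|\bPhi^\top\brho u\|_2=\|\Pi_S u\|_\brho\le\|u\|_\brho$. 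Applying $\|B^{1/2}\|_2=\bar\beta^{1/2}$ then gives $\|F(\theta)-F(\theta')\|_2\le(1+\gamma)\bar\beta^{1/2}\|w\|_\brho$. Finally, $\|w\|_\brho\le\bar\beta^{1/2}\|\delta\|_2$ by \eqref{eq:Gen_Bellman_eig}.

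\emph{Third inequality.} Combine the first line with Cauchy--Schwarz:
\[
(1-\gamma)\|w\|_\brho^2\le\|F(\theta)-F(\theta')\|_2\|\delta\|_2\le\|F(\theta)-F(\theta')\|_2\,\underline\beta^{-1/2}\|w\|_\brho .
\]
Dividing by $\|w\|_\brho$ (the case $w=0$ is trivial) and squaring gives the claim.

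The only step needing care is the projection-norm bound $\|\bPhi^\top\brho u\|_2\le\|u\|_\brho$ used in the Lipschitz part. It follows from Bessel's inequality for the $\brho$-orthonormal set $\{\bphi_i\}$, and the rest is routine.
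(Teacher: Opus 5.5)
Your proof is correct. For the third inequality your argument is the paper's: strong monotonicity, then Cauchy--Schwarz, then $\|\theta-\theta'\|_2\le\underline\beta^{-1/2}\|v-v'\|_\brho$ from \eqref{eq:Gen_Bellman_eig}, then divide by $\|v-v'\|_\brho$ and square. The step the paper labels (ii) is exactly this lower eigenvalue bound.

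The difference is in the first two lines. The paper does not prove them and simply cites \cite{kotsalis2022simple2}. You derive both directly from $F(\theta)-F(\theta')=\Phi^\top\brho(I-\gamma\mathbb{P}_\pi)\Phi(\theta-\theta')$:
\begin{itemize}
\item the monotonicity bound, by Cauchy--Schwarz in $\langle\cdot,\cdot\rangle_\brho$ together with the nonexpansiveness of $\mathbb{P}_\pi$ from \autoref{lem:transition-pi};
\item the Lipschitz bound, by the factorization $\Phi^\top\brho=B^{1/2}\bPhi^\top\brho$, Bessel's inequality for the $\brho$-orthonormal system $\{\bphi_i\}$, and $\|B^{1/2}\|_2=\bar\beta^{1/2}$.
\end{itemize}
Both derivations are valid. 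Your version therefore makes the lemma self-contained within the paper's setup. It relies only on \autoref{lem:transition-pi} (itself imported from the same reference) and on $\gamma\ge 0$.
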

\begin{proof}
The first two properties follow from \cite{kotsalis2022simple2}.
  We just prove the third one as follows.
\begin{align*}
(1-\gamma)\|v-v'\|_\brho^2
&\overset{\text{(i)}}{\leq}
\langle F(\theta)-F(\theta'),\theta-\theta'\rangle\leq
\|F(\theta)-F(\theta')\|_2\|\theta-\theta'\|_2\overset{\text{(ii)}}{\leq}
\tfrac{1}{\sqrt{\underline\beta}}
\|F(\theta)-F(\theta')\|_2\|v-v'\|_\brho,
\end{align*}
where (i) (resp. (ii)) follows from the first (resp. second) equation from \eqref{eq:strong_monotone_Bellman_operator}.
Dividing by $\|v-v'\|_\brho$ and squaring proves the claim.
\end{proof}
It is natural to solve \eqref{eq:def_Bellman_fixed_point}, or equivalently $F(\theta)=0$, through a root-finding or VI approach \cite{kotsalis2022simple2}. In particular, the problem can be written as solving the following VI, 
\begin{equation}\label{eqn:PE-VI}
    \langle F(\theta), \theta' - \theta \rangle \geq 0,\quad\text{for all}\quad\theta' \in \bbr^d.
\end{equation}

\subsection{Stochastic estimator} \label{sec:PolicyEvaluationEstimators}

Our goal in this subsection is to present a stochastic estimator of $F$ and to analyze its statistical properties.
Specifically, for a given stochastic estimator $\tilde{F}(\theta,\zeta_t)$ of $F(\theta)$, we are interested in verifying that the unbiased estimates assumption \eqref{eq:fast-decreasing-bias-o} and state-dependent noise assumption \eqref{eqn:state-dependent-assumption}  hold.

	To simplify the design of stochastic estimation of policy evaluation
operator $F$, we assume the access to a simulator (generative model)
that generates samples from the transition kernel.  In particular, we observe i.i.d. triples $\zeta_t=(s_t, a_t, s_t')$ such that  $s_t \sim \nu_\pi,~a_t \sim \pi(\cdot|s_t),~ s_t'\sim  \mathbb{P}(\cdot|s_t, a_t),$
	where $\nu_\pi$ is the stationary distribution of the Markov chain
	associated with $ \mathbb{P}_\pi$.
 However, the stochastic estimator is not restricted to i.i.d. sampling and can incorporate other conditional sampling methods.

Observe that operator $F$ in \eqref{eq:def_Bellman_fixed_point}
admits unbiased stochastic estimators under a generative model \cite{kotsalis2022simple2}.
\begin{align*}
F(\theta)
&= \tsum_{s \in \cS} \nu_\pi(s) \phi(s) \left[
\phi(s)^\top \theta - c_\pi(s) - \gamma \mathbb{P}_\pi(s, \cdot)^\top \Phi \theta
\right] \\
&= \tsum_{s \in \cS} \nu_\pi(s) \phi(s) \left\{
\phi(s)^\top \theta - \tsum_{a \in \cA} \pi(a \mid s) \left[c(s, a) +
\gamma \mathbb{P}(\cdot \mid s, a)^\top \Phi \theta\right]
\right\}\\
&= \tsum_{s \in \cS} \nu_\pi(s) \phi(s) \left\{
\phi(s)^\top \theta - \tsum_{a \in \cA} \pi(a \mid s) \left[c(s, a) +
\gamma \tsum_{s' \in \cS} \mathbb{P}(s' \mid s, a) \phi(s')^\top\theta \right]
\right\},
\end{align*}
where \( \phi(s) \) denotes the feature vector \( [\phi_1(s), \phi_2(s), \ldots, \phi_d(s)]^\top \).
Then, an unbiased estimator of \( F(\theta) \) reads
\beq \label{eq:stochastic_estimator_basic_F}
\tilde{F}(\theta,\zeta) = \phi(s) \left[ \phi(s)^\top \theta - c(s, a)
- \gamma \phi(s')^\top \theta \right].
\eeq
It remains to bound the variance.
The first inequality in the following lemma  bounds the noise at the solution $\bar\theta$ by $\bar\sigma^2$, see \cite{kotsalis2022simple2} for its proof.
The second shows that,
when $s_t \sim \nu_\pi$,
the variance of the difference between two stochastic operators can be controlled
by the inner product $\langle F(\theta) - F(\theta'), \theta - \theta'\rangle$.

\begin{lemma}  \label{lemma_operator_variance_2}
Let $\bar \theta$ be the solution to $F(\theta) = 0$.
Then
\begin{align}
&\bbe[\|\tilde F( \bar \theta,\zeta_t)
- F(\bar \theta) \|_2^2] \le \bar \sigma^2\coloneqq
\tfrac{8  \bar c^2 \bar \phi^2}{(1-\gamma)^2} + 2\bar c^2 \bar \phi^2,
\label{eq:def_variance_at_root}
\end{align}
where $\bar c$ and $\bar \phi$ are defined in \eqref{eq:bound_psi},
respectively. Furthermore, for any $\theta$ and $\theta'$, we have
\begin{align} \label{eqn:stochastic-estimator-state-dependent}
&\bbe
[\|\tilde F( \theta,\zeta_t) - \tilde F( \theta',\zeta_t)
- (F(\theta) - F(\theta'))\|_2^2] \le 2 \bar \phi^2 \langle F(\theta) - F(\theta'), \theta - \theta'\rangle.
\end{align}
\end{lemma}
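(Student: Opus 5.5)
The first bound \eqref{eq:def_variance_at_root} is taken from \cite{kotsalis2022simple2}, as the text indicates. If we were to re-derive it, we would write $\tilde F(\bar\theta,\zeta)=\phi(s)[\phi(s)^\top\bar\theta-c(s,a)-\gamma\phi(s')^\top\bar\theta]$ and bound the variance by the second moment. We would use $(a+b)^2\le 2a^2+2b^2$ to separate the cost term, which contributes $2\bar c^2\bar\phi^2$, from the temporal-difference term in $\bar\theta$. That latter term is controlled using $\|\phi(s)\|\le\bar\phi$, stationarity of $s\sim\nu_\pi$ and $s'\sim\mathbb{P}_\pi(s,\cdot)$ (so $\mathbb{E}[(\phi(s')^\top\bar\theta)^2]=\|\bar v\|_\brho^2$), and the bound $\|\bar v\|_\brho\le\bar c/(1-\gamma)$ from \autoref{lem:transition-pi}. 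This yields the $8\bar c^2\bar\phi^2/(1-\gamma)^2$ term.

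The main content is \eqref{eqn:stochastic-estimator-state-dependent}. By linearity of \eqref{eq:stochastic_estimator_basic_F} in $\theta$, the cost cancels. Setting $\delta\coloneqq\theta-\theta'$ and $w\coloneqq\Phi\delta=v-v'$, we get
\[
\tilde F(\theta,\zeta)-\tilde F(\theta',\zeta)=\phi(s)\bigl[\phi(s)^\top\delta-\gamma\phi(s')^\top\delta\bigr]=\phi(s)\bigl[w(s)-\gamma w(s')\bigr],
\]
and its expectation is $F(\theta)-F(\theta')$. Since a variance is at most the second moment, it suffices to bound $\mathbb{E}\bigl[\|\phi(s)\|^2(w(s)-\gamma w(s'))^2\bigr]\le\bar\phi^2\,\mathbb{E}[(w(s)-\gamma w(s'))^2]$.

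Next, we would expand $\mathbb{E}[(w(s)-\gamma w(s'))^2]=\|w\|_\brho^2-2\gamma\langle w,\mathbb{P}_\pi w\rangle_\brho+\gamma^2\mathbb{E}[w(s')^2]$. Stationarity gives $\mathbb{E}[w(s')^2]=\|w\|_\brho^2$.

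The comparison target is the inner product. Since $F(\theta)-F(\theta')=\Phi^\top\brho(I-\gamma\mathbb{P}_\pi)w$, we have $\langle F(\theta)-F(\theta'),\delta\rangle=\|w\|_\brho^2-\gamma\langle w,\mathbb{P}_\pi w\rangle_\brho$. The claim then reduces to
\[
(1+\gamma^2)\|w\|_\brho^2-2\gamma\langle w,\mathbb{P}_\pi w\rangle_\brho\le 2\|w\|_\brho^2-2\gamma\langle w,\mathbb{P}_\pi w\rangle_\brho,
\]
which holds because $1+\gamma^2\le 2$. Note that the cross terms cancel exactly, so we never need nonexpansiveness here.

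The only delicate point, which we expect to be the main obstacle, is making sure the cross term is handled by exact cancellation rather than a lossy Cauchy--Schwarz step. Crude bounds would yield a factor like $(1+\gamma)^2\|w\|_\brho^2$, which is not comparable to the inner product with a $\mu$-free constant: that comparison would only give $(1+\gamma)^2/(1-\gamma)$. The computation above avoids this and yields the $\gamma$-independent constant $2\bar\phi^2$, which is exactly the $\varsigma_*\approx\varsigma$ phenomenon advertised in the introduction.
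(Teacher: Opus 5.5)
Your proof is correct and follows essentially the same route as the paper: bound the variance by the second moment, pull out $\|\phi(s)\|^2\le\bar\phi^2$, expand the square so the two cross terms combine exactly into $-2\gamma\langle w,\mathbb{P}_\pi w\rangle_{\brho}$, use stationarity for the $\phi(s')\phi(s')^\top$ term, and absorb $\gamma^2\le 1$. The only differences are cosmetic: the paper works in matrix notation and applies $\gamma^2\le1$ pointwise before taking expectations, and for the first bound it likewise only cites \cite{kotsalis2022simple2}, while your sketch via $(a+b)^2\le 2a^2+2b^2$ and $\|\bar v\|_{\brho}\le \bar c/(1-\gamma)$ reproduces the stated constant.
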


\begin{proof}
Since $\bbe[\tilde F(\theta, \zeta_t)] =F(\theta)$,
it follows immediately that
\begin{align*}
&\bbe [
\|\tilde F( \theta,\zeta_t) - \tilde F( \theta',\zeta_t)
- (F(\theta) - F(\theta'))\|_2^2] \le \bbe[\|\tilde F( \theta,\zeta_t) - \tilde F( \theta',\zeta_t)\|_2^2].
\end{align*}
Next, observe that
\begin{align*}
\|\tilde F( \theta,\zeta_t) - \tilde F( \theta',\zeta_t)\|_2^2
&= \| \phi(s_t) (\langle \phi(s_t) - \gamma \phi(s_{t}'),
\theta - \theta'\rangle )\|_2^2 \\
&= (\theta - \theta')^\top [\phi(s_t) - \gamma \phi(s_{t}')] \phi(s_t)^\top
 \phi(s_t) [\phi(s_t)^\top - \gamma \phi(s_{t}')^\top] (\theta - \theta')\\
 &\overset{\text{(i)}}{\le} \bar \phi^2  (\theta - \theta')^\top [\phi(s_t) - \gamma \phi(s_{t}')]
  [\phi(s_t)^\top - \gamma \phi(s_{t}')^\top] (\theta - \theta')\\
  &= \bar \phi^2 (\theta - \theta')^\top \phi(s_t)  [\phi(s_t)^\top - \gamma \phi(s_{t}')^\top] (\theta - \theta') - \gamma  \bar \phi^2 (\theta - \theta')^\top \phi(s_{t}')  \phi(s_t)^\top (\theta - \theta')\\
  &\quad + \gamma^2 \bar \phi^2  (\theta - \theta')^\top  \phi(s_{t}') \phi(s_{t}')^\top (\theta - \theta')\\
  &\overset{\text{(ii)}}{\le} \bar \phi^2 (\theta - \theta')^\top \phi(s_t)  [\phi(s_t)^\top - \gamma \phi(s_{t}')^\top] (\theta - \theta')- \gamma  \bar \phi^2 (\theta - \theta')^\top \phi(s_{t}')  \phi(s_t)^\top (\theta - \theta')\\
  &\quad + \bar \phi^2  (\theta - \theta')^\top  \phi(s_{t}') \phi(s_{t}')^\top (\theta - \theta')\\
&= \bar \phi^2 (\theta - \theta')^\top  [\phi(s_t)  \phi(s_t)^\top - \gamma \phi(s_t)  \phi(s_{t}')^\top+\phi(s_{t}') \phi(s_{t}')^\top
- \gamma \phi(s_t') \phi(s_{t})^\top] (\theta - \theta'),
\end{align*}
where (i) follows from the bound \eqref{eq:bound_psi}, and (ii) uses the fact that $\gamma \in [0,1]$.
Taking expectation on both sides and using the stationarity of $\nu_\pi$, we obtain
\[
\bbe [\phi(s_t)  \phi(s_t)^\top] = \bbe [\phi(s_{t}')  \phi(s_{t}')^\top] = \Phi^\top \brho \Phi,\quad \bbe [\phi(s_t) \phi(s_{t}')^\top]
= \Phi^\top \brho \mathbb{P}_\pi \Phi, \quad \bbe [\phi(s_t') \phi(s_{t})^\top]
= {(\Phi^\top \brho \mathbb{P}_\pi \Phi)^{\top}}.\]
Substituting these identities yields
\begin{align*}
\bbe
[\|\tilde F( \theta,\zeta_t) - \tilde F( \theta',\zeta_t)
- (F(\theta) - F(\theta'))\|_2^2]
&\leq \bar \phi^2 (\theta - \theta')^\top  [\Phi^\top \brho \Phi - \gamma \Phi^\top \brho \mathbb{P}_\pi\Phi+\Phi^\top \brho \Phi
- \gamma {(\Phi^\top \brho \mathbb{P}_\pi \Phi)^{\top}}] (\theta - \theta')\\
&= \bar \phi^2 (\theta - \theta')^\top \Phi^\top \brho [ \Phi - \gamma \mathbb{P}_\pi\Phi+\Phi
- \gamma {\mathbb{P}_\pi \Phi}] (\theta - \theta')\\
&=2 \bar \phi^2 \langle F(\theta) - F(\theta'), \theta - \theta'\rangle,
\end{align*}
which completes the proof.
\end{proof}
\subsection{Policy evaluation}\label{sec:FTD}
In policy evaluation, TD-type methods can be interpreted as stochastic approximation methods for solving the operator equation $F(\theta)=0.$ In particular, classical TD learning can be viewed as a stochastic forward operator method that uses a single-sample stochastic estimate of the policy evaluation operator $F$ at each iteration.

In this section, we develop accelerated algorithms for solving \eqref{eqn:PE-VI} by applying the AR method (cf. \autoref{alg:ARVI}) within this stochastic approximation framework. By \autoref{lem:linear-operator}, the linear operator $F$ is strongly monotone with parameter $(1-\gamma)\underline{\beta}$. Therefore, when $\gamma$ is bounded away from $1$, we can directly apply the AR method for strongly monotone VIs (cf. \autoref{state-dependent-thm-strongly-monotone}). In the long-horizon regime, however, the strong monotonicity parameter degenerates as $\gamma$ approaches $1$, leading to an unfavorable dependence on $1/(1-\gamma)$ in the strongly monotone analysis. We instead apply the AR method for monotone VIs (cf. \autoref{thm:ARVI-state-dependent}). The resulting AR-FTD method combines regularization, restarting, and the FTD subroutine to achieve fast decay of the Bellman residual.

We adopt the notation introduced in \autoref{sec:Stochastic Variational Inequalities under State-Dependent Noise}. At iteration $k$ of epoch $s$, we construct a stochastic estimator of $F$ from $m_s$ i.i.d.\ samples as in \eqref{eqn:minibatch}: $\tilde{F}_{s,k}(\theta_{s,k})\coloneqq\tfrac{1}{m_s}\tsum_{i=1}^{m_s}\tilde{F}(\theta_{s,k},\zeta_{s,i}).$ By \autoref{lemma_operator_variance_2}, $\tilde{F}_{s,k}$ satisfies the unbiasedness and state-dependent variance assumptions (cf. \eqref{eq:fast-decreasing-bias-o} and \eqref{eqn:state-dependent-assumption}). More precisely, for any $\theta\in\bbr^d$, setting $\varsigma_*\coloneqq 4\bar \phi^2$ and $\sigma_*^2=2\bar \sigma^2$, we have
\begin{align*}
\bbe[\|\tilde{F}_{s,k}(\theta)-F(\theta)\|_*^2\mid\mathcal{F}_{s,k}] &\le \tfrac{\varsigma_*}{m_s}\langle F(\theta)-F(\bar{\theta}),\theta-\bar{\theta}\rangle+\tfrac{\sigma_*^2}{m_s}.
\end{align*}

We next present convergence guarantees for AR-FTD in \autoref{the:mult_epoch_FTD_burn} and \autoref{prop:ARVI-state-dependent}. 
For each $s\in\{1,\ldots,S\}$, suppose that $\lambda_{s,k}\equiv\lambda_k$, $\eta_{s,k}\equiv\eta_k$, and $k_{s,0}\equiv k_0$, where $\lambda_k$, $\eta_k$, and $k_0$ are chosen as in \autoref{cor:subroutine-guarantee}, with $L$, $\mu$, $\varsigma_*$, and $\sigma_*^2$ are defined as
\begin{align}\label{eqn:final-bound}
L &=(1+\gamma)\bar \beta,\quad \mu=(1-\gamma)\underline\beta,\quad \varsigma_*=4\bar \phi^2,\quad\text{and}\quad \sigma_*^2=2\bar \sigma^2.
\end{align}

When $\gamma$ is sufficiently small, we apply AR-FTD with $\mu>0$; see \autoref{state-dependent-thm-strongly-monotone}. We measure its performance by the mean-square error $\bbe[\|\hat v_S-\bar v\|_\brho^2]$, where $\bar v=\Phi\bar\theta$ denotes the exact solution of the linear system in \eqref{eq:def_Bellman_fixed_point}, and $\hat v_S=\Phi\hat\theta_S\coloneqq\Phi\theta_{S,R_S}$ is the randomized output of the $S$-th epoch. Specifically, $R_S$ is sampled as \eqref{eqn:iterative-averaging-per-epoch}, i.e.,
\begin{equation*}
\mathbb{P}(R_S=k)
=
\tfrac{k+k_{S,0}-1}
{\sum_{j=1}^{N_S}(j+k_{S,0}-1)}
=
\tfrac{2(k+k_{S,0}-1)}
{N_S(N_S+2k_{S,0}-1)},
\end{equation*}
where $N_S$ is the epoch length and $k_{S,0}=\tfrac{16L}{\mu}+1=\tfrac{16(1+\gamma)\bar\beta}{(1-\gamma)\underline \beta}+1.$ \begin{proposition}\label{the:mult_epoch_FTD_burn}
 Suppose $D_0>0$ and
$0<\varepsilon\leq\underline\beta D_0^2/16$. Furthermore,
suppose the number of epochs is given by
\begin{equation}\label{second-output-epoch}
S=\left\lceil\log_{16}\left(\tfrac{\underline\beta(\sqrt{c_{\mathcal A}}+4)D_0^2}{16\sqrt{c_{\mathcal A}}\varepsilon}\right)\right\rceil,
\end{equation}
where $D_0\geq\|\theta^0-\bar\theta\|$. Furthermore, suppose the parameters $r_s$, $N_s$, and $m_s$ of AR (\autoref{alg:ARVI}) are chosen as in \autoref{state-dependent-thm-strongly-monotone}. Then, AR-FTD returns $\hat v_S$ satisfying $\bbe[\|\hat v_S-\bar v\|_\brho^2]\leq\varepsilon$ with a total sample complexity of
\beq\label{eqn:sample-FTD}
\mathcal{O}\left(\tfrac{\bar\beta+\bar\phi^2}{(1-\gamma)\underline\beta}\log_{16}\!\left(\tfrac{\underline\beta D_0^2}{\varepsilon}\right)+\tfrac{\bar\sigma^2}{\varepsilon(1-\gamma)^2\underline\beta}\right).
\eeq
\end{proposition}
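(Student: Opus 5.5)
The plan is to treat the proposition as an instance of \autoref{state-dependent-thm-strongly-monotone} (AR with $r_s=0$, i.e.\ restarted SOE), and to handle the last epoch separately because the output is the randomized iterate $\theta_{S,R_S}$ rather than the last iterate.

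\emph{Step 1: verify the hypotheses.} By \autoref{lem:linear-operator}, $F$ in \eqref{eq:def_Bellman_fixed_point} is $L$-Lipschitz and $\mu$-strongly monotone, with $L=(1+\gamma)\bar\beta$ and $\mu=(1-\gamma)\underline\beta$. By \autoref{lemma_operator_variance_2}, the minibatch estimator satisfies \eqref{eq:fast-decreasing-bias-o} and \eqref{eqn:state-dependent-assumption} with $\varsigma_*=4\bar\phi^2$ and $\sigma_*^2=2\bar\sigma^2$, as recorded in \eqref{eqn:final-bound}. Hence SOE, with the stepsizes of \autoref{cor:subroutine-guarantee}, satisfies \autoref{assumption_inner_sto-state-dependent} through \autoref{prop:subroutine} with $r_s=0$. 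In that case $F_s=F$, $x_s^*=\bar\theta$ and $\sigma_{s,*}^2=\sigma_*^2$, so $\sigma_{s,*}^2$ is deterministic.

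\emph{Step 2: epochs $1,\dots,S-1$.} I would rerun the induction of \autoref{thm:main-unbiased-mu} and \autoref{state-dependent-thm-strongly-monotone}, using the stated choices of $N_s$ and $m_s$ (in particular $m_s\ge \tilde c_{\mathcal A}\varsigma_*/L$). This gives $\mathbb E\|\theta_{s}-\bar\theta\|^2\le D_0^2/16^{s}$ for $s\le S-1$, where $\theta_s=\theta_{s,N_s}$ is the last iterate that initializes the next epoch.

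\emph{Step 3: epoch $S$ (the randomized output).} Apply \eqref{eqn:distance-s'} of \autoref{prop:subroutine} at epoch $S$, conditioned on $\mathcal F_{S,0}$, and keep only the inner-product term:
\[
\tfrac{16N_S}{\mu(N_S+k_0)}\,\mathbb E\langle F(\hat\theta_S)-F(\bar\theta),\hat\theta_S-\bar\theta\rangle\le \tfrac{1301L^2}{\mu^2N_S^2}\|\theta_{S-1}-\bar\theta\|^2+\tfrac{1024\sigma_*^2}{N_Sm_S\mu^2}.
\]
The first inequality of \autoref{lem:linear-operator} gives $\langle F(\hat\theta_S)-F(\bar\theta),\hat\theta_S-\bar\theta\rangle\ge(1-\gamma)\|\hat v_S-\bar v\|_\brho^2$. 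Since $\mu/(1-\gamma)=\underline\beta$, and $N_S\gtrsim\sqrt{c_{\mathcal A}}L/\mu$ while $k_0=16L/\mu+1$, the ratio $(N_S+k_0)/N_S$ is bounded by an absolute constant.

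Taking total expectation and inserting Step 2 together with the batch-size choice, the right-hand side becomes $\mathcal O\big(\underline\beta D_0^2/16^{S}\big)$, with a constant depending only on $c_{\mathcal A}$. The choice of $S$ in \eqref{second-output-epoch} is designed exactly to make this at most $\varepsilon$, and the condition $\varepsilon\le\underline\beta D_0^2/16$ ensures $S\ge1$.

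\emph{Step 4: the complexity.} Sum $N_sm_s$ exactly as in \autoref{state-dependent-thm-strongly-monotone}, noting that the effective squared-distance target is $\tilde\varepsilon\asymp\varepsilon/\underline\beta$. The deterministic part contributes
\[
\tfrac{L+\varsigma_*}{\mu}\,S=\mathcal O\!\big(\tfrac{\bar\beta+\bar\phi^2}{(1-\gamma)\underline\beta}\log_{16}\tfrac{\underline\beta D_0^2}{\varepsilon}\big).
\]
The stochastic part contributes
\[
\tfrac{\sigma_*^2}{\mu^2\tilde\varepsilon}=\mathcal O\!\big(\tfrac{\bar\sigma^2}{(1-\gamma)^2\underline\beta\,\varepsilon}\big),
\]
which matches \eqref{eqn:sample-FTD}.

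\emph{Main obstacle.} The delicate point is the bookkeeping in Step 3. The last epoch must be measured through the inner-product (randomized-output) bound, not the last-iterate distance bound. The constants $1301$, $1024$, the factor $(N_S+k_0)/N_S$, and the $16^{s-1}$ scaling in $m_S$ must combine so that the final bound lands exactly at $\varepsilon$ under the stated $S$. If the constants do not close as written, I would enlarge $S$ by one epoch, or scale $m_S$ by an absolute constant; neither change affects the order of the complexity.
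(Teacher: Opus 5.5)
Your proposal is correct and takes essentially the same route as the paper. You invoke \autoref{prop:subroutine} with $r_s=0$, reuse the epoch induction of \autoref{state-dependent-thm-strongly-monotone} so that the right-hand side of the combined bound at epoch $S$ is at most $D_0^2/16^S$, drop the distance term, and convert the inner-product term into $\mathbb E\|\hat v_S-\bar v\|_\brho^2$ via the first inequality of \autoref{lem:linear-operator}. The constants close without enlarging $S$ or $m_S$: $N_S\ge 4\sqrt{2c_{\mathcal A}}L/\mu$ and $k_{S,0}=16L/\mu+1$ give $N_S/(N_S+k_{S,0})\ge \sqrt{c_{\mathcal A}}/(\sqrt{c_{\mathcal A}}+4)$, which is exactly the factor built into the choice of $S$.
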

\ifdefined\myfinal
\begin{proof}
By \autoref{prop:subroutine}, for each $1\leq s\leq S,$
AR-FTD with $r_s=0$
outputs solutions $(\theta_s, \hat{\theta}_s) $ such that
\begin{equation*}
    \begin{aligned}
    \tfrac{16N_s}{\mu(N_s+k_{s,0})}
\mathbb{E}[
\langle F(\hat{\theta}_s)-F(\bar{\theta}), \hat{\theta}_s-\bar{\theta}\rangle
]+    \mathbb E\left[\|\theta_s-\bar{\theta}\|^2\right]
&\leq
\tfrac{c_{\mathcal A}L^2}
{\mu^2N_s^2}
\mathbb E\left[\|\theta_{s-1}-\bar{\theta}\|^2\right]
+
\tfrac{c_{\mathcal A}\mathbb E[ \sigma_{s,*}^2]}
{N_sm_s\mu^2}\leq \tfrac{D_0^2}{16^s},
     \end{aligned}
\end{equation*}
where $c_\mathcal{A}=1301$ and $\hat{\theta}_s\coloneqq {\theta}_{s, R_s},$
where $R_s$ be sampled according to \eqref{eqn:iterative-averaging-per-epoch}.

Since $r_s=0$, we have
$F_s=F$, and
$\sigma_{s,*}^2=\sigma_*^2=2\bar\sigma^2$.
Hence, by \autoref{state-dependent-thm-strongly-monotone}, we have 
\begin{equation*}
\begin{aligned}
  &\tfrac{16N_s}{\mu(N_s+k_{s,0})}
\mathbb{E}[
\langle F(\hat{\theta}_s)-F(\bar{\theta}), \hat{\theta}_s-\bar{\theta}\rangle
]\overset{\text{(i)}}{\leq} \tfrac{D_0^2}{16^s},
    \end{aligned}
    \end{equation*}
    where in (i), we substituted the choices for the batch size $m_s$ and the epoch length $N_s, k_{s,0}.$
    Furthermore, by the choices of $N_s,$ and $k_{s,0}=\tfrac{16L}{\mu}+1, $ we have
    \begin{equation*}
        \begin{aligned}
              \tfrac{16N_s}{(1-\gamma)\underline\beta(N_s+k_{s,0})}
         \geq \tfrac{16\sqrt{c_{\mathcal{A}}}}{(1-\gamma)\underline\beta\left(\sqrt{c_{\mathcal{A}}}+4\right)}.
        \end{aligned}
    \end{equation*}
Furthermore, we have
\begin{equation*}
     \begin{aligned}
  \mathbb{E}\left[\|\hat{v}_S - \bar{v}\|_\brho^2\right]\overset{\text{(ii)}}{\leq} \tfrac{1}{1-\gamma} \mathbb{E}[
\langle F(\hat{\theta}_S)-F(\bar{\theta}), \hat{\theta}_S-\bar{\theta}\rangle
]&\leq \tfrac{\underline\beta\left(\sqrt{c_{\mathcal{A}}}+4\right)}{16\sqrt{c_{\mathcal{A}}}}\cdot\tfrac{D_0^2}{16^S},
    \end{aligned}
\end{equation*}
where in (ii), we used \autoref{lem:linear-operator}.
By choosing the number of epochs as in \eqref{second-output-epoch},
we conclude $\mathbb{E}\left[\|\hat{v}_S- \bar{v}\|_\brho^2\right]\leq \varepsilon.$ The total number of samples used by AR-FTD with $\mu>0$ is therefore
\begin{equation*}
    \begin{aligned}
       \tsum_{s=1}^{S}\tsum_{i=1}^{N_s} m_s
&\le \tsum_{s=1}^{S}\left[\tfrac{8\cdot16^{s-1}\sqrt{c_{\mathcal{A}}} \sigma^2_*}{\mu L \|\theta_0-\bar{\theta}\|_2^2}{\tfrac{1}{\sqrt{2}}}+\tfrac{c_{\mathcal A}\varsigma_*}{L}+1\right]\left[\tfrac{4L\sqrt{2c_{\mathcal{A}}}}{{\mu}}+1\right]
=\mathcal{O}\left(\tfrac{(1+\gamma)\bar{\beta}+\bar \phi^2}{(1-\gamma)\underline \beta}
\log_{16}\!\left(\tfrac{\underline \beta D_0^2}{{\varepsilon}}\right)+\tfrac{\bar{\sigma}^2}{\varepsilon(1-\gamma)^2\underline{\beta}}\right).
    \end{aligned}
\end{equation*}
\end{proof}
\medskip

The discount factor $\gamma$ determines the relative importance of future rewards. Values of $\gamma$ close to $1$ are widely used when long-term rewards are important, including in benchmark deep RL methods \cite{amit2020discount}. This setting is commonly referred to as long-horizon discounted policy evaluation and is central to RL.

However, the sample complexity bound in \eqref{eqn:sample-FTD} deteriorates as $\gamma$ approaches $1$ due to its dependence on inverse powers of $1-\gamma$. Thus, although AR-FTD is suitable when strong monotonicity is sufficiently large, its guarantee becomes less favorable for long-horizon policy evaluation. This motivates methods whose convergence guarantees remain stable as $\gamma$ approaches $1$.

When $\gamma$ is close to $1$, we apply AR-FTD with $\mu=0$; see \autoref{thm:ARVI-state-dependent}. The performance measure is the expected residual $\mathbb E[\res_F(x_S)]$. We have the following convergence guarantee for long-horizon policy evaluation.

\begin{proposition}\label{prop:ARVI-state-dependent}
Suppose the parameters $S$, $ r_s $, $N_s$, and $m_s$ of AR (\autoref{alg:ARVI}) are chosen as in \autoref{thm:ARVI-state-dependent}.
    Then AR-FTD computes an approximate solution $\theta_S$ such that $ \sqrt{\mathbb E[\|F(\theta_S)\|^2]}
\leq \varepsilon$
 after
\begin{align*}
  \mathcal{\widetilde{\mathcal{O}}}\left(\tfrac{\bar \beta D_0}{\varepsilon}+\tfrac{\bar{\phi}^2D_0}{\varepsilon}(\log\tfrac{\bar \beta D_0}{\varepsilon})^3+\tfrac{\bar\sigma^2}{\varepsilon^2}(\log\tfrac{\bar \beta D_0}{\varepsilon})^3\right)
\end{align*}
samples, where $D_0 \geq \textnormal{dist}(\theta^0, \bar{\Theta})$, and $\bar{\Theta}$ is the space of the optimal solutions.
\end{proposition}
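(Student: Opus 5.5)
The plan is to instantiate \autoref{thm:ARVI-state-dependent} for the policy-evaluation VI \eqref{eqn:PE-VI}, with SOE (\autoref{alg:SOE}) as the subroutine $\mathcal A$, and then translate its constants into the policy-evaluation constants \eqref{eqn:final-bound}. Since $X=\bbr^d$, the normal cone is $N_X(\theta)=\{0\}$, so $\res_F(\theta)=\|F(\theta)\|$. Hence a guarantee $\sqrt{\bbe[\res_F(\theta_S)^2]}\le\varepsilon$ is exactly the claimed bound on $\sqrt{\bbe[\|F(\theta_S)\|^2]}$. The argument does not need strong monotonicity; it only uses monotonicity and Lipschitz continuity. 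This matters because the modulus $(1-\gamma)\underline\beta$ never enters the parameters or the bound.

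The first step verifies the hypotheses of \autoref{thm:ARVI-state-dependent}.
\begin{itemize}
\item Lipschitz continuity: by \autoref{lem:linear-operator}, $F$ is $L$-Lipschitz with $L=(1+\gamma)\bar\beta\le 2\bar\beta$.
\item Monotonicity: $F$ is monotone. It is even strongly monotone, but that is irrelevant here, since \autoref{thm:ARVI-state-dependent} allows $\mu\ge0$ and the parameters \eqref{eqn:regularization-exp-unbiased}, \eqref{eqn:batch-epoch} do not involve $\mu$.
\item Unbiasedness: the estimator \eqref{eq:stochastic_estimator_basic_F} satisfies \eqref{eq:fast-decreasing-bias-o}.
\end{itemize}

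For the noise condition \eqref{eqn:state-dependent-assumption}, I would write
\[
\tilde F(\theta,\zeta)-F(\theta)=\bigl[\tilde F(\theta,\zeta)-\tilde F(\bar\theta,\zeta)-(F(\theta)-F(\bar\theta))\bigr]+\bigl[\tilde F(\bar\theta,\zeta)-F(\bar\theta)\bigr].
\]
Applying $\|a+b\|^2\le2\|a\|^2+2\|b\|^2$ together with \autoref{lemma_operator_variance_2} gives the condition with $\varsigma_*=4\bar\phi^2$ and $\sigma_*^2=2\bar\sigma^2$. The minibatch version then follows from independence of the samples.

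Next, the subroutine assumption \autoref{assumption_inner_sto-state-dependent} must hold. This follows from \autoref{prop:subroutine}, applied to $F_s$ with parameters $L+r_s$ and $\mu+r_s$; one can even use $\mu=0$ there, because $r_s>0$ makes each $F_s$ strongly monotone. This yields $c_{\mathcal A}=1301$. The mini-batch lower bound needed by \autoref{cor:subroutine-guarantee}, $m_s\ge 6\varsigma_*/(L+r_s)$, is ensured by the second term of the max in \eqref{eqn:regularization-exp-unbiased-state-dependent}, taking $\tilde c_{\mathcal A}\ge6$.

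The main point needing care is that SOE's parameters in \autoref{cor:subroutine-guarantee} are stated via the strong monotonicity parameter. I would apply them with modulus $r_s$, which is a valid lower bound on the modulus of $F_s$. This keeps the implementation free of $\gamma$-dependent constants.

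Then \autoref{thm:ARVI-state-dependent} gives the complexity \eqref{eqn:state-dependent-final} with the reference solution $x^*=\bar\theta$; the solution is unique, so $D_0\ge\|\theta^0-\bar\theta\|=\operatorname{dist}(\theta^0,\bar\Theta)$. Substituting $L\le2\bar\beta$, $\varsigma_*=4\bar\phi^2$, and $\sigma_*^2=2\bar\sigma^2$, and absorbing constants, gives the stated bound $\widetilde{\mathcal O}\bigl(\bar\beta D_0/\varepsilon+\bar\phi^2D_0(\log\tfrac{\bar\beta D_0}{\varepsilon})^3/\varepsilon+\bar\sigma^2(\log\tfrac{\bar\beta D_0}{\varepsilon})^3/\varepsilon^2\bigr)$. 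The theorem also requires $\varepsilon\le LD_0$. I would assume $\varepsilon\le\bar\beta D_0$; otherwise $L$ can be replaced by the upper bound $2\bar\beta$, since only an upper bound on the Lipschitz constant is used.
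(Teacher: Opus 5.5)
Your proposal is correct and follows the paper's proof. That proof simply invokes \autoref{thm:ARVI-state-dependent}, notes that $X=\bbr^d$ gives $\res_F(\theta)=\|F(\theta)\|$, and substitutes $L=(1+\gamma)\bar\beta$, $\varsigma_*=4\bar\phi^2$, $\sigma_*^2=2\bar\sigma^2$ from \eqref{eqn:final-bound}. Your extra checks (the noise decomposition, running SOE with modulus $r_s$ so that no $\gamma$-dependent constant enters, and the $\varepsilon\le LD_0$ caveat) make explicit what the paper leaves implicit.

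To make the subroutine step airtight, apply the same decomposition with $x_s^*$ in place of $\bar\theta$ and use $\langle F(\theta)-F(x_s^*),\theta-x_s^*\rangle\le\langle F_s(\theta)-F_s(x_s^*),\theta-x_s^*\rangle$. This gives the state-dependent bound for $F_s$ relative to $x_s^*$ that \autoref{prop:subroutine} needs, since \autoref{lemma:state-dependent-noise-regularized} by itself only controls the variance at $x_s^*$.
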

\begin{proof}
By \autoref{thm:ARVI-state-dependent},
 we obtain $\theta_S$ such that $ \sqrt{\mathbb E[\res_F(\theta_S)^2]}
\leq \varepsilon$
 after
\begin{align*}
  \widetilde{\mathcal{O}}\left(\tfrac{LD_0}{\varepsilon}+
\tfrac{{\varsigma_*} D_0}{\varepsilon}
(\log\tfrac{LD_0}{\varepsilon})^3
+
\tfrac{\sigma_*^2}{\varepsilon^2}
(\log\tfrac{LD_0}{\varepsilon})^3
\right)
\end{align*}
samples.
In particular, for policy evaluation problem \eqref{eq:def_Bellman_fixed_point}, it holds that in \eqref{def_N_X} $X = \bbr^n$, then $N_X( \theta) = \{0\}$ and $\res_F( \theta) = \|F(\theta)\|$, which is exactly
 the residual of solving the linear equation $F( x) = 0$.
Substituting the bounds of $L,\sigma_*,\varsigma_*$ from \eqref{eqn:final-bound} concludes the proof.
\end{proof}

\smallskip

The above result shows that AR-FTD is well suited for long-horizon policy evaluation when the goal is to obtain a solution with small expected residual. 
The AR-FTD with small $\gamma$ guarantee applies in the strongly monotone regime and controls the mean-squared distance to the projected solution, while the AR-FTD with large $\gamma$ close to $1$ bound applies in the long-horizon, nearly monotone regime and controls the projected Bellman residual. Up to logarithmic factors, both guarantees match the lower bounds respectively: distance to the solution in strongly monotone problems and residual accuracy for monotone problems.

The two bounds exhibit different dependencies on the discount factor $\gamma$, although they are not directly comparable because they control different accuracy criteria. To achieve an estimate $\widehat v_S$ such that $\bbe[\|\widehat v_S-v^*\|_\brho^2]\leq\varepsilon$, the stochastic term in the FTD sample complexity scales as $\mathcal O\bigl(\tfrac{\bar\sigma^2}{\underline\beta(1-\gamma)^2\varepsilon}\bigr)$. By contrast, to achieve $\mathbb E[\|F(\theta_S)\|]\leq\varepsilon$, the stochastic term in the residual complexity of AR-FTD scales as $\widetilde{\mathcal O}\bigl(\tfrac{\bar\sigma^2}{\varepsilon^2}(\log\tfrac{\bar\beta D_0}{\varepsilon})^3\bigr)$. Thus, AR-FTD avoids the explicit $(1-\gamma)^{-2}$ factor while exhibiting a higher-order dependence on its residual tolerance. Although AR-FTD still depends on $\gamma$ through the optimal variance $\bar\sigma$, determining which bound is preferable requires translating the residual guarantee into a value-error guarantee.

Moreover, the residual guarantee in \autoref{prop:ARVI-state-dependent} can be converted into a value-function guarantee comparable to that in \autoref{the:mult_epoch_FTD_burn}. Indeed, by the third relation in \autoref{lem:linear-operator}, \eqref{eq:strong_monotone_Bellman_operator}, we have
\begin{equation*}
\bbe\left[\| v_S-v^*\|_\brho^2\right]
\leq
\tfrac{1}{(1-\gamma)^2\underline\beta}
\bbe\left[\res_F(\theta_S)^2\right].
\end{equation*}
Therefore, to achieve $\bbe[\|v_S-v^*\|_\brho^2]\leq\varepsilon$, it suffices to set the residual tolerance in \autoref{prop:ARVI-state-dependent} to $(1-\gamma)\sqrt{\underline\beta \varepsilon}$. Substituting this choice into the residual sample complexity shows that AR-FTD requires
\begin{equation*}
\widetilde{\mathcal O}\left(
\tfrac{(1+\gamma)\bar\beta D_0}{(1-\gamma)\sqrt{\underline\beta\varepsilon}}
+\tfrac{\bar\phi^2 D_0}{(1-\gamma)\sqrt{\underline\beta\varepsilon}}\left(\log\tfrac{(1+\gamma)\bar\beta D_0}{(1-\gamma)\sqrt{\underline\beta\varepsilon}}\right)^3
+\tfrac{\bar\sigma^2}{(1-\gamma)^2\underline\beta\varepsilon}\left(\log\tfrac{(1+\gamma)\bar\beta D_0}{(1-\gamma)\sqrt{\underline\beta\varepsilon}}\right)^3
\right).
\end{equation*}
samples. The dominant stochastic term matches that in the sample-complexity bound of \autoref{the:mult_epoch_FTD_burn} up to logarithmic factors, whereas the deterministic terms do not. Although the residual-to-value conversion and the resulting complexity bound depend on the strong-monotonicity parameter $(1-\gamma)\underline\beta$, the AR-FTD updates do not require its value.

\section{Concluding Remarks} \label{sec_conclusion}

In this paper, we present a unified AR framework for finding strong solutions to monotone VIs in both deterministic and stochastic settings. We show that, with appropriately chosen regularization parameters and moving centers, AR achieves optimal convergence rates. For the stochastic setting, we introduce a state-dependent noise model tailored to monotone VIs and develop algorithms with optimal convergence rates in both the monotone and strongly monotone regimes. Thus, AR provides a unified treatment of these two classes of VI problems. We further apply AR to policy evaluation and improve several existing convergence-rate results in both the long-horizon and short-horizon regimes.

\bibliographystyle{spmpsci}
\bibliography{references}

\begin{appendices}

\section{Stochastic operator extrapolation for strongly monotone VIs under state-dependent noise}\label{sec:hp-soe-state-dependent-new}
In this subsection, we show that for strongly monotone problems, SOE satisfies the performance guarantees in \autoref{as:hp-assumption_inner_sto-state-dependent} under the unbiased estimator assumption \eqref{eq:fast-decreasing-bias-o} and the high-probability state-dependent noise condition in \eqref{eq:hp-bounded-var-at-opt-o}. Throughout this section, we suppress the epoch index $s$ since our results hold for any fixed epoch. 
\smallskip

We begin with a concentration inequality regarding sums of sub-Gaussian vectors.

\begin{lemma}\label{lem:juditsky-bound}\cite[Theorem 4.1]{juditsky_large_2023}
Let $X_{1,k}, \cdots, X_{m,k} \in \mathbb R^d$ be a sequence of random vectors conditionally independent given $\mathcal F_k$ such that $\bbe[X_{i,k} \mid \mathcal F_k] = 0$ a.s. and $\mathbb E[\exp(\|X_{i,k}\|^2/ \sigma_k^2) \mid \mathcal F_k] \leq \exp(1)$ a.s. for all $i \in [m]$. It holds for any $t \geq 0$ that
$$
\mathbb P\left\{\|\tsum_{i = 1}^m X_{i,k}\| \geq (1+t)\sqrt{m}\sigma_k \mid \mathcal F_k\right\} \leq \exp(-t^2/3) \quad \text{a.s.}
$$
\end{lemma}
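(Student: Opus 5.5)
The bound is \cite[Theorem 4.1]{juditsky_large_2023} specialized to the Euclidean norm, so a full re-derivation is not needed. If I were to reprove it, I would work conditionally on $\mathcal F_k$ throughout and suppress $k$. The plan splits $\|S_m\|$, with $S_j\coloneqq\tsum_{i\le j}X_{i}$, into its mean and its fluctuation. I would show that the mean is at most $\sqrt m\,\sigma$ and that the fluctuation above the mean exceeds $t\sqrt m\,\sigma$ with probability at most $\exp(-t^2/3)$. Together these give the event $\{\|S_m\|\ge(1+t)\sqrt m\,\sigma\}$ with the stated tail.

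For the mean, Jensen applied to the convex function $\exp$ turns $\mathbb E[\exp(\|X_i\|^2/\sigma^2)]\le e$ into $\mathbb E\|X_i\|^2\le\sigma^2$. Conditional independence and $\mathbb E[X_i\mid\mathcal F_k]=0$ make the cross terms $\mathbb E\langle X_i,X_j\rangle$ vanish for $i\ne j$. Hence
\[
\mathbb E\|S_m\|\le\sqrt{\mathbb E\|S_m\|^2}=\sqrt{\tsum_i\mathbb E\|X_i\|^2}\le\sqrt m\,\sigma .
\]

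For the fluctuation, I would use the Doob martingale $f\coloneqq\|S_m\|$, $M_j\coloneqq\mathbb E[f\mid\mathcal F_k,X_1,\dots,X_j]$, with increments $D_j=M_j-M_{j-1}$. Since $f$ is $1$-Lipschitz in each $X_j$ separately, replacing $X_j$ by an independent copy $X_j'$ gives $|D_j|\le\|X_j\|+\mathbb E\|X_j'\|\le\|X_j\|+\sigma$. The increment $D_j$ therefore has conditional mean zero and a sub-Gaussian envelope. A standard lemma (mean zero plus $\mathbb E\exp(Y^2/\sigma^2)\le e$ gives $\mathbb E e^{\lambda Y}\le e^{c\lambda^2\sigma^2}$) then bounds $\mathbb E[e^{\lambda D_j}\mid\cdot]\le e^{c\lambda^2\sigma^2}$. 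Chaining these bounds Azuma-style and optimizing over $\lambda$ yields $\mathbb P(f-\mathbb Ef\ge t\sqrt m\,\sigma)\le\exp(-t^2/(4c))$.

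The main obstacle is the constant: the crude envelope $\|X_j\|+\sigma$ loses a factor and does not give exactly $t^2/3$. To obtain the sharp constant I would instead follow the Juditsky--Nemirovski argument. It runs the same exponential-moment recursion directly on the smoothed potential $\sqrt{\|S_j\|^2+\rho^2}$, exploiting the $2$-smoothness of the Euclidean norm: the first-order term vanishes by zero conditional mean, and the second-order term is controlled by $\|X_j\|^2$. Combined with the sharp scalar bound $\mathbb E e^{\lambda\langle e,X\rangle}\le e^{3\lambda^2\sigma^2/4}$ for unit vectors $e$, this gives exactly $\exp(-t^2/3)$. Since the lemma is quoted verbatim from that reference, I would ultimately defer this constant-level computation to \cite{juditsky_large_2023}.
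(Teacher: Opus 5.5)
Your proposal matches the paper, which likewise gives no proof of this lemma and simply imports it from \cite[Theorem 4.1]{juditsky_large_2023}. Your supplementary sketch is sound as far as it goes: Jensen gives $\mathbb E\|S_m\|\le\sqrt m\,\sigma$, and a Doob-martingale/Azuma argument gives the deviation bound. As you note, though, with the envelope $\|X_j\|+\sigma$ it only yields a weaker tail (roughly $\exp(-t^2/12)$), so the exact $t^2/3$ constant genuinely rests on the citation.
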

We begin with the following identity for the averaged operator error.

\begin{lemma}\label{prop:moment-tail-bound}
Under Assumptions \eqref{eq:fast-decreasing-bias-o} and \eqref{eq:hp-bounded-var-at-opt-o}, it holds that
\begin{equation}\label{eq:MGF-bound}
\mathbb E\left[\exp\left\{\tfrac{m\|\tilde F_{k}(x) - F(x)\|^2}{9[\sigma_*^2 + \varsigma_*\langle F(x) - F(x^*), x - x^*\rangle]}\right\} \,\bigg|\, \mathcal F_{k}\right] \leq \exp(1)
\end{equation}
\end{lemma}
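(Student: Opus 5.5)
The plan is to fix $x$ ($\mathcal F_k$-measurable) and write $\sigma_x^2\coloneqq\sigma_*^2+\varsigma_*\langle F(x)-F(x^*),x-x^*\rangle$. Note that $\sigma_x^2$ is $\mathcal F_k$-measurable, so conditionally on $\mathcal F_k$ it behaves as a constant. Set $X_i\coloneqq\tilde F(x,\zeta_{k,i})-F(x)$ for $i\in[m]$. Given $\mathcal F_k$, these are conditionally independent and mean zero by \eqref{eq:fast-decreasing-bias-o}. By \eqref{eq:hp-bounded-var-at-opt-o} they satisfy $\mathbb E[\exp(\|X_i\|^2/\sigma_x^2)\mid\mathcal F_k]\le e$. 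If $\sigma_x=0$, each $X_i=0$ a.s. and the claim is trivial, so assume $\sigma_x>0$. Since $\tilde F_k(x)-F(x)=\tfrac1m\sum_i X_i$, the quantity to control is $Z\coloneqq m\|\tilde F_k(x)-F(x)\|^2=\|\sum_iX_i\|^2/m$.

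First, apply \autoref{lem:juditsky-bound} with $\sigma_k=\sigma_x$. This gives, for every $t\ge0$,
\[
\mathbb P\bigl(\sqrt{Z}\ge(1+t)\sigma_x\mid\mathcal F_k\bigr)\le e^{-t^2/3}.
\]
Second, convert this tail bound into an exponential moment bound. With $c=9$, write
\[
\mathbb E[e^{Z/(9\sigma_x^2)}\mid\mathcal F_k]=1+\int_0^\infty e^{u}\,\mathbb P\bigl(Z\ge 9\sigma_x^2u\mid\mathcal F_k\bigr)\,du.
\]
The event $Z\ge9\sigma_x^2u$ is the event $\sqrt Z\ge 3\sqrt u\,\sigma_x$. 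For $u\ge1/9$, take $t=3\sqrt u-1\ge0$, which gives a probability bound of $\exp(-(3\sqrt u-1)^2/3)$. For $u<1/9$, bound the probability by $1$.

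Third, the main step: show that
\[
1+\int_0^{1/9}e^u\,du+\int_{1/9}^\infty e^{u-(3\sqrt u-1)^2/3}\,du\le e.
\]
The exponent in the second integral is $u-3u+2\sqrt u-\tfrac13=-2u+2\sqrt u-\tfrac13$, which is at most $\tfrac16-\ldots$ and decays like $e^{-2u}$. Numerically, the first integral is about $0.117$. For the second, substitute $v=\sqrt u$; it becomes $\int_{1/3}^\infty 2v\,e^{-2v^2+2v-1/3}\,dv$. Completing the square, $-2(v-\tfrac12)^2+\tfrac16$, so this is a Gaussian-type integral of order $e^{1/6}\cdot(\text{const})\approx 1.1$. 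That is slightly too large if estimated crudely, so this is the step where care is needed. If the constant $9$ does not close directly, I would refine the split point: use the trivial bound $1$ up to $u_0=4/9$, where $t=1$, and only then the Juditsky tail. Alternatively, I would use the cruder tail $\mathbb P(\sqrt Z\ge (1+t)\sigma_x)\le e^{-t^2/3}$ together with $(1+t)^2\le 2+2t^2$, which gives $\mathbb P(Z\ge s\sigma_x^2)\le e^{1/3}e^{-s/6}$, and integrate $e^{u}e^{1/3}e^{-3u/2}$.

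Concretely, the fallback is as follows. Since $(a+b)^2\le 2a^2+2b^2$, we get $\mathbb P(Z\ge \sigma_x^2 s\mid\mathcal F_k)\le e^{1/3}e^{-s/6}$ for all $s\ge 2$. Then
\[
\mathbb E[e^{Z/(9\sigma_x^2)}]\le e^{2/9}+\int_{2/9}^\infty e^{u}e^{1/3-3u/2}\,du=e^{2/9}+2e^{1/3-1/9},
\]
which is too big. So the honest route is the careful Gaussian computation above, checked numerically against $e-1\approx1.718$, possibly splitting at $u_0=4/9$ and bounding the tail integral by completing the square. I expect this constant-chasing to be the only real obstacle; the structural part, namely conditional independence, measurability of $\sigma_x$, and invoking \autoref{lem:juditsky-bound}, is immediate.
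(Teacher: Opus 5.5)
Your argument is the paper's proof. With $W\coloneqq Z/(9\sigma_x^2)$, \autoref{lem:juditsky-bound} gives $\mathbb P(W\ge w\mid\mathcal F_k)\le e^{-(3\sqrt w-1)^2/3}$ for $w\ge 1/9$. The paper then bounds $\mathbb E[e^W\mid\mathcal F_k]$ by $e^{1/9}+\int_{1/9}^\infty e^{-2w+2\sqrt w-1/3}\,dw$ and asserts that this is at most $e$, exactly as in your third step.

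What your write-up is missing is that final numerical check, which you leave open with an inaccurate estimate. The tail integral is about $1.49$, not $1.1$, against a budget of $e-e^{1/9}\approx 1.60$. It closes at the original split with no refinement. Using your substitution and $y=\sqrt u-\tfrac12$,
\[
\int_{1/9}^\infty e^{-2u+2\sqrt u-1/3}\,du
=e^{1/6}\int_{-1/6}^\infty (2y+1)e^{-2y^2}\,dy
=e^{1/6}\Bigl(\tfrac12 e^{-1/18}+\int_{-1/6}^\infty e^{-2y^2}\,dy\Bigr)
\le e^{1/6}\Bigl(\tfrac12+\tfrac16+\tfrac12\sqrt{\pi/2}\Bigr)\le 1.53 .
\]
Here the last integral was split as $\int_{-1/6}^0 1\,dy+\int_0^\infty e^{-2y^2}dy$. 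Hence $\mathbb E[e^W\mid\mathcal F_k]\le e^{1/9}+1.53\le 2.65<e$.

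Your proposed fixes are not needed, and neither is an improvement. Replacing the Juditsky tail by the trivial bound $1$ on $[1/9,4/9]$ can only enlarge the estimate; it happens to still close, at roughly $2.67$. The $(1+t)^2\le 2+2t^2$ route loses too much, as you found. With the computation above inserted, your proof is complete and coincides with the paper's.
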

\begin{proof}
Notice that the random vectors $\tilde F(x, \zeta_{k}^i) - F(x)$ satisfy the conditions of \autoref{lem:juditsky-bound} with $ \sigma_k^2 := \sigma_*^2 + \varsigma_* \langle F(x) - F(x^*), x - x^*\rangle$. By defining $W_{k} \coloneqq \tfrac{m_s\|\tilde F_k(x) - F(x)\|^2}{9\sigma_k^2 }$ we have
$$\mathbb P(W_{k} \geq w \mid \mathcal F_{k}) \leq \exp\left\{-\tfrac{\left(3\sqrt{w} - 1\right)^2}{3}\right\}, \quad \forall w \geq \tfrac{1}{9}.$$
Plugging this into the tail-sum identity for expectations,
\begin{align*}
\mathbb E[\exp(W_{k}) \mid \mathcal F_{k}] &= 1 + \tint_0^{1/9} e^w\mathbb P(W_{k}\geq w \mid \mathcal F_{k})dw + \tint_{1/9}^\infty e^w\mathbb P(W_{k}\geq w \mid \mathcal F_{k})dw \\
&\le e^{1/9} + \tint_{1/9}^\infty \exp\left(-2w + 2\sqrt{w} - \tfrac{1}{3}\right)dw\leq\exp(1).
\end{align*}
\end{proof}
\smallskip

At each  $k\geq 0,$ define the state-dependent variance proxy and the corresponding weighted variance proxy by
\begin{equation}\label{eqn:innerproduct-var}
\sigma_k^2\coloneqq\varsigma_*\langle F(x_k)-F(x^*),x_k-x^*\rangle+\sigma_*^2,\quad
\mathcal{V}_k\coloneqq \tsum_{i=0}^k [\theta_i(\eta_i\lambda_i)^2+\theta_{i+1}(\eta_{i+1}\lambda_{i+1})^2]\sigma_i^2.
\end{equation}
We further define
\begin{equation}\label{eqn:beta-def}
\beta_k\coloneqq[\theta_k(\eta_k\lambda_k)^2+\theta_{k+1}(\eta_{k+1}\lambda_{k+1})^2]\tfrac{\sigma_k^2}{\mathcal{V}_k}
\quad\text{and}\qquad W_k\coloneqq
\begin{cases}
\tfrac{m\|\Delta_k\|^2}{9\sigma_k^2},&\sigma_k^2>0,\\[1ex]
0,&\sigma_k^2=0,
\end{cases}
\end{equation}
where we recall from \eqref{eqn:mini-batch-estimator} and \eqref{eqn:delta-error} that $\tilde F_k(x)
\coloneqq
\tfrac{1}{m}\tsum_{i=1}^{m}\tilde F(x;\zeta_{k,i})$ and $\Delta_k\coloneqq\tilde F_k(x_k)-F(x_k)$. Under the state-dependent noise assumption, $\sigma_k^2=0$
implies $\Delta_k=0$ a.s.
By \autoref{prop:moment-tail-bound}, for any $\mathcal F_k$-measurable point $x$, it holds that
\begin{equation}\label{eqn:subgaussian-to-use}
\mathbb E\left[
\exp\left\{
\tfrac{m\|\tilde F_k(x)-F(x)\|^2}
{9[\varsigma_*\langle F(x)-F(x^*),x-x^*\rangle+\sigma_*^2]}
\right\}
\mid\mathcal F_k
\right]
\leq \exp(1).
\end{equation}

Unlike the uniform-noise case, the conditional
variance parameters are random. Therefore, we use self-normalized
concentration bounds based on nonnegative supermartingales. We start with the following guarantee for the cumulative squared gradient error.
\begin{proposition}\label{prop-gradient-norm-square}
Suppose unbiased estimator assumption \eqref{eq:fast-decreasing-bias-o} and state-dependent noise assumption \eqref{eq:hp-bounded-var-at-opt-o} hold. Suppose $\mathcal V_{0}>0,$
then, for any $\Lambda\geq0$, with probability at least $1-\exp\{-\Lambda\},$
we have
\begin{equation}
    \begin{aligned}
&\tsum_{k=0}^{N-1}[\theta_{k} (\eta_{k}\lambda_{k})^2+\theta_{k+1} (\eta_{k+1}\lambda_{k+1})^2]\|\Delta_k\|^2\\
&\leq\left(1+\Lambda+\log\tfrac{\mathcal V_{N-1}}{\mathcal V_{0}}\right)
\tsum_{k=0}^{N-1}\tfrac{9\sigma_k^2}{m}[\theta_{k} (\eta_{k}\lambda_{k})^2+\theta_{k+1} (\eta_{k+1}\lambda_{k+1})^2].
\end{aligned}
\end{equation}
\end{proposition}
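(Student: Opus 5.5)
Write $w_k\coloneqq \theta_k(\eta_k\lambda_k)^2+\theta_{k+1}(\eta_{k+1}\lambda_{k+1})^2$, so that $\mathcal V_k=\sum_{i\le k}w_i\sigma_i^2$ and $\beta_k=w_k\sigma_k^2/\mathcal V_k\in[0,1]$. The left-hand side of the claim then equals $\tfrac{9}{m}\sum_{k<N}w_k\sigma_k^2W_k$. We therefore need to show that, with probability at least $1-e^{-\Lambda}$,
\[
\sum_{k<N}w_k\sigma_k^2W_k\le\Bigl(1+\Lambda+\log\tfrac{\mathcal V_{N-1}}{\mathcal V_0}\Bigr)\mathcal V_{N-1}.
\]
The approach is a self-normalized exponential supermartingale built from the conditional MGF bound \eqref{eqn:subgaussian-to-use}. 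The key structural fact is that $x_k$ is $\mathcal F_k$-measurable. Hence $\sigma_k^2$, $w_k$, $\mathcal V_k$ and $\beta_k$ are all $\mathcal F_k$-measurable, i.e.\ predictable, while $W_k$ satisfies $\mathbb E[e^{W_k}\mid\mathcal F_k]\le e$ by \eqref{eqn:subgaussian-to-use}, applied at $x=x_k$ (trivially so when $\sigma_k^2=0$).

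\textbf{Step 1.} Since $\beta_k\in[0,1]$, Jensen's inequality gives $\mathbb E[e^{\beta_kW_k}\mid\mathcal F_k]\le(\mathbb E[e^{W_k}\mid\mathcal F_k])^{\beta_k}\le e^{\beta_k}$. Consequently $M_n\coloneqq\exp\{\sum_{k<n}\beta_k(W_k-1)\}$ is a nonnegative supermartingale with $\mathbb E M_N\le1$. Markov's inequality then yields, with probability at least $1-e^{-\Lambda}$,
\[
\sum_{k<N}\beta_kW_k\le\Lambda+\sum_{k<N}\beta_k.
\]

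\textbf{Step 2.} This step bounds $\sum_k\beta_k$ deterministically. We have $\beta_0=1$, and for $k\ge1$, $\beta_k=(\mathcal V_k-\mathcal V_{k-1})/\mathcal V_k\le\log(\mathcal V_k/\mathcal V_{k-1})$ because $1-1/u\le\log u$. Telescoping gives $\sum_{k<N}\beta_k\le1+\log(\mathcal V_{N-1}/\mathcal V_0)$. This is where the assumption $\mathcal V_0>0$ enters.

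\textbf{Step 3.} This step converts the weighted sum back to the desired one. Since $\mathcal V_k\le\mathcal V_{N-1}$, we have $w_k\sigma_k^2W_k=\beta_k\mathcal V_kW_k\le\mathcal V_{N-1}\beta_kW_k$. Summing over $k<N$ and combining with Steps 1 and 2 proves the claim after multiplying by $9/m$.

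The main obstacle is Step 1. We must check the predictability and measurability carefully, given how the filtration $\mathcal F_k$ is defined relative to the mini-batch producing $\Delta_k$, because the normalization $\beta_k$ is random and must be fixed before $\Delta_k$ is drawn. Only then is it legitimate to apply Jensen's inequality conditionally with exponent $\beta_k\le1$. The remaining steps are bookkeeping.
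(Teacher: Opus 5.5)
Your proposal is correct and follows the paper's proof essentially step for step. You use the same predictable normalization $\beta_k=w_k\sigma_k^2/\mathcal V_k\in[0,1]$, conditional Jensen to make $\exp\{\sum_{k<n}\beta_k(W_k-1)\}$ a nonnegative supermartingale, Markov's inequality, the telescoping bound $\sum_k\beta_k\le 1+\log(\mathcal V_{N-1}/\mathcal V_0)$ via $1-1/u\le\log u$, and finally $\mathcal V_k\le\mathcal V_{N-1}$ to recover the unnormalized sum; you also handle the case $\sigma_k^2=0$ explicitly, which the paper leaves implicit.
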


\begin{proof}
Notice that the algorithmic parameters $\theta_k,\eta_k,\lambda_k$ are deterministic. Furthermore, for all $0\leq k\leq N$, $W_k$ is $\mathcal F_{k+1}$-measurable, while $\sigma_k^2$, $\mathcal V_k$, and $\beta_k$ are $\mathcal F_k$-measurable, and $0\leq\beta_k\leq1$. Therefore, we obtain
\begin{equation}\label{eqn:super-1}
\mathbb E[\exp\{\beta_k(W_k-1)\}\mid\mathcal F_k]
\overset{\text{(i)}}{\leq} \exp\{-\beta_k\}\bigl(\mathbb E[\exp\{W_k\}\mid\mathcal F_k]\bigr)^{\beta_k}
\overset{\eqref{eqn:subgaussian-to-use}}{\leq} 1,
\end{equation}
where in (i),
we used  the fact $x\mapsto x^{\beta_k}$ on $\mathbb R_+$ is a concave function, and the
conditional Jensen's inequality.
Let $M_0\coloneqq1$ and $M_n\coloneqq\exp\{\tsum_{k=0}^{n-1}\beta_k(W_k-1)\}$. Then, $\{M_n\}$ is a nonnegative supermartingale shown as follows.
\begin{equation*}
\mathbb E[M_{n+1}\mid\mathcal F_n]
=M_n\mathbb E[\exp\{\beta_n(W_n-1)\}\mid\mathcal F_n]\overset{\eqref{eqn:super-1}}{\leq} M_n.
\end{equation*}
Moreover, by the definition of $M_{N}$, we have
\begin{equation}\label{eqn:equivalent-event}
A_{N}\coloneqq\left\{M_{N}\leq\exp\{\Lambda\}\right\}
=
\left\{\tsum_{k=0}^{N-1}\beta_kW_k
\leq\Lambda+\tsum_{k=0}^{N-1}\beta_k\right\}.
\end{equation}
Furthermore, by Markov's inequality, we obtain
\begin{equation*}
\mathbb{P}(A_{N}^c)\leq\mathbb P\left(M_{N}\geq \exp\{\Lambda\}\right)
\leq \exp\{-\Lambda\}\mathbb E[M_{N}]
\overset{\text{(ii)}}{\leq}\exp\{-\Lambda\}.
\end{equation*}
Here (ii) follows from
the property of the  supermartingale
$\mathbb E[M_{N}]\leq\mathbb E[M_0]=1$.
Therefore,  on $A_N$, we have
\begin{equation*}
\begin{aligned}
\tfrac{m}{9\mathcal V_{N-1}}
\tsum_{k=0}^{N-1}
[\theta_k(\eta_k\lambda_k)^2+\theta_{k+1}(\eta_{k+1}\lambda_{k+1})^2]\|\Delta_k\|^2
\overset{\text{(iii)}}{\leq}
\tsum_{k=0}^{N-1}\beta_kW_k
\overset{\text{(iv)}}{\leq}
\Lambda+\tsum_{k=0}^{N-1}\beta_k
\overset{\text{(v)}}{\leq}
1+\Lambda+\log\tfrac{\mathcal V_{N-1}}{\mathcal V_0}.
\end{aligned}
\end{equation*}
where in (iii), we used $\mathcal{V}_{N-1}\geq\mathcal{V}_k;$ and in (iv), we used \eqref{eqn:equivalent-event}; in (v), we  use $1-x\leq -\log x$ and hence
\begin{equation*}
    \begin{aligned}
        \tsum_{k=0}^{N-1}\beta_k
\overset{\eqref{eqn:beta-def}}{=}1+\tsum_{k=1}^{N-1}\left(1-\tfrac{\mathcal{V}_{k-1}}{\mathcal{V}_{k}}\right)
\leq 1+\log\tfrac{\mathcal{V}_{N-1}}{\mathcal{V}_{0}}.
    \end{aligned}
\end{equation*}
This concludes the proof.
\end{proof}

\medskip

 We next bound the cumulative inner-product term. Define
\begin{align}
d_k
&\coloneqq
\theta_{k-1}\eta_{k-1}
\langle\Delta_k,x_k-x^*\rangle,
\quad\text{and}\quad
s_k^2
\coloneqq
\tfrac{9(\theta_{k-1}\eta_{k-1})^2
\|x_k-x^*\|^2\sigma_k^2}{m},
\end{align}
where $\Delta_k=\tilde F_k(x_k)-F(x_k)$ and $\sigma_k^2$ is
defined in \eqref{eqn:innerproduct-var}. Furthermore, let
$\tilde v_0>0$ be an arbitrary deterministic constant and for all $n\geq 1,$ define
\begin{equation}\label{eqn:U-def}
\tilde{\mathcal V}_n
\coloneqq
\tilde v_0+\tsum_{k=0}^{n}s_{k+1}^2.
\end{equation}
The following result bounds the cumulative inner-product term.
\begin{proposition}\label{prop:inner-product-concentration}
Suppose unbiased estimator assumption \eqref{eq:fast-decreasing-bias-o} and state-dependent noise assumption \eqref{eq:hp-bounded-var-at-opt-o} hold.
For any $\Lambda\geq0$, with probability at least $1-\exp\{-\Lambda\},$
we have
\begin{equation*}
\tsum_{k=0}^{N-2}
\theta_k\eta_k
\langle\Delta_{k+1}, x^*-x_{k+1}\rangle
\leq
\sqrt{
2(e+1)(1+\Lambda)\tilde{\mathcal V}_{N-2}
\left(
1+\tfrac{1}{2}
\log\tfrac{\tilde{\mathcal V}_{N-2}}{\tilde v_0}
\right)}
\end{equation*}
\end{proposition}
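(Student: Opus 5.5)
We would prove this by a self-normalized (method-of-mixtures) argument applied to the martingale
\[
\textstyle S_n \coloneqq \sum_{k=0}^{n}\theta_k\eta_k\langle\Delta_{k+1},x^*-x_{k+1}\rangle = -\sum_{k=0}^{n} d_{k+1}.
\]

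\emph{Step 1: a conditional MGF bound for each increment.} Since $x_{k+1}$ and $\sigma_{k+1}^2$ are $\mathcal F_{k+1}$-measurable, \eqref{eq:fast-decreasing-bias-o} gives $\mathbb E[d_{k+1}\mid\mathcal F_{k+1}]=0$. Write $u\coloneqq\theta_k\eta_k(x^*-x_{k+1})$ and $\tau^2\coloneqq 9\sigma_{k+1}^2/m$. Then \eqref{eqn:subgaussian-to-use} reads $\mathbb E[\exp\{\|\Delta_{k+1}\|^2/\tau^2\}\mid\mathcal F_{k+1}]\le e$. The scalar $Y\coloneqq\langle\Delta_{k+1},u\rangle$ has mean zero and satisfies $|Y|\le\|u\|\,\|\Delta_{k+1}\|$. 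We would then prove the standard bound
\[
\mathbb E[e^{\lambda Y}\mid\mathcal F_{k+1}]\le\exp\{c\lambda^2\tau^2\|u\|^2\}=\exp\{c\lambda^2 s_{k+1}^2\}\quad\text{for all }\lambda\in\mathbb R,
\]
with an explicit constant $c$ (of order $(e+1)/4$). We argue in two regimes:
\begin{itemize}
\item for small $|\lambda|\tau\|u\|$, use $e^{x}\le 1+x+x^2e^{|x|}$ together with the mean-zero property;
\item for large $|\lambda|\tau\|u\|$, use Young's inequality $\lambda Y\le \lambda^2\tau^2\|u\|^2/4+\|\Delta_{k+1}\|^2/\tau^2$ and the $e$-moment bound.
\end{itemize}
Tracking the constants in these two cases is what should produce the factor $2(e+1)$. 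If $\sigma_{k+1}^2=0$, then $\Delta_{k+1}=0$ almost surely and the increment vanishes.

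\emph{Step 2: exponential supermartingales and a Gaussian mixture.} For fixed $\lambda$, Step 1 shows that
\[
M_n(\lambda)\coloneqq\exp\Bigl\{\lambda S_n-c\lambda^2\textstyle\sum_{k=0}^n s_{k+1}^2\Bigr\}
\]
is a nonnegative supermartingale with $\mathbb E M_n(\lambda)\le1$. The proxies $s_{k+1}^2$ are predictable, so the self-normalization is legitimate. We then integrate $\lambda$ against a centered Gaussian prior with variance $1/(2c\tilde v_0)$ and apply Fubini. The mixture $\bar M_n$ still satisfies $\mathbb E\bar M_n\le1$, and completing the square gives the closed form
\[
\bar M_n=\sqrt{\tilde v_0/\tilde{\mathcal V}_n}\;\exp\Bigl\{S_n^2/\bigl(4c\tilde{\mathcal V}_n\bigr)\Bigr\},
\]
with $\tilde{\mathcal V}_n$ as in \eqref{eqn:U-def}.

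\emph{Step 3: Markov's inequality and unpacking.} Markov's inequality gives $\mathbb P(\bar M_{N-2}\ge e^{\Lambda})\le e^{-\Lambda}$. On the complementary event,
\[
S_{N-2}^2\le 4c\,\tilde{\mathcal V}_{N-2}\Bigl(\Lambda+\tfrac12\log\tfrac{\tilde{\mathcal V}_{N-2}}{\tilde v_0}\Bigr).
\]
Bounding $\Lambda\le(1+\Lambda)$ and $\tfrac12\log(\cdot)\le(1+\Lambda)\tfrac12\log(\cdot)$, and taking square roots, yields the stated inequality with $4c\le 2(e+1)$.

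The main obstacle is Step 1: obtaining a clean sub-Gaussian MGF bound for the projection $\langle\Delta_{k+1},u\rangle$ that holds for all $\lambda$, using only the $\psi_2$-type moment condition \eqref{eqn:subgaussian-to-use} and mean zero, with a constant small enough to give $2(e+1)$. If the two-regime argument loses too much, we would fall back on a slightly larger constant, since this affects only absolute constants in the later SOE high-probability guarantees.
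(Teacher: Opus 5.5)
Your proof is correct. Its first two steps coincide with the paper's: the per-increment bound $\mathbb E[\exp\{t d_{k+1}\}\mid\mathcal F_{k+1}]\le\exp\{\tfrac{e+1}{2}t^2s_{k+1}^2\}$ for all $t\in\mathbb R$, obtained from the $\psi_2$-type condition \eqref{eqn:subgaussian-to-use}, and the exponential supermartingale with the predictable proxy $\tsum_k s_{k+1}^2=\tilde{\mathcal V}_n-\tilde v_0$.

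\textbf{Where the routes differ.} The difference is in the self-normalization step. The paper takes $\mathbb E[\exp\{tX-t^2Y^2/2\}]\le1$ and applies de la Pe\~na--Klass--Lai's Corollary 2.2 as a black box, with $x=\sqrt{2(1+\Lambda)}$ and $y=(e+1)\tilde v_0$. You instead carry out the Gaussian mixture explicitly, with prior variance $1/(2c\tilde v_0)$, and finish with Markov's inequality. That computation is right, including the closed form $\bar M_n=\sqrt{\tilde v_0/\tilde{\mathcal V}_n}\,\exp\{S_n^2/(4c\tilde{\mathcal V}_n)\}$ and the use of Tonelli. Your route is self-contained and gives a slightly sharper event,
\[
S_{N-2}^2\le 2(e+1)\,\tilde{\mathcal V}_{N-2}\Bigl(\Lambda+\tfrac12\log\tfrac{\tilde{\mathcal V}_{N-2}}{\tilde v_0}\Bigr).
\]
The factor $(1+\Lambda)(1+\tfrac12\log)$ enters only through your final relaxation. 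The paper's route is shorter but imports the cruder form of the inequality; its restriction $x\ge\sqrt2$ is why $1+\Lambda$ appears there.

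\textbf{On Step 1.} You do not need two regimes. The chain
\[
\mathbb E[e^{\lambda Y}\mid\mathcal F_{k+1}]
\le 1+\tfrac{\lambda^2}{2}e^{\lambda^2s^2/2}\,\mathbb E\bigl[Y^2e^{Y^2/(2s^2)}\mid\mathcal F_{k+1}\bigr]
\le 1+\tfrac{e\lambda^2s^2}{2}e^{\lambda^2s^2/2}
\le e^{(e+1)\lambda^2s^2/2}
\]
holds for every $\lambda$. It uses $e^x\le1+x+\tfrac{x^2}{2}e^{|x|}$, the mean-zero property, $|\lambda Y|\le\tfrac{\lambda^2s^2}{2}+\tfrac{Y^2}{2s^2}$, $ze^{z/2}\le e^z$, and $Y^2/s^2\le\|\Delta_{k+1}\|^2/\tau^2$. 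This gives exactly $c=(e+1)/2$, which is precisely what $4c\le2(e+1)$ requires. Your guess that $c$ is of order $(e+1)/4$ is too optimistic, but the conclusion is unaffected.

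Keep the factor $\tfrac12$ in $e^x\le1+x+\tfrac{x^2}{2}e^{|x|}$. The version you wrote, $e^x\le1+x+x^2e^{|x|}$, only yields $c=e+\tfrac12$ and would miss the stated constant $2(e+1)$.
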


\begin{proof}
Notice that $d_{k+1}$ is $\mathcal F_{k+2}$-measurable and
$s_{k+1}^2$ is $\mathcal F_{k+1}$-measurable. Since $x_{k+1}$ is
$\mathcal F_{k+1}$-measurable and
$\mathbb E[\Delta_{k+1}\mid\mathcal F_{k+1}]=0$, we have
$\mathbb E[d_{k+1}\mid\mathcal F_{k+1}]=0$. Moreover, by the Cauchy--Schwarz inequality, we have
\begin{equation}\label{eqn:inner-square-mgf}
\mathbb E\left[
\exp\left\{\tfrac{d_{k+1}^2}{s_{k+1}^2}\right\}
\middle|\mathcal F_{k+1}
\right]
\leq
\mathbb E\left[
\exp\left\{
\tfrac{m\|\Delta_{k+1}\|^2}{9\sigma_{k+1}^2}
\right\}
\middle|\mathcal F_{k+1}
\right]
\overset{\eqref{eqn:subgaussian-to-use}}{\leq}
\exp\{1\},
\end{equation}
If $s_{k+1}=0$, then $d_{k+1}=0$ almost surely, and we adopt
the convention $d_{k+1}^2/s_{k+1}^2=0$.
Following the proof of
\cite[Proposition 2.6.1, step $(\mathrm{iii})\Rightarrow(\mathrm{iv})$]{vershynin2026}, for all $t\in\mathbb{R},$ we have
\begin{equation}\label{eqn:inner-linear-mgf}
\begin{aligned}
\mathbb E\left[\exp\{td_{k+1}\}\mid\mathcal F_{k+1}\right]
&\overset{\text{(i)}}{\leq}1+\tfrac{t^2}{2}\mathbb E\left[d_{k+1}^2\exp\{|td_{k+1}|\}\mid\mathcal F_{k+1}\right]\\
&\overset{\text{(ii)}}{\leq}1+\tfrac{t^2}{2}\exp\left\{\tfrac{t^2s_{k+1}^2}{2}\right\}\mathbb E\left[d_{k+1}^2\exp\left\{\tfrac{d_{k+1}^2}{2s_{k+1}^2}\right\}\middle|\mathcal F_{k+1}\right]\\
&\overset{\text{(iii)}}{\leq}1+\tfrac{et^2s_{k+1}^2}{2}\exp\left\{\tfrac{t^2s_{k+1}^2}{2}\right\}\overset{\text{(iv)}}{\leq}\left(1+\tfrac{et^2s_{k+1}^2}{2}\right)\exp\left\{\tfrac{t^2s_{k+1}^2}{2}\right\}\overset{\text{(v)}}{\leq}\exp\left\{\tfrac{(e+1)t^2s_{k+1}^2}{2}\right\},
\end{aligned}
\end{equation}
where in (i), we used $e^u\leq1+u+\tfrac{u^2}{2}e^{|u|}$ and $\mathbb E[d_{k+1}\mid\mathcal F_{k+1}]=0$; in (ii), we used $|td_{k+1}|\leq\tfrac{t^2s_{k+1}^2}{2}+\tfrac{d_{k+1}^2}{2s_{k+1}^2}$; in (iii), we used $ze^{z/2}\leq e^z$ with $z=d_{k+1}^2/s_{k+1}^2$ and \eqref{eqn:inner-square-mgf}; in (iv), we used $1\leq\exp\{t^2s_{k+1}^2/2\}$; and in (v), we used $1+z\leq e^z$.
\smallskip

Let $\tilde  M_0(t )\coloneqq1$ and for
$1\leq n\leq N-1$, define
\begin{equation*}
\tilde  M_n(t )
\coloneqq
\exp\left\{
t \tsum_{k=0}^{n-1}d_{k+1}
-\tfrac{e+1}{2}t ^2
\left(
\tilde{\mathcal V}_{n-1}-\tilde v_0
\right)
\right\}.
\end{equation*}
Then, $\{\tilde  M_n(t)\}_{n=0}^{N-1}$ is a nonnegative supermartingale. Indeed, for all $0\leq n\leq N-2$, we obtain
\begin{equation*}
\begin{aligned}
\mathbb E\left[
\tilde  M_{n+1}(t)
\mid\mathcal F_{n+1}
\right]
&=
\tilde  M_n(t)
\mathbb E\left[
\exp\left\{
t d_{n+1}
-\tfrac{e+1}{2}t^2s_{n+1}^2
\right\}
\middle|\mathcal F_{n+1}
\right]\overset{\eqref{eqn:inner-linear-mgf}}{\leq}
\tilde  M_n(t).
\end{aligned}
\end{equation*}
By the supermartingale property, we have
$\mathbb E[\tilde  M_{N-1}(t)]
\leq\mathbb E[\tilde  M_0(t)]=1$. 
\smallskip

Therefore, we have
$\mathbb E[
\exp\{t X-t^2Y^2/2\}]
\leq1$
for every $t\in\mathbb R$, where 
\begin{equation*}
X\coloneqq
\tsum_{k=0}^{N-2}d_{k+1}
\quad\text{and}\quad
Y^2\coloneqq
(e+1)
\left(
\tilde{\mathcal V}_{N-2}-\tilde v_0
\right).
\end{equation*}
Hence, the conditions in
\cite[Corollary 2.2]{deLaPenaKlassLai2004} are satisfied, and we have for all $x \geq \sqrt{2}, y>0,$ it holds that
\begin{equation}\label{eqn:inner-self-normalized}
\mathbb P\left\{
|X|>
x\sqrt{
(Y^2+y)
\left(
1+\tfrac{1}{2}
\log\left(1+\tfrac{Y^2}{y}\right)
\right)}
\right\}
\leq
\exp\left\{-\tfrac{x^2}{2}\right\}. 
\end{equation}
Therefore, by setting
$x\coloneqq\sqrt{2(1+\Lambda)}$ and
$y\coloneqq(e+1)\tilde v_0$, we obtain
$Y^2+y=(e+1)\tilde{\mathcal V}_{N-2}$ and $1+\tfrac{Y^2}{y}
=
\tfrac{\tilde{\mathcal V}_{N-2}}{\tilde v_0}.$
Substituting these identities into
\eqref{eqn:inner-self-normalized} and using
$\{X>t\}\subseteq\{|X|>t\}$ conclude the proof.
\end{proof}
\medskip
Unlike the uniform-noise case, both sides of the bounds in \autoref{prop-gradient-norm-square} (resp. \autoref{prop:inner-product-concentration}) are random. Indeed, conditional on $\mathcal F_k$, the error $\|\Delta_k\|^2$ (resp. $d_{k}^2$) satisfies an exponential-moment bound with the $\mathcal F_k$-measurable random scale $\sigma_k^2$ (resp. $s_{k}^2$). The proof therefore uses the cumulative variance proxy $\mathcal V_k$ (resp. $\tilde{\mathcal V}_k$) to self-normalize the errors and constructs a nonnegative supermartingale to obtain the high-probability guarantee. This self-normalization introduces the additional term $\log(\mathcal V_{N-1}/\mathcal V_0)$ (resp. $\log(\tilde{\mathcal V}_{N-2}/\widetilde{v}_0)$) compared with the corresponding uniform-noise bound.

Let $\{q_s\}_{s\geq0}$ be a deterministic nondecreasing sequence with $q_s\geq1$. For fixed $c_{\mathcal A}>0$, $\Lambda\geq0$, and $k\geq0$, define
\begin{equation}\label{eqn:hp-induction-event}
\begin{aligned}
B_k := q_s\left[ \tfrac{c_{\mathcal A}L^2\|x_0-x^*\|^2}{\mu^2} +\tfrac{c_{\mathcal A}(1+\Lambda)k\sigma^2_*}{\mu^2m} \right],\quad \widetilde{\mathcal E}_k &:= \left\{ \theta_k\|x_k-x^*\|^2\leq B_k \right\}, \quad \widehat{\mathcal E}_k := \bigcap_{i=0}^k\widetilde{\mathcal E}_i,
\end{aligned}
\end{equation}
For every fixed $N\geq2$, set $\widetilde v_{0}:=\tfrac{144\sigma_0^2B_{N-1}}{\mu^2m}$ and define $\ell_N(\Lambda)\coloneqq 72(e+2) \left( 1+\Lambda +\log\tfrac{\mathcal V_{N-1}}{\mathcal V_0} +\tfrac12 \log\tfrac{\tilde{\mathcal V}_{N-2}}{\widetilde v_{0}} \right). $
\medskip

\begin{theorem}\label{thm:hp-state-dependent-inner-loop}
Let $\{x_{ k}\}_{k\geq -1}$ denote the iterates generated by SOE \autoref{alg:SOE}, and let $\{\theta_{ k}\}_{k\geq 0}$ be a sequence of positive numbers. Let $\{\eta_k\}_{k\geq 0}$ and $\{\lambda_k\}_{k\geq 0}$ denote the stepsizes and extrapolation weights, with $\lambda_0=0$. Suppose that, for all $k\geq 0$, they satisfy \eqref{eq:inner-loop-B}, \eqref{eq:inner-loop-A} and \eqref{eq:inner-loop-C}. Furthermore, fix $N\geq2$, assume that $\mathcal V_0>0$ and $\widetilde v_{0}>0$, and suppose that, on $\widehat{\mathcal E}_{N-1}$, for all $0\leq k\leq N-2,$
\begin{align}
\tfrac{3(1+\Lambda)\ell_N(\Lambda)\varsigma_*}{m} \left[ \theta_{k+1}(\eta_{k+1}\lambda_{k+1})^2 +\theta_{k+2}(\eta_{k+2}\lambda_{k+2})^2 \right] \leq \theta_k\eta_k. \label{eq:hp-inner-loop-D}
\end{align}
Then,  with probability at least $1-2\exp\{-\Lambda\}$, either $\widehat{\mathcal E}_{N-1}$ does not hold or the following holds
\begin{equation}\label{eq:hp-main-thm}
\begin{aligned}
&\tfrac{\theta_N}{8}\|x_N-x^*\|^2 +\tsum_{k=0}^{N-1}\tfrac{\theta_k}{8}\|x_{k+1}-x_k\|^2 +\tsum_{k=1}^{N}\tfrac{\theta_{k-1}\eta_{k-1}}{6} \langle F(x_k)-F(x^*),x_k-x^*\rangle\\
&\leq \left\{ \tfrac{\theta_0}{2} +\tfrac{c_{\mathcal A}q_{N-1}L^2}{16\mu^2} +\tfrac{(1+\Lambda)\ell_N(\Lambda) L\varsigma_*}{m} \left[ \theta_1(\eta_1\lambda_1)^2+\tfrac{16}{\mu^2} \right] \right\}\|x_0-x^*\|^2\\
&\quad+\left\{\ell_N(\Lambda) \left[\tfrac{16}{\mu^2}+\tsum_{k=0}^{N-1} \left( \theta_k(\eta_k\lambda_k)^2 +\theta_{k+1}(\eta_{k+1}\lambda_{k+1})^2\right)\right] +\tfrac{c_{\mathcal A}q_{N-1}(N-1)}{16\mu^2} \right\} \tfrac{(1+\Lambda)\sigma_*^2}{m}.
\end{aligned}
\end{equation}
\end{theorem}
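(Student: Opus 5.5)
Starting from the pathwise inequality of \autoref{lem:per-itrate} with $x=x^*$, I will run the same bookkeeping as in \autoref{prop:main-theorem-bias-exp}, except that I never take expectations. In their place I use the two concentration results. \autoref{prop-gradient-norm-square} handles the squared-error sum $\tsum_k[\theta_k(\eta_k\lambda_k)^2+\theta_{k+1}(\eta_{k+1}\lambda_{k+1})^2]\|\Delta_k\|^2$, together with the extra terminal term $\theta_N(\eta_N\lambda_N)^2\|\Delta_{N-1}\|^2$. \autoref{prop:inner-product-concentration} handles the martingale sum $\tsum_{k=0}^{N-2}\theta_k\eta_k\langle\Delta_{k+1},x_{k+1}-x^*\rangle$. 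A union bound over the two gives probability at least $1-2\exp\{-\Lambda\}$.

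On that event, the first bound converts the squared-error sum into $(1+\Lambda+\log\tfrac{\mathcal V_{N-1}}{\mathcal V_0})\tfrac{9}{m}\tsum_k[\cdots]\bigl(\varsigma_*\langle F(x_k)-F(x^*),x_k-x^*\rangle+\sigma_*^2\bigr)$.
\begin{itemize}
\item The $\sigma_*^2$ part produces the $\ell_N(\Lambda)\tsum_k(\cdots)\tfrac{(1+\Lambda)\sigma_*^2}{m}$ term.
\item The $\varsigma_*$ part is absorbed into the monotonicity sum $\tsum_k\theta_{k-1}\eta_{k-1}\langle F(x_k)-F(x^*),x_k-x^*\rangle$ through condition \eqref{eq:hp-inner-loop-D}, exactly as \eqref{eq:inner-loop-D} was used earlier. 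After absorption a coefficient like $\tfrac16$ survives; part goes to the strong-monotonicity telescoping via \eqref{eq:inner-loop-C}.
\item The $k=0$ term is bounded by Lipschitz continuity, $\langle F(x_0)-F(x^*),x_0-x^*\rangle\le L\|x_0-x^*\|^2$, which yields the $\tfrac{(1+\Lambda)\ell_N L\varsigma_*}{m}\theta_1(\eta_1\lambda_1)^2\|x_0-x^*\|^2$ contribution.
\end{itemize}

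The main obstacle is the martingale term. Its bound is $\sqrt{2(e+1)(1+\Lambda)\tilde{\mathcal V}_{N-2}(1+\tfrac12\log\tfrac{\tilde{\mathcal V}_{N-2}}{\tilde v_0})}$, and $\tilde{\mathcal V}_{N-2}$ contains $\tsum_k\tfrac{9(\theta_k\eta_k)^2}{m}\|x_{k+1}-x^*\|^2\sigma_{k+1}^2$, which involves the unknown iterates. Here I use the hypothesis that $\widehat{\mathcal E}_{N-1}$ holds, so $\theta_k\|x_k-x^*\|^2\le B_k\le B_{N-1}$. With $\theta_k\eta_k^2\lesssim 16/\mu^2$ from the step-size structure, this gives $s_{k+1}^2\le\tfrac{144 B_{N-1}}{\mu^2 m}\cdot\tfrac{\theta_k\eta_k}{\cdots}\sigma_{k+1}^2$ up to constants. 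This also explains the choice $\tilde v_0=\tfrac{144\sigma_0^2B_{N-1}}{\mu^2m}$.

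I then apply Young's inequality in the form $\sqrt{ab}\le \tfrac{a}{2c}+\tfrac{c b}{2}$, splitting the square root into two pieces.
\begin{itemize}
\item A piece $\tfrac{B_{N-1}}{16}$-type, which produces $\tfrac{c_{\mathcal A}q_{N-1}}{16\mu^2}\bigl(L^2\|x_0-x^*\|^2+(N-1)(1+\Lambda)\sigma_*^2/m\bigr)$ after inserting the definition of $B_{N-1}$.
\item A piece $\ell_N(\Lambda)\tfrac{(1+\Lambda)}{m}\tfrac{16}{\mu^2}\tsum\sigma_{k+1}^2$-type. Its $\varsigma_*$-part is again absorbed by \eqref{eq:hp-inner-loop-D}, with the $x_0$ contribution again bounded by $L$. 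Its $\sigma_*^2$-part gives the $\tfrac{16}{\mu^2}$ terms.
\end{itemize}
The factor $\ell_N$ collects the constants $72(e+2)$ and both log ratios. I expect the delicate point to be matching weights so that one sum $\tsum\theta_{k-1}\eta_{k-1}\sigma_k^2$ simultaneously dominates both concentration contributions; I would first try bounding $(\theta_k\eta_k)^2/\theta_{k+1}$ by $\theta_k\eta_k\cdot\eta_k$ using monotonicity of $\theta$.

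Finally, I will handle the terminal terms $\tfrac{3\theta_N}{8}\|x_N-x^*\|^2$ and $\theta_N(\eta_N\lambda_N)^2\|\Delta_{N-1}\|^2$ as in \autoref{prop:main-theorem-bias-exp}, leaving $\tfrac{\theta_N}{8}\|x_N-x^*\|^2$ on the left. Collecting terms yields \eqref{eq:hp-main-thm}.
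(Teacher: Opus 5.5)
Your overall route is the paper's. You use \autoref{lem:per-itrate} pathwise at $x=x^*$, and fold the terminal $\|\Delta_{N-1}\|^2$ term into the weighted squared-error sum via $\lambda_0=0$ and $\Delta_{-1}=\Delta_0$. You apply \autoref{prop-gradient-norm-square} and \autoref{prop:inner-product-concentration} with a union bound, use $\theta_k\|x_k-x^*\|^2\le B_{N-1}$ on $\widehat{\mathcal E}_{N-1}$, and split with Young's inequality to produce $B_{N-1}/16$. The remaining steps (Lipschitz at $x_0$, absorption through \eqref{eq:hp-inner-loop-D}, telescoping via \eqref{eq:inner-loop-C}) also match.

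The one step that does not go through as you wrote it is the one you flagged: the weight in $s_{k+1}^2$. The theorem assumes only \eqref{eq:inner-loop-B}, \eqref{eq:inner-loop-A} and \eqref{eq:inner-loop-C}. It gives you neither the concrete step sizes (so you cannot invoke $\theta_k\eta_k^2\lesssim 16/\mu^2$) nor monotonicity of $\theta_k$. Bounding $(\theta_k\eta_k)^2/\theta_{k+1}$ by $\theta_k\eta_k^2$, or by $16/\mu^2$ uniformly, also puts weights on $\sigma_{k+1}^2$ that differ from the weights $\theta_{k+1}(\eta_{k+1}\lambda_{k+1})^2+\theta_{k+2}(\eta_{k+2}\lambda_{k+2})^2$ that \eqref{eq:hp-inner-loop-D} controls. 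Once that happens, the $\varsigma_*$-part is not absorbed by the stated hypothesis. The martingale and squared-error contributions also no longer merge into the single factor $(1+\Lambda)\ell_N(\Lambda)/m$. Likewise, your ``$\tfrac{16}{\mu^2}\tsum\sigma_{k+1}^2$'' piece would put $16/\mu^2$ on every index, whereas \eqref{eq:hp-main-thm} contains only one such term.

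The fix is the exact identity from \eqref{eq:inner-loop-A}, namely $(\theta_k\eta_k)^2/\theta_{k+1}=\theta_{k+1}(\eta_{k+1}\lambda_{k+1})^2$. This gives $s_k^2\le\tfrac{9B_{N-1}}{m}\theta_k(\eta_k\lambda_k)^2\sigma_k^2$ for $1\le k\le N-1$, hence
\[
\tilde{\mathcal V}_{N-2}\le\tfrac{9B_{N-1}}{m}\Bigl[\tfrac{16\sigma_0^2}{\mu^2}+\tsum_{k=1}^{N-1}\theta_k(\eta_k\lambda_k)^2\sigma_k^2\Bigr].
\]
Here $16/\mu^2$ enters only through the index-zero normalization $\tilde v_0=\tfrac{144\sigma_0^2B_{N-1}}{\mu^2 m}$, not through any step-size structure. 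That single $\sigma_0^2$ term produces all the $16/\mu^2$ pieces in \eqref{eq:hp-main-thm}, with its $\varsigma_*$-part bounded by $L\|x_0-x^*\|^2$.

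Since $\theta_k(\eta_k\lambda_k)^2\le\theta_k(\eta_k\lambda_k)^2+\theta_{k+1}(\eta_{k+1}\lambda_{k+1})^2$, the two contributions now share weights. The martingale part (coefficient $72(e+1)(1+\Lambda)(1+\tfrac12\log\tfrac{\tilde{\mathcal V}_{N-2}}{\tilde v_0})/m$) and the squared-error part (coefficient $36(1+\Lambda+\log\tfrac{\mathcal V_{N-1}}{\mathcal V_0})/m$) merge into $(1+\Lambda)\ell_N(\Lambda)/m$. Condition \eqref{eq:hp-inner-loop-D} caps that coefficient at $\tfrac13\theta_{k-1}\eta_{k-1}$, leaving $\tfrac23=\tfrac12+\tfrac16$ for the strong-monotonicity telescoping and the surviving $\tfrac16$ sum.
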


\begin{proof}
By \autoref{lem:per-itrate}, for every trajectory and any $N\geq2$,
\begin{equation*}
\begin{aligned}
&\tsum_{k = 0}^{N-1}\theta_{ k}\left[\tfrac{1}{2}\|x_{ k} - x^*\|^2-\tfrac{1}{2}\|x_{  k + 1} - x^*\|^2-\eta_{  k}\langle F(x_{  k + 1}), x_{  k + 1}-x^* \rangle\right]+\tfrac{3\theta_{N}}{8}\| x_{ N} - x^* \|^2+\theta_{  N} (\eta_{  N} \lambda_{ N})^2\|\Delta_{  N - 1}\|^2 \\
&\geq\tsum_{k = 0}^{N-1}\left[\tfrac{\theta_k}{8}\|x_{  k + 1} - x_{ k}\|^2-4\theta_k(\eta_{  k} \lambda_{ k})^2(\|\Delta_{  k}\|^2 + \|\Delta_{  k - 1}\|^2)\right]+\tsum_{k = 0}^{N-2}\theta_{ k}\eta_{  k} \langle \Delta_{ {k + 1}}, x_{  k + 1} - x^*\rangle.
\end{aligned}
\end{equation*}
Since $\Delta_{-1}=\Delta_0$ and $\lambda_0=0$,
\begin{align*}
&4\tsum_{k = 0}^{N-1}\theta_k(\eta_{  k} \lambda_{ k})^2(\|\Delta_{  k}\|^2 + \|\Delta_{  k - 1}\|^2)+\theta_{  N} (\eta_{  N} \lambda_{ N})^2\|\Delta_{  N - 1}\|^2\\
&\leq 4\tsum_{k = 0}^{N-1}[\theta_k(\eta_{  k} \lambda_{ k})^2+\theta_{k+1}(\eta_{  k+1} \lambda_{ k+1})^2]\|\Delta_k\|^2.
\end{align*}
Hence, by \autoref{prop-gradient-norm-square}, with probability at least $1- \exp\{-\Lambda\},$ we have
\begin{equation} \label{eqn:term-1}
\begin{aligned}
&\tsum_{k = 0}^{N - 1}[\theta_{k} (\eta_{k}\lambda_{k})^2+\theta_{k+1} (\eta_{k+1}\lambda_{k+1})^2]\|\Delta_k\|^2\leq\tfrac{9}{m}\left(1+\Lambda+\log\tfrac{\mathcal V_{N-1}}{\mathcal V_0}\right)\tsum_{k = 0}^{N-1} [\theta_{k} (\eta_{k}\lambda_{k})^2+\theta_{k+1} (\eta_{k+1}\lambda_{k+1})^2]\sigma_k^2,
\end{aligned}
\end{equation}
where $\sigma_k^2$ is defined in \eqref{eqn:innerproduct-var}. Furthermore, on $\widehat{\mathcal E}_{N-1}$, we have $\theta_k\|x_k-x^*\|^2\leq B_k\leq B_{N-1}$ because $r_k$ is nondecreasing. By the definition of $s_k^2$, we have
\begin{equation*}
\begin{aligned}
s_k^2 &= \tfrac{9(\theta_{k-1}\eta_{k-1})^2 \|x_k-x^*\|^2\sigma_k^2}{m}\leq \tfrac{9B_{N-1}}{m} \tfrac{(\theta_{k-1}\eta_{k-1})^2}{\theta_k}\sigma_k^2 \overset{\eqref{eq:inner-loop-A}}{=} \tfrac{9B_{N-1}}{m} \theta_k(\eta_k\lambda_k)^2\sigma_k^2, \qquad 1\leq k\leq N-1.
\end{aligned}
\end{equation*}
Furthermore, by the choice of $\widetilde v_{0}=\tfrac{144\sigma_0^2B_{N-1}}{\mu^2m}$ and the definition of $\tilde{\mathcal V}_{N-2}$, we obtain
\begin{equation}\label{eqn:hp-tilde-V-theorem}
\begin{aligned}
\tilde{\mathcal V}_{N-2} &\overset{\eqref{eqn:U-def}}{\leq} \tfrac{9B_{N-1}}{m} \left[ \tfrac{16\sigma_0^2}{\mu^2} +\tsum_{k=1}^{N-1} \theta_k(\eta_k\lambda_k)^2\sigma_k^2 \right].
\end{aligned}
\end{equation}
Hence, by \autoref{prop:inner-product-concentration}, with probability at least $1 - \exp(-\Lambda)$,
\begin{equation}\label{eqn:term-2}
\begin{aligned}
\tsum_{k = 0}^{N - 2}\theta_k \eta_k \langle \Delta_{k + 1}, x^*-x_{k + 1} \rangle &\leq \sqrt{2(e+1)(1+\Lambda)\tilde{\mathcal V}_{N-2}\left(1+\tfrac{1}{2}\log\tfrac{\tilde{\mathcal V}_{N-2}}{\widetilde v_{0}}\right)}\\
&\overset{\text{(i)}}{\leq} \tfrac{B_{N-1}}{16} +\tfrac{72(e+1)(1+\Lambda)}{m} \left(1+\tfrac{1}{2}\log\tfrac{\tilde{\mathcal V}_{N-2}}{\widetilde v_{0}}\right)\cdot \left[ \tfrac{16\sigma_0^2}{\mu^2} +\tsum_{k=1}^{N-1} \theta_k(\eta_k\lambda_k)^2\sigma_k^2 \right],
\end{aligned}
\end{equation}
where in (i), we used Young's inequality and \eqref{eqn:hp-tilde-V-theorem}.
\medskip

Substituting \eqref{eqn:term-1} and \eqref{eqn:term-2} into \autoref{lem:per-itrate}, we have
\begin{equation}
\begin{aligned}
&\tsum_{k = 0}^{N-1}\theta_{ k}\left[\tfrac{1}{2}\|x_{ k} - x^*\|^2-\tfrac{1}{2}\|x_{  k + 1} - x^*\|^2-\eta_{  k}\langle F(x_{  k + 1}), x_{  k + 1}-x^* \rangle\right]+\tfrac{3\theta_{N}}{8}\| x_{ N} - x^* \|^2\\
&\geq\tsum_{k = 0}^{N-1}\left[\tfrac{\theta_k}{8}\|x_{  k + 1} - x_{ k}\|^2\right]-\tfrac{36}{m} \left[ 1+\Lambda +\log\tfrac{\mathcal V_{N-1}}{\mathcal V_0} \right]\cdot \tsum_{k=0}^{N-1} \left[ \theta_k(\eta_k\lambda_k)^2 +\theta_{k+1}(\eta_{k+1}\lambda_{k+1})^2 \right] \sigma_k^2\\
&\quad- \tfrac{B_{N-1}}{16} -\tfrac{72(e+1)(1+\Lambda)}{m} \left(1+\tfrac{1}{2}\log\tfrac{\tilde{\mathcal V}_{N-2}}{\widetilde v_{0}}\right)\cdot \left[ \tfrac{16\sigma_0^2}{\mu^2} +\tsum_{k=1}^{N-1} \theta_k(\eta_k\lambda_k)^2\sigma_k^2 \right].
\end{aligned}
\end{equation}
Furthermore, using $\mathcal V_{N-1}\geq\mathcal V_0$, $\tilde{\mathcal V}_{N-2}\geq\widetilde v_{0}$, and $\Lambda\geq0$, we have
\begin{equation*}
\begin{aligned}
&\left(1+\Lambda+\log\tfrac{\mathcal V_{N-1}}{\mathcal V_0}\right)+2(e+1)(1+\Lambda) \left(1+\tfrac{1}{2}\log\tfrac{\tilde{\mathcal V}_{N-2}}{\widetilde v_{0}}\right)\leq2(e+2)(1+\Lambda) \left[ 1+\Lambda+\log\tfrac{\mathcal V_{N-1}}{\mathcal V_0} +\tfrac{1}{2}\log\tfrac{\tilde{\mathcal V}_{N-2}}{\widetilde v_{0}} \right].
\end{aligned}
\end{equation*}
Hence, by $\lambda_0=0$, the $L$-Lipschitz continuity of $F$, and $\theta_k(\eta_k\lambda_k)^2 \leq \theta_k(\eta_k\lambda_k)^2 +\theta_{k+1}(\eta_{k+1}\lambda_{k+1})^2,$ we obtain
\begin{equation}\label{eqn:hp-before-I-N}
\begin{aligned}
&\tsum_{k=0}^{N-1}\theta_k \left[ \tfrac{1}{2}\|x_k-x^*\|^2 -\tfrac{1}{2}\|x_{k+1}-x^*\|^2 \right] +\tfrac{3\theta_N}{8}\|x_N-x^*\|^2\\
&\quad+\tfrac{B_{N-1}}{16} +\tfrac{72(e+2)(1+\Lambda)L\varsigma_*}{m} \left[ 1+\Lambda+\log\tfrac{\mathcal V_{N-1}}{\mathcal V_0} +\tfrac{1}{2}\log\tfrac{\tilde{\mathcal V}_{N-2}}{\widetilde v_{0}} \right] \left[ \theta_1(\eta_1\lambda_1)^2+\tfrac{16}{\mu^2} \right] \|x_0-x^*\|^2\\
&\quad+\tfrac{72(e+2)(1+\Lambda)\sigma_*^2}{m} \left[ 1+\Lambda+\log\tfrac{\mathcal V_{N-1}}{\mathcal V_0} +\tfrac{1}{2}\log\tfrac{\tilde{\mathcal V}_{N-2}}{\widetilde v_{0}} \right] \left\{ \tfrac{16}{\mu^2} +\tsum_{k=0}^{N-1} \left[ \theta_k(\eta_k\lambda_k)^2 +\theta_{k+1}(\eta_{k+1}\lambda_{k+1})^2 \right] \right\}\\
&\geq \tsum_{k=0}^{N-1}\tfrac{\theta_k}{8}\|x_{k+1}-x_k\|^2 +\mathcal I_N.
\end{aligned}
\end{equation}
where, combining with by \eqref{eq:x-star-solves-the-VI}, we  define $\mathcal I_N$ as
\begin{equation*}
\begin{aligned}
\mathcal I_N &\coloneqq \tsum_{k=1}^{N-1} \left\{ \theta_{k-1}\eta_{k-1} -\tfrac{72(e+2)(1+\Lambda)\varsigma_*}{m} \left[ 1+\Lambda+\log\tfrac{\mathcal V_{N-1}}{\mathcal V_0} +\tfrac{1}{2}\log\tfrac{\tilde{\mathcal V}_{N-2}}{\widetilde v_{0}} \right] \cdot \left[ \theta_k(\eta_k\lambda_k)^2 +\theta_{k+1}(\eta_{k+1}\lambda_{k+1})^2 \right] \right\}\\
&\quad\cdot \langle F(x_k)-F(x^*),x_k-x^*\rangle+\theta_{N-1}\eta_{N-1} \langle F(x_N)-F(x^*),x_N-x^*\rangle\\
&\overset{\eqref{eq:hp-inner-loop-D}}{\geq} \tsum_{k=1}^{N} \left[ \tfrac{\theta_{k-1}\eta_{k-1}}{2} +\tfrac{\theta_{k-1}\eta_{k-1}}{6} \right] \langle F(x_k)-F(x^*),x_k-x^*\rangle\\
&\,\,\,\geq \tsum_{k=0}^{N-1} \tfrac{\mu\theta_k\eta_k}{2}\|x_{k+1}-x^*\|^2 +\tsum_{k=1}^{N} \tfrac{\theta_{k-1}\eta_{k-1}}{6} \langle F(x_k)-F(x^*),x_k-x^*\rangle,
\end{aligned}
\end{equation*}
where the last inequality follows from the strong monotonicity of $F$. Finally, by \eqref{eq:inner-loop-C}, we have
\begin{equation*}
\begin{aligned}
&\tsum_{k=0}^{N-1}\tfrac{\theta_k}{2} \left(\|x_k-x^*\|^2-\|x_{k+1}-x^*\|^2\right) +\tfrac{3\theta_N}{8}\|x_N-x^*\|^2 -\tsum_{k=0}^{N-1} \tfrac{\mu\theta_k\eta_k}{2}\|x_{k+1}-x^*\|^2\\
&\leq \tfrac{\theta_0}{2}\|x_0-x^*\|^2 -\tfrac{\theta_N}{8}\|x_N-x^*\|^2.
\end{aligned}
\end{equation*}
By a union bound, the concentration bounds in \eqref{eqn:term-1} and \eqref{eqn:term-2} hold simultaneously with probability at least $1-2\exp\{-\Lambda\}$. On this event, by \eqref{eqn:hp-before-I-N} and the definition of $B_{N-1}$ in \eqref{eqn:hp-induction-event}, we conclude either $\widehat{\mathcal E}_{N-1}$ does not hold or \eqref{eq:hp-main-thm} holds.
\end{proof}
\medskip
We next bound the logarithmic terms arising from the random state-dependent variance.
\begin{lemma}\label{lem:hp-variance-ratio}
Suppose $k_0, \eta_k,\theta_k$ are chosen in \eqref{eqn:stepsize-inner}. Let $N\geq2$ and $\Lambda >0$, assume that $\mathcal V_0>0$ and $\widetilde v_{0}>0$, and further suppose that $m \geq \lceil \tfrac{c_{\mathcal A}\varsigma_*(1+\Lambda)}{L} \rceil,$ and  $\widetilde v_{0} = \tfrac{144\sigma_0^2B_{N-1}}{\mu^2m}.$ Then, on $\widehat{\mathcal E}_{N-1}$, we have
\begin{equation}\label{eqn:hp-log-ratio-lemma}
\begin{aligned}
\log\tfrac{\mathcal V_{N-1}}{\mathcal V_0} &\leq 2\log(N+k_0)+\log(1+c_{\mathcal A})+\log q_{N-1}\\
\log\tfrac{\tilde{\mathcal V}_{N-2}}{\widetilde v_{0}} &\leq 2\log(N+k_0)+\log(1+c_{\mathcal A})+\log q_{N-1}.
\end{aligned}
\end{equation}
\end{lemma}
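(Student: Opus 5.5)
The plan is to bound each numerator termwise on $\widehat{\mathcal E}_{N-1}$, reducing everything to multiples of $\sigma_0^2$ or of the denominators. The key observation is that on $\widehat{\mathcal E}_{N-1}$ every $\sigma_k^2$, $k\le N-1$, is controlled by a deterministic quantity. By Lipschitz continuity, $\langle F(x_k)-F(x^*),x_k-x^*\rangle\le L\|x_k-x^*\|^2\le LB_{N-1}/\theta_k$. Hence
\[
\sigma_k^2\le \sigma_*^2+\tfrac{\varsigma_* L B_{N-1}}{\theta_k}.
\]
Substituting the definition of $B_{N-1}$ and using $m\ge c_{\mathcal A}\varsigma_*(1+\Lambda)/L$, we get $\varsigma_*\le Lm/(c_{\mathcal A}(1+\Lambda))$. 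The stochastic part of $\varsigma_* L B_{N-1}$ is then at most $q_{N-1}L^2(N-1)\sigma_*^2/\mu^2$, up to constants. The deterministic part is $\varsigma_* q_{N-1}c_{\mathcal A}L^3\|x_0-x^*\|^2/\mu^2$. I must compare this with the denominator, which contains $\sigma_0^2\ge\sigma_*^2$. The point $x_0$ enters $\sigma_0^2$ through $\varsigma_*\langle F(x_0)-F(x^*),x_0-x^*\rangle\ge\varsigma_*\mu\|x_0-x^*\|^2$, which is the deterministic handle that makes this comparison possible.

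For $\log(\mathcal V_{N-1}/\mathcal V_0)$, I will use the parameter choices \eqref{eqn:stepsize-inner}. These give $\theta_k(\eta_k\lambda_k)^2=16(k+k_0-1)/(\mu^2(k+k_0))\in[8/\mu^2,16/\mu^2]$ for $k\ge1$, and $\lambda_0=0$. So $\mathcal V_0=\theta_1(\eta_1\lambda_1)^2\sigma_0^2\ge 8\sigma_0^2/\mu^2$. Also $\mathcal V_{N-1}\le (32/\mu^2)\sum_{k=0}^{N-1}\sigma_k^2$. I bound each $\sigma_k^2$ by $\sigma_0^2$ times a factor polynomial in $N+k_0$ and linear in $(1+c_{\mathcal A})q_{N-1}$. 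For this I use $\sigma_*^2\le\sigma_0^2$ and $\varsigma_*\|x_0-x^*\|^2\le\sigma_0^2/\mu$. The ratio $L/\mu$ is absorbed by $k_0=16L/\mu+1$, the $\theta_k^{-1}$ factors are at most $\theta_0^{-1}\le k_0^{-2}$, and $N-1\le N+k_0$. Summing $N$ such terms and collecting constants should give a ratio of at most $(1+c_{\mathcal A})q_{N-1}(N+k_0)^2$.

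For $\log(\tilde{\mathcal V}_{N-2}/\widetilde v_0)$, I will use the bound \eqref{eqn:hp-tilde-V-theorem} already derived in the proof of \autoref{thm:hp-state-dependent-inner-loop}:
\[
\tilde{\mathcal V}_{N-2}\le \tfrac{9B_{N-1}}{m}\Bigl[\tfrac{16\sigma_0^2}{\mu^2}+\textstyle\sum_{k=1}^{N-1}\theta_k(\eta_k\lambda_k)^2\sigma_k^2\Bigr].
\]
Dividing by $\widetilde v_0=144\sigma_0^2B_{N-1}/(\mu^2 m)$ cancels $B_{N-1}/m$. The ratio becomes $1+(\mu^2/(16\sigma_0^2))\sum_k\theta_k(\eta_k\lambda_k)^2\sigma_k^2\le 1+\sum_{k\le N-1}\sigma_k^2/\sigma_0^2$. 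This is the same quantity as in the first part, so the same estimate applies.

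I expect the main obstacle to be the constant bookkeeping needed to land exactly on $(N+k_0)^2(1+c_{\mathcal A})q_{N-1}$ rather than a larger power. The risky term is $\varsigma_*LB_{N-1}/\theta_k$. My first attempt will use the crude bound $\theta_k\ge k_0^2$ together with $L^2/\mu^2\le k_0^2/256$, which cancels the $L^2/\mu^2$ inside $B_{N-1}$. What remains is an extra factor $L/\mu\le k_0$ multiplied by $N$ terms in the sum. If that overshoots, I will instead sum $\sum_k 1/\theta_k\le 1/(k_0-1)$ telescopically; this saves the factor $N$ and should close the argument.
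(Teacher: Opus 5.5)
Your proposal is correct and follows the paper's proof: you bound $\sigma_k^2\le\sigma_*^2+\varsigma_*LB_{N-1}/\theta_k$ on $\widehat{\mathcal E}_{N-1}$, lower-bound $\mathcal V_0\ge 8\sigma_0^2/\mu^2$ using $\sigma_0^2\ge\sigma_*^2+\varsigma_*\mu\|x_0-x^*\|^2$, absorb $\varsigma_*(1+\Lambda)/m$ via the batch condition, and reduce $\tilde{\mathcal V}_{N-2}/\widetilde v_0$ to $1+\sum_{k\ge1}\sigma_k^2/\sigma_0^2$ (the paper sums $1/\theta_k$ telescopically, as in your fallback). One small slip: $\theta_0=k_0(k_0-1)<k_0^2$, so use $\theta_k\ge(k_0-1)^2=256L^2/\mu^2$ instead; with that, even your crude per-term bound closes, since the extra terms are $O(c_{\mathcal A}q_{N-1}Nk_0+q_{N-1}N^2)$, both dominated by $(1+c_{\mathcal A})q_{N-1}(N+k_0)^2$.
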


\begin{proof}
For all $k\geq1$, substituting the parameters from \eqref{eqn:stepsize-inner}, we have $\theta_k(\eta_k\lambda_k)^2 = \tfrac{16}{\mu^2}\tfrac{k+k_0-1}{k+k_0} \leq \tfrac{16}{\mu^2},$ and hence
\begin{equation}\label{eqn:hp-weight-sums-lemma}
\begin{aligned}
\tsum_{k=0}^{N-1} \left[ \theta_k(\eta_k\lambda_k)^2 +\theta_{k+1}(\eta_{k+1}\lambda_{k+1})^2 \right] &\leq \tfrac{32N}{\mu^2},\\
\tsum_{k=0}^{N-1} \tfrac{ \theta_k(\eta_k\lambda_k)^2 +\theta_{k+1}(\eta_{k+1}\lambda_{k+1})^2 }{\theta_k} &\leq \tfrac{32}{\mu^2} \tsum_{k=0}^{N-1} \tfrac{1}{(k+k_0)(k+k_0-1)} \leq \tfrac{32}{\mu^2(k_0-1)} \leq \tfrac{32}{15\mu L}.
\end{aligned}
\end{equation}
On $\widehat{\mathcal E}_{N-1}$, the $L$-Lipschitz continuity of $F$ implies $\sigma_k^2\leq \sigma^2_*+\varsigma_*L B_k/\theta_k$ for all $0\leq k\leq N-1$. Since $B_k\leq B_{N-1}$ for all $0\leq k\leq N-1$, substituting this and \eqref{eqn:hp-weight-sums-lemma} into $\mathcal V_{N-1}$ defined in \eqref{eqn:innerproduct-var}, we obtain
\begin{equation}\label{eqn:hp-V-bound-lemma}
\begin{aligned}
\mathcal V_{N-1} &\leq \tfrac{32N\sigma^2_*}{\mu^2} +\tfrac{32\varsigma_*B_{N-1}}{15\mu}\leq \tfrac{32N\sigma^2_*}{\mu^2} +\tfrac{32c_{\mathcal A}q_{N-1}\varsigma_*L^2\|x_0-x^*\|^2}{15\mu^3} +\tfrac{32c_{\mathcal A}q_{N-1}\varsigma_*(1+\Lambda)(N-1)\sigma^2_*}{15\mu^3m}.
\end{aligned}
\end{equation}
Moreover, by the strong monotonicity of $F,$ we have $\sigma_0^2\geq \sigma^2_*+\varsigma_*\mu\|x_0-x^*\|^2$. Hence, we have
\begin{equation}\label{eqn:hp-V0-bound-lemma}
\begin{aligned}
\mathcal V_0 &= \theta_1(\eta_1\lambda_1)^2\sigma_0^2 \geq \tfrac{8}{\mu^2} \left( \sigma^2_*+\varsigma_*\mu\|x_0-x^*\|^2 \right).
\end{aligned}
\end{equation}
Combining the batch size condition $m \geq \left\lceil \tfrac{c_{\mathcal A}\varsigma_*(1+\Lambda)}{L} \right\rceil,$ \eqref{eqn:hp-V-bound-lemma}, and \eqref{eqn:hp-V0-bound-lemma}, we obtain
\begin{equation*}
\begin{aligned}
\tfrac{\mathcal V_{N-1}}{\mathcal V_0} &\leq 4N+\tfrac{4q_{N-1}NL}{15\mu}+\tfrac{4c_{\mathcal A}q_{N-1}L^2}{15\mu^2} \leq (1+c_{\mathcal A})q_{N-1}(N+k_0)^2.
\end{aligned}
\end{equation*}
We next bound $\tilde{\mathcal V}_{N-2}$. By the definition of $B_k$ in \eqref{eqn:hp-induction-event} and $B_k\leq B_{N-1}$, for all $1\leq k\leq N-1$, we have
\begin{equation}\label{eqn:hp-sk-bound-lemma}
\begin{aligned}
s_k^2 = \tfrac{9(\theta_{k-1}\eta_{k-1})^2\|x_k-x^*\|^2\sigma_k^2}{m}\leq \tfrac{9B_{N-1}}{m} \tfrac{(\theta_{k-1}\eta_{k-1})^2}{\theta_k}\sigma_k^2\overset{\eqref{eqn:stepsize-inner}}{=} \tfrac{144B_{N-1}}{\mu^2m}\cdot \tfrac{k+k_0-1}{k+k_0}\sigma_k^2 \leq \tfrac{144B_{N-1}}{\mu^2m}\sigma_k^2.
\end{aligned}
\end{equation}
By the $L$-Lipschitz continuity of $F$ and the choice of $\theta_k$, we have
\begin{equation}\label{eqn:hp-sigma-sum-lemma}
\begin{aligned}
\tsum_{k=1}^{N-1}\sigma_k^2 &\leq (N-1)\sigma^2_* +\varsigma_*L B_{N-1} \tsum_{k=1}^{N-1} \tfrac{1}{(k+k_0)(k+k_0-1)}\leq (N-1)\sigma^2_* +\tfrac{\varsigma_*\mu B_{N-1}}{16}.
\end{aligned}
\end{equation}
Substituting the choice for $\widetilde v_{0} = \tfrac{144\sigma_0^2B_{N-1}}{\mu^2m},$ \eqref{eqn:hp-sk-bound-lemma} and \eqref{eqn:hp-sigma-sum-lemma} into \eqref{eqn:U-def}, we obtain
\begin{equation}\label{eqn:hp-tilde-V-bound-lemma}
\begin{aligned}
\tilde{\mathcal V}_{N-2} &\leq \tfrac{144B_{N-1}}{\mu^2m} \left[ \sigma_0^2+(N-1)\sigma^2_* +\tfrac{\varsigma_*\mu B_{N-1}}{16} \right].
\end{aligned}
\end{equation}
Dividing \eqref{eqn:hp-tilde-V-bound-lemma} by $\widetilde v_{0}$, using the lower bound of $\sigma_0^2$ in \eqref{eqn:hp-V0-bound-lemma}, and substituting $B_{N-1}$ from \eqref{eqn:hp-induction-event}, we obtain
\begin{equation*}
\begin{aligned}
\tfrac{\tilde{\mathcal V}_{N-2}}{\widetilde v_{0}} \overset{\eqref{eqn:U-def}}{\leq} 1+\tfrac{(N-1)\sigma_*^2}{\sigma_0^2} +\tfrac{\varsigma_*\mu B_{N-1}}{16\sigma_0^2}&\overset{\text{(i)}}{\leq} N+\tfrac{c_{\mathcal A}q_{N-1}L^2}{16\mu^2} +\tfrac{c_{\mathcal A}q_{N-1}\varsigma_*(1+\Lambda)(N-1)}{16\mu m}\\
&\overset{\text{(ii)}}{\leq} N+\tfrac{q_{N-1}NL}{16\mu} +\tfrac{c_{\mathcal A}q_{N-1}L^2}{16\mu^2}\leq (1+c_{\mathcal A})q_{N-1}(N+k_0)^2,
\end{aligned}
\end{equation*}
where in (i), we used the definition of $B_{N-1}$ in \eqref{eqn:hp-induction-event} and the lower bound of $\sigma_0^2$ in \eqref{eqn:hp-V0-bound-lemma}; in (ii), we used the condition on the batch size $m \geq \left\lceil {c_{\mathcal A}\varsigma_*(1+\Lambda)}/{L} \right\rceil$; and in the last step, we used the choice for $k_0$ in \eqref{eqn:stepsize-inner}.
\end{proof}

We now show that \eqref{eqn:hp-induction-event} holds at iteration $N-1$ with probability at least $1-2(N-1)\exp\{-\Lambda\}$ via an appropriate selection of algorithmic parameters. This will allow us to complete the induction and specify the convergence rate in the subroutine assumption \eqref{eqn:hp-distance-s-state-dependent} required for AR.
Throughout the following corollary, the definitions of $B_k$, $\widetilde{\mathcal E}_k$, and $\widehat{\mathcal E}_k$ in \eqref{eqn:hp-induction-event} are taken with the numerical constant $2304$.
Let $k_0:=\tfrac{16L}{\mu}$. For fixed $\Lambda>0$, set $q_0=1,$ and, for all $k\geq1,$ define
\begin{equation}\label{eqn:hp-r-choice}
q_s:=\max\left\{q_{k-1},72(e+2)\left[1+\Lambda+3\log(k+k_0)+\tfrac32\log(2305)+\tfrac32\log q_{k-1}\right]\right\}.
\end{equation}
Furthermore, let  $a_k:=1+\Lambda+\log(k+k_0)$. Choose a fixed constant $C_q\geq1$ sufficiently large so that
$C_q\geq 72(e+2)\left[\tfrac92+\tfrac32\log(2305C_q)\right].$ Notice that  $C_q$ is well defined since the right-hand side grows logarithmically in $C_q$. We have the following sublinear convergence of SOE.
\begin{corollary}\label{cor:hp-state-dependent-concrete-rate}
Suppose the assumptions in \autoref{lem:hp-variance-ratio}, and  further suppose that
\begin{equation}\label{eqn:hp-combined-batch-condition}
m\geq\left\lceil\tfrac{(1+\Lambda)\varsigma_*}{L}\max\left\{2304,\tfrac32q_N\right\}\right\rceil.
\end{equation}
Then, with probability at least $1-2N\exp\{-\Lambda\},$ it holds that
\begin{equation}\label{eqn:hp-sd-subroutine-guarantee}
\begin{aligned}
&(N+k_0)(N+k_0-1)\|x_N-x^*\|^2+\tsum_{k=0}^{N-1}(k+k_0)(k+k_0-1)\|x_{k+1}-x_k\|^2\\
&\quad+\tfrac{16}{3\mu}\tsum_{k=1}^{N}(k+k_0-1)\langle F(x_k)-F(x^*),x_k-x^*\rangle\\
&\leq 2304 q_N\left(\tfrac{L^2\|x_0-x^*\|^2}{\mu^2}+\tfrac{ (1+\Lambda)N\sigma_*^2}{\mu^2m}\right).
\end{aligned}
\end{equation}
Furthermore, $q_N\leq C_q\left[1+\Lambda+\log(N+k_0)\right]$.
\end{corollary}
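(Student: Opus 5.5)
We will prove the statement by induction on $N$, showing that $\widehat{\mathcal E}_{N-1}$ (with constant $2304$ in $B_k$) holds with probability at least $1-2(N-1)\exp\{-\Lambda\}$. On that event we then apply \autoref{thm:hp-state-dependent-inner-loop} to obtain \eqref{eqn:hp-sd-subroutine-guarantee} at step $N$; a union bound contributes an extra $2\exp\{-\Lambda\}$, giving total failure probability $2N\exp\{-\Lambda\}$. The base case is trivial: $\theta_0\|x_0-x^*\|^2=k_0(k_0-1)\|x_0-x^*\|^2\leq 256L^2\|x_0-x^*\|^2/\mu^2\leq B_0$. The induction step from $N$ to $N+1$ is the same inequality read off at index $N$, namely $\theta_N\|x_N-x^*\|^2\leq B_N$, which follows from \eqref{eqn:hp-sd-subroutine-guarantee} after multiplying through by $8$. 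So the bound at $N$ must be established with exactly the constant $2304q_N$ that appears in $B_N$.

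For the step itself, we work on $\widehat{\mathcal E}_{N-1}$. First, \autoref{lem:hp-variance-ratio} applies because \eqref{eqn:hp-combined-batch-condition} gives $m\geq 2304(1+\Lambda)\varsigma_*/L\geq c_{\mathcal A}(1+\Lambda)\varsigma_*/L$ with $c_{\mathcal A}=2304$. It bounds both logarithmic ratios by $2\log(N+k_0)+\log 2305+\log q_{N-1}$. Substituting into the definition of $\ell_N(\Lambda)$ gives $\ell_N(\Lambda)\leq 72(e+2)[1+\Lambda+3\log(N+k_0)+\tfrac32\log 2305+\tfrac32\log q_{N-1}]\leq q_N$ by the recursion \eqref{eqn:hp-r-choice}, where $q$ is indexed by $k$.

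Next we verify \eqref{eq:hp-inner-loop-D}. The stepsize choices \eqref{eqn:stepsize-inner} give $\theta_{k+1}(\eta_{k+1}\lambda_{k+1})^2+\theta_{k+2}(\eta_{k+2}\lambda_{k+2})^2\leq 32/\mu^2$ and $\theta_k\eta_k=4(k+k_0)/\mu\geq 64L/\mu^2$. It therefore suffices that $96(1+\Lambda)q_N\varsigma_*/(m\mu^2)\leq 64L/\mu^2$, i.e.\ $m\geq\tfrac32(1+\Lambda)q_N\varsigma_*/L$, which is exactly the second term of \eqref{eqn:hp-combined-batch-condition}. \autoref{thm:hp-state-dependent-inner-loop} then yields \eqref{eq:hp-main-thm}.

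It remains to collect constants; multiplying \eqref{eq:hp-main-thm} by $8$ converts its left side to that of \eqref{eqn:hp-sd-subroutine-guarantee}, with the sum terms having coefficients $\theta_k$ and $\tfrac{4}{3}\theta_{k-1}\eta_{k-1}=\tfrac{16}{3\mu}(k+k_0-1)$. On the right side, $\theta_0/2\leq 128L^2/\mu^2$. The term $c_{\mathcal A}q_{N-1}L^2/(16\mu^2)$ becomes $144q_{N-1}L^2/\mu^2$. The $\varsigma_*$-term uses $(1+\Lambda)L\varsigma_*/m\leq L^2/2304$, $\ell_N\leq q_N$, and $\theta_1(\eta_1\lambda_1)^2\leq16/\mu^2$, which leaves $O(q_NL^2/\mu^2)$ with a small constant. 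The $\sigma_*^2$-term uses $\sum_k[\cdots]\leq 32N/\mu^2$, giving $\ell_N\cdot 48N/\mu^2+144q_{N-1}(N-1)/\mu^2\leq 192q_NN/\mu^2$. After the factor $8$, all coefficients stay below $2304$, using $q_{N-1}\leq q_N$ and $q_N\geq1$.

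The main obstacle is that \autoref{thm:hp-state-dependent-inner-loop} feeds $B_{N-1}/16$ back into the bound through Young's inequality. The inductive constant must therefore be a fixed point: $8\cdot\tfrac{1}{16}\cdot 2304$ plus the other contributions must not exceed $2304$, with $q$ increasing from $q_{N-1}$ to $q_N$. We will check this carefully, possibly by absorbing the $B_{N-1}/16$ term using $q_{N-1}\leq q_N$ together with the slack left by the remaining constants.

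Finally, for $q_N\leq C_q[1+\Lambda+\log(N+k_0)]$ we use induction on $a_k$. If $q_{k-1}\leq C_qa_{k-1}\leq C_qa_k$, then $\log q_{k-1}\leq \log C_q+\log a_k$, and with $\log a_k\leq a_k$ and $\log(k+k_0)\leq a_k$ the recursion gives $q_k\leq 72(e+2)[3a_k+\tfrac32\log(2305C_q)+\tfrac32 a_k]\leq C_qa_k$. The last inequality follows from the defining property of $C_q$ together with $a_k\geq1$.
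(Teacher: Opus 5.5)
Your proposal is correct and follows essentially the same route as the paper's proof. Both arguments run an induction on $\widehat{\mathcal E}_N$ with a union bound, get $\ell_N(\Lambda)\le q_N$ from \autoref{lem:hp-variance-ratio}, verify \eqref{eq:hp-inner-loop-D} from the $\tfrac32 q_N$ part of the batch condition, multiply \eqref{eq:hp-main-thm} by $8$, absorb the fed-back $B_{N-1}/16$ term using $q_{N-1}\le q_N$, and bound $q_N$ by induction on $a_k$.

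One small point, which the paper shares since it simply asserts the $N=1$ case: \autoref{thm:hp-state-dependent-inner-loop} and \autoref{lem:hp-variance-ratio} are stated only for $N\ge 2$. Your step from $\widehat{\mathcal E}_0$ to $\widehat{\mathcal E}_1$ should therefore use \autoref{lem:per-itrate} with $N=1$, where the inner-product sum is empty, together with \autoref{prop-gradient-norm-square} for $N=1$.
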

\begin{proof}
We show by induction that, for all $N\geq2,$ $\widehat{\mathcal E}_N$ defined in \eqref{eqn:hp-induction-event} holds with probability at least $1-2N\exp\{-\Lambda\}$. Recall that $\widehat{\mathcal E}_1$ holds with probability at least $1-2\exp\{-\Lambda\}$. Fix $N\geq2,$ and assume that $\widehat{\mathcal E}_{N-1}$ holds with probability at least $1-2(N-1)\exp\{-\Lambda\}$. Since the choices of $k_0$, $\eta_k$, $\theta_k$, and $\lambda_k$ are the same as those in \autoref{cor:subroutine-guarantee}, conditions \eqref{eq:inner-loop-A}, \eqref{eq:inner-loop-B}, and \eqref{eq:inner-loop-C} all hold. Moreover, for all $k\geq1,$
\begin{equation}\label{eqn:hp-exact-parameter-identities}
\lambda_k=\tfrac{k+k_0-1}{k+k_0},\qquad \theta_k\eta_k=\tfrac{4(k+k_0)}{\mu},\qquad \theta_k(\eta_k\lambda_k)^2=\tfrac{16(k+k_0-1)}{\mu^2(k+k_0)}\leq\tfrac{16}{\mu^2}.
\end{equation}
Hence, we have
\begin{equation}\label{eqn:hp-weight-sum-induction}
\tfrac{16}{\mu^2}+\tsum_{k=0}^{N-1}\left[\theta_k(\eta_k\lambda_k)^2+\theta_{k+1}(\eta_{k+1}\lambda_{k+1})^2\right]\leq\tfrac{32N}{\mu^2}.
\end{equation}
By \autoref{lem:hp-variance-ratio}, on $\widehat{\mathcal E}_{N-1},$ we have
\begin{equation}\label{eqn:hp-ell-by-r}
\ell_N(\Lambda)\leq72(e+2)\left[1+\Lambda+3\log(N+k_0)+\tfrac32\log 2305+\tfrac32\log q_{N-1}\right]\leq q_N.
\end{equation}
Furthermore, by \eqref{eqn:hp-exact-parameter-identities}, for all $0\leq k\leq N-2,$
\begin{equation*}
\theta_{k+1}(\eta_{k+1}\lambda_{k+1})^2+\theta_{k+2}(\eta_{k+2}\lambda_{k+2})^2\leq\tfrac{\theta_k\eta_k}{2L}.
\end{equation*}
Therefore, \eqref{eqn:hp-combined-batch-condition} and \eqref{eqn:hp-ell-by-r} imply
\begin{equation*}
\tfrac{3(1+\Lambda)\ell_N(\Lambda)\varsigma_*}{m}\left[\theta_{k+1}(\eta_{k+1}\lambda_{k+1})^2+\theta_{k+2}(\eta_{k+2}\lambda_{k+2})^2\right]\leq\tfrac{3(1+\Lambda)q_N\varsigma_*}{2Lm}\theta_k\eta_k\overset{\eqref{eqn:hp-combined-batch-condition}}{\leq}\theta_k\eta_k.
\end{equation*}
Hence, \eqref{eq:hp-inner-loop-D} holds, and the conditions of \autoref{thm:hp-state-dependent-inner-loop} apply, and thus \eqref{eq:hp-main-thm} holds.

We proceed with bounding the initial optimality gap term in \autoref{thm:hp-state-dependent-inner-loop}, using \eqref{eqn:hp-combined-batch-condition}, \eqref{eqn:hp-ell-by-r} and $q_{N-1}\leq q_N,$ and  we have
\begin{equation*}
\begin{aligned}
&8\left\{\tfrac{\theta_0}{2}+\tfrac{2304q_{N-1}L^2}{16\mu^2}+\tfrac{(1+\Lambda)\ell_N(\Lambda)L\varsigma_*}{m}\left[\theta_1(\eta_1\lambda_1)^2+\tfrac{16}{\mu^2}\right]\right\}\leq\left[1024+\tfrac{2304q_N}{2}+\tfrac{256q_N}{2304}\right]\tfrac{L^2}{\mu^2}\leq\tfrac{2304q_NL^2}{\mu^2}.
\end{aligned}
\end{equation*}
Moreover, by \eqref{eqn:hp-weight-sum-induction}, we can bound the noise related part as follows
\begin{equation*}
\begin{aligned}
&8\left\{\ell_N(\Lambda)\left[\tfrac{16}{\mu^2}+\tsum_{k=0}^{N-1}\left(\theta_k(\eta_k\lambda_k)^2+\theta_{k+1}(\eta_{k+1}\lambda_{k+1})^2\right)\right]+\tfrac{ 2304q_{N-1}(N-1)}{16\mu^2}\right\}\leq\left(256+\tfrac{ 2304}{2}\right)\tfrac{q_NN}{\mu^2}\leq\tfrac{ 2304q_NN}{\mu^2}.
\end{aligned}
\end{equation*}
Noting that $\theta_N=(N+k_0)(N+k_0-1)$ and $\theta_{k-1}\eta_{k-1}=\tfrac{4(k+k_0-1)}{\mu},$ multiplying \eqref{eq:hp-main-thm} by $8$ and substituting the preceding bounds yields \eqref{eqn:hp-sd-subroutine-guarantee}. In particular, \eqref{eqn:hp-sd-subroutine-guarantee} implies $\theta_N\|x_N-x^*\|^2\leq B_N,$ and hence $\widetilde{\mathcal E}_N$ holds.
Therefore, by \autoref{thm:hp-state-dependent-inner-loop}, we have
\begin{equation*}
\mathbb P\left(\widehat{\mathcal E}_{N-1}\cap\widetilde{\mathcal E}_N^c\right)\leq2\exp\{-\Lambda\}.
\end{equation*}
Since $\widehat{\mathcal E}_N=\widehat{\mathcal E}_{N-1}\cap\widetilde{\mathcal E}_N,$ by the union bound,
\begin{equation*}
\begin{aligned}
\mathbb P\left(\widehat{\mathcal E}_N^c\right)&=\mathbb P\left(\widehat{\mathcal E}_{N-1}^c\cup\left(\widehat{\mathcal E}_{N-1}\cap\widetilde{\mathcal E}_N^c\right)\right)\leq\mathbb P\left(\widehat{\mathcal E}_{N-1}^c\right)+\mathbb P\left(\widehat{\mathcal E}_{N-1}\cap\widetilde{\mathcal E}_N^c\right)\leq2N\exp\{-\Lambda\}.
\end{aligned}
\end{equation*}
This completes the induction, and 
hence \eqref{eqn:hp-sd-subroutine-guarantee} holds with probability at least $1-2N\exp\{-\Lambda\}$.

Finally, we show by induction that $q_s\leq C_qa_k$ for all $k\geq0$. When $k=0,$ $q_0=1\leq C_qa_0$. Suppose $q_{k-1}\leq C_qa_{k-1}$ for some $k\geq1$. Since $a_{k-1}\leq a_k,$ we have $q_{k-1}\leq C_qa_k.$
Moreover, by the definition of $q_s$ in \eqref{eqn:hp-r-choice},
\begin{equation*}
\log q_{k-1}\leq\log C_q+\log a_{k-1}\leq\log C_q+\log a_k.
\end{equation*}
Using $1+\Lambda+3\log(k+k_0)=a_k+2\log(k+k_0)$, $\log(k+k_0)\leq a_k$, $\log a_k\leq a_k$, and $a_k\geq1,$ we obtain
\begin{equation*}
\begin{aligned}
72(e+2)\left[1+\Lambda+3\log(k+k_0)+\tfrac32\log(2305)+\tfrac32\log q_{k-1}\right]
&\leq 72(e+2)\left[\tfrac92+\tfrac32\log(2305C_q)\right]a_k\leq C_qa_k.
\end{aligned}
\end{equation*}
Therefore, by \eqref{eqn:hp-r-choice}, both terms in the maximum defining $q_s$ are bounded by $C_qa_k$, and hence $q_s\leq C_qa_k$. This completes the induction. Consequently, we have $q_N\leq C_q\left[1+\Lambda+\log(N+k_0)\right].$
\end{proof}
\medskip

In addition, \autoref{cor:hp-state-dependent-concrete-rate} also allows the SOE algorithm to meet the residual guarantee of \eqref{eqn:hp-residual-s-state-dependent} as follows.
\begin{corollary}\label{cor:hp-residual-concrete-rate-state}
Suppose the conditions of \autoref{cor:hp-state-dependent-concrete-rate} hold. Let $\Lambda \geq 0$ and $N \geq 2$. It holds that
\begin{equation}\label{eqn:hp-state-dep-slow-residual}
\mathbb P\left\{\res_{F}(x_{N})\leq 34\sqrt{q_N}\left(L\|x_0-x^*\|+\tfrac{\sqrt{N}\sqrt{1+\Lambda}\sigma_*}{\sqrt{m}}\right)\right\} \geq 1 - 2N\exp\{-\Lambda\},
\end{equation}
where $q_N$ is defined in \eqref{eqn:hp-r-choice}.
\end{corollary}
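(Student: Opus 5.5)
The plan is to mirror the in-expectation proof of \autoref{lem:slow-residual-last}, but pathwise, and on the high-probability event supplied by \autoref{cor:hp-state-dependent-concrete-rate}.

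\textbf{Step 1: deterministic residual bound.} First, I reuse the optimality condition of the SOE step \eqref{eq:stochastic_algorithm_step} at $k=N$. Exactly as before, it gives, along every sample path,
\begin{equation*}
\res_F(x_N)\leq 2\|\Delta_{N-1}\|+\|\Delta_{N-2}\|+\tfrac{(4N+5k_0)\mu}{16}\|x_N-x_{N-1}\|+\tfrac{\mu k_0}{16}\|x_{N-1}-x_{N-2}\|.
\end{equation*}
Here I used $\lambda_{N-1}\le 1$, $L+1/\eta_{N-1}\le (4N+5k_0)\mu/16$, and $\lambda_{N-1}L\le \mu k_0/16$.

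\textbf{Step 2: the increment terms.} On the event of \autoref{cor:hp-state-dependent-concrete-rate}, which has probability at least $1-2N\exp\{-\Lambda\}$, set
\begin{equation*}
B\coloneqq 2304q_N\left(L^2\|x_0-x^*\|^2/\mu^2+(1+\Lambda)N\sigma_*^2/(\mu^2m)\right).
\end{equation*}
On this event, $\theta_{N-1}\|x_N-x_{N-1}\|^2\le B$, $\theta_{N-2}\|x_{N-1}-x_{N-2}\|^2\le B$, and $\theta_k\|x_k-x^*\|^2\le B$ for all $k\le N$; the last follows from the induction events $\widehat{\mathcal E}_N$. Using $(4N+5k_0)/\sqrt{\theta_{N-1}}\le 8$ and $k_0/\sqrt{\theta_{N-2}}\le 2$, the two increment terms are at most $\tfrac{10\mu}{16}\sqrt B$. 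Since $\sqrt B\le 48\sqrt{q_N}\bigl(L\|x_0-x^*\|+\sqrt{(1+\Lambda)N}\sigma_*/\sqrt m\bigr)/\mu$, this contributes $30\sqrt{q_N}(\cdots)$.

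\textbf{Step 3: the noise terms (main obstacle).} The previous argument bounded $\mathbb E\|\Delta_k\|^2$, but now I need pathwise bounds on $\|\Delta_{N-1}\|$ and $\|\Delta_{N-2}\|$. I would apply \autoref{lem:juditsky-bound} conditionally on $\mathcal F_{N-1}$ and $\mathcal F_{N-2}$, with $t=\sqrt{3\Lambda}$. This gives, each with probability at least $1-\exp\{-\Lambda\}$,
\begin{equation*}
\|\Delta_k\|\le (1+\sqrt{3\Lambda})\sigma_k/\sqrt m.
\end{equation*}
Next I bound the random proxy $\sigma_k^2=\sigma_*^2+\varsigma_*\langle F(x_k)-F(x^*),x_k-x^*\rangle\le \sigma_*^2+\varsigma_* L B/\theta_k$ on the event. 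The batch condition \eqref{eqn:hp-combined-batch-condition} gives $\varsigma_*(1+\Lambda)/m\le L/2304$, and $\theta_k\ge k_0^2\ge (16L/\mu)^2$. Together these make $(1+\Lambda)\varsigma_*LB/(m\theta_k)$ of order $\mu^2B/(2304\cdot 256)$, which is tiny. Hence $2\|\Delta_{N-1}\|+\|\Delta_{N-2}\|\le 3\sqrt 3\sqrt{1+\Lambda}\,\sigma_*/\sqrt m+O(\mu\sqrt B)$ with a small constant. The remaining slack, $34-30$ together with $\sqrt{q_N}\ge1$ and $N\ge2$, absorbs all of this.

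\textbf{Step 4: probability bookkeeping.} The two extra events cost $2\exp\{-\Lambda\}$. The delicate point is fitting this into the stated total of $2N\exp\{-\Lambda\}$. I would handle it either by noting that the induction in \autoref{cor:hp-state-dependent-concrete-rate} actually uses only $2(N-1)$ failures plus the final step, or by folding the conditional Juditsky bounds into the same union bound, with the base case $\widehat{\mathcal E}_1$ being essentially free. I expect this accounting, together with controlling the random $\sigma_k$ inside Step 3, to be the only nonroutine part.
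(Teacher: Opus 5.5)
Your Steps 1--3 carry out, pathwise, the proof of \autoref{lem:slow-residual-last}, which is all the paper's one-line proof asks for, and those estimates are sound. There is one small slip. Cauchy--Schwarz gives $1+\sqrt{3\Lambda}\le 2\sqrt{1+\Lambda}$, not $\sqrt3\sqrt{1+\Lambda}$, so the fresh noise terms cost $6\sqrt{1+\Lambda}\,\sigma_*/\sqrt m$. That exceeds the $4\sqrt2\sqrt{1+\Lambda}\,\sigma_*/\sqrt m$ of slack guaranteed by $q_N\ge1$, $N\ge2$. It is still absorbed, but only because \eqref{eqn:hp-r-choice} forces $q_N\ge 72(e+2)$.

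\textbf{The gap is the probability count in Step 4.} Step 2 invokes \eqref{eqn:hp-sd-subroutine-guarantee} at index $N$, whose failure probability $2N\exp\{-\Lambda\}$ already includes the two concentration events of the final induction step. Two fresh applications of \autoref{lem:juditsky-bound} therefore leave you with only $1-2(N+1)\exp\{-\Lambda\}$. Neither of your repairs closes this.
\begin{itemize}
\item The first frees nothing, because Step 2 needs exactly that final step.
\item The second rests on a false claim. $\widehat{\mathcal E}_1$ is not free: $\theta_1\|x_1-x^*\|^2\le B_1$ requires controlling $\|\Delta_0\|$, which is not almost surely bounded under the sub-Gaussian assumption \eqref{eq:hp-bounded-var-at-opt-o}. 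Even charging the base case only $\exp\{-\Lambda\}$ leaves $(2N+1)\exp\{-\Lambda\}$.
\end{itemize}

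\textbf{Fix: draw no new events.} The guarantee at $N$ is proved on the intersection $G_N$ of three events:
\begin{itemize}
\item $\widehat{\mathcal E}_{N-1}$;
\item the event $A_N$ from the proof of \autoref{prop-gradient-norm-square};
\item the event of \autoref{prop:inner-product-concentration}.
\end{itemize}
This intersection satisfies $\mathbb P(G_N^c)\le 2N\exp\{-\Lambda\}$. On $A_N$, keep a single summand on the left. With $w_k\coloneqq\theta_k(\eta_k\lambda_k)^2+\theta_{k+1}(\eta_{k+1}\lambda_{k+1})^2\in[15/\mu^2,32/\mu^2]$ under \eqref{eqn:stepsize-inner}, this gives
\[
\|\Delta_j\|^2\le\tfrac{96}{5m}\Bigl(1+\Lambda+\log\tfrac{\mathcal V_{N-1}}{\mathcal V_0}\Bigr)\tsum_{k=0}^{N-1}\sigma_k^2,\qquad j\in\{N-2,N-1\}.
\]
The logarithmic factor is at most $q_N/(72(e+2))$ by \autoref{lem:hp-variance-ratio} and \eqref{eqn:hp-ell-by-r}.

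For the state-dependent part of $\tsum_k\sigma_k^2$, use the inner-product term of \eqref{eqn:hp-sd-subroutine-guarantee}. It gives $\tsum_{k=1}^{N}\langle F(x_k)-F(x^*),x_k-x^*\rangle\le 3\mu^2B/(256L)$, and the $k=0$ term is bounded by Lipschitz continuity. Do not use $L\|x_k-x^*\|^2\le LB/\theta_k$ here: summed over all $k$ it only yields $\varsigma_*\mu B/16$, hence an error of order $\sqrt{L\mu B}$, too large by a factor $\sqrt{L/\mu}$.

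Combining these with \eqref{eqn:hp-combined-batch-condition} bounds $2\|\Delta_{N-1}\|+\|\Delta_{N-2}\|$ by roughly $3.1\sqrt{q_N}\bigl(L\|x_0-x^*\|+\sqrt{N(1+\Lambda)}\,\sigma_*/\sqrt m\bigr)+0.72\sqrt{q_NN}\,\sigma_*/\sqrt m$. Together with your $30\sqrt{q_N}(\cdots)$ from Step 2 this stays below $34\sqrt{q_N}(\cdots)$, at no probability cost beyond $2N\exp\{-\Lambda\}$.
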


\begin{proof}
Since the selection of $\theta_k$, $\eta_k$ and $\lambda_k$ are identical to those in \autoref{cor:subroutine-guarantee} with similar conditions on $m$, showing that \eqref{eqn:hp-state-dep-slow-residual} holds with probability at least $1 - 2N\exp\{-\Lambda\}$ is similar to the proof of \autoref{lem:slow-residual-last}.
\end{proof}

It is immediate to see that when applied to the regularized operator $F_s$ for the AR framework, the results of \autoref{cor:hp-state-dependent-concrete-rate} and \autoref{cor:hp-residual-concrete-rate-state} imply that SOE satisfies the required rates in \autoref{as:hp-assumption_inner_sto-state-dependent}. The proof is similar to that of \autoref{prop:subroutine}, so we omit it for brevity.
\begin{proposition}\label{prop:state-dependent-high-p}
Suppose that the conditions of \autoref{cor:hp-state-dependent-concrete-rate} are satisfied when the generic parameters $L$ and $\mu$ therein are specialized to $L+r_s$ and $\mu+r_s$, respectively. Then SOE exhibits the following performance guarantee after $k$ iterations: for all $2\leq k\leq N_s$, it holds that
\begin{align*}
\mathbb P\left\{\|x_{s,k}-x_s^*\|^2\leq2304q_s\left[\tfrac{(L+r_s)^2}{(\mu+r_s)^2k^2}\|x_{s-1}-x_s^*\|^2+\tfrac{(1+\Lambda)\sigma_{s,*}^2}{km_s(\mu+r_s)^2}\right]\,\middle|\,\mathcal F_{s,0}\right\}&\geq1-2k\exp\{-\Lambda\},\\
\mathbb P\left\{\res_{F_s}(x_{s,k})\leq34\sqrt{q_s}\left[(L+r_s)\|x_{s-1}-x_s^*\|+\tfrac{\sqrt{k}\sqrt{1 + \Lambda}\sigma_{s,*}}{\sqrt{m_s}}\right]\,\middle|\,\mathcal F_{s,0}\right\}&\geq1-2k\exp\{-\Lambda\},
\end{align*}
where $\sigma_{s,*}^2\coloneqq9\left[\varsigma_s r_s\|x_s^*-\bar x_s\|\,\|x_s^*-x^*\|+\sigma_{*}^2\right]$.
Moreover, $q_s$ satisfies  $q_s\leq C_q\left[1+\Lambda+\log(N_s+k_0)\right],$
where $C_q$ is a fixed constant satisfying $C_q\geq72(e+2)\left[\tfrac92+\tfrac32\log(2305C_q)\right]$.
\end{proposition}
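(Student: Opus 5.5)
The plan is to mirror the proof of \autoref{prop:subroutine}, now using the high-probability bounds \autoref{cor:hp-state-dependent-concrete-rate} and \autoref{cor:hp-residual-concrete-rate-state}, and doing everything conditionally on $\mathcal F_{s,0}$.

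\emph{Setup on the regularized problem.} Within epoch $s$, SOE runs on $F_s$. The operator $F_s$ is $(L+r_s)$-Lipschitz and $(\mu+r_s)$-strongly monotone, has unique solution $x_s^*$, and is started at $x_{s,0}=x_{s-1}$. Conditionally on $\mathcal F_{s,0}$, the quantities $\bar x_s$, $x_s^*$, $x_{s-1}$ and $\sigma_{s,*}^2$ are all fixed. The analysis in the appendix uses the noise assumption \eqref{eq:hp-bounded-var-at-opt-o} only at the reference point through $\sigma_*^2+\varsigma_*\langle F(x)-F(x^*),x-x^*\rangle$. So I first check that $\widehat F_{s,k}$ satisfies a sub-Gaussian state-dependent condition relative to $x_s^*$, with constants $(\varsigma_*,\sigma_{s,*}^2/9)$ up to the factor $9$ already absorbed in \autoref{lem:hp-regularized-operator-properties}.

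To get this, I write $\langle F(x)-F(x^*),x-x^*\rangle$ in terms of $\langle F_s(x)-F_s(x_s^*),x-x_s^*\rangle$ plus cross terms. The expansion is the same one used in \autoref{lemma:state-dependent-noise-regularized}, combined with the VI optimality of $x^*$ and $x_s^*$. The term $r_s\|x-x_s^*\|^2$ is nonnegative and can be dropped, and the mixed terms are bounded by $r_s\|x_s^*-\bar x_s\|\|x_s^*-x^*\|$, which is exactly the additive part of $\sigma_{s,*}^2$.

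\emph{Applying the corollaries.} With this in hand, I apply \autoref{cor:hp-state-dependent-concrete-rate} with the substitutions $L\to L+r_s$, $\mu\to\mu+r_s$, $\sigma_*^2\to\sigma_{s,*}^2$, $m\to m_s$ and $N\to k$. The batch condition \eqref{eqn:hp-combined-batch-condition} is assumed in the statement. Dividing \eqref{eqn:hp-sd-subroutine-guarantee} by $(k+k_{s,0})(k+k_{s,0}-1)\ge k^2$ and dropping the nonnegative extra terms gives the distance bound: the $1/k^2$ deterministic term and the $k/k^2=1/k$ stochastic term. Since the corollary holds for every $k\ge 2$, the bound holds simultaneously for all $2\le k\le N_s$. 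The $q$-factor is monotone in the iteration index, so I bound it by $q_s$, its value at $N_s$. \autoref{cor:hp-state-dependent-concrete-rate} then directly gives $q_s\le C_q[1+\Lambda+\log(N_s+k_0)]$ with $k_0=k_{s,0}$. The residual bound follows in the same way from \autoref{cor:hp-residual-concrete-rate-state}, with the same substitutions and $\res_{F_s}$ in place of $\res_F$. Each event has probability at least $1-2k\exp\{-\Lambda\}$ by those corollaries.

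\emph{Main obstacle.} The delicate step is conditioning. The appendix arguments are unconditional supermartingale and self-normalized bounds started at a deterministic $x_0$. Here the starting point, the prox-center and $\sigma_{s,*}$ are random but $\mathcal F_{s,0}$-measurable. I would rerun the constructions of \autoref{prop-gradient-norm-square} and \autoref{prop:inner-product-concentration} under $\mathbb P(\cdot\mid\mathcal F_{s,0})$, with $M_0=1$ and $\tilde v_0$ chosen $\mathcal F_{s,0}$-measurable, and use $\mathbb E[M_N\mid\mathcal F_{s,0}]\le 1$. This works because each epoch draws fresh samples. Everything else is a direct substitution of parameters.
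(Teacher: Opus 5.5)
Your route is the one the paper intends; its proof is omitted as ``similar to'' \autoref{prop:subroutine}. You apply \autoref{cor:hp-state-dependent-concrete-rate} and \autoref{cor:hp-residual-concrete-rate-state} to $F_s$ with constants $L+r_s$, $\mu+r_s$, start $x_{s-1}$, solution $x_s^*$ and batch $m_s$. You then divide by $(k+k_{s,0})(k+k_{s,0}-1)\ge k^2$ and bound the nondecreasing $q$-sequence by its value at $N_s$. Those steps, and your handling of the conditioning on $\mathcal F_{s,0}$, are fine. One small correction: the guarantee $1-2k\exp\{-\Lambda\}$ holds for each fixed $k$, not simultaneously in $k$.

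The gap is the step you yourself flag as the prerequisite. You need $\widehat F_{s,k}$ to satisfy the state-dependent sub-Gaussian condition relative to $(F_s,x_s^*)$ at \emph{every} query point, with parameters $(\varsigma_*,\sigma_{s,*}^2/9)$. The appendix analysis really does need this, because the proxies $\sigma_k^2$ are evaluated at the iterates and absorbed by the terms $\langle F_s(x_k)-F_s(x_s^*),x_k-x_s^*\rangle$. Your derivation of it is false. One has exactly
\[
\langle F(x)-F(x^*),x-x^*\rangle=\langle F_s(x)-F_s(x_s^*),x-x_s^*\rangle-r_s\|x-x_s^*\|^2+\langle F(x)-F(x_s^*),x_s^*-x^*\rangle+\langle F(x_s^*)-F(x^*),x-x^*\rangle .
\]
\autoref{lemma:state-dependent-noise-regularized} only bounds the piece $\langle F(x_s^*)-F(x^*),x_s^*-x^*\rangle$ by $r_s\|x_s^*-\bar x_s\|\|x_s^*-x^*\|$. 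It works at $x=x_s^*$, where the cross terms $\langle F(x)-F(x_s^*),x_s^*-x^*\rangle+\langle F(x_s^*)-F(x^*),x-x_s^*\rangle$ vanish. For $x\neq x_s^*$ these terms are linear in $x-x_s^*$, sign-indefinite, and not bounded by any constant.

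Concretely, let $X=\mathbb R$ and $F(x)=L\max\{x,0\}$. Take the solution $x^*=-q$ with $q>0$, and $\bar x_s=0$, so that $x_s^*=0$. At $x=p>0$ the left-hand side equals $Lp(p+q)$. Meanwhile $\langle F_s(x)-F_s(x_s^*),x-x_s^*\rangle=(L+r_s)p^2$ and $r_s\|x_s^*-\bar x_s\|\|x_s^*-x^*\|=0$. Hence no bound of the form $c\langle F_s(x)-F_s(x_s^*),x-x_s^*\rangle+c'r_s\|x_s^*-\bar x_s\|\|x_s^*-x^*\|$ survives as $p\downarrow 0$. An oracle whose error has squared norm exactly $\sigma_*^2+\varsigma_*Lp(p+q)$ satisfies \eqref{eq:hp-bounded-var-at-opt-o} but violates the condition you want to transfer. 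If you instead keep $-r_s\|x-x_s^*\|^2$ and apply Young, you are left with an additive $L^2\|x_s^*-x^*\|^2/r_s$. That term is not part of $\sigma_{s,*}^2$, and with $r_1$ of order $\varepsilon/D_0$ it would destroy the $\varsigma_*D_0/\varepsilon$ term in \autoref{thm:hp-state-dep-main-ar-new}.

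There are two ways to close this. First, suppose $F$ is affine, as in the policy-evaluation application. Then $Q(v)\coloneqq\langle F(z+v)-F(z),v\rangle$ does not depend on $z$ and is nonnegative on differences of points of $X$. It also satisfies $Q(u+d)=2Q(u)+2Q(d)-Q(u-d)$. Take $u=x-x_s^*$ and $d=x_s^*-x^*$; then $u-d=2\bigl(\tfrac{x+x^*}{2}-x_s^*\bigr)$, so $Q(u-d)\ge 0$. This gives
\[
\langle F(x)-F(x^*),x-x^*\rangle\le 2\langle F_s(x)-F_s(x_s^*),x-x_s^*\rangle+2r_s\|x_s^*-\bar x_s\|\,\|x_s^*-x^*\|.
\]
So the transfer holds with $2\varsigma_*$ in place of $\varsigma_*$. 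The additive part is $\sigma_*^2+2\varsigma_*r_s\|x_s^*-\bar x_s\|\|x_s^*-x^*\|\le\tfrac29\sigma_{s,*}^2$. Your substitutions then go through, with the batch condition written for $2\varsigma_*$.

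Second, for general monotone $F$, you must treat the state-dependent condition for $(F_s,x_s^*)$ as part of the hypothesis that ``the conditions of \autoref{cor:hp-state-dependent-concrete-rate} are satisfied'' for $F_s$, rather than derive it. The paper's omitted proof does not supply this step either. It defers to \autoref{prop:subroutine}, which invokes \autoref{lemma:state-dependent-noise-regularized} (here \autoref{lem:hp-regularized-operator-properties}). Both lemmas only control the oracle error at $x_s^*$.
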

Observe that in the deterministic case, we have $\sigma_k^2=0$ and $\Delta_k=0$ for all $k\geq0$, and hence $\mathcal V_k=\tilde{\mathcal V}_k=\tilde v_0=0$. In the deterministic case, $\Delta_k=0$ almost surely for every
$k\geq0$. Therefore, all stochastic-error terms vanish, and we
apply \autoref{lem:per-itrate} directly without invoking
\autoref{prop-gradient-norm-square} or
\autoref{prop:inner-product-concentration}. The resulting
deterministic telescoping argument yields the same rates with a
fixed constant $q_{k}$ independent of $k$ and $N$.

\section{Supporting Lemma}
\begin{lemma}\label{lem:bar-q-bound}
Let $p \in (0,1)$ and $0 < \varepsilon < LD_0$ where $D_0 \ge \operatorname{dist}(x_0, X^*).$ It holds that
\begin{align}
    \bar q &\le \tilde c_{\cA}(1 + \log_4 \tfrac{LD_0}{p \varepsilon}). 
\end{align}
\end{lemma}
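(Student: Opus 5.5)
The bound is a direct unpacking of the definitions in \eqref{eq:Qbar-Nbar-Lambda}, \eqref{eq:parameter-bounds} and \eqref{eq:hp-state-dependent-def-tildecA}. Write $T\coloneqq\log_4\tfrac{LD_0}{p\varepsilon}$, so $T\ge 0$ since $\varepsilon<LD_0$ and $p<1$. Expanding the logarithm in $\bar q$ gives
\begin{equation*}
\bar q = 2+2\log_4\tfrac{16}{p}+2\log_4\bar S+4\log_4 N_{\textnormal{base}} .
\end{equation*}
The plan is to show that each of the last three pieces is at most a constant depending only on $c_{\cA}$ plus a constant multiple of $T$ (or of $\log T$). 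The leftover constants are then absorbed into $\tilde c_{\cA}=16\log_4(16(16c_{\cA}+4))$, using $1+T\ge1$.

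First, $2\log_4(16/p)=4+2\log_4(1/p)\le 4+2T$, because $LD_0/\varepsilon>1$.

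Second, bound $N_{\textnormal{base}}$. Since $\mu\ge0$,
\begin{equation*}
N_{\textnormal{base}}\le 1+4\sqrt{2c_{\cA}}\Bigl(1+\tfrac{L}{\underline r}\Bigr),\qquad \tfrac{L}{\underline r}=\tfrac{16(16c_{\cA}+4)LD_0\log\bar S}{\varepsilon}.
\end{equation*}
Hence $\log_4 N_{\textnormal{base}}\le \log_4(C_1 c_{\cA}^{1/2}(16c_{\cA}+4))+T+\log_4\log\bar S$ for an absolute constant $C_1$.

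Third, bound $\bar S$ itself. By \eqref{eq:parameter-bounds}, $S_0\le 1+\log_4(16(16c_{\cA}+4))+T$, which is $\mathcal O(1+T)$ with a $c_{\cA}$-dependent constant. Therefore $\bar S\le 4+\log_4(16(16c_{\cA}+4))+T+\log_4(S_0\log(4S_0))+\tfrac12\log(\tilde c_{\cA}(1+\log\tfrac{LD_0}{p\varepsilon}))$. Every term beyond $T$ is logarithmic in $1+T$, so $\bar S\le c'(1+T)$. It follows that $\log_4\bar S$ and $\log_4\log\bar S$ are at most $\log_4 c'+\log_4(1+T)\le \log_4 c'+T$.

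Summing these pieces gives $\bar q\le A(c_{\cA})+B\,(1+T)$. The main obstacle is the bookkeeping needed to reach the specific constant $\tilde c_{\cA}$, and in particular confirming $B\le\tilde c_{\cA}$ and $A\le \tilde c_{\cA}$. The slope $B$ is an absolute constant of about $2+4+$(small), so it fits once we note $\tilde c_{\cA}\ge16\log_4 64\cdot\ldots\ge 48$. For $A$, the key observation is that every $c_{\cA}$-dependent quantity enters only through $\log_4(16(16c_{\cA}+4))$ or through $\log_4\sqrt{c_{\cA}}\le\log_4(16c_{\cA}+4)$, each with bounded multiplicity. The total multiplicity is roughly $2+4\cdot 2$, which is at most $16$, and the absolute additive constants are dominated by $16\log_4 64\ge 48$.

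I would carry out the count with crude inequalities: $\log x\le x$, $\log_4(a+b)\le\log_4 a+\log_4 b$ for $a,b\ge2$, and $\log_4\log\bar S\le\log_4\bar S$. If the constant $16$ turns out to be slightly tight, the fallback is to check that $\bar S\le 8S_0$, in the style of \eqref{eqn:bound-on-S-new}. That replaces all nested logarithms by $\log_4 S_0\le S_0$ and reduces everything to a linear combination of $S_0$ and $T$. Finally, $S_0\le\tfrac{\tilde c_{\cA}}{16}+1+T$ by the definition of $\tilde c_{\cA}$, which closes the estimate.
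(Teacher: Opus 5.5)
Your proposal is correct and follows essentially the same route as the paper's proof. You expand $\bar q$ as $6+2\log_4\tfrac{1}{p}+2\log_4\bar S+4\log_4 N_{\textnormal{base}}$, bound $N_{\textnormal{base}}$ using $\mu\ge0$ and the explicit formula for $L/\underline r$, bound $\bar S$ linearly in $1+T$ through $S_0$, and absorb everything into $\tilde c_{\cA}(1+T)$; your handling of the ceiling in $S_0$ is, if anything, more careful than the paper's.

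One correction to your stated target. As you wrote it, "$A\le\tilde c_{\cA}$ and $B\le\tilde c_{\cA}$" for a bound of the form $A+B(1+T)$ would only give $2\tilde c_{\cA}(1+T)$. You need either $A,B\le\tilde c_{\cA}$ for a bound of the form $A+BT$, or $A+B\le\tilde c_{\cA}$ for the form $A+B(1+T)$.

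This holds with room. Set $a\coloneqq\log_4(16(16c_{\cA}+4))>4$. A careful count gives $\bar q\le 6a+6T+\mathcal{O}(\log(1+a+T))$, against $\tilde c_{\cA}(1+T)=16a(1+T)$.
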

\begin{proof}
Observe that
\begin{align}\label{eq:hp-bound-S0}
S_0 &= \log_4\left(16(16c_{\cA} + 4) \tfrac{LD_0}{p\varepsilon}\right) = \log_4\left(16(16c_{\cA} + 4) \right) + \log_4 \tfrac{LD_0}{p \varepsilon} \overset{\eqref{eq:hp-state-dependent-def-tildecA}}{\le}\tfrac{\tilde c_{\cA}}{16}(1 + \log_4 \tfrac{LD_0}{p \varepsilon}).
\end{align}
It follows by the definition of $\bar S$,
\begin{equation}\label{eq:bar-S-bound}
\begin{aligned}
\bar S&\le4+\log_4\left(\tfrac{16(16c_{\cA}+4)LD_0}{p\varepsilon}S_0\log_4(4S_0)\right)+\tfrac{1}{2}\log\left(\tilde c_{\cA}\left(1+\log_4\tfrac{LD_0}{p\varepsilon}\right)\right) \\
&\overset{\text{(i)}}{\le} 4S_0 + \tfrac{1}{2} \log_4\left(\tilde c_{\cA}(1 + \log_4 \tfrac{LD_0}{p\varepsilon})\right)\overset{\text{(ii)}}{\le}\tfrac{\tilde c_{\cA}}{2} \left(1 + \log_4 \tfrac{LD_0}{p\varepsilon}\right)
\end{aligned}
\end{equation}
where in (i) follows by $S_0 \ge 4$, and in (ii) we use \eqref{eq:hp-bound-S0} the fact that $\log(x) \le x/4$ for $x \ge 4$. By substituting the bound for $\bar S$ and the definition for $N_{\text{base}}$ into the definition of $\bar q$, 
\begin{equation}\label{eq:q-bar-decomp}
\begin{aligned}
\bar q\le 6 + 2\log_4(\tfrac{\tilde c_{\cA}}{2}(1 + \log_4 \tfrac{LD_0}{p \varepsilon})) + 2\log_4(\tfrac{1}{p}) + 4\log_4\left(4\sqrt{2c_{\cA}}\right) +4\log_4\left(1 +\tfrac{16(16c_{\cA}+4)LD_0 \log \bar S}{\varepsilon}\right). 
\end{aligned}
\end{equation}
To bound the first two terms on the RHS of \eqref{eq:q-bar-decomp}, since $\tilde c_{\cA} \ge 64$, $(1 + \log \tfrac{LD_0}{p \varepsilon}) \ge 1$, and $\log_4(x) \le x/8$ for $x \ge 16$, 
\begin{equation}\label{eq:bar-q-decomp-i}
6 +2\log_4(\tfrac{\tilde c_{\cA}}{2}(1 + \log_4 \tfrac{LD_0}{p \varepsilon})) \le \tfrac{\tilde c_{\cA}}{10}(1 + \log_4\tfrac{LD_0}{p \varepsilon})+\tfrac{\tilde c_{\cA}}{8}(1 + \log_4 \tfrac{LD_0}{p \varepsilon}) \le \tfrac{\tilde c_{\cA}}{4}(1 + \log_4 \tfrac{LD_0}{p \varepsilon}). 
\end{equation}
For the next two terms, since $\varepsilon \le LD_0$ and $4\log_4(4\sqrt{2c_{\cA}}) = 2\log_4(16(2c_{\cA})) \le \tfrac{\tilde c_{\mathcal{A}}}{8}$, 
\begin{equation}\label{eq:bar-q-decomp-ii}
2\log_4\tfrac{1}{p} +4 \log_4(4\sqrt{2c_{\cA}})\le 2\log_4\tfrac{LD_0}{p\varepsilon} +\tfrac{\tilde c_{\cA}}{8} \le \tfrac{\tilde c_{\cA}}{8}(1 + \log_4 \tfrac{LD_0} {p \varepsilon}).
\end{equation}
For the final term, by the fact that $\log_4(1+x) \le1 +\log_4(x)$ for all $x\ge 1$
\begin{equation}\label{eq:bar-q-decomp-iv}
\begin{aligned}
4\log_4\left(1 +\tfrac{16(16c_{\cA}+4)LD_0 \log_4 \bar S}{\varepsilon}\right) &\le 4 + 4\log_4(16(16c_{\cA} +4)) + 4\log_4\left(\tfrac{LD_0}{p \varepsilon}\right) + \log_4(\log_4 \bar S) \\
&\overset{\text{(iii)}}{\le} \tfrac{\tilde c_{\cA}}{16}(1 + \log_4 \tfrac{LD_0}{p \varepsilon}) +\tfrac{\tilde c_{\cA}}{4} + \tfrac{\tilde c_{\cA}}{16}(1 + \log \tfrac{LD_0}{p \varepsilon}) \\
&\le \tfrac{\tilde c_{\cA}}{2} (1 + \log_4 \tfrac{LD_0}{p \varepsilon}). 
\end{aligned}
\end{equation}
where in (iii), we have used that $\log_4 \log_4 \bar S \le \tfrac{\bar S}{16}$ along with \eqref{eq:bar-S-bound}. 
Substituting \eqref{eq:bar-q-decomp-i}, \eqref{eq:bar-q-decomp-ii}, and \eqref{eq:bar-q-decomp-iv} into \eqref{eq:q-bar-decomp} admits the result. 
\end{proof}

\end{appendices}
\end{document}